\providecommand{\U}[1]{\protect\rule{.1in}{.1in}}
\numberwithin{equation}{section}
\newtheorem{theorem}{Theorem}[section]
\newtheorem{lemma}[theorem]{Lemma}
\newtheorem{corollary}[theorem]{Corollary}
\newtheorem{proposition}[theorem]{Proposition}
\newtheorem{remark}[theorem]{Remark}
\newtheorem{definition}[theorem]{Definition}
\def\<{\langle}
\def\>{\rangle}
\def\d{{\rm d}}
\def\L{\mathcal{L}}
\def\div{{\rm div}}
\def\E{\mathbb{E}}
\def\N{\mathbb{N}}
\def\P{\mathbb{P}}
\def\R{\mathbb{R}}
\def\T{\mathbb{T}}
\def\Z{\mathbb{Z}}
\def\eps{\varepsilon}
\begin{document}

\title{Stochastic mSQG equations with multiplicative transport noises: white noise solutions and scaling limit}

\author{Dejun Luo\footnote{Email: luodj@amss.ac.cn. Key Laboratory of RCSDS, Academy of Mathematics and Systems Science, Chinese Academy of Sciences, Beijing 100190, China, and School of Mathematical Sciences, University of the Chinese Academy of Sciences, Beijing 100049, China. }
\quad Rongchan Zhu\footnote{Email: zhurongchan@126.com. Department of Mathematics, Beijing Institute of Technology, Beijing 100081, China and Department of Mathematics, University of Bielefeld, D-33615 Bielefeld, Germany.} }

\maketitle

\begin{abstract}
We consider the modified Surface Quasi-Geostrophic (mSQG) equation on the 2D torus $\mathbb T^2$, perturbed by multiplicative transport noise. The equation admits the white noise measure on $\T^2$ as the invariant measure. We first prove the existence of white noise solutions to the stochastic equation via the method of point vortex approximation, then, under a suitable scaling limit of the noise, we show that the solutions converge weakly to the unique stationary solution of the dissipative mSQG equation driven by space-time white noise. The weak uniqueness of the latter equation is also proved by following Gubinelli and Perkowski's approach in \cite{GP-18}.
\end{abstract}

\textbf{Keywords:} modified Surface Quasi-Geostrophic equation, transport noise, white noise solution, scaling limit, weak convergence

\section{Introduction}

Let $\T^2=\R^2/\Z^2$ be the 2D torus. The modified Surface Quasi-Geostrophic (mSQG) equation on $\T^2$ reads as
  \begin{equation}\label{mSQG}
  \left\{ \aligned
  & \partial_t \xi+ u\cdot \nabla\xi =0, \\
  & u= \nabla^\perp (-\Delta)^{-(1+\eps)/2} \xi,
  \endaligned
  \right.
  \end{equation}
where $\nabla^\perp=(\partial_{x_2}, -\partial_{x_1})$, $\Delta$ is the usual Laplacian operator and $\eps\in [0,1]$. The mSQG equation links the SQG equation ($\eps=0$) with the vorticity form of 2D Euler equation ($\eps=1$). The SQG equation is a well known model in geophysics, used to describe the temperature in a rapidly rotating stratified fluid, see for instance \cite{CMT, HPGS, SBHMTHV} for the geophysical background. We refer to the introduction of \cite{FS} for a detailed list of well posedness results on the equation \eqref{mSQG}. In the smooth setting, since the velocity field $u$ is divergence free, the $L^2$-norm of solutions to \eqref{mSQG} is formally preserved by the dynamics. This implies that the white noise measure $\mu$ (also called the enstrophy measure) on $\T^2$ is invariant under the dynamics of \eqref{mSQG}. The measure $\mu$ has the heuristic expression
  $$\mu(\d\xi)= \frac1{Z} \exp\bigg(-\frac12 \int_{\T^2} |\xi(x)|^2\,\d x\bigg) \d\xi,$$
and is supported on $H^{-1-}(\T^2) = \cap_{s<-1} H^s(\T^2)$, where $H^s(\T^2)$ is the usual Sobolev space on $\T^2$. In this paper, the elements in $H^s(\T^2)$ will be assumed to have zero average on $\T^2$. We say that a stochastic process $\xi:\Omega\to C\big( [0,T], H^{-1-}\big)$, defined on a probability space $(\Omega, \mathcal F,\P)$, is a white noise solution to \eqref{mSQG} if for any $t\in [0,T]$, $\xi_t$ has the law $\mu$ and for all $\phi \in C^\infty(\T^2)$, $\P$-a.s. for all $t\in [0,T]$, it holds
  \begin{equation}\label{mSQG-solution}
  \<\xi_t, \phi\>= \<\xi_0, \phi\> +\int_0^t \big\<\xi_s\otimes \xi_s, H^\phi_\eps \big\>\,\d s,
  \end{equation}
where $\<\cdot, \cdot\>$ is the duality between distributions and smooth functions and for all $x,y\in \T^2$,
  \begin{equation}\label{H-eps-phi}
  H_\eps^\phi(x,y)= {\bf 1}_{\{x\neq y\}}\, \frac12 K_\eps(x-y)\cdot (\nabla\phi(x)- \nabla\phi(y)),
  \end{equation}
in which $K_\eps$ is the kernel on $\T^2$ corresponding to the operator $\nabla^\perp (-\Delta)^{-(1+\eps)/2}$. Near the origin, the kernel $K_\eps$ has the approximative expression
  \begin{equation}\label{K-eps}
  K_\eps(x)\sim \frac{x^\perp}{|x|^{3-\eps}}.
  \end{equation}
This easily implies that there is a constant $C>0$ such that for all $\phi \in C^\infty(\T^2)$,
  \begin{equation}\label{H-eps-phi-estimate}
  \big| H_\eps^\phi(x,y) \big|\leq \frac{C \|\nabla^2\phi \|_\infty}{|x-y|^{1-\eps}}.
  \end{equation}
Due to the singularity of $H_\eps^\phi$, some careful analysis have to be done in order to give a meaning to the nonlinear term $\big\<\xi_s\otimes \xi_s, H^\phi_\eps \big\>$ in \eqref{mSQG-solution} for $\eps\in (0,1]$, see \cite[Section 2.2]{FS} or Section \ref{subsec-definition} below. For $\eps=0$ we are unable to make sense of the nonlinear term and the difficulty comes from the singularity of the kernel $K_\eps$ near the origin.

In the case $\eps=1$, namely, for the 2D Euler equation on the torus, the existence of white noise solutions has been known for a long time, thanks to Albeverio and Cruzeiro's work \cite{AC} which is based on the Galerkin approximation. The same result  was proved by Flandoli in the recent paper \cite{Flandoli18} via the method of point vortex approximation. This powerful method also allows him to establish other results, for instance, it was shown in \cite[Theorem 1]{Flandoli18} that the white noise solutions can be approximated by $L^\infty$-solutions. Later on, following the method of \cite{Flandoli18}, Flandoli and Saal proved in \cite{FS} the existence of white noise solutions to the mSQG equation \eqref{mSQG}, see also \cite{NPGT} for relevant results via the method of Galerkin approximation.

Motivated by the recent paper \cite{FlaLuo-1}, we consider in the white noise regime the mSQG equation perturbed by transport noise:
  \begin{equation}\label{stoch-mSQG}
  \left\{ \aligned
  &\d\xi + u\cdot\nabla \xi \,\d t+ \sum_{k\in \Z^2_0} \theta_k\, \sigma_k\cdot\nabla \xi \circ\d W^k_t=0, \\
  & u= \nabla^\perp (-\Delta)^{-(1+\eps)/2} \xi,
  \endaligned
  \right.
  \end{equation}
where $\Z^2_0= \Z^2\setminus \{0\}$ and $\theta\in \ell^2:= \ell^2(\Z^2_0)$ is the collection of square summable sequences indexed by $\Z^2_0$; moreover, $\{\sigma_k \}_{k\in \Z^2_0}$ is a CONS of the space
  $$H:=\bigg\{f\in L^2(\T^2;\R^2);\div f=0, \int f\, \d x=0 \bigg\}$$
and $\{W^k \}_{k\in \Z^2_0}$ a family of independent complex Brownian motions, see the next section for explicit definitions. We always assume that $\theta\in \ell^2$ is radially symmetric, namely,
  \begin{equation}\label{theta-symmetry}
  \theta_k =\theta_l \quad \mbox{whenever } |k|=|l|.
  \end{equation}
The first equation in \eqref{stoch-mSQG} has the following It\^o form:
  \begin{equation}\label{stoch-mSQG-Ito}
  \d\xi + u\cdot\nabla \xi \,\d t= \frac12 \|\theta \|_{\ell^2}^2\, \Delta \xi\,\d t - \sum_{k\in \Z^2_0} \theta_k\, \sigma_k\cdot\nabla \xi \,\d W^k_t,
  \end{equation}
which can be easily deduced using the equalities \eqref{key-identity} and \eqref{BMs} below. The nonlinear part is understood in the same way as \eqref{mSQG-solution}, with suitable interpretations of other terms. Following the method of \cite{Flandoli18, FlaLuo-1}, we shall first consider the stochastic point vortex dynamics associated to \eqref{stoch-mSQG}, and prove the existence of stationary white noise solutions to the equation \eqref{stoch-mSQG}, by letting the number of vortices go to infinity.

\begin{theorem}[Existence] \label{thm-existence}
Let $\eps\in (0,1]$. There exist a filtered probability space $(\Omega, \mathcal F, (\mathcal F_t),\P)$, a family of complex $(\mathcal F_t)$-Brownian motions $\{W^k \}_{k\in \Z^2_0}$ and an $(\mathcal F_t)$-progressively measurable stationary process $\xi:\Omega\to C\big( [0,T], H^{-1-}\big)$ such that, for all $t\in [0,T]$, $\xi_t$ is a white noise on $\T^2$; and for all $\phi\in C^\infty(\T^2)$, $\P$-a.s. for all  $t\in [0,T]$, one has
  $$\aligned
  \big\<\xi_t,\phi \big\> =&\ \big\<\xi_t,\phi \big\> + \int_0^t \big\<\xi_s\otimes \xi_s, H^\phi_\eps \big\>\,\d s + \frac12 \|\theta \|_{\ell^2}^2 \int_0^t \big\<\xi_s,\Delta \phi \big\> \,\d s \\
  &\, + \sum_{k\in \Z^2_0} \theta_k\int_0^t \big\<\xi_s,\sigma_k\cdot \nabla\phi \big\>\,\d W^k_s.
  \endaligned $$
\end{theorem}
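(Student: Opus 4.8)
The plan is to adapt the stochastic point vortex approximation of \cite{Flandoli18, FS, FlaLuo-1} to the present setting. For each $N\in\N$ I would take $N$ point vortices on $\T^2$ with positions $X^N_1,\dots,X^N_N$ and fixed balanced signs $a_1,\dots,a_N\in\{\pm1\}$, evolving by the regularized stochastic dynamics
\[
\d X^N_i=\frac{1}{\sqrt N}\sum_{j\neq i}a_j\,K^N_\eps(X^N_i-X^N_j)\,\d t+\sum_{k\in\Z^2_0}\theta_k\,\sigma_k(X^N_i)\circ\d W^k_t,
\]
where $K^N_\eps$ is a mollification of the singular kernel in \eqref{K-eps}, with mollification scale vanishing as $N\to\infty$, and the Brownian family $\{W^k\}$ is common to all vortices. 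Setting $\omega^N_t=\frac{1}{\sqrt N}\sum_{i=1}^N a_i\,\delta_{X^N_i(t)}$, the Hamiltonian (hence divergence free) structure of the drift together with $\div\sigma_k=0$ makes the product of normalized Lebesgue measures invariant for the joint dynamics, so starting from it renders $\omega^N$ stationary. A direct It\^o computation then shows that $\omega^N$ solves, for every $\phi\in C^\infty(\T^2)$, the weak equation of Theorem \ref{thm-existence} with $H^\phi_\eps$ replaced by its mollified version: one symmetrizes the drift using the antisymmetry $K^N_\eps(-x)=-K^N_\eps(x)$ to reconstruct the kernel $H^\phi_\eps$ of \eqref{H-eps-phi}, and converts the Stratonovich noise into the It\^o form \eqref{stoch-mSQG-Ito}, where the radial symmetry \eqref{theta-symmetry} produces precisely the correction $\frac12\|\theta\|_{\ell^2}^2\Delta$.

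The next step is to obtain bounds uniform in $N$. Since $\omega^N_t$ is stationary with a one-time marginal that converges, by a central limit argument, to the white noise $\mu$, I would get $\sup_N\E\big[\|\omega^N_0\|^p_{H^{-1-}}\big]<\infty$ for all $p$ and, crucially, a uniform bound on the second moment of the nonlinear term,
\[
\sup_N\E\Big[\big\<\omega^N_s\otimes\omega^N_s,H^{\phi,N}_\eps\big\>^2\Big]<\infty.
\]
This is the technical core. Writing the pairing as $\frac1N\sum_{i\neq j}a_ia_j\,H^{\phi,N}_\eps(X^N_i,X^N_j)$, where the diagonal $i=j$ drops out thanks to the indicator ${\bf 1}_{\{x\neq y\}}$ in \eqref{H-eps-phi}, one controls the fourth order vortex correlations by comparison with the Gaussian (white noise) correlations, using that for $\eps>0$ the bound \eqref{H-eps-phi-estimate} makes the square $|H^\phi_\eps(x,y)|^2$, of order $|x-y|^{-2(1-\eps)}$, integrable on $\T^2\times\T^2$; this is exactly where $\eps\in(0,1]$ is needed, since at $\eps=0$ the square ceases to be integrable. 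Feeding these moment bounds into the drift term and the martingale term of the weak equation (the latter via Burkholder--Davis--Gundy) yields the time regularity required for tightness of the laws of $\omega^N$ in $C\big([0,T],H^{-1-}\big)$.

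Finally I would pass to the limit. By tightness and Skorokhod's representation theorem, along a subsequence the pair consisting of $\omega^N$ and the Brownian family converges almost surely on a common probability space, yielding a limit $\xi\in C\big([0,T],H^{-1-}\big)$ and a limiting family $\{W^k\}$. Passage to the limit in the linear drift and in the martingale term is routine given the uniform moments; the main obstacle is the nonlinear term, for which weak convergence of $\omega^N$ cannot be used directly because $H^\phi_\eps$ is singular on the diagonal. I would resolve this by splitting $H^{\phi,N}_\eps$ at a scale $|x-y|=\kappa$ into a part supported away from the diagonal and a remainder: for fixed $\kappa$ the former is bounded and continuous off the diagonal, so removing the mollification and using $\omega^N\to\xi$ gives convergence of $\big\<\omega^N\otimes\omega^N,H^{\phi,N,\kappa}_\eps\big\>$ to $\big\<\xi\otimes\xi,H^{\phi,\kappa}_\eps\big\>$, while the remainder near the diagonal is made uniformly small in $N$ as $\kappa\to0$ by the second moment bound above together with $\int\!\!\int_{|x-y|<\kappa}|H^\phi_\eps|^2\,\d x\,\d y\to0$. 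Identifying the limiting martingale and its quadratic variation recovers the stochastic integral against $\{W^k\}$, and the white noise marginal and stationarity pass to the limit from $\omega^N$, giving the stationary white noise solution of Theorem \ref{thm-existence}. The entire argument rests on the uniform control of the nonlinear term, which is the hardest point.
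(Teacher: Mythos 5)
Your overall skeleton (stochastic point vortex system, stationarity inherited from invariance of Lebesgue measure, moment bounds, tightness, Skorokhod representation, and a diagonal cut-off to identify the nonlinear term in the limit) is the same as the paper's. But there is a genuine gap at what you yourself call the technical core: a uniform \emph{second} moment bound on the nonlinear term is not sufficient to obtain tightness of the laws in $C\big([0,T],H^{-1-}\big)$. Because of the transport noise, the martingale part of the weak formulation has only H\"older-$1/2$ regularity in time: the best available estimate (the paper's Lemma \ref{lem-difference}) is $\E\big[|\<\xi^N_t-\xi^N_s,e_k\>|^{2p}\big]\lesssim |k|^{4p}|t-s|^{p}$, the exponent $p$ (rather than $2p$) coming from Burkholder--Davis--Gundy applied to the stochastic integral. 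With $p=1$ this only gives $\E\|\xi^N_t-\xi^N_s\|^2_{H^{-\kappa}}\lesssim|t-s|$, which is critical: Kolmogorov's criterion needs an exponent strictly larger than $1$, and in the Simon-compactness route (Lemma \ref{lem-simon}) one needs $W^{\alpha,2p}\big(0,T;H^{-\kappa}\big)$ with $\alpha>1/(2p)$ and $\alpha<1/2$ simultaneously, which forces $p>1$. This is exactly why the paper proves Proposition \ref{2-prop-estimates}(iii) for $2p$-th moments with $p\in(1,1/(1-\eps))$ --- it is the $L^{2p}$, not merely the $L^2$, integrability of $H^\phi_\eps$ that pins down the role of $\eps>0$ at this stage --- and why it explicitly remarks that the second-moment bounds of \cite{FS}, where the dynamics is deterministic and trajectories are Lipschitz in time, do not suffice in the stochastic setting.

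The gap is aggravated by your choice of deterministic balanced signs $a_i\in\{\pm1\}$ in place of the paper's i.i.d.\ standard Gaussian intensities. The paper's proof of the higher-moment bound conditions on the positions and uses that $\sum_{i,j}a_{i,j}\xi_i\xi_j$ lies in the second Wiener chaos of the Gaussian family $\{\xi_i\}$, so that hypercontractivity reduces the $2p$-th moment to a power of the second moment; with $\pm1$ intensities this mechanism is unavailable, and one would instead have to prove moment bounds for degenerate $U$-statistics of the singular kernel $H^\phi_\eps$, which your proposal does not address. (Your mollification of $K_\eps$ is a legitimate alternative to the paper's reliance on the a.s.\ well-posedness of the singular point vortex system from \cite{LS}, and your removal of the mollification via a diagonal cut-off mirrors the paper's identification step; these parts are fine.) To close the argument along the paper's lines you should take Gaussian intensities, prove the $2p$-th moment bound for some $p\in(1,1/(1-\eps))$, and only then run the fractional-Sobolev tightness argument.
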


In fact, as in \cite{FlaLuo-1}, one can prove more general results: given a density function $\rho_0\in C_b \big(H^{-1-},\R_+\big)$ (i.e. $\int_{H^{-1-}} \rho_0 \,\d\mu =1$), there exists a stochastic process $\xi:\Omega\to C\big( [0,T], H^{-1-}\big)$ and a time-dependent density function $\rho\in L^\infty\big( [0,T]\times H^{-1-}, \R_+\big)$, such that for all $t\in [0,T]$, $\xi_t$ has the law $\rho_t\,\d\mu$ and the process $\xi_t$ solves the equation \eqref{stoch-mSQG} in the same sense as above. We do not investigate such generalizations in this work. For the moment, we are unable to deal with more general initial densities via the method of point vortex approximation.

Next, following \cite{FlaLuo-2}, we introduce a suitable scaling of the noise in the stochastic mSQG equations \eqref{stoch-mSQG}. For this purpose, we take a special sequence $\{\theta^N\}_{N\geq 1} \subset \ell^2$:
  $$\theta^N_k = \frac1{|k|^\gamma} {\bf 1}_{\{|k|\leq N\}}, \quad k\in \Z^2_0, $$
where $\gamma\in (1/2,1]$ is a fixed parameter. It is clear that $\|\theta^N \|_{\ell^2} \to \infty$ as $N\to \infty$. For every $N\geq 1$, we consider the equations:
  \begin{equation}\label{stoch-mSQG-N}
  \left\{ \aligned
  & \d\xi^N + u^N\cdot\nabla \xi^N \,\d t+ \frac{\sqrt 2}{\|\theta^N \|_{\ell^2}} \sum_{k\in \Z^2_0} \theta^N_k \sigma_k\cdot\nabla \xi^N \circ\d W^k_t =0, \\
  & u^N= \nabla^\perp (-\Delta)^{-(1+\eps)/2} \xi^N .
  \endaligned \right.
  \end{equation}
Note that we have performed a scaling of the noise, thus, in the It\^o formulation, the coefficient in front of the Laplacian operator is simply  1; namely, we have (comparing with \eqref{stoch-mSQG-Ito}):
  \begin{equation}\label{stoch-mSQG-Ito-N}
  \d\xi^N + u^N\cdot\nabla \xi^N \,\d t= \Delta \xi^N\,\d t - \frac{\sqrt 2}{\|\theta^N \|_{\ell^2}} \sum_{k\in \Z^2_0} \theta^N_k \sigma_k\cdot\nabla \xi^N \,\d W^k_t.
  \end{equation}
By Theorem \ref{thm-existence}, for any $N\geq 1$, the above equation admits a stationary white noise solution $\xi^N$ with trajectories in $C\big( [0,T], H^{-1-}\big)$. The main result of this paper is

\begin{theorem}[Scaling limit] \label{thm-scaling-limit}
Suppose that $\eps\in (0,1]$. As $N\to \infty$, the sequence of white noise solutions $\xi^N$ to \eqref{stoch-mSQG-Ito-N} converge in law to the unique stationary solution of the stochastic dissipative mSQG equation:
  \begin{equation}\label{mSQG-space-time-white-noise}
  \left\{ \aligned
  &\d\xi + u\cdot\nabla \xi \,\d t= \Delta \xi\,\d t + \nabla^\perp \cdot \d \zeta_t, \\
  & u= \nabla^\perp (-\Delta)^{-(1+\eps)/2} \xi,
  \endaligned \right.
  \end{equation}
where $\zeta_t$ is a cylindrical Brownian motion on $H$.
\end{theorem}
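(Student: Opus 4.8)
The plan is to characterise the limit through a martingale problem and then pin it down by weak uniqueness. Testing \eqref{stoch-mSQG-Ito-N} against $\phi\in C^\infty(\T^2)$, Theorem \ref{thm-existence} gives, for each $N$,
$$\langle\xi^N_t,\phi\rangle = \langle\xi^N_0,\phi\rangle + \int_0^t\langle\xi^N_s\otimes\xi^N_s,H^\phi_\eps\rangle\,\d s + \int_0^t\langle\xi^N_s,\Delta\phi\rangle\,\d s + M^{N,\phi}_t,$$
where $M^{N,\phi}_t = \frac{\sqrt2}{\|\theta^N\|_{\ell^2}}\sum_{k\in\Z^2_0}\theta^N_k\int_0^t\langle\xi^N_s,\sigma_k\cdot\nabla\phi\rangle\,\d W^k_s$ is a continuous martingale. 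Note that the Stratonovich correction has already been absorbed into the fixed Laplacian term, so the only $N$-dependence of the drift is through $\xi^N$ itself; the entire effect of the scaling is concentrated in $M^{N,\phi}$.

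First I would establish tightness of the laws of $\{\xi^N\}$ on $C([0,T],H^{-s})$ for some $s>1$. Every time-marginal equals $\mu$, so all one-dimensional distributions are controlled uniformly in $N$; for equicontinuity of $t\mapsto\langle\xi^N_t,\phi\rangle$ I would use uniform moment bounds on the drift and on $\langle M^{N,\phi}\rangle$. Both are finite uniformly in $N$: the bracket of $M^{N,\phi}$ has a bounded expectation (see below), while $H^\phi_\eps$ belongs to $L^2(\T^2\times\T^2)$ off the diagonal precisely because $\eps>0$ (by \eqref{H-eps-phi-estimate} the singularity $|x-y|^{-(1-\eps)}$ is square-integrable in two dimensions only for $\eps>0$), which yields a uniform second-chaos bound on $\langle\xi^N_s\otimes\xi^N_s,H^\phi_\eps\rangle$. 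A standard Aldous/Kolmogorov argument then gives tightness, and by Prokhorov and Skorokhod I pass to a subsequence realising a.s. convergence $\xi^N\to\xi$ on a common probability space, with $\xi$ again stationary and $\xi_t\sim\mu$.

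The core of the proof is the convergence of the noise. A direct computation of the bracket gives
$$\langle M^{N,\phi}\rangle_t = \frac{2}{\|\theta^N\|_{\ell^2}^2}\sum_{k\in\Z^2_0}(\theta^N_k)^2\int_0^t|\langle\xi^N_s,\sigma_k\cdot\nabla\phi\rangle|^2\,\d s,$$
whose expectation equals $\frac{2t}{\|\theta^N\|_{\ell^2}^2}\sum_k(\theta^N_k)^2\|\sigma_k\cdot\nabla\phi\|_{L^2}^2$ because $\xi^N_s\sim\mu$. The radial symmetry \eqref{theta-symmetry} forces the directional average of $\sigma_k\otimes\sigma_k$ over each frequency shell to converge to $\tfrac12 I$ on divergence-free fields, so this expectation tends to $t\,\|\nabla\phi\|_{L^2}^2$. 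To upgrade convergence in mean to convergence to this deterministic limit I would bound the variance of the bracket by Wick's formula: the diagonal part is $\lesssim\|\theta^N\|_{\ell^2}^{-4}\sum_k(\theta^N_k)^4$, which vanishes exactly because $\gamma>\tfrac12$ makes $\sum_k|k|^{-4\gamma}$ finite while $\|\theta^N\|_{\ell^2}^2\to\infty$, and the off-diagonal part is controlled by the fast decay of $\langle\sigma_k\cdot\nabla\phi,\sigma_l\cdot\nabla\phi\rangle$ in $|k-l|$. This mode-wise law of large numbers is the main obstacle, and it is exactly here that the constraint $\gamma\in(\tfrac12,1]$ enters: the upper bound $\gamma\le1$ puts the mass of $\theta^N$ at high frequencies and produces the isotropic limit $\tfrac12 I$, while the lower bound $\gamma>\tfrac12$ makes the fluctuations concentrate. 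The same computation gives $\langle M^{N,\phi},M^{N,\psi}\rangle_t\to t\,\langle\nabla^\perp\phi,\nabla^\perp\psi\rangle_{L^2}$, which is precisely the covariance of $-\langle\zeta_t,\nabla^\perp\phi\rangle$ for a cylindrical Brownian motion $\zeta$ on $H$.

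Finally I would identify the limit. Using the a.s. convergence of $\xi^N$ together with the uniform second-chaos bound, I pass to the limit in the singular drift to obtain $\int_0^t\langle\xi_s\otimes\xi_s,H^\phi_\eps\rangle\,\d s$, while the bracket convergence shows, via a martingale representation theorem, that the limiting family $\{M^\phi\}_\phi$ admits the representation $M^\phi_t=-\langle\zeta_t,\nabla^\perp\phi\rangle$ for some cylindrical Brownian motion $\zeta$ on $H$. Hence $\xi$ solves the weak (martingale) formulation of \eqref{mSQG-space-time-white-noise} and is stationary with white-noise marginals. The weak uniqueness of the stationary solution of \eqref{mSQG-space-time-white-noise}, established following \cite{GP-18}, then forces every subsequential limit to coincide, so the whole family $\xi^N$ converges in law to this unique stationary solution. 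I expect the tightness and the passage to the limit in the nonlinearity to follow the template of \cite{Flandoli18,FS,FlaLuo-2}, the genuinely delicate points being the mode-wise law of large numbers for the bracket and the appeal to uniqueness to identify the limit.
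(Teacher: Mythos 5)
Your overall architecture---tightness, Skorokhod representation, a renormalized law of large numbers for the martingale bracket, and then uniqueness to upgrade subsequential convergence to full convergence---is the same as the paper's, and your treatment of the noise is correct in substance. The mean of the bracket is pinned down exactly by the radial symmetry (this is the paper's Lemma \ref{lem-identity}, which is an identity for every $N$, not merely an asymptotic statement), while the fluctuation is a second-chaos quantity whose variance is bounded by $\sum_k |k|^{-4\gamma}<\infty$ (here $\gamma>1/2$ enters) and is then annihilated by the prefactor $\|\theta^N\|_{\ell^2}^{-2}\to 0$ (here $\gamma\le 1$ enters); this is precisely the role of the term $R_{l,m}$ in the paper's Proposition \ref{prop-martingale-solution}. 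Your plan for passing to the limit in the singular drift via uniform second-chaos bounds and kernel approximation is also the paper's route.

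The genuine gap is in the final step, where you invoke ``the weak uniqueness of the stationary solution'' as a black box. The uniqueness actually proved in the paper (Theorem \ref{thm-uniqueness}) is \emph{not} uniqueness among all stationary weak/martingale solutions with white noise marginals; it is uniqueness of the cylinder martingale problem in the sense of Definition \ref{def-cylinder-martingale}, whose defining conditions are (i) incompressibility, (ii) the It\^o-trick estimate \eqref{limit-a-priori} for all cylinder functions and all $p\geq 1$, and (iii) the martingale property for all $F\in\mathcal{FC}$ with quadratic variation $\int_0^t \mathcal E(F)(\xi_s)\,\d s$. Your limit satisfies (i), and (iii) is recoverable from your identification of the driving cylindrical Brownian motion via It\^o's formula; but nothing in your argument yields (ii), and (ii) is used essentially in the uniqueness proof (it is exactly what controls $\E\big|\int_0^t \L(\varphi^M-\varphi)(\xi_s)\,\d s\big|$ when elements of $\mathcal D(\L)$ are approximated by cylinder functions). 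Since no uniqueness statement is available in the larger class of plain stationary weak solutions of this singular SPDE, the conclusion that every subsequential limit coincides does not follow from what you have established. In the paper, condition (ii) for the limit is Proposition \ref{prop-Ito-trick}, and its proof requires an ingredient entirely absent from your proposal: the time reversal of the approximating point-vortex dynamics (Remarks \ref{rem-time-reversal} and \ref{rem-time-reversal-1}), which shows that the reversed limit process $\hat\xi_t=\tilde\xi_{T-t}$ solves the martingale problem for the adjoint generator $\hat{\L}=\L_0-\mathcal G$. Summing the forward and backward martingale decompositions cancels the antisymmetric drift $\mathcal G$ and leaves $0=2\int_0^t \L_0F(\tilde\xi_s)\,\d s+\tilde M^F_t+\hat M^F_T-\hat M^F_{T-t}$, from which \eqref{limit-a-priori} follows via the quadratic variations and hypercontractivity. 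You would need this forward--backward argument, or some substitute proof of the It\^o-trick bound for the limit, before the appeal to uniqueness is legitimate.
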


Here the stationary solution to \eqref{mSQG-space-time-white-noise} is understood in the sense of Definition  \ref{def-cylinder-martingale} with initial law $\mu$ and the weak uniqueness of the stationary solution will be proved in Theorem \ref{thm-uniqueness} below (the case $\eps=1$ was studied in \cite{DaPD, AF}). It is possible to slightly extend the scaling limit to non-stationary case, but, due to the remark below Theorem \ref{thm-existence}, the initial density $\rho_0$ will be required to be a bounded continuous function on $H^{-1-}$. We omit such extension here.

Since the driven noise of \eqref{mSQG-space-time-white-noise} is given by the derivative of the space-time white noise, equation \eqref{mSQG-space-time-white-noise} is a singular SPDE and the nonlinear term is not well-defined in the classical sense. For $\eps=1$ equation \eqref{mSQG-space-time-white-noise} is the vorticity form of stochastic Navier-Stokes equation driven by space-time white noise studied in \cite{DaPD}. When $\eps$ becomes smaller, $u$ and the nonlinear term are more singular.  Recently, this kind of singular SPDEs have been studied a lot due to the new solution theories such as regularity structures proposed by Hairer in \cite{Hai14} and paracontrolled distributions introduced by Gubinelli, Imkeller and Perkowski in \cite{GIP15}.

Equation \eqref{mSQG-space-time-white-noise}  might be treated  by using the above two theories. According to \cite{Hai14}, for $\eps\in (0,1)$, equation \eqref{mSQG-space-time-white-noise} is in the subcritical regime while for $\eps=0$ it corresponds to the critical regime. Although there are powerful black box theories in regularity structure to obtain local well-posedness of singular SPDEs (see \cite{BHZ19, CH16}), it is  unclear whether the solution obtained by these theories is the same as the limiting  solution obtained in Theorem \ref{thm-scaling-limit}. (See \cite{RZZ17} for the proof of the dynamical $\Phi^4_2$ model and for the problem of the dynamical $\Phi^4_3$ model.)

On the other hand, equation \eqref{mSQG-space-time-white-noise} has Gaussian measure as an invariant measure. This suggests us to use a powerful probabilistic approach recently developed by Gubinelli and Perkowski  in \cite{GP-18}, by constructing a domain for the corresponding infinitesimal generators and  solving the Kolmogorov equation in this domain. We follow the method developed in \cite{GP-18} and prove the existence and uniqueness of cylinder martingale solutions to equation \eqref{mSQG-space-time-white-noise} as follows. This also helps us to identify the limit in Theorem \ref{thm-scaling-limit}. Let $\L$ be the generator corresponding to \eqref{mSQG-space-time-white-noise}, see \eqref{generator} for its definition.

\begin{theorem}\label{thm-uniqueness}
Assume $\eps\in (0,1]$. Let $\eta\in L^2(H^{-1-}, \mu)$ be a density function. There exists a unique solution $\xi$ to the cylinder martingale problem for $\L$ in the sense of Definition \ref{def-cylinder-martingale}, with initial distribution $\xi_0\sim \eta\,\d\mu$. Moreover, $\xi$ is a homogeneous Markov process with transition kernel $(T_t)_{t\geq 0}$ and invariant measure $\mu$.
\end{theorem}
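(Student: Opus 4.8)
The plan is to follow Gubinelli and Perkowski's strategy from \cite{GP-18}, adapted to the singular nonlinearity of the mSQG equation with general $\eps\in(0,1]$. The proof splits into an existence part and a uniqueness part, and the heart of the matter is to construct a sufficiently rich domain for the generator $\L$ on which the Kolmogorov backward equation can be solved. The key structural feature to exploit throughout is that $\mu$ (the white noise measure) is invariant and that the symmetric part of $\L$ is the Ornstein--Uhlenbeck generator $\L_{OU}$ associated to $\Delta$, while the nonlinear drift contributes an antisymmetric part $\L_{0}$ in $L^2(\mu)$. This antisymmetry is what makes the approach work: it allows one to control the nonlinear term against the Dirichlet form of $\L_{OU}$.

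First I would set up the Gaussian analysis on $L^2(H^{-1-},\mu)$ using the Wiener chaos decomposition, writing a cylinder functional $F$ in terms of its chaos components and expressing $\L F = \L_{OU} F + \L_0 F$, where $\L_0$ raises and lowers chaos by controlled amounts. The crucial analytic input is an a priori estimate showing that $\L_0$ is bounded relative to $(-\L_{OU})^{1/2}$ in an appropriate graph norm; here the singularity estimate \eqref{H-eps-phi-estimate} for $H^\phi_\eps$ with $1-\eps<1$ is exactly what guarantees the relevant kernels are square-integrable after the Gaussian contraction, so that the nonlinear term maps into the domain with a loss controlled by the spectral gap of $\L_{OU}$. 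I would then solve the resolvent equation $(\lambda-\L)u = f$ for $\lambda$ large by a fixed-point or Neumann-series argument in the graph space of $(-\L_{OU})^{1/2}$, obtaining a dense domain on which the generator is well-defined and essentially self-adjoint-like behavior (in the sense of a good core) holds. Existence of a cylinder martingale solution with initial law $\eta\,\d\mu$ then follows by a standard martingale-problem construction (tightness plus identification of the limit), which is anyway supplied by the scaling-limit construction of Theorem \ref{thm-scaling-limit} for the stationary case and extended to densities $\eta\in L^2(\mu)$ via absolute continuity.

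For uniqueness, the plan is to show that any two solutions induce the same one-dimensional time marginals and more generally the same finite-dimensional distributions, by testing against the solution $u_\lambda$ of the resolvent equation constructed above. Concretely, if $\xi$ is any cylinder martingale solution, then $M^{u_\lambda}_t := u_\lambda(\xi_t) - u_\lambda(\xi_0) - \int_0^t \L u_\lambda(\xi_s)\,\d s$ is a martingale, and using $\L u_\lambda = \lambda u_\lambda - f$ one recovers $\E\big[\int_0^\infty e^{-\lambda t} f(\xi_t)\,\d t\big]$ as a quantity determined solely by $u_\lambda$ and the initial law. Since $f$ ranges over a dense set and $\lambda$ over an interval, the Laplace transforms of all marginals are pinned down, yielding weak uniqueness and, by the Markov structure of the resolvent, the homogeneous Markov property with transition semigroup $(T_t)_{t\geq 0}$ and invariant measure $\mu$.

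The hard part will be the a priori estimate controlling the antisymmetric generator $\L_0$ by the Dirichlet form of $\L_{OU}$, uniformly in the chaos degree, for the full range $\eps\in(0,1]$. As $\eps\downarrow 0$ the kernel $K_\eps$ becomes more singular and the exponent $1-\eps$ in \eqref{H-eps-phi-estimate} approaches $1$, so the bounds on the off-diagonal contributions of $\L_0$ degrade; I expect the main technical work to lie in proving the precise graph-norm bound with a constant that remains finite and that the resolvent equation is solvable for $\lambda$ large enough to absorb this constant. Managing this singularity carefully—via the renormalized interpretation of $\langle \xi_s\otimes\xi_s, H^\phi_\eps\rangle$ established earlier in the paper and sharp estimates on the resulting Wiener-chaos kernels—is the crux on which both existence and uniqueness rest.
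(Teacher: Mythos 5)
Your overall architecture (chaos decomposition, antisymmetry of the drift in $L^2(\mu)$, duality against solutions of a Kolmogorov-type equation to pin down finite-dimensional distributions) is the right general framework, and your existence discussion is essentially fine. But there is a genuine gap at the core of your plan: the relative bound you assume for the nonlinear drift does not hold, and without it your Neumann-series/fixed-point construction of the resolvent collapses. In the paper's notation (where $\L_0$ is the Ornstein--Uhlenbeck part and $\mathcal G$ the drift, i.e.\ the roles of your $\L_{OU}$ and $\L_0$), the sharp a priori estimates are \eqref{lem-apriori.1}--\eqref{lem-apriori.2} of Lemma \ref{lem-apriori}: for instance
$$\big\|w(\mathcal N)(-\L_0)^{-\gamma}\mathcal G^m_+\varphi\big\| \lesssim \big\|w(1+\mathcal N)\,(1+\mathcal N)\,(-\L_0)^{1-\gamma-\eps/2}\varphi\big\|, \qquad \gamma>(1-\eps)/2 .$$
Two features destroy your argument. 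First, the estimate only holds after presmoothing by $(-\L_0)^{-\gamma}$ with $\gamma>(1-\eps)/2>0$; the drift does not map the graph space of $(-\L_0)^{1/2}$ into $\mathcal H$ at all. Second, and worse, the right-hand side carries the factor $(1+\mathcal N)$ (the number operator), which is unbounded and is not absorbed by any spectral gap or by taking $\lambda$ large: each application of $(\lambda-\L_0)^{-1}\mathcal G$ in your Neumann series gains at most one power of $(-\L_0)$ but costs $1-\gamma-\eps/2$ powers plus a full factor of $\mathcal N$, so the iterates lose regularity and grow in chaos order without compensation. This is not a technicality that ``degrades as $\eps\downarrow 0$''; it fails for every $\eps\in(0,1]$, including $\eps=1$, and it is precisely the obstruction that forces Gubinelli--Perkowski's machinery.

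What the paper does instead (Section 4.3.2, ``controlled structures'') is the missing idea: split the drift with a \emph{chaos-dependent} frequency cutoff $M(\mathcal N)=L(1+\mathcal N)^{4/\eps}$ into $\mathcal G = \mathcal G^{\succ}+\mathcal G^{\prec}$, and define the domain not as a graph space but as the set of paracontrolled-type functions $\varphi=\mathcal K\varphi^\sharp$ solving the fixed point $\varphi=(-\L_0)^{-1}\mathcal G^{\succ}\varphi+\varphi^\sharp$ (Proposition \ref{prop-control-struc}, Lemma \ref{lem-domain}). The cutoff makes $(-\L_0)^{-1}\mathcal G^{\succ}$ a contraction of size $L^{-\eps/4}$, so the bad part of the drift is built into the domain rather than treated perturbatively; then $\L\varphi=\L_0\varphi^\sharp+\mathcal G^{\prec}\varphi$ with both terms controlled (Proposition \ref{prop-control-struc-2}). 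Uniqueness is then obtained not from a resolvent but by solving the time-dependent backward Kolmogorov equation $\partial_t\varphi=\L\varphi$ on this domain (Theorem \ref{thm-Kolmogorov}), via compactness of the Galerkin solutions $\varphi^{m,\sharp}$ and dissipativity of $(\L,\mathcal D(\L))$ (Corollary \ref{cor-dissipa}), followed by the duality argument you sketch. Your Laplace-transform duality would indeed close the uniqueness proof \emph{if} you had a resolvent theory, but constructing it requires exactly this controlled-structure domain; as written, your proposal omits the one mechanism that makes the generator analysis possible.
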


This paper is organized as follows. In the next section, we follow the arguments in \cite[Sections 2 and 3]{FlaLuo-1} to prove the existence of white noise solutions to the stochastic mSQG equation \eqref{stoch-mSQG}. Then we prove the main result (Theorem \ref{thm-scaling-limit}) in Section 3. Finally,  we present in Section 4 a proof of the uniqueness of the cylinder martingale solutions (Theorem \ref{thm-uniqueness}) to \eqref{mSQG-space-time-white-noise} for $\eps\in (0,1]$. Section 5 (the appendix) is devoted to the proofs of some technical lemmas used in Section 4.

\section{Existence of white noise solutions to \eqref{stoch-mSQG}} \label{sec-white-noise-solution}

In this section we prove that the stochastic mSQG equation \eqref{stoch-mSQG} has a white noise solution, by using the method of point vortex approximation. The main ideas are similar to those in \cite[Sections 2 and 3]{FlaLuo-1}, thus we will only give some sketched proofs. We fix $\eps\in (0,1]$.

\subsection{Preliminaries} \label{subsec-prelim}

First of all, we introduce some notations. Let $\Z^2_+$ and $ \Z^2_-$ be a partition of $\Z^2_0$ such that $\Z^2_+ \cap \Z^2_-= \emptyset$ and $\Z^2_-= -\Z^2_+$. Let
  $$a_k= \begin{cases}
  \frac{k^\perp}{|k|}, & k\in \Z^2_+, \\
  -\frac{k^\perp}{|k|},& k\in \Z^2_-;
  \end{cases}  $$
then $a_k=a_{-k}$ for all $k\in \Z^2_0$. We define the CONS $\{\sigma_k \}_{k\in \Z^2_0}$ of the space $H:=\{f\in L^2(\T^2;\R^2);\div f=0, \int f\,\d x=0\}$ as follows:
  $$\sigma_k(x)= a_k \,e_k(x) = a_k\, e^{2\pi {\rm i} k\cdot x} ,\quad x\in \T^2, k\in \Z^2_0. $$
Note that
  \begin{equation}\label{basis}
  a_k\cdot k^\perp = |k| \big({\bf 1}_{\Z^2_+}(k) - {\bf 1}_{\Z^2_-}(k)\big).
  \end{equation}
Recall that $\theta\in \ell^2$ is symmetric, i.e. it satisfies \eqref{theta-symmetry}; then we have the following useful identity (see for instance \cite[Lemma 2.6]{FlaLuo-2}):
  \begin{equation}\label{key-identity}
  \sum_{k\in \Z^2_0} \theta_k^2\, \sigma_k \otimes \sigma_{-k}= \frac12 \|\theta \|_{\ell^2}^2 I_2,
  \end{equation}
where $I_2$ is the $2\times 2$ unit matrix.

Next, the family $\{W^k_t\}_{k\in \Z^2_0}$ consists of independent complex Brownian motions on a stochastic basis $(\Omega, \mathcal F, \P)$, satisfying
  \begin{equation}\label{BMs}
  \overline{W^k_t} = W^{-k}_t, \quad \big[W^k, W^l\big]_t = 2\, t \delta_{k,-l},  \quad k,l \in \Z^2_0,
  \end{equation}
where $[\cdot, \cdot]$ is the joint quadratic covariance operator. Now the cylindrical Brownian motion has the expansion:
  $$W_t(x)= \sum_{k\in  \Z^2_0} \sigma_k(x) W^k_t.$$

\subsection{Stochastic point vortex systems}

Throughout this section we fix a $\theta\in \ell^2$ fulfilling the symmetry condition \eqref{theta-symmetry}. To study the point vortex dynamics corresponding to the stochastic mSQG equation \eqref{stoch-mSQG}, we need some more notations. For $N\in \N$, define the generalized diagonal of $(\T^2)^N$ as
  $$D_N= \big\{(x_1,\cdots, x_N)\in (\T^2)^N: \exists\, i\neq j \mbox{ such that } x_i=x_j \big\} $$
and the complement $D_N^c:= (\T^2)^N \setminus D_N$. Let $(\xi_1,\cdots, \xi_N)\in (\R\setminus \{0\})^N$ be vortex intensities and $(x_1,\cdots, x_N)\in D_N^c$ be the initial positions of vortices; we consider the stochastic system of $N$-point vortices:
  \begin{equation}\label{stoch-point-vortices}
  \left\{ \aligned
  \d X^{i,N}_t &= \frac1{\sqrt N} \sum_{j=1,j\neq i}^N \xi_j K_\eps \big( X^{i,N}_t- X^{j,N}_t\big)\,\d t + \sum_{|k|\leq N} \theta_k\, \sigma_k\big( X^{i,N}_t \big) \circ \d W^k_t,\\
  X^{i,N}_0&= x_i, \quad 1\leq i\leq N.
  \endaligned \right.
  \end{equation}
It is known that (see \cite[Proposition 3.4]{LS}), $\P$-a.s., the above system admits a unique strong solution $\big(X^{1,N}_t, \cdots, X^{N,N}_t \big)\in D_N^c$ for all $t>0$. Moreover, since the fields $K_\eps$ and $\sigma_k$ are divergence free,  one can prove

\begin{lemma}\label{2-lem-1}
If the initial positions $(x_1,\cdots, x_N)$ are distributed as the Lebesgue measure ${\rm Leb}_{\T^2}^{\otimes N}$ on $(\T^2)^N$ and are independent of the Brownian motions $\{W^k_t\}_{k\in \Z^2_0}$, then the solution $\big(X^{1,N}_t, \cdots, X^{N,N}_t \big)$ is a stationary process with the invariant law ${\rm Leb}_{\T^2}^{\otimes N}$.
\end{lemma}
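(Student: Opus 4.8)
The plan is to reduce the claim to the invariance of $\Leb_{\T^2}^{\otimes N}$ under the one-time marginals of the system \eqref{stoch-point-vortices}: once each $X^N_t:=\big(X^{1,N}_t,\dots,X^{N,N}_t\big)$ is shown to have law $\Leb_{\T^2}^{\otimes N}$, stationarity of the whole trajectory follows from the time-homogeneous Markov property, since the initial datum is $\Leb_{\T^2}^{\otimes N}$-distributed and independent of the driving noises. To analyze invariance I would write down the generator $\L_N$ of $X^N$ on $(\T^2)^N$. Introducing the first-order operators $A_k:=\sum_{i=1}^N \sigma_k(x_i)\cdot\nabla_{x_i}$, a direct computation from the Stratonovich system \eqref{stoch-point-vortices}, using \eqref{BMs} and the symmetry \eqref{theta-symmetry}, gives
\begin{equation*}
\L_N = \sum_{i=1}^N b^{i,N}\cdot\nabla_{x_i} + \sum_{|k|\le N}\theta_k^2\, A_{-k}A_k, \qquad b^{i,N}(x)=\frac1{\sqrt N}\sum_{j\ne i}\xi_j\,K_\eps(x_i-x_j);
\end{equation*}
in the equivalent It\^o form the diagonal ($i=j$) part of the second-order operator reduces, via \eqref{key-identity}, to $\frac12\|\theta\|_{\ell^2}^2\sum_i\Delta_{x_i}$. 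It then suffices to check $\int_{(\T^2)^N}\L_N f\,\d x=0$ for a sufficiently large class of $f$, i.e. the infinitesimal invariance of Lebesgue measure.

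The verification uses only the divergence-free character of the coefficients. For the drift, integration by parts on the (boundaryless) torus gives $\int b^{i,N}\cdot\nabla_{x_i}f\,\d x=-\int(\div_{x_i}b^{i,N})f\,\d x$, and $\div_{x_i}b^{i,N}=0$ since $K_\eps=\nabla^\perp(-\Delta)^{-(1+\eps)/2}$ is divergence free; hence the first-order part contributes nothing. For the second-order part, two integrations by parts, again using $\div\sigma_k=0$, show that $A_k$ is skew-symmetric for the bilinear pairing $(f,g)\mapsto\int_{(\T^2)^N}fg\,\d x$; consequently $\int A_{-k}A_kf\,\d x=0$, noting that $A_{-k}\mathbf 1=0$. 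Summing over $i$ and $k$ yields $\int_{(\T^2)^N}\L_N f\,\d x=0$.

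The main obstacle is the singularity of $K_\eps$ along the generalized diagonal $D_N$, which makes $b^{i,N}$ unbounded and a priori invalidates the integration by parts above; moreover, infinitesimal invariance must be upgraded to genuine invariance of the marginal law. I would handle both points by approximation: mollify $K_\eps$ into a smooth divergence-free kernel $K_\eps^\delta$, for which the Stratonovich flow of the regularized system is, by the divergence-free computation above, a volume-preserving diffeomorphism of $(\T^2)^N$, so that starting from $\Leb_{\T^2}^{\otimes N}$ (independent of the noise) the marginals are exactly $\Leb_{\T^2}^{\otimes N}$. Passing $\delta\downarrow0$, one uses that for $\eps\in(0,1]$ the bound \eqref{K-eps} makes $K_\eps$ locally integrable and that, by \cite[Proposition 3.4]{LS}, the solution stays in $D_N^c$ for all times, so the regularized solutions converge to $X^N$ and the invariance of $\Leb_{\T^2}^{\otimes N}$ persists in the limit. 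The time-homogeneous Markov property then upgrades invariance of the one-time marginals to stationarity of $\big(X^{1,N}_t,\dots,X^{N,N}_t\big)$, as claimed.
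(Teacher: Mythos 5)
Your proposal is correct and follows essentially the same route as the paper, whose proof is given by reference to \cite[Theorem 2.1]{FlaLuo-1}: approximate the singular kernel by smooth divergence-free fields, use that the Stratonovich flow of a system with divergence-free drift and diffusion coefficients preserves $\Leb_{\T^2}^{\otimes N}$, pass to the limit using the a.s.\ confinement of the solution in $D_N^c$ from \cite[Proposition 3.4]{LS}, and upgrade invariance of the one-time marginals to stationarity of the process via the Markov property. The only (inconsequential) slip is in your motivational generator computation: since the noise in \eqref{stoch-point-vortices} is truncated at $|k|\leq N$, the It\^o correction is $c_N\sum_i\Delta_{x_i}$ with $c_N=\frac12\sum_{|k|\leq N}\theta_k^2$, rather than $\frac12\|\theta\|_{\ell^2}^2\sum_i\Delta_{x_i}$.
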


\begin{proof}
The proof is similar to that of \cite[Theorem 2.1]{FlaLuo-1}; we omit it here.
\end{proof}

Next, on a probability space $(\Omega, \mathcal F, \P)$, we take an i.i.d. sequence $\{\xi_i \}_{i\geq 1}$ of standard  Gaussian random variables, and  an i.i.d. sequence $\{X^i_0\}_{i\geq 1}$ of uniformly distributed random variables on $\T^2$; the two sequences are independent, and they are also independent of the Brownian motions $\{W^k_t\}_{k\in \Z^2_0}$. For any $N\in \N$, we define the random point vortices
  \begin{equation}\label{random-point-vortices}
  \xi^N_0= \frac1{\sqrt N} \sum_{i=1}^N \xi_i \delta_{X^i_0},
  \end{equation}
where $\delta_x,\, x\in \T^2,$ is the delta Dirac mass. It was shown in \cite[Proposition 21]{Flandoli18} that, as $N\to\infty$, $\xi^N_0$ converges weakly to the white noise on $\T^2$. In other words, if we denote by $\mu^N$ the law of $\xi^N_0$ on $H^{-1-}$, then $\mu^N$ converges weakly to the white noise measure $\mu$ on the torus $\T^2$.

For any $N\geq 1$, consider the stochastic point vortex system \eqref{stoch-point-vortices} with random vortex intensities $(\xi_1,\cdots, \xi_N)$ and random initial positions $(X^1_0, \cdots , X^N_0)$; we have the following result which is similar to \cite[Proposition 2.3]{FlaLuo-1}.

\begin{proposition}\label{2-prop-1}
For $\P$-a.s. value of $\big((\xi_1, X^1_0), \cdots, (\xi_N, X^N_0)\big)$, the system of equations \eqref{stoch-point-vortices} admits a unique solution $\big(X^{1,N}_t, \cdots, X^{N,N}_t \big)$ taking values in $D_N^c$ for all $t>0$, and the measure-valued process
  $$\xi^N_t= \frac1{\sqrt N} \sum_{i=1}^N \xi_i \delta_{X^{i,N}_t}$$
satisfies the following equation: for any $\phi\in C^\infty(\T^2)$, $\P$-a.s. for all $t>0$, it holds
  \begin{equation}\label{2-prop-1.1}
  \aligned
  \big\<\xi^N_t,\phi \big\>=&\ \big\<\xi^N_0,\phi \big\> + \int_0^t \big\<\xi^N_s\otimes \xi^N_s,H_\eps^\phi \big\>\,\d s + c_N \int_0^t \big\<\xi^N_s,\Delta \phi \big\>\,\d s\\
  &\, + \sum_{|k|\leq N} \theta_k\int_0^t \big\<\xi^N_s,\sigma_k\cdot \nabla\phi \big\>\,\d W^k_s,
  \endaligned
  \end{equation}
where $c_N=\frac12 \sum_{|k|\leq N} \theta_k^2$ and
  $$\big\<\xi^N_t \otimes \xi^N_t, H_\eps^\phi \big\>= \frac1{2N} \sum_{1\leq i \neq j\leq N} \xi_i\xi_j K_\eps \big( X^{i,N}_t- X^{j,N}_t\big) \cdot \big(\nabla\phi \big( X^{i,N}_t \big) - \nabla\phi \big( X^{j,N}_t \big) \big). $$
Moreover, the process $\xi^N_t$ is stationary with invariant law $\mu^N$.
\end{proposition}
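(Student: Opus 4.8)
The plan is to secure pathwise existence and uniqueness first, then derive the weak form \eqref{2-prop-1.1} by the Stratonovich chain rule, and finally read off stationarity from Lemma \ref{2-lem-1}.

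First I would check that the random initial configuration lies in $D_N^c$ for $\P$-a.s.\ value of the data: since $\{X^i_0\}$ are i.i.d.\ uniform on $\T^2$, they are pairwise distinct almost surely, so $(X^1_0,\dots,X^N_0)\in D_N^c$ a.s. Conditioning on $\big((\xi_1,X^1_0),\dots,(\xi_N,X^N_0)\big)$ reduces \eqref{stoch-point-vortices} to a system with deterministic intensities and deterministic initial points in $D_N^c$, so \cite[Proposition 3.4]{LS} applies and yields a unique strong solution staying in $D_N^c$ for all $t>0$; Fubini then gives the claim for $\P$-a.s.\ value of the data.

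Next I would derive \eqref{2-prop-1.1}. Writing $\<\xi^N_t,\phi\>=\frac1{\sqrt N}\sum_i\xi_i\phi(X^{i,N}_t)$ and applying the Stratonovich chain rule to each $\phi(X^{i,N}_t)$ using \eqref{stoch-point-vortices} produces a drift part and a Stratonovich noise part. For the drift, multiplying by $\xi_i/\sqrt N$ and summing gives $\frac1N\sum_{i\neq j}\xi_i\xi_j\,\nabla\phi(X^{i,N}_t)\cdot K_\eps(X^{i,N}_t-X^{j,N}_t)$; exchanging $i\leftrightarrow j$ and using the oddness $K_\eps(-x)=-K_\eps(x)$ symmetrizes this into exactly $\<\xi^N_s\otimes\xi^N_s,H_\eps^\phi\>$ via \eqref{H-eps-phi}. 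The noise part must then be converted from Stratonovich to It\^o: computing the correction from the covariation rule \eqref{BMs} and the symmetry $\theta_k=\theta_{-k}$, it equals $\sum_{|k|\le N}\theta_k^2\,\sigma_{-k}\cdot\nabla(\sigma_k\cdot\nabla\phi)$ evaluated along the trajectories. Expanding the inner derivative, the first-order (transport) term carries the factor $\sigma_{-k}\cdot\nabla\sigma_k$, which vanishes because $a_k\cdot k=0$, while the second-order term, summed against $\theta_k^2$ over $|k|\le N$, is turned into $c_N\Delta\phi$ by the key identity \eqref{key-identity} applied to the (still radially symmetric) truncated sequence $\theta_k\mathbf 1_{\{|k|\le N\}}$. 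Summing $\frac1{\sqrt N}\sum_i\xi_i(\cdots)(X^{i,N}_t)$ then yields the It\^o drift $c_N\<\xi^N_t,\Delta\phi\>$ together with the martingale $\sum_{|k|\le N}\theta_k\<\xi^N_t,\sigma_k\cdot\nabla\phi\>\,\d W^k_t$, which is precisely \eqref{2-prop-1.1}.

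For stationarity I would again condition on the intensities $(\xi_1,\dots,\xi_N)$: given them, the initial positions are i.i.d.\ uniform, hence $\Leb_{\T^2}^{\otimes N}$-distributed, and independent of $\{W^k\}$, so Lemma \ref{2-lem-1} shows $(X^{1,N}_t,\dots,X^{N,N}_t)$ is stationary with invariant law $\Leb_{\T^2}^{\otimes N}$. Since the intensities are frozen in time, $\xi^N_t=\frac1{\sqrt N}\sum_i\xi_i\delta_{X^{i,N}_t}$ is a time-invariant functional of the stationary pair (intensities, positions), hence stationary, and its one-time law coincides with that of $\xi^N_0$, namely $\mu^N$. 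The hard part will be the Stratonovich--It\^o conversion: one must carefully track the complex covariation \eqref{BMs}, exploit \emph{both} the symmetry $\theta_k=\theta_{-k}$ and the algebraic identities $a_k\cdot k=0$ and \eqref{key-identity}, and confirm that all first-order corrections cancel so that only the Laplacian $c_N\Delta\phi$ survives; once this correction is pinned down, the remaining steps are routine bookkeeping.
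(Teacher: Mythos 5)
Your proposal is correct and follows essentially the same route as the paper's proof: apply the Stratonovich chain rule to $\phi(X^{i,N}_t)$, symmetrize the drift over $i\leftrightarrow j$ using the oddness of $K_\eps$ to produce $\big\<\xi^N\otimes\xi^N,H_\eps^\phi\big\>$, convert to It\^o form via \eqref{BMs} and \eqref{theta-symmetry}, and then kill the first-order correction by $\sigma_{-k}\cdot\nabla\sigma_k\equiv 0$ while identifying the second-order one as $c_N\Delta\phi$ through \eqref{key-identity} applied to the truncated sequence, with stationarity obtained from Lemma \ref{2-lem-1}. Your treatment of the a.s.\ well-posedness (conditioning on the random data plus \cite[Proposition 3.4]{LS}) and of the stationarity step is in fact slightly more detailed than the paper's, which simply cites these facts.
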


\begin{proof}
Let $\phi\in C^\infty(\T^2)$; by the It\^o formula,
  $$\aligned
  \d\phi \big( X^{i,N}_t \big)=&\, \frac1{\sqrt N} \sum_{j=1,j\neq i}^N \xi_j K_\eps \big( X^{i,N}_t- X^{j,N}_t\big) \cdot \nabla\phi\big( X^{i,N}_t \big) \,\d t \\
  &+ \sum_{|k|\leq N} \theta_k \sigma_k\big( X^{i,N}_t \big)\cdot \nabla\phi\big( X^{i,N}_t \big) \circ \d W^k_t.
  \endaligned $$
Using the skew-symmetry of the kernel $K_\eps$ we can deduce
  $$\d\big\<\xi^N_t,\phi \big\> = \big\<\xi^N_t \otimes \xi^N_t, H_\eps^\phi \big\>\,\d t + \sum_{|k|\leq N} \theta_k \big\<\xi^N_t, \sigma_k\cdot \nabla\phi\big\> \circ \d W^k_t. $$
In the It\^o form, the above equation reads as
  $$\aligned
  \d\big\<\xi^N_t,\phi \big\> =&\ \big\<\xi^N_t \otimes \xi^N_t, H_\eps^\phi \big\>\,\d t + \sum_{|k|\leq N} \theta_k \big\<\xi^N_t, \sigma_k\cdot \nabla\phi\big\> \, \d W^k_t \\
  &+ \sum_{|k|\leq N} \theta_k^2 \big\<\xi^N_t, \sigma_{-k}\cdot \nabla(\sigma_k\cdot \nabla \phi)\big\> \,\d t,
  \endaligned  $$
where we have used \eqref{theta-symmetry} and \eqref{BMs}. By the fact that $\sigma_{-k}\cdot \nabla\sigma_k\equiv 0$ and \eqref{key-identity}, we have
  $$\sum_{|k|\leq N} \theta_k^2 \big[ \sigma_{-k}\cdot \nabla(\sigma_k\cdot \nabla \phi)\big] = c_N \Delta \phi.$$
This leads to the first assertion. The second one can be deduced from Lemma \ref{2-lem-1}, cf. \cite[Proposition 22]{Flandoli18}.
\end{proof}

We provide the following estimates on the point vortices $\xi^N_t$. Compared to \cite[Theorem 16]{FS} where only the second moment of $\big\<\xi^N_t\otimes \xi^N_t,H_\eps^\phi \big\>$ is considered, in the current paper, we need higher moment estimates since we are dealing with stochastic processes which has only H\"older continuous trajectories; see Section \ref{subsec-limit} where we will make use of time-fractional Sobolev spaces. In the last assertion we need the condition $\eps>0$.

\begin{proposition}\label{2-prop-estimates}
\begin{itemize}
\item[\rm (i)] For all $p>1$, there exists $C_p>0$ such that for any $t>0$ and $f\in L^\infty(\T^2)$,
  $$\E\big| \big\<\xi^N_t,f \big\> \big|^p \leq C_p \|f\|_{L^\infty(\T^2)}^p. $$
\item[\rm (ii)] For all $p>1$ and $\delta>0$, there is a $C_{p,\delta}>0$ independent of $N\geq 1$ and $t\in [0,T]$, such that
  $$\E \Big[\big\| \xi^N_t\big\|_{H^{-1-\delta}(\T^2)}^p \Big] \leq C_{p,\delta}. $$
\item[\rm (iii)] For all $p\in [1, 1/(1-\eps))$, there exists a constant $C>0$ such that for all $N\geq 1$, $t>0$ and $\phi\in C^\infty(\T^2)$,
  $$\E\Big[\big\<\xi^N_t\otimes \xi^N_t,H_\eps^\phi \big\>^{2p}\Big] \leq C\big\|H_\eps^\phi \big\|_{L^{2p}(\T^2\times \T^2)}^{2p}.   $$
\end{itemize}
\end{proposition}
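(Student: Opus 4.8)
The three assertions share a common engine: the stationarity of $\xi^N_t$ with law $\mu^N$, which reduces each time-$t$ moment to a moment under the fixed measure $\mu^N$, the law of the Gaussian/uniform point vortex ensemble $\xi^N_0 = \frac{1}{\sqrt N}\sum_{i=1}^N \xi_i\delta_{X^i_0}$. So for every part I would first invoke Proposition 2.5 to write $\E[\,\cdot\,(\xi^N_t)] = \E[\,\cdot\,(\xi^N_0)]$, and thereafter compute purely combinatorially over the i.i.d. pairs $(\xi_i, X^i_0)$, using that the $\xi_i$ are standard Gaussians (so odd moments vanish and Wick pairing controls the even ones) and the $X^i_0$ are independent uniform on $\T^2$.

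\smallskip

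\textbf{Part (i).} Here $\langle \xi^N_0, f\rangle = \frac{1}{\sqrt N}\sum_{i=1}^N \xi_i f(X^i_0)$ is, conditionally on the positions, a centered Gaussian with variance $\frac1N\sum_i f(X^i_0)^2 \le \|f\|_{L^\infty}^2$. The plan is to bound the $p$-th moment by the Gaussian moment formula: $\E|\langle \xi^N_0, f\rangle|^p \le C_p \big(\frac1N\sum_i f(X^i_0)^2\big)^{p/2} \le C_p\|f\|_{L^\infty}^p$, with $C_p$ the absolute moment constant of a standard normal. This part is routine.

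\smallskip

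\textbf{Part (ii).} I would expand the Sobolev norm in Fourier modes, $\|\xi^N_0\|_{H^{-1-\delta}}^2 = \sum_{k\in\Z^2_0} |k|^{-2(1+\delta)} |\langle \xi^N_0, e_k\rangle|^2$, then apply part (i) modewise (with $f = e_k$, so $\|e_k\|_{L^\infty}=1$) together with Minkowski's inequality in $\ell^2$ to pass the $L^{p}$-norm inside the sum. The summability $\sum_k |k|^{-2(1+\delta)}<\infty$ in dimension two is exactly what makes the bound finite and $N$-independent; the fractional exponent $\delta>0$ is used only to secure this convergence.

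\smallskip

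\textbf{Part (iii), the main obstacle.} This is the hard part, for two reasons: the quadratic structure $\langle \xi^N_0\otimes\xi^N_0, H^\phi_\eps\rangle = \frac{1}{2N}\sum_{i\neq j} \xi_i\xi_j H^\phi_\eps(X^i_0, X^j_0)$ produces, after raising to the $2p$-th power, a sum over $2p$-tuples of index-pairs whose Gaussian expectation is governed by how the indices pair up; and the kernel $H^\phi_\eps$ is only in $L^{2p}$ (not bounded), with the constraint $p<1/(1-\eps)$ precisely guaranteeing $\frac{1}{|x-y|^{(1-\eps)\cdot 2p}}$ is integrable on $\T^2\times\T^2$ by \eqref{H-eps-phi-estimate}. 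The plan is: take the Gaussian expectation over $\{\xi_i\}$ first, conditioning on positions, and use Wick's theorem to reduce to the diagonal pairings that survive (off-diagonal terms vanish by independence and mean zero); this leaves a sum over at most $p$ distinct position-indices, weighted by products of kernel values $H^\phi_\eps(X^{a}_0, X^{b}_0)$. I would then take expectation over the uniform positions, turning each such product into an integral $\int_{(\T^2)^m} \prod H^\phi_\eps(x_a,x_b)\,\d x$. The crucial combinatorial point is that the $\frac{1}{N}$ prefactors, raised to the $2p$, combine with the number of surviving tuples (of order $N^{p}$) so that the powers of $N$ cancel and the bound is $N$-independent; the leading contribution is the fully-paired ``diagonal'' term giving $\frac{1}{N^p}\cdot N^p \int_{\T^2\times\T^2}|H^\phi_\eps|^{2p} = \|H^\phi_\eps\|_{L^{2p}}^{2p}$, while all other pairings carry strictly more $N$ in the denominator than in the numerator and are absorbed into the constant $C$. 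Making the bookkeeping of which pairings survive and checking that every resulting position-integral is dominated by the $L^{2p}$-norm (via H\"older and the fact that $H^\phi_\eps\in L^{2p}$) is where the real work lies; this mirrors the second-moment computation of \cite[Theorem 16]{FS}, extended to general even moments.
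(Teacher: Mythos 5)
Your treatments of (i) and (ii) are correct: conditioning on the positions reduces (i) to a Gaussian moment bound, and (ii) follows modewise from (i) plus summability of $|k|^{-2(1+\delta)}$ in dimension two (the paper simply cites \cite[Lemma 23]{Flandoli18} for both, so your explicit arguments are a fine substitute).

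For (iii), however, your plan has a genuine gap, and the one concrete claim you make about the combinatorics is wrong. Write
$Z=\big\<\xi^N_0\otimes \xi^N_0,H_\eps^\phi \big\>=\sum_{i,j}a_{i,j}\xi_i\xi_j$ with $a_{i,j}=\frac1N H_\eps^\phi\big(X^i_0,X^j_0\big)$, $a_{i,i}=0$. Expanding $Z^{2p}$ gives the prefactor $N^{-2p}$, not $N^{-p}$. The configuration that produces the integral $\int_{\T^2\times\T^2}|H_\eps^\phi|^{2p}$ is the one in which all $2p$ kernel factors sit on the \emph{same} index pair $\{i,j\}$; it has only $O(N^2)$ index choices and hence contributes $O\big(N^{2-2p}\big)$, which is \emph{subleading} for $p>1$ --- the opposite of what you assert. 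The genuinely dominant configuration (the $2p$ factors matched into $p$ disjoint pairs with all indices distinct) contributes $\big(\int H_\eps^\phi(x,y)^2\,\d x\d y\big)^p=\|H_\eps^\phi\|_{L^2}^{2p}$, which must then be absorbed into $\|H_\eps^\phi\|_{L^{2p}}^{2p}$ by H\"older on the finite-measure space $\T^2\times\T^2$; and every intermediate ``collision'' pairing produces a mixed integral $\int \prod_i |H_\eps^\phi|^{m_i}(x_{a_i},x_{b_i})$, $\sum_i m_i=2p$, which needs a generalized H\"older argument to be dominated by $\|H_\eps^\phi\|_{L^{2p}}^{2p}$. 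None of this is carried out in your sketch --- you explicitly defer it --- and it is precisely the content of the statement.

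The paper avoids the combinatorics entirely, and this is the idea you are missing. Conditionally on $\mathcal G=\sigma(X^i_0;i\geq1)$, the variable $Z$ belongs to the \emph{second Wiener chaos} of the i.i.d. Gaussians $\{\xi_i\}$, so equivalence of moments on a fixed chaos (hypercontractivity) gives
$\E\big[Z^{2p}\,\big|\,\mathcal G\big]\leq C_p^{2p}\big(\E\big[Z^2\,\big|\,\mathcal G\big]\big)^p=C_p^{2p}\big(2\sum_{i,j}a_{i,j}^2\big)^p$;
only the \emph{second} conditional moment is ever computed by Wick. Then, since $\sum_{i,j}a_{i,j}^2=\frac1{N^2}\sum_{i,j}H_\eps^\phi\big(X^i_0,X^j_0\big)^2$ is an average over $N^2$ terms, Jensen's inequality yields
$\big(\frac1{N^2}\sum_{i,j}H_{i,j}^2\big)^p\leq \frac1{N^2}\sum_{i,j}H_{i,j}^{2p}$,
and taking expectation over the i.i.d. uniform positions gives exactly $\|H_\eps^\phi\|_{L^{2p}(\T^2\times\T^2)}^{2p}$ (finite precisely because $p<1/(1-\eps)$, by \eqref{H-eps-phi-estimate}). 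Stationarity (Proposition \ref{2-prop-1}) reduces $t$ to $0$ as you say. This two-step argument (chaos hypercontractivity, then Jensen) replaces all of your pairing bookkeeping and gives the uniform-in-$N$ bound with no power counting at all.
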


\begin{proof}
The proofs of the first two estimates can be found in \cite[Lemma 23]{Flandoli18}; we only prove the last one, which is different from the one proved in \cite[Lemma 23]{Flandoli18} because the function $H_\eps^\phi$ is unbounded. First, by \eqref{H-eps-phi-estimate}, we see that $H_\eps^\phi\in L^{2p}(\T^2\times \T^2)$ for $p<1/(1-\eps)$. Next, since the process $\xi^N_t$ is stationary, we have
  $$\E\Big[\big\<\xi^N_t\otimes \xi^N_t,H_\eps^\phi \big\>^{2p}\Big]= \E\Big[\big\<\xi^N_0\otimes \xi^N_0,H_\eps^\phi \big\>^{2p} \Big];$$
thus it is enough to estimate the right-hand side. Note that
  $$  \big\<\xi^N_0\otimes \xi^N_0,H_\eps^\phi \big\>= \frac1N \sum_{i,j=1}^N \xi_i\xi_j H_\eps^\phi\big(X^{i}_0, X^{j}_0 \big);$$
one has
  $$\aligned
  \E\Big[\big\<\xi^N_0\otimes \xi^N_0,H_\eps^\phi \big\>^{2p} \Big] &= \E\Bigg[ \bigg(\frac1N \sum_{i,j=1}^N \xi_i\xi_j H_\eps^\phi\big(X^{i}_0, X^{j}_0 \big) \bigg)^{2p} \Bigg]\\
  &= \E\Bigg\{ \E\Bigg[ \bigg(\frac1N \sum_{i,j=1}^N \xi_i\xi_j H_\eps^\phi\big(X^{i}_0, X^{j}_0 \big) \bigg)^{2p} \bigg| \mathcal G \Bigg] \Bigg\},
  \endaligned $$
where $\mathcal G$ is the $\sigma$-algebra generated by $\{X^i_0 \}_{i\geq 1}$. Since the two families $\{\xi_i \}_{i\geq 1}$ and $\{X^i_0 \}_{i\geq 1}$ are independent, it holds that
  $$\E\Bigg[ \bigg(\frac1N \sum_{i,j=1}^N \xi_i\xi_j H_\eps^\phi\big(X^{i}_0, X^{j}_0 \big) \bigg)^{2p} \bigg| \mathcal G \Bigg]= \Bigg\{ \E\Bigg[\bigg(\sum_{i,j=1}^N a_{i,j} \xi_i\xi_j \bigg)^{2p} \Bigg] \Bigg\}_{a_{i,j}= \frac1N H_\eps^\phi (X^{i}_0, X^{j}_0 )}\, . $$
We remark that the coefficients $\{a_{i,j} \}_{1\leq i,j\leq N}$ satisfy $a_{i,j}=a_{j,i}$ and $a_{i,i}=0$; the latter is due to \eqref{H-eps-phi}. Fix such a family $\{a_{i,j} \}_{1\leq i,j\leq N}$, the random variable $\sum_{i,j=1}^N a_{i,j} \xi_i\xi_j$ belongs to the second Wiener chaos corresponding to the family $\{\xi_i \}_{i\geq 1}$ of  i.i.d. standard Gaussian random variables, thus
  $$\E\Bigg[\bigg(\sum_{i,j=1}^N a_{i,j} \xi_i\xi_j \bigg)^{2p} \Bigg] \leq C_p^{2p} \Bigg\{\E\Bigg[\bigg(\sum_{i,j=1}^N a_{i,j} \xi_i\xi_j \bigg)^2 \Bigg] \Bigg\}^p = C_p^{2p} \Bigg[ 2 \sum_{i,j=1}^N a_{i,j}^2 \Bigg]^p, $$
where the last step follows easily from the Isserlis-Wick theorem. Substituting this estimate into the above two equalities leads to
  $$\E\Big[\big\<\xi^N_0\otimes \xi^N_0,H_\eps^\phi \big\>^{2p} \Big] \leq C'_p\, \E\Bigg\{ \Bigg[\sum_{i,j=1}^N \frac1{N^2} H_\eps^\phi \big(X^{i}_0, X^{j}_0 \big)^2 \Bigg]^p \Bigg\} \leq C'_p\, \E\Bigg[ \sum_{i,j=1}^N \frac1{N^2} H_\eps^\phi \big(X^{i}_0, X^{j}_0 \big)^{2p} \Bigg], $$
where we have used Jensen's inequality. Recall that $\{X^i_0 \}_{i\geq 1}$ is an i.i.d. family of $\T^2$-valued uniformly distributed random variables; we have
  $$\E\Big[\big\<\xi^N_0\otimes \xi^N_0,H_\eps^\phi \big\>^{2p} \Big] \leq C'_p\int_{\T^2\times \T^2} H_\eps^\phi(x, y)^{2p} \,\d x\d y = C'_p \big\|H_\eps^\phi \big\|_{L^{2p}(\T^2\times \T^2)}^{2p} .$$
The proof is complete.
\end{proof}

For later purpose, we give the following remark.

\begin{remark}\label{rem-time-reversal}
Fix $T>0$; the time-reversal $\hat\xi^N_t:= \xi^N_{T-t}$ of the process $\xi^N_t$ enjoys similar properties as above. Indeed, if we define the reversed Brownian motions $\hat W^k_t= W^k_T- W^k_{T-t},\, 0\leq t\leq T,\, k\in \Z^2_0$ and consider the system
  \begin{equation*}
  \left\{ \aligned
  \d \hat X^{i,N}_t &= -\frac1{\sqrt N} \sum_{j=1,j\neq i}^N \xi_j K_\eps \big(\hat X^{i,N}_t- \hat X^{j,N}_t\big)\,\d t + \sum_{|k|\leq N} \theta_k\, \sigma_k\big(\hat X^{i,N}_t \big) \circ \d \hat W^k_t,\\
  \hat X^{i,N}_0&= X^{i,N}_T, \quad 1\leq i\leq N,
  \endaligned \right.
  \end{equation*}
  with $\xi_j, \theta_k$ as in \eqref{stoch-point-vortices},
then we have, $\P$-a.s., $\hat X^{i,N}_t= X^{i,N}_{T-t},\, 0\leq t\leq T,\, 1\leq i\leq N$ and
  $$\hat\xi^N_t = \frac1{\sqrt N} \sum_{i=1}^N \xi_i \hat X^{i,N}_t;$$
moreover, for any $\phi\in C^\infty(\T^2)$, repeating the proof of Proposition \ref{2-prop-1} yields the equation:
  $$  \aligned
  \big\<\hat\xi^N_t,\phi \big\>=&\ \big\<\hat \xi^N_0,\phi \big\> - \int_0^t \big\<\hat \xi^N_s\otimes \hat \xi^N_s,H_\eps^\phi \big\>\,\d s + c_N \int_0^t \big\<\hat \xi^N_s,\Delta \phi \big\>\,\d s\\
  &\, + \sum_{|k|\leq N} \theta_k\int_0^t \big\<\hat \xi^N_s,\sigma_k\cdot \nabla\phi \big\>\,\d \hat W^k_s.
  \endaligned $$
\end{remark}

\subsection{Definition of the nonlinear term for a white noise}\label{subsec-definition}

As mentioned above, the random point vortices \eqref{random-point-vortices} converges weakly to the white noise on $\T^2$. Before proceeding further, we have to give a definition of the nonlinear term in \eqref{2-prop-1.1} when $\xi^N_s$ is replaced by a white noise $\omega_{WN}$ on $\T^2$. Indeed, due to the singularity of the kernel $K_\eps $ ($\eps\in (0,1)$, see \eqref{K-eps}), the function $H_\eps^\phi$ is more singular than $H_\phi= H_1^\phi$ considered in the papers \cite{Flandoli18, FlaLuo-1}. Fortunately, $H_\eps^\phi(x,y)$ is still square integrable on $\T^2\times \T^2$ with respect to the Lebesgue measure; hence, for a white noise $\omega_{WN}$ on $\T^2$, one can follow the ideas in \cite[Section 2.4]{Flandoli18} to define the nonlinear term $\big\<\omega_{WN}\otimes \omega_{WN}, H_\eps^\phi\big\>$, see e.g. \cite[Section 2.2]{FS}. Here we recall the main steps.

Let $\omega_{WN}$ be a white noise on $\T^2$, defined on some probability space $(\Omega, \mathcal F, \P)$. By definition, $\omega_{WN}$ is a centered Gaussian random variable taking values in the space of distributions $C^\infty(\T^2)'$ such that for any $\phi,\psi\in C^\infty(\T^2)$, one has
  $$\E[\<\omega_{WN}, \phi\>\<\omega_{WN}, \psi\>] = \<\phi, \psi\>.$$
Here, $\<\cdot, \cdot\>$ denotes the duality between distributions and smooth functions, or the inner product in $L^2(\T^2)$ when both objects are functions. Using the Fourier basis on $\T^2$, it is not difficult to show that the law $\mu$ of $\omega$ is supported by $H^{-1-}(\T^2)$. The results below are proved in \cite[Corollary 6]{Flandoli18}; we omit the proofs here.

\begin{lemma}\label{2-lem-2}
Let $\omega_{WN}: \Omega \to H^{-1-}(\T^2)$ be a white noise and $f\in H^{2+}(\T^2\times \T^2)$.
\begin{itemize}
\item[\rm(i)] For every $p\geq 1$ there is constant $C_p>0$ such that
  $$\E[|\<\omega_{WN}\otimes \omega_{WN}, f\>|^p] \leq C_p \|f\|_{L^\infty(\T^2)}^p.$$

\item[\rm(ii)] We have $\E \<\omega_{WN}\otimes \omega_{WN}, f\> = \int_{\T^2} f(x,x)\,\d x$.

\item[\rm(iii)] If $f$ is symmetric, then
  $$\E\big[ |\<\omega_{WN}\otimes \omega_{WN}, f\> - \E\<\omega_{WN}\otimes \omega_{WN}, f\>|^2\big]= 2 \int_{\T^2\times \T^2} f(x,y)^2\,\d x\d y.$$
\end{itemize}
\end{lemma}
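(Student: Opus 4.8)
The plan is to give a rigorous meaning to $\<\omega_{WN}\otimes\omega_{WN},f\>$ through the second-order Wiener chaos generated by the white noise, and then to read off all three assertions from that decomposition. Fix a real orthonormal basis $\{\phi_n\}_{n\ge 1}$ of $L^2(\T^2)$ and set $g_n=\<\omega_{WN},\phi_n\>$; by the definition of white noise the $g_n$ are i.i.d. standard Gaussians. Writing $f_{mn}=\<f,\phi_m\otimes\phi_n\>$, so that $\sum_{m,n}f_{mn}^2=\|f\|_{L^2(\T^2\times\T^2)}^2<\infty$, I would define
\[
\<\omega_{WN}\otimes\omega_{WN},f\>:=\int_{\T^2}f(x,x)\,\d x+\sum_{m,n}f_{mn}\big(g_mg_n-\delta_{mn}\big),
\]
where the first term makes sense because $f\in H^{2+}(\T^2\times\T^2)\hookrightarrow C(\T^2\times\T^2)$ (Sobolev embedding on the $4$-dimensional manifold $\T^2\times\T^2$, whose threshold is precisely $s=2$, which is why the regularity ``$2+$'' is imposed), so the diagonal restriction $x\mapsto f(x,x)$ is continuous, while the series is the $L^2(\Omega)$-limit of its partial sums and lives in the homogeneous chaos $\mathcal H_2$. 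Equivalently, and perhaps more transparently, one obtains the same object as the $L^2(\Omega)$-limit of $\<\omega^\eps\otimes\omega^\eps,f\>=\int\!\int\omega^\eps(x)\omega^\eps(y)f(x,y)\,\d x\,\d y$ for the mollifications $\omega^\eps=\rho_\eps*\omega_{WN}$, which are genuine smooth functions.

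For (ii) I would note that every partial sum of the $\mathcal H_2$ series is centered, since $\E[g_mg_n-\delta_{mn}]=0$; hence the mean of the whole expression is the constant term $\int_{\T^2}f(x,x)\,\d x$. In the mollification viewpoint this amounts to showing $\int\!\int(\rho_\eps*\tilde\rho_\eps)(x-y)f(x,y)\,\d x\,\d y\to\int_{\T^2}f(x,x)\,\d x$: the covariance kernel $\rho_\eps*\tilde\rho_\eps$ is an approximate identity, so the claim follows from the uniform continuity of $f$ granted by the embedding above.

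Assertion (iii) is the decisive computation, and it simultaneously justifies the $L^2$-convergence underlying the definition. For symmetric $f$ one has $f_{mn}=f_{nm}$, and by the Isserlis--Wick formula $\E[g_mg_ng_pg_q]=\delta_{mn}\delta_{pq}+\delta_{mp}\delta_{nq}+\delta_{mq}\delta_{np}$, whence $\E[(g_mg_n-\delta_{mn})(g_pg_q-\delta_{pq})]=\delta_{mp}\delta_{nq}+\delta_{mq}\delta_{np}$ and therefore
\[
\E\Big[\Big(\textstyle\sum_{m,n}f_{mn}(g_mg_n-\delta_{mn})\Big)^2\Big]=\sum_{m,n}f_{mn}(f_{mn}+f_{nm})=2\sum_{m,n}f_{mn}^2=2\!\int_{\T^2\times\T^2}\!f(x,y)^2\,\d x\,\d y,
\]
using Parseval in the last step. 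Applying the same identity to tail partial sums shows they are Cauchy in $L^2(\Omega)$, so the series converges and the definition is legitimate; (iii) is then exactly its variance.

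Finally, for (i) I would invoke hypercontractivity. The random variable $\<\omega_{WN}\otimes\omega_{WN},f\>$ lies in $\mathcal H_0\oplus\mathcal H_2$, a sum of Wiener chaoses of order at most $2$, on which all $L^p$ norms with $p\ge2$ are equivalent to the $L^2$ norm with constants $C_p$ depending only on $p$ (for $1\le p\le2$ the bound is immediate from Jensen). Since $\T^2\times\T^2$ has total mass one, $\|f\|_{L^2}\le\|f\|_{L^\infty}$ and $\big|\int_{\T^2}f(x,x)\,\d x\big|\le\|f\|_{L^\infty}$, so the $L^2$ norm, and hence every $L^p$ norm, is bounded by a constant multiple of $\|f\|_{L^\infty}$, which is (i). The only genuinely delicate point in the whole argument is the first one, namely making sense of the nonlinear pairing for a distribution-valued $\omega_{WN}$ and identifying its mean with the diagonal integral $\int_{\T^2}f(x,x)\,\d x$; this is precisely where the regularity $f\in H^{2+}$ enters, through the embedding into continuous functions that furnishes a meaningful diagonal restriction and controls $\|f\|_{L^\infty}$. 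Once this is settled, the verifications of (ii), (iii) and (i) are routine chaos computations.
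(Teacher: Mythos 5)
The paper does not actually prove Lemma \ref{2-lem-2}: it quotes the statement from \cite[Corollary 6]{Flandoli18} and explicitly omits the proof, so your argument must be measured against that reference. In essence you follow the same standard route used there: expand the quadratic functional in an orthonormal basis of $L^2(\T^2)$, compute means and covariances with the Isserlis--Wick formula, and control higher moments by equivalence of $L^p$-norms on Wiener chaoses of order at most two. Your computations are correct: the covariance identity $\E[(g_mg_n-\delta_{mn})(g_pg_q-\delta_{pq})]=\delta_{mp}\delta_{nq}+\delta_{mq}\delta_{np}$, the factor $2$ in (iii) coming from the symmetry $f_{mn}=f_{nm}$ together with Parseval, the hypercontractive bound for $p\geq 2$ with Jensen for $1\leq p\leq 2$, and the reductions $\|f\|_{L^2}\leq\|f\|_{L^\infty}$ and $\big|\int_{\T^2}f(x,x)\,\d x\big|\leq \|f\|_{L^\infty}$ on the unit-mass product space.

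The one step you should tighten is the very first one, and it matters because it is where the content of (ii) actually lies. In the paper (and in \cite{Flandoli18}), $\<\omega_{WN}\otimes\omega_{WN}, f\>$ is the canonical pairing of the random distribution $\omega_{WN}\otimes\omega_{WN}\in H^{-2-}(\T^2\times\T^2)$ with the test function $f\in H^{2+}(\T^2\times\T^2)$. You instead \emph{define} the bracket by its chaos decomposition $\int_{\T^2}f(x,x)\,\d x+\sum_{m,n}f_{mn}(g_mg_n-\delta_{mn})$, so in your formulation (ii) holds essentially by construction; the substantive assertion — that the mean of the \emph{canonical} pairing is the diagonal integral — is thereby shifted into the identification of your object with the canonical one, which you only sketch via mollification. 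This identification is routine but should be carried out. For instance, in the Fourier basis the hypothesis $f\in H^{2+}$ gives $\sum_{k,l}|\hat f(k,l)|<\infty$ (Cauchy--Schwarz against the summable weight $(1+|k|^2+|l|^2)^{-2-\delta}$ on $\Z^2\times\Z^2$), so the duality pairing is an absolutely convergent double Fourier series a.s., which may be re-summed into a deterministic diagonal part plus a second-chaos part, and the same absolute summability yields the trace identity $\sum_k\hat f(k,-k)=\int_{\T^2}f(x,x)\,\d x$ that your basis-free formulation takes for granted. Alternatively, your mollification route closes the gap once both halves are stated: $\omega^\eps\otimes\omega^\eps\to\omega_{WN}\otimes\omega_{WN}$ a.s.\ in $H^{-2-}$ identifies the a.s.\ limit of $\<\omega^\eps\otimes\omega^\eps,f\>$ with the canonical pairing, while your $L^2(\Omega)$ computation identifies the same limit with the chaos expression. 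With that identification spelled out, all three verifications stand as written.
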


Based on these facts we can give a definition of $\big\<\omega_{WN}\otimes \omega_{WN}, H_\eps^\phi \big\>$ when $\omega_{WN}$ is a white noise on $\T^2$.

\begin{proposition}\label{2-prop-2}
Let $\omega_{WN}: \Omega \to H^{-1-}(\T^2)$ be a white noise. Assume that $H_n\in H^{2+}(\T^2\times \T^2)$ are symmetric and approximate $H_\eps^\phi$ in the following sense:
  $$\lim_{n\to\infty} \int_{\T^2\times \T^2} \big(H_n-H_\eps^\phi \big)^2(x,y)\,\d x\d y =0, \quad \lim_{n\to\infty} \int_{\T^2} H_n(x,x)\,\d x =0.$$
Then the sequence of random variables $\<\omega_{WN}\otimes \omega_{WN}, H_n\>$ is a Cauchy sequence in mean square. We denote its limit by $\big\<\omega_{WN}\otimes \omega_{WN}, H_\eps^\phi \big\>$.

Moreover, the limit is the same if $H_n$ is replaced by $\tilde H_n$ with the same properties and such that $\lim_{n\to\infty} \int_{\T^2\times \T^2} (H_n- \tilde H_n)^2(x,y)\,\d x\d y =0$.
\end{proposition}

\begin{proof}
The proofs are the same as those of \cite[Theorem 8]{Flandoli18}; we recall them here for completeness. Since $\lim_{n\to\infty} \int_{\T^2} H_n(x,x)\,\d x =0$, it is equivalent to show that $\<\omega_{WN}\otimes \omega_{WN}, H_n\>- \int_{\T^2} H_n(x,x)\,\d x$ is a Cauchy sequence in mean square. We have
  $$\aligned &\ \E\bigg[\Big|\<\omega_{WN}\otimes \omega_{WN}, H_n\>- \int_{\T^2} H_n(x,x)\,\d x- \<\omega_{WN}\otimes \omega_{WN}, H_m\>+ \int_{\T^2} H_m(x,x)\,\d x\Big|^2 \bigg]\\
  =&\ \E\bigg[\Big|\<\omega_{WN}\otimes \omega_{WN}, H_n-H_m\>- \int_{\T^2} (H_n-H_m)(x,x)\,\d x \Big|^2 \bigg]\\
  =&\ 2 \int_{\T^2\times \T^2} (H_n-H_m)^2(x,y)\,\d x\d y,
  \endaligned$$
where the last equality follows from (ii) and (iii) of Lemma \ref{2-lem-2}. This implies the Cauchy property, and thus $\big\<\omega_{WN}\otimes \omega_{WN}, H_\eps^\phi \big\>$ is well defined. The invariance property is proved in the same way.
\end{proof}

Here is an example of the approximating functions $H_n$. Let $\chi: \T^2=[-1/2, 1/2]^2 \to [0,1]$ be a smooth and symmetric function with support in a small ball $B(0,r)$, $r<1$, and equal to 1 in $B(0,r/2)$. For any $n\geq 1$, set $\chi_n(x) = \chi(nx),\, x\in \T^2$. Define
  $$H_n(x,y) = \begin{cases}
  H_\eps^\phi(x,y)(1-\chi_n(x-y)), & x\neq y;\\
  0, & x=y.
  \end{cases}$$
Since $H_n(x,x) \equiv 0$, we have the following estimate (cf. Lemma \ref{2-lem-2}(iii)):
  \begin{equation}\label{eq-1}
  \E \Big[\big(\<\omega_{WN} \otimes \omega_{WN}, H_n\>- \big\<\omega_{WN} \otimes \omega_{WN}, H_\eps^\phi \big\> \big)^2 \Big] \leq 2 \int_{\T^2\times \T^2} \big(H_n-H_\eps^\phi \big)^2(x,y)\,\d x\d y.
  \end{equation}
We remark that the random variables $\<\omega_{WN} \otimes \omega_{WN}, H_n\>$ belong to the second Wiener chaos defined in terms of $\omega_{WN}$; thus, as an $L^2(\Omega, \P)$-limit, $\big\<\omega_{WN} \otimes \omega_{WN}, H_\eps^\phi \big\>$ also belongs to the second Wiener chaos. As a result, for any $p\geq 2$,
  \begin{equation}\label{moment-estimate}
  \E \Big[ \big| \big\<\omega_{WN} \otimes \omega_{WN}, H_\eps^\phi \big\> \big|^p \Big] \leq C_p \big\| H_\eps^\phi \big\|_{L^2(\T^2\times \T^2)}^p.
  \end{equation}

The above results hold as well for a stochastic process of white noises, see \cite[Theorem 10]{Flandoli18} for the proof.

\begin{corollary}
Let $\omega:\Omega \to C\big([0,T], H^{-1-}\big)$ be such that $\omega_t$ is a white noise on $\T^2$ for all $t\in [0,T]$. For any $\phi\in C^\infty(\T^2)$, let $H_n$ be an approximation of $H_\eps^\phi$ as in Proposition \ref{2-prop-2}; then $\<\omega_\cdot \otimes \omega_\cdot, H_n \>$ is a Cauchy sequence in $L^2\big(\Omega, L^2(0,T)\big)$ and we denote the limit by $\big\<\omega_\cdot \otimes \omega_\cdot, H_\eps^\phi \big\>$.
\end{corollary}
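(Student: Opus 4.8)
The plan is to lift the fixed-time convergence already established in Proposition \ref{2-prop-2} to the space-time space $L^2\big(\Omega, L^2(0,T)\big)$, by integrating the fixed-time second-moment estimate over $t\in[0,T]$ and exploiting the fact that this estimate is uniform in $t$. The key simplification is that the $L^2\big(\Omega,L^2(0,T)\big)$-norm of the difference involves only fixed-time second moments, so no cross-time correlation structure of $\omega$ enters.

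First I would verify joint measurability of $(t,\omega)\mapsto \<\omega_t\otimes\omega_t, H_n\>$, so that Fubini's theorem may be applied below. Since $H_n\in H^{2+}(\T^2\times\T^2)$, the map $\zeta\mapsto \<\zeta\otimes\zeta, H_n\>$ is a genuine continuous functional on $H^{-1-}$; as $\omega$ has trajectories in $C\big([0,T], H^{-1-}\big)$, the map $t\mapsto \<\omega_t\otimes\omega_t, H_n\>$ is continuous, and joint measurability in $(t,\omega)$ follows.

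Next, for fixed $n,m$ and each $t\in[0,T]$ I would use that $\omega_t$ is a white noise together with the symmetry of $H_n-H_m$. By Lemma \ref{2-lem-2}(ii)--(iii), exactly as in the proof of Proposition \ref{2-prop-2}, and since $\E\big[|Z|^2\big]=\mathrm{Var}(Z)+(\E Z)^2$,
$$\E\Big[\big|\<\omega_t\otimes\omega_t, H_n-H_m\>\big|^2\Big]=2\int_{\T^2\times\T^2}(H_n-H_m)^2(x,y)\,\d x\d y+\Big(\int_{\T^2}(H_n-H_m)(x,x)\,\d x\Big)^2.$$
Crucially, the right-hand side does not depend on $t$, because $\omega_t$ has the fixed law $\mu$ for every $t$.

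Finally, integrating over $t$ and invoking Fubini,
$$\big\|\<\omega_\cdot\otimes\omega_\cdot, H_n\>-\<\omega_\cdot\otimes\omega_\cdot, H_m\>\big\|_{L^2(\Omega,L^2(0,T))}^2=\int_0^T\E\Big[\big|\<\omega_t\otimes\omega_t, H_n-H_m\>\big|^2\Big]\,\d t,$$
which equals $T$ times the displayed $t$-independent quantity. As $n,m\to\infty$, both $\int_{\T^2\times\T^2}(H_n-H_m)^2$ and $\int_{\T^2}(H_n-H_m)(x,x)\,\d x$ vanish, since each of $H_n,H_m$ approximates $H_\eps^\phi$ in the sense of Proposition \ref{2-prop-2}; hence the sequence is Cauchy in $L^2\big(\Omega, L^2(0,T)\big)$ and its limit $\<\omega_\cdot\otimes\omega_\cdot, H_\eps^\phi\>$ is well defined. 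I expect no genuine obstacle here: the only points requiring care are the joint measurability needed for Fubini and the uniformity in $t$ of the estimate, the latter being exactly where the hypothesis that $\omega_t$ is a white noise for \emph{every} $t$ (so that Lemma \ref{2-lem-2} applies with a $t$-independent constant) is used.
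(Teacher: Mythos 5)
Your proposal is correct and takes essentially the same route as the paper: the paper itself gives no proof of this corollary, deferring to \cite[Theorem 10]{Flandoli18}, and the argument there is exactly your lifting of the fixed-time second-moment identity from Lemma \ref{2-lem-2}(ii)--(iii) to $L^2\big(\Omega, L^2(0,T)\big)$ via Fubini, using that the estimate is uniform in $t$ because each $\omega_t$ has the fixed law $\mu$. The measurability and Cauchy steps are handled properly, so there is no gap.
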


\subsection{White noise solutions to \eqref{stoch-mSQG}} \label{subsec-limit}

Recall the measure-valued process $\xi^N_t$ defined in Proposition \ref{2-prop-1}. In this part we prove that the sequence of processes $\big\{\xi^N_t \big\}_{N\geq 1}$ converges weakly to a limit process $\xi_t$, which will be a white noise solution to the equation \eqref{stoch-mSQG}. The arguments are by now classical; we will proceed by following the ideas in \cite[Section 4.2]{Flandoli18} or \cite[Section 3]{FlaLuo-1}.

Let $Q^N$ be the law of $\xi^N_\cdot$; we want to prove that the family of laws $\big\{Q^N \big\}_{N\geq 1}$ is tight on $C\big([0,T], H^{-1-} \big)$. It is sufficient to show that, for any fixed $\delta>0$, $\big\{Q^N \big\}_{N\geq 1}$ is tight on $C\big([0,T], H^{-1-\delta} \big)$. To this end, we need the compactness result of Simon \cite[p. 90, Corollary 9]{Simon}, which involves the fractional Sobolev space. Given $\alpha\in (0,1)$, $p>1$ and a normed linear space $(Y,\|\cdot \|_Y)$, the space $W^{\alpha,p}(0,T; Y)$ is defined as those functions $f\in L^p(0,T; Y)$ such that
  $$\int_0^T\!\! \int_0^T \frac{\|f(t)- f(s) \|_Y^p}{|t-s|^{1+\alpha p}} \,\d t\d s <+\infty. $$
Fix $\delta>0$ small enough and $\kappa>5$ (this choice is due to the computations in Corollary \ref{cor-tightness} below); we have the compact inclusions
  $$H^{-1-\delta/2} \subset H^{-1-\delta} \subset H^{-\kappa}. $$
It is well known that there exists a constant $C>0$ such that
  $$\|\xi \|_{H^{-1-\delta}}\leq C\|\xi \|_{H^{-1-\delta/2}}^{1-\theta} \|\xi \|_{H^{-\kappa}}^\theta,\quad \xi\in H^{-1-\delta/2},$$
where $\theta= \delta/(2\kappa-2 -\delta) \in (0,1)$. In the sequel, for a fixed $p\in (1,1/(1-\eps))$ and $p\leq 2$, we shall choose $\alpha\in (0,1/2)$ such that $\alpha> 1/(2p)$. Obviously, such parameter $\alpha$ exists. Now by \cite[p. 90, Corollary 9]{Simon}, we have

\begin{lemma}\label{lem-simon}
For $q$ sufficiently big, the space
  $$L^q\big(0,T; H^{-1-\delta/2}\big) \cap W^{\alpha,2p}\big(0,T; H^{-\kappa}\big) $$
is compactly embedded into $C\big([0,T], H^{-1-\delta} \big)$.
\end{lemma}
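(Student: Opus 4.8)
The plan is to verify directly the hypotheses of the abstract compactness criterion of Simon and then read off the conclusion. First I would fix the triple of spaces $X = H^{-1-\delta/2}$, $B = H^{-1-\delta}$ and $Y = H^{-\kappa}$ on $\T^2$. Since the torus is a compact manifold and $-1-\delta/2 > -1-\delta > -\kappa$, the embedding $X \hookrightarrow B$ is compact (Rellich--Kondrachov on $\T^2$) and $B \hookrightarrow Y$ is continuous; these are exactly the abstract assumptions $X \hookrightarrow\hookrightarrow B \hookrightarrow Y$ required by \cite[Corollary~9]{Simon}. The interpolation inequality already recorded above, $\|\xi\|_{B} \le C\|\xi\|_X^{1-\theta}\|\xi\|_Y^\theta$ with $\theta = \delta/(2\kappa-2-\delta)$, is the quantitative form of this scale of embeddings and is precisely the device the abstract theorem uses to transfer compactness and continuity from the endpoint spaces $X$ and $Y$ to the intermediate space $B$.

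Second, I would check the time-regularity hypothesis, where the crucial point is the choice $\alpha > 1/(2p)$, i.e. $\alpha\cdot 2p > 1$. This guarantees the fractional Sobolev embedding $W^{\alpha,2p}(0,T;Y) \hookrightarrow C^{0,\alpha-1/(2p)}([0,T];Y)$, so that every element of the intersection space is, after modification on a null set, continuous in time with values in $Y$, with a quantitative modulus $\|f(t)-f(s)\|_Y \le C|t-s|^{\alpha-1/(2p)}$. This is the ingredient that upgrades mere membership in $L^q(0,T;X)$ to continuity in $C([0,T];B)$: combining the $Y$-modulus with the $L^q$-control in $X$ through the interpolation inequality produces equicontinuity in $B$, while the compactness $X\hookrightarrow\hookrightarrow B$ supplies relative compactness of the time-slices in $B$.

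Third, I would simply invoke \cite[p.~90, Corollary~9]{Simon} with exponent $2p$ in the time-regularity factor and spatial integrability exponent $q$: under $X\hookrightarrow\hookrightarrow B\hookrightarrow Y$ and $\alpha\cdot 2p > 1$, that corollary asserts that a set bounded in $L^q(0,T;X)\cap W^{\alpha,2p}(0,T;Y)$ is relatively compact in $C([0,T];B)$, provided $q$ is taken sufficiently large so that the boundedness in $L^q(0,T;X)$ can be combined with the Hölder-in-time bound in $Y$ to close the interpolation estimate. Applying this to the unit ball of the intersection space is exactly the stated compact embedding.

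The main obstacle will be the bookkeeping of exponents needed to land precisely in the range where Simon's corollary applies: in particular, confirming that ``$q$ sufficiently big'' can be chosen compatibly with the already-fixed values of $\alpha$, $p$, $\delta$ and $\kappa$, and that the interpolation weight $1-\theta$ together with the time-H\"older exponent $\alpha - 1/(2p)$ yields a genuinely positive modulus of continuity in $B$. This is routine once the abstract theorem is quoted, so the real work is matching the paper's parameters to the hypotheses of the cited result rather than re-establishing the compactness criterion itself.
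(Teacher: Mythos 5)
Your proposal is correct and follows essentially the same route as the paper: the paper proves this lemma purely by setting up the compact scale $H^{-1-\delta/2}\subset H^{-1-\delta}\subset H^{-\kappa}$ together with the interpolation inequality with exponent $\theta=\delta/(2\kappa-2-\delta)$, and then directly invoking \cite[p.~90, Corollary~9]{Simon}, with $\alpha>1/(2p)$ ensuring the time-regularity condition and ``$q$ sufficiently big'' absorbing the $(1-\theta)/q$ term in Simon's hypothesis. Your parameter identification and the role you assign to each hypothesis match the paper's argument.
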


Recall that $Q^N$ is the law of the stationary process $\big\{\xi^N_t \big\}_{t\in [0,T]}$ of random point vortices obtained in Proposition \ref{2-prop-1}. As a consequence of Lemma \ref{lem-simon}, it is easy to prove the next result.

\begin{proposition}\label{2-prop-3}
If for any $q>1$, there exists a $C_q>0$ such that for all $N\geq 1$, it holds
  \begin{equation}\label{2-prop-3.1}
  \E \int_0^T \big\|\xi^N_t \big\|_{H^{-1-\delta/2}}^q \,\d t + \E \int_0^T\!\! \int_0^T \frac{\|\xi^N_t- \xi^N_s \|_{H^{-\kappa}}^{2p}}{|t-s|^{1+2\alpha p}}\,\d t\d s <C_q,
  \end{equation}
then the family $\big\{Q^N \big\}_{N\geq 1}$ is tight on $C\big([0,T], H^{-1-\delta} \big)$.
\end{proposition}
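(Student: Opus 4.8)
The plan is to derive tightness directly from the compact embedding in Lemma \ref{lem-simon} by a routine Chebyshev (Markov) argument applied to the uniform moment bounds \eqref{2-prop-3.1}. The guiding observation is that bounded subsets of the intersection space
  $$E:= L^q\big(0,T; H^{-1-\delta/2}\big) \cap W^{\alpha,2p}\big(0,T; H^{-\kappa}\big)$$
are, by Lemma \ref{lem-simon}, relatively compact in $C\big([0,T], H^{-1-\delta}\big)$. Hence it suffices to show that the laws $Q^N$ charge balls of $E$ with mass arbitrarily close to $1$, uniformly in $N$.

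First I would equip $E$ with the norm $\|f\|_E := \|f\|_{L^q(0,T; H^{-1-\delta/2})} + \|f\|_{W^{\alpha,2p}(0,T; H^{-\kappa})}$ and check that the two summands in \eqref{2-prop-3.1} control the relevant moments of $\|\xi^N \|_E$. The first summand directly bounds $\E\big[\|\xi^N \|_{L^q(0,T; H^{-1-\delta/2})}^q\big]$, while the second bounds the Gagliardo seminorm part of $\|\xi^N \|_{W^{\alpha,2p}(0,T; H^{-\kappa})}$. The only remaining piece is the $L^{2p}\big(0,T; H^{-\kappa}\big)$ part of the $W^{\alpha,2p}$-norm; since $\kappa>5>1+\delta/2$ yields the continuous embedding $H^{-1-\delta/2}\hookrightarrow H^{-\kappa}$, this is dominated by $\E\int_0^T \|\xi^N_t \|_{H^{-1-\delta/2}}^{2p}\,\d t$, which is uniformly bounded in $N$ by \eqref{2-prop-3.1} (applied with exponent $q\geq 2p$, noting $2p\leq 4$). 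Combining these gives a constant $C$, independent of $N$, with
  $$\E\big[\|\xi^N \|_{L^q(0,T; H^{-1-\delta/2})}^q\big] + \E\big[\|\xi^N \|_{W^{\alpha,2p}(0,T; H^{-\kappa})}^{2p}\big] \leq C.$$

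Next I would fix $\eta>0$ and apply Markov's inequality to each summand separately. Writing $B_E(R)=\{f: \|f\|_E\leq R\}$, we obtain, uniformly in $N$,
  $$Q^N\big(\{f: \|f\|_{L^q(0,T; H^{-1-\delta/2})} > R\}\big) \leq C R^{-q}, \qquad Q^N\big(\{f: \|f\|_{W^{\alpha,2p}(0,T; H^{-\kappa})} > R\}\big) \leq C R^{-2p}.$$
Choosing $R=R(\eta)$ large enough that each right-hand side is at most $\eta/2$, the complement of $B_E(2R)$ carries $Q^N$-mass at most $\eta$ for every $N$. Since $\overline{B_E(2R)}$ is compact in $C\big([0,T], H^{-1-\delta}\big)$ by Lemma \ref{lem-simon}, we conclude $Q^N\big(\overline{B_E(2R)}\big)\geq 1-\eta$ for all $N$, which is precisely the tightness of $\big\{Q^N\big\}_{N\geq 1}$ on $C\big([0,T], H^{-1-\delta}\big)$. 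As $\delta>0$ was arbitrary, tightness on $C\big([0,T], H^{-1-}\big)$ follows.

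I do not anticipate any serious obstacle: once the bounds \eqref{2-prop-3.1} are granted (which they are, by hypothesis), this is the standard Aubin--Lions--Simon tightness criterion. The only mild bookkeeping is matching the two given estimates to the full $E$-norm, in particular supplying the $L^{2p}\big(0,T; H^{-\kappa}\big)$ contribution from the $H^{-1-\delta/2}$-bound via the continuous Sobolev embedding; the genuinely substantive work lies instead in verifying \eqref{2-prop-3.1} itself, which is deferred to Corollary \ref{cor-tightness} and rests on the moment estimates of Proposition \ref{2-prop-estimates}.
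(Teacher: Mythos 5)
Your proposal is correct and is exactly the argument the paper has in mind: the paper states this proposition without proof as an ``easy consequence'' of Lemma \ref{lem-simon}, and your Markov-inequality-plus-compact-embedding argument is the standard way to fill it in. Your bookkeeping is also sound — in particular, supplying the missing $L^{2p}\big(0,T;H^{-\kappa}\big)$ part of the $W^{\alpha,2p}$-norm from the first summand of \eqref{2-prop-3.1} (with $q=2p$) via the continuous embedding $H^{-1-\delta/2}\hookrightarrow H^{-\kappa}$, which is valid since $\kappa>5>1+\delta/2$.
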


Thanks to Proposition \ref{2-prop-estimates}(ii), we immediately obtain the boundedness of the first expectation in \eqref{2-prop-3.1}; therefore, it remains to check that the second one is also finite. For this purpose, we first prove the following estimate; recall that $e_k(x)= e^{2\pi{\rm i} k\cdot x},\, x\in \T^2,\, k\in \Z^2_0$.

\begin{lemma}\label{lem-difference}
There is a constant $C>0$ such that for all $k\in \Z^2_0$ and $0\leq s<t\leq T$, one has
  $$\E\Big[ \big|\big\<\xi^N_t- \xi^N_s, e_k\big\> \big|^{2p} \Big]\leq C|k|^{4p} |t-s|^p. $$
\end{lemma}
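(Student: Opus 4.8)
The plan is to use the evolution equation \eqref{2-prop-1.1} with the specific test function $\phi=e_k$, writing the increment $\langle\xi^N_t-\xi^N_s,e_k\rangle$ as a sum of three pieces: the nonlinear drift, the Laplacian drift, and the stochastic integral. After taking the $2p$-th moment and applying the elementary inequality $|a+b+c|^{2p}\le C(|a|^{2p}+|b|^{2p}+|c|^{2p})$, I would bound each piece separately, tracking the dependence on $|k|$ and on $|t-s|$.

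For the nonlinear term $\int_s^t \langle \xi^N_r\otimes\xi^N_r, H_\eps^{e_k}\rangle\,\d r$, I would first apply Jensen (or H\"older) in the time integral to pass the $2p$-th power inside, picking up a factor $|t-s|^{2p-1}$, and then use the stationarity-based moment bound from Proposition \ref{2-prop-estimates}(iii), namely $\E[\langle\xi^N_r\otimes\xi^N_r,H_\eps^{e_k}\rangle^{2p}]\le C\|H_\eps^{e_k}\|_{L^{2p}}^{2p}$. The key is to control $\|H_\eps^{e_k}\|_{L^{2p}}$ in terms of $|k|$: from \eqref{H-eps-phi-estimate} we have $|H_\eps^{e_k}(x,y)|\le C\|\nabla^2 e_k\|_\infty |x-y|^{\eps-1}\le C|k|^2 |x-y|^{\eps-1}$, and since $|x-y|^{\eps-1}\in L^{2p}(\T^2\times\T^2)$ precisely when $p<1/(1-\eps)$, this yields $\|H_\eps^{e_k}\|_{L^{2p}}\le C|k|^2$. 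Raised to the $2p$ and combined with the time factor $|t-s|^{2p-1}$, this contributes $C|k|^{4p}|t-s|^{2p-1}$, which is dominated by $C|k|^{4p}|t-s|^p$ on $[0,T]$ since $2p-1\ge p$ for $p\ge 1$.

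For the Laplacian drift $c_N\int_s^t\langle\xi^N_r,\Delta e_k\rangle\,\d r$, note $\Delta e_k=-4\pi^2|k|^2 e_k$ and $c_N=\frac12\sum_{|l|\le N}\theta_l^2$ is uniformly bounded; applying Jensen in time and then Proposition \ref{2-prop-estimates}(i) with $f=\Delta e_k$ (for which $\|\Delta e_k\|_{L^\infty}=4\pi^2|k|^2$) gives a bound of order $|k|^{4p}|t-s|^{2p}$, again absorbed into $|k|^{4p}|t-s|^p$. For the martingale part $\sum_{|l|\le N}\theta_l\int_s^t\langle\xi^N_r,\sigma_l\cdot\nabla e_k\rangle\,\d W^l_r$, I would apply the Burkholder--Davis--Gundy inequality to pass to the $2p$-th moment of the quadratic variation, then H\"older in time to recover a factor $|t-s|^{p-1}$, and estimate the integrand using Proposition \ref{2-prop-estimates}(i) together with $\|\sigma_l\cdot\nabla e_k\|_{L^\infty}\le C|k|$ and the fact that $\sum_l\theta_l^2$ is summable (uniformly in $N$ under \eqref{theta-symmetry}); this produces $C|k|^{2p}|t-s|^p$, which is again bounded by $C|k|^{4p}|t-s|^p$.

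The main obstacle is the nonlinear term, both because $H_\eps^{e_k}$ is genuinely singular on the diagonal and because the $2p$-th moment bound of Proposition \ref{2-prop-estimates}(iii) is only available for $p<1/(1-\eps)$ — this is exactly the constraint behind the standing assumption $p\in(1,1/(1-\eps))$ and the reason the condition $\eps>0$ is needed. The care required is to ensure that the $L^{2p}$-norm of $H_\eps^{e_k}$ is finite and scales like $|k|^2$; once the integrability threshold is respected, the rest is a matter of collecting the three contributions and checking that each time exponent is at least $p$ so that they all fit under the claimed bound $C|k|^{4p}|t-s|^p$.
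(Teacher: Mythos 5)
Your proposal is correct and follows essentially the same route as the paper's proof: the same decomposition of $\big\<\xi^N_t-\xi^N_s,e_k\big\>$ via \eqref{2-prop-1.1} with $\phi=e_k$, the same use of Proposition \ref{2-prop-estimates}(iii) together with $\big\|H_\eps^{e_k}\big\|_{L^{2p}}\lesssim\|\nabla^2 e_k\|_\infty\lesssim|k|^2$ for the nonlinear term, Proposition \ref{2-prop-estimates}(i) for the Laplacian term, and Burkholder--Davis--Gundy plus a weighted Jensen argument in the $\theta_l^2$-sum for the martingale term, with the same bookkeeping showing the martingale contribution $|k|^{2p}|t-s|^p$ dictates the final exponent. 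The only cosmetic difference is that you record the nonlinear contribution as $|t-s|^{2p-1}$ rather than the paper's $|t-s|^{2p}$, which is harmless since both are absorbed into $|t-s|^p$ on $[0,T]$.
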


\begin{proof}
Replacing $\phi$ by $e_k$ in \eqref{2-prop-1.1}, one easily get
  \begin{equation}\label{lem-difference.1}
  \aligned
  \E\Big[ \big| \big\<\xi^N_t- \xi^N_s, e_k\big\> \big|^{2p} \Big]\leq &\ C_p \E\bigg[ \Big| \int_s^t \big\<\xi^N_r\otimes \xi^N_r,H_\eps^{e_k} \big\>\,\d r \Big|^{2p} \bigg] + C_p c_N^{2p} \E\bigg[ \Big| \int_s^t \big\<\xi^N_r,\Delta e_k \big\>\,\d r \Big|^{2p} \bigg]\\
  &\, + C_p \E\Bigg[ \bigg| \sum_{|l|\leq N} \theta_l \int_s^t \big\<\xi^N_r,\sigma_l\cdot \nabla e_k \big\>\,\d W^l_r \bigg|^{2p} \Bigg],
  \endaligned
  \end{equation}
  where $c_N=\frac12 \sum_{|k|\leq N} \theta_k^2$ and is uniformly bounded in $N$.
We have, by Proposition \ref{2-prop-estimates}(iii),
  $$\aligned
  \E\bigg[ \Big| \int_s^t \big\<\xi^N_r\otimes \xi^N_r,H_\eps^{e_k} \big\>\,\d r \Big|^{2p} \bigg] &\leq |t-s|^{2p-1} \int_s^t \E\Big[\big\<\xi^N_r\otimes \xi^N_r,H_\eps^{e_k} \big\>^{2p} \Big]\,\d r\\
  & \leq C_{1,p}|t-s|^{2p} \big\|H_\eps^{e_k} \big\|_{L^{2p}(\T^2\times \T^2)}^{2p}.
  \endaligned $$
Recalling the estimate \eqref{H-eps-phi-estimate}, we have
  $$\big\|H_\eps^{e_k} \big\|_{L^{2p}(\T^2\times \T^2)} \leq C'_{1,p} \|\nabla^2 {e_k}\|_\infty \leq C''_{1,p} |k|^2. $$
Substituting this result into the inequality above yields
  \begin{equation}\label{lem-difference.2}
  \E\bigg[ \Big| \int_s^t \big\<\xi^N_r\otimes \xi^N_r,H_\eps^{e_k} \big\>\,\d r \Big|^{2p} \bigg] \leq C_{2,p} |k|^{4p} |t-s|^{2p}.
  \end{equation}

Next, as $\Delta e_k= -4\pi^2 |k|^2 e_k$ and $c_N= \frac12 \sum_{|l|\leq N} \theta_l^2 \leq \frac12 \|\theta \|_{\ell^2}^2$,
  \begin{equation}\label{lem-difference.3}
  c_N^{2p} \E\bigg[ \Big| \int_s^t \big\<\xi^N_r,\Delta e_k \big\>\,\d r \Big|^{2p} \bigg] \leq \frac{\|\theta \|_{\ell^2}^{4p}}{2^{2p}} |t-s|^{2p-1} \int_s^t \E \big| \big\<\xi^N_r,\Delta e_k \big\> \big|^{2p} \,\d r \leq C_{3,p}|k|^{4p} |t-s|^{2p},
  \end{equation}
where the second step follows from the estimate (i) in Proposition \ref{2-prop-estimates}. Finally, by the Burkholder-Davis-Gundy inequality and the Cauchy inequality,
  $$\aligned
  \E\Bigg[ \bigg| \sum_{|l|\leq N} \theta_l \int_s^t \big\<\xi^N_r,\sigma_l\cdot \nabla e_k \big\>\,\d W^l_r \bigg|^{2p} \Bigg] &\leq C_{4,p} \E\Bigg[ \bigg( \sum_{|l|\leq N} \theta_l^2 \int_s^t \big| \big\<\xi^N_r,\sigma_l\cdot \nabla e_k \big\> \big|^2\,\d r \bigg)^p \Bigg] \\
  &\leq C_{4,p} |t-s|^{p-1} \int_s^t \E \Bigg[ \bigg(\sum_{|l|\leq N} \theta_l^2 \big| \big\<\xi^N_r,\sigma_l\cdot \nabla e_k \big\> \big|^2 \bigg)^p \Bigg] \,\d r.
  \endaligned $$
Recall that $2c_N= \sum_{|l|\leq N} \theta_l^2$; by Jensen's inequality,
  $$\aligned
  \bigg(\sum_{|l|\leq N} \theta_l^2 \big| \big\<\xi^N_r,\sigma_l\cdot \nabla e_k \big\> \big|^2 \bigg)^p &= (2c_N)^p \bigg(\sum_{|l|\leq N} \frac{\theta_l^2}{2c_N} \big| \big\<\xi^N_r,\sigma_l\cdot \nabla e_k \big\> \big|^2 \bigg)^p \\
  &\leq (2c_N)^{p-1} \sum_{|l|\leq N} \theta_l^2 \big| \big\<\xi^N_r,\sigma_l\cdot \nabla e_k \big\> \big|^{2p} .
  \endaligned $$
Now, using Proposition \ref{2-prop-estimates}(i),
  $$\aligned
  \E \Bigg[ \bigg(\sum_{|l|\leq N} \theta_l^2 \big| \big\<\xi^N_r,\sigma_l\cdot \nabla e_k \big\> \big|^2 \bigg)^p \Bigg] &\leq  (2c_N)^{p-1} \sum_{|l|\leq N} \theta_l^2\, \E \Big( \big| \big\<\xi^N_r,\sigma_l\cdot \nabla e_k \big\> \big|^{2p} \Big) \\
  &\leq (2c_N)^{p-1} \sum_{|l|\leq N} \theta_l^2 C_{5,p} \|\sigma_l\cdot \nabla e_k \|_\infty^{2p} \leq C'_{5p} \|\theta \|_{\ell^2}^{2p} |k|^{2p}.
  \endaligned $$
Summarizing these arguments yields
  $$\E\Bigg[ \bigg| \sum_{|l|\leq N} \theta_l \int_s^t \big\<\xi^N_r,\sigma_l\cdot \nabla e_k \big\>\,\d W^l_r \bigg|^{2p} \Bigg] \leq \tilde C_{5,p} |k|^{2p}|t-s|^p. $$
Combining this estimate with \eqref{lem-difference.1}--\eqref{lem-difference.3}, we obtain the desired result.
\end{proof}

Now we can prove

\begin{corollary}\label{cor-tightness}
The family of laws $\big\{Q^N \big\}_{N\geq 1}$ is tight on $C\big([0,T], H^{-1-}\big)$.
\end{corollary}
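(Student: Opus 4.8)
The plan is to verify the two-part moment bound \eqref{2-prop-3.1} with constants that do not depend on $N$, and then simply quote Proposition \ref{2-prop-3}. That proposition gives tightness of $\{Q^N\}_{N\geq 1}$ on $C\big([0,T], H^{-1-\delta}\big)$ for each fixed $\delta>0$; since $\delta$ is arbitrary and $H^{-1-}=\cap_{s<-1}H^s$, the reduction recorded before Lemma \ref{lem-simon} upgrades this to tightness on $C\big([0,T], H^{-1-}\big)$. The first summand in \eqref{2-prop-3.1} costs nothing: Proposition \ref{2-prop-estimates}(ii), applied with regularity index $\delta/2$ and exponent $q$, gives $\E\|\xi^N_t\|_{H^{-1-\delta/2}}^q\leq C_{q,\delta}$ uniformly in $t\in[0,T]$ and $N$, whence $\E\int_0^T\|\xi^N_t\|_{H^{-1-\delta/2}}^q\,\d t\leq TC_{q,\delta}$.

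The real work is the second summand, and I would handle it by passing to Fourier coefficients. Since the elements of our spaces have zero average, the elementary bound $(1+|k|^2)^{-\kappa}\leq |k|^{-2\kappa}$ yields
$$\big\|\xi^N_t-\xi^N_s\big\|_{H^{-\kappa}}^2\leq \sum_{k\in\Z^2_0}|k|^{-2\kappa}\,\big|\big\<\xi^N_t-\xi^N_s,e_k\big\>\big|^2.$$
To take the $2p$-th moment of the right-hand side I would apply Minkowski's inequality in $L^p(\Omega)$ to the nonnegative summands and then insert Lemma \ref{lem-difference}, obtaining
$$\Big(\E\big\|\xi^N_t-\xi^N_s\big\|_{H^{-\kappa}}^{2p}\Big)^{1/p}\leq \sum_{k\in\Z^2_0}|k|^{-2\kappa}\Big(\E\big|\big\<\xi^N_t-\xi^N_s,e_k\big\>\big|^{2p}\Big)^{1/p}\leq C\,|t-s|\sum_{k\in\Z^2_0}|k|^{4-2\kappa}.$$
The lattice sum $\sum_{k}|k|^{4-2\kappa}$ converges (it requires $\kappa>3$, and $\kappa>5$ is comfortably enough), so $\E\|\xi^N_t-\xi^N_s\|_{H^{-\kappa}}^{2p}\leq C|t-s|^p$ with $C$ independent of $N$. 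Dividing by $|t-s|^{1+2\alpha p}$ and integrating gives
$$\E\int_0^T\!\!\int_0^T \frac{\|\xi^N_t-\xi^N_s\|_{H^{-\kappa}}^{2p}}{|t-s|^{1+2\alpha p}}\,\d t\,\d s\leq C\int_0^T\!\!\int_0^T |t-s|^{\,p-1-2\alpha p}\,\d t\,\d s,$$
which is finite precisely because $\alpha<1/2$ forces the exponent $p(1-2\alpha)-1>-1$. This establishes \eqref{2-prop-3.1} uniformly in $N$, and the corollary follows.

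The computation is essentially the assembly of pieces already in place, so I do not expect a deep obstacle; the only point needing care is the exchange of the $2p$-th moment with the infinite Fourier sum, which I would justify by Minkowski's inequality applied to the nonnegative terms. This exchange is exactly the reason Lemma \ref{lem-difference} was stated with the polynomial factor $|k|^{4p}$: that factor must be absorbed by the decay $|k|^{-2\kappa}$, which is what dictates the loss of regularity (large $\kappa$). The genuinely delicate bookkeeping is the compatibility of the constraints on $\alpha$, namely $\alpha<1/2$ for integrability in time here and $\alpha>1/(2p)$ for the compactness in Lemma \ref{lem-simon}; these are jointly satisfiable once $p<1/(1-\eps)$ (which uses $\eps>0$), and verifying this consistency is all that the proof really turns on.
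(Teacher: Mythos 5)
Your proposal is correct and follows essentially the same route as the paper: both reduce the corollary to the uniform moment bound \eqref{2-prop-3.1} via Proposition \ref{2-prop-3}, dispose of the first term by Proposition \ref{2-prop-estimates}(ii), and control the increments by expanding the $H^{-\kappa}$ norm in Fourier modes and invoking Lemma \ref{lem-difference}, concluding with the same observation that $\alpha<1/2$ gives time integrability. The single deviation is cosmetic: where you interchange the $2p$-th moment with the lattice sum via Minkowski's inequality in $L^p(\Omega)$ (which needs only $\kappa>3$), the paper uses H\"older's (Jensen's) inequality with the weight $|k|^{-2\kappa}$ (which needs $\kappa>2p+1\leq 5$); both yield the identical bound $\E\big\|\xi^N_t-\xi^N_s\big\|_{H^{-\kappa}}^{2p}\lesssim |t-s|^p$ uniformly in $N$.
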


\begin{proof}
As mentioned above, it is sufficient to check that the second expectation in \eqref{2-prop-3.1} is bounded in $N\geq 1$. Indeed, by the H\"older inequality and Lemma \ref{lem-difference},
  $$\aligned
  \E\Big( \big\|\xi^N_t- \xi^N_s \big\|_{H^{-\kappa}}^{2p} \Big) &= \E\Bigg[ \bigg(\sum_{k\in \Z^2_0} \frac1{|k|^{2\kappa}}\big| \big\<\xi^N_t- \xi^N_s, e_k\big\> \big|^2 \bigg)^p \Bigg] \\
  & \leq \bigg(\sum_{k\in \Z^2_0} \frac1{|k|^{2\kappa}} \bigg)^{p-1} \sum_{k\in \Z^2_0} \frac1{|k|^{2\kappa}} \E\Big[ \big| \big\<\xi^N_t- \xi^N_s, e_k\big\> \big|^{2p} \Big] \\
  &\leq C_{\kappa,p} \sum_{k\in \Z^2_0} \frac1{|k|^{2\kappa}} C|k|^{4p} |t-s|^p \leq C'_{\kappa,p} |t-s|^p
  \endaligned $$
since $p\leq 2$ and $\kappa>5$. From this estimate we obtain
  $$\E \int_0^T\!\! \int_0^T \frac{\|\xi^N_t- \xi^N_s \|_{H^{-\kappa}}^{2p}}{|t-s|^{1+2\alpha p}}\,\d t\d s \leq  \int_0^T\!\! \int_0^T \frac{C |t-s|^{p}}{|t-s|^{1+2\alpha p}}\,\d t\d s <\infty,$$
where the last step is due to $\alpha<1/2$.
\end{proof}

At this step, we can use the Prohorov theorem (see \cite[p. 59, Theorem 5.1]{Billingsley}) to conclude that there is a subsequence  $Q^{N_i}$ converging weakly to some probability $Q$ on $C\big([0,T], H^{-1-}\big)$. Next, the Skorohod representation theorem (see \cite[p. 70, Theorem 6.7]{Billingsley}) implies that there exist a sequence of stochastic processes $\big\{\tilde \xi^i_\cdot \big\}_{i\geq 1}$ and a limit process $\tilde\xi_\cdot$ defined on a new probability space $\big(\tilde \Omega, \tilde{\mathcal F}, \tilde\P \big)$, such that
\begin{itemize}
\item $\tilde\P$-a.s., $\tilde \xi^i_\cdot$ converges in the topology of $C\big([0,T], H^{-1-}\big)$ to $\tilde\xi_\cdot\, $;
\item $\tilde\xi_\cdot$ has the law $Q$ and $\tilde \xi^i_\cdot$ has the law $Q^{N_i}$ for all $i\geq 1$.
\end{itemize}
With these preparations, it is easy to show that the limit process $\tilde \xi_\cdot$ is stationary and, for any $t\in [0,T]$, $\tilde \xi_t$ is a white noise on $\T^2$; moreover, by  classical arguments, one can prove that it solves the equation \eqref{stoch-mSQG} in the following sense: for any $\phi\in C^\infty(\T^2)$, $\tilde\P$-a.s. for all $t\in [0,T]$, it holds
  \begin{equation}\label{eq-limit}
  \aligned
  \big\<\tilde \xi_t,\phi \big\> =&\ \big\<\tilde \xi_0,\phi \big\> + \int_0^t \big\<\tilde\xi_s\otimes \tilde\xi_s, H^\phi_\eps \big\>\,\d s + \frac12 \|\theta \|_{\ell^2}^2 \int_0^t \big\<\tilde \xi_s,\Delta \phi \big\> \,\d s \\
  &\, + \sum_{k\in \Z^2_0} \theta_k\int_0^t \big\<\tilde\xi^N_s,\sigma_k\cdot \nabla\phi \big\>\,\d \tilde W^k_s,
  \endaligned
  \end{equation}
where $\big\{\tilde W^k_\cdot \big\}_{k\in \Z^2_0}$ is a family of complex Brownian motions defined on $\big(\tilde \Omega, \tilde{\mathcal F}, \tilde\P \big)$. We omit the proofs here, see pp. 800--807 of \cite{FlaLuo-1} for details.

Finally, thanks to Remark \ref{rem-time-reversal}, we give the following

\begin{remark}\label{rem-time-reversal-1}
Given $T>0$; if we consider the approximating sequence $\big\{\xi^N_t \big\}_{N\geq 1}$ together with the reversed processes $\big\{\hat\xi^N_t= \xi^N_{T-t} \big\}_{N\geq 1}$, then the above arguments work as well and yield that the time-reversal $\hat\xi_t=\tilde \xi_{T-t}$ of the limit process $\tilde \xi_t$ solves the equation below:
  $$\aligned  \big\<\hat \xi_t,\phi \big\> =&\ \big\<\hat \xi_0,\phi \big\> - \int_0^t \big\<\hat\xi_s\otimes \hat\xi_s, H^\phi_\eps \big\>\,\d s + \frac12 \|\theta \|_{\ell^2}^2 \int_0^t \big\<\hat \xi_s,\Delta \phi \big\> \,\d s \\
  &\, + \sum_{k\in \Z^2_0} \theta_k\int_0^t \big\< \hat\xi^N_s,\sigma_k\cdot \nabla\phi \big\>\,\d \hat W^k_s,
  \endaligned $$
where $\hat W^k_s= \tilde W^k_T- \tilde W^k_{T-s},\, 0\leq s\leq T,\, k\in \Z^2_0$. Note that the sign in front of the nonlinear part is different from the one in \eqref{eq-limit}.
\end{remark}

\section{The scaling limit}

In this part we take the following special sequence $\theta^N \in \ell^2, N\geq 1$ such that
  $$\theta^N_k = \frac1{|k|^\gamma} {\bf 1}_{\{|k|\leq N\}}, \quad k\in \Z^2_0, $$
where $\gamma\in (1/2, 1]$. Consider the stochastic mSQG equations \eqref{stoch-mSQG-Ito-N} that we recall here:
  \begin{equation}\label{3-stoch-mSQG-Ito}
  \d\xi^N + u^N\cdot\nabla \xi^N \,\d t= \Delta \xi^N\,\d t - \frac{\sqrt 2}{\|\theta^N \|_{\ell^2}} \sum_{|k|\leq N} \frac1{|k|^\gamma} \sigma_k \cdot \nabla \xi^N \,\d W^k_t,
  \end{equation}
where $u^N= \nabla^\perp (-\Delta)^{-(1+\eps)/2} \xi^N$. Theorem \ref{thm-existence} implies that, for all $N\in \N$, the above equation has a stationary white noise solution $\xi^N$ with trajectories in $C\big([0,T], H^{-1-}\big)$; more precisely, for all $\phi\in C^\infty(\T^2)$, $\P$-a.s. for all $t\in [0,T]$, it holds
  \begin{equation}\label{3-stoch-mSQG-Ito.1}
  \aligned
  \big\<\xi^N_t,\phi \big\> =&\ \big\<\xi^N_0,\phi \big\> + \int_0^t \big\<\xi^N_s\otimes \xi^N_s, H^\phi_\eps \big\>\,\d s + \int_0^t \big\<\xi^N_s,\Delta \phi \big\> \,\d s \\
  &\, +\frac{\sqrt 2}{\|\theta^N \|_{\ell^2}} \sum_{|k|\leq N} \frac1{|k|^\gamma} \int_0^t \big\<\xi^N_s,\sigma_k\cdot \nabla\phi \big\>\,\d W^k_s.
  \endaligned
  \end{equation}
Note that the solutions (and Brownian motions) may be defined on different probability spaces; however, for simplicity, we do not distinguish the notations $\Omega,\,\P$ etc. Our purpose in this section is to prove Theorem \ref{thm-scaling-limit}  following the ideas in \cite{FlaLuo-2}.

First, let $Q^N$ be the law of $\xi^N_\cdot,\, N\geq 1$; we want to prove that the family $\{Q^N \}_{N\geq 1}$ is tight on $C\big([0,T], H^{-1-}\big)$. This can be done in the same way as Corollary \ref{cor-tightness}, once we have the following analogs of Proposition \ref{2-prop-estimates}.

\begin{lemma}\label{3-lem-estimates}
For any $p\geq 2$ and $\delta>0$, there are constants $C_p>0$ and $C_{p,\delta}>0$, independent of $N\geq 1$ and $t\in [0,T]$, such that
\begin{itemize}
\item[\rm (i)] for any $f\in L^\infty(\T^2)$, one has
  $$\E\big| \big\<\xi^N_t,f \big\> \big|^p \leq C_p \|f\|_{L^\infty(\T^2)}^p; $$
\item[\rm (ii)] it holds
  $$\E \Big[\big\| \xi^N_t\big\|_{H^{-1-\delta}(\T^2)}^p \Big] \leq C_{p,\delta}; $$
\item[\rm (iii)] for all $\phi\in C^\infty(\T^2)$,
  $$\E\Big[ \big|\big\<\xi^N_t\otimes \xi^N_t,H_\eps^\phi \big\> \big|^{p}\Big] \leq C_p\big\|H_\eps^\phi \big\|_{L^2(\T^2\times \T^2)}^{p}.   $$
\end{itemize}
\end{lemma}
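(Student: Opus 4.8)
The plan is to reduce all three estimates to properties of a single white noise, exploiting that by Theorem \ref{thm-existence} the solution $\xi^N$ is stationary with one-time marginal equal to the white noise measure $\mu$. Thus, for every $N\geq 1$ and every $t\in[0,T]$, the law of $\xi^N_t$ is $\mu$, independently of $N$ and $t$; consequently each of the three bounds is really a statement about a white noise $\omega_{WN}\sim\mu$, and the resulting constants are automatically uniform in $N$ and $t$. This is precisely where the lemma improves on Proposition \ref{2-prop-estimates}: the point vortices only approximate a white noise, whereas here $\xi^N_t$ is genuinely white, so in (iii) the $L^{2p}$-norm of Proposition \ref{2-prop-estimates}(iii) (valid only for $p<1/(1-\eps)$) is replaced by the $L^2$-norm, and the bound holds for every $p\geq 2$.

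For item (i) I would use that $\big\<\omega_{WN},f\big\>$ is a centred Gaussian variable with variance $\|f\|_{L^2(\T^2)}^2\leq\|f\|_{L^\infty(\T^2)}^2$, the inequality because $\Leb(\T^2)=1$; the explicit formula for Gaussian moments then gives the claim. For item (ii) I would expand
$$\big\|\omega_{WN}\big\|_{H^{-1-\delta}}^2=\sum_{k\in\Z^2_0}\frac{1}{|k|^{2(1+\delta)}}\big|\big\<\omega_{WN},e_k\big\>\big|^2,$$
a convergent series of independent centred (complex) Gaussians, and bound its $(p/2)$-th moment; the finiteness of $C_{p,\delta}=\int_{H^{-1-}}\|\cdot\|_{H^{-1-\delta}}^p\,\d\mu$ is the standard fact that white noise is supported in $H^{-1-}$, and may be quoted from \cite[Lemma 23]{Flandoli18}.

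Item (iii) is the essential point, and it is already contained in Section \ref{subsec-definition}. Because $\xi^N_t$ is a white noise, the nonlinearity $\big\<\xi^N_t\otimes\xi^N_t,H_\eps^\phi\big\>$ is precisely the random variable constructed in Proposition \ref{2-prop-2}, and, as observed just after \eqref{eq-1}, it lives in the second Wiener chaos generated by $\omega_{WN}$. On a fixed Wiener chaos all $L^p$-norms are equivalent to the $L^2$-norm by Gaussian hypercontractivity, and by Lemma \ref{2-lem-2}(ii)--(iii) the variable is centred with second moment equal to $2\|H_\eps^\phi\|_{L^2(\T^2\times\T^2)}^2$; this is exactly the moment estimate \eqref{moment-estimate}, which combined with stationarity yields (iii) uniformly in $N$ and $t$. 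Note that $\eps>0$ guarantees $H_\eps^\phi\in L^2(\T^2\times\T^2)$ via \eqref{H-eps-phi-estimate}, so no restriction on $p$ survives.

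There is no genuine obstacle in the argument; the one conceptual point is the realization that passing from the point vortex approximation to the true white noise solution upgrades the moment bound on the nonlinearity from $L^{2p}$ (with the constraint $p<1/(1-\eps)$) to $L^2$ for every $p\geq 2$. This uniform-in-$N$ control of arbitrarily high moments is what allows the tightness argument of Corollary \ref{cor-tightness} to be repeated verbatim for the scaled equation \eqref{3-stoch-mSQG-Ito}, even though $\|\theta^N\|_{\ell^2}\to\infty$.
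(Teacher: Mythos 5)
Your proof is correct and takes essentially the same approach as the paper: the paper likewise reduces all three bounds to properties of the white noise marginal via stationarity, treats (i) and (ii) as standard Gaussian facts, and obtains (iii) directly from the second-Wiener-chaos moment estimate \eqref{moment-estimate}, noting exactly as you do that this upgrades Proposition \ref{2-prop-estimates}(iii) to an $L^2$-norm bound valid for all $p\geq 2$. You merely spell out the details that the paper omits.
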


Since the processes $\xi^N_\cdot,\, N\geq 1$ are stationary with white noise marginal distribution, the first two estimates are well known, while the third one follows readily from the moment estimate \eqref{moment-estimate}. We omit the details here. Note that in the last estimate we only need the $L^2(\T^2\times \T^2)$-norm of $H_\eps^\phi$, in contrast to Proposition \ref{2-prop-estimates}(iii).

Using the above estimates and the equation \eqref{3-stoch-mSQG-Ito.1}, we can proceed as in Section \ref{subsec-limit} to show the tightness of $\big\{Q^N \big\}_{N\geq 1}$. Again by the Prohorov theorem, we can find a subsequence $\big\{Q^{N_i} \big\}_{i\geq 1}$ which converges weakly to some probability measure $Q$ supported by $C\big([0,T], H^{-1-}\big)$. Next, the Skorohod representation theorem implies that there exist a filtered probability space $\big(\tilde \Omega, \tilde{\mathcal F}, \tilde\P\big)$ and a sequence of processes $\tilde\xi^i_\cdot,\, i\geq 1$ and the limit process $\tilde \xi_\cdot$, such that
\begin{itemize}
\item[(a)] $\tilde \xi_\cdot \stackrel{d}{\sim} Q$ and $\tilde \xi^i_\cdot \stackrel{d}{\sim} Q^{N_i}$ for all $i\geq 1$;
\item[(b)] $\tilde\P$-a.s., $\tilde\xi^i_\cdot$ converges in $C\big([0,T], H^{-1-}\big)$ to $\tilde \xi_\cdot$ as $i\to \infty$.
\end{itemize}
Moreover, for any $i\geq 1$, since the process $\tilde\xi^i_\cdot$ has the same law $Q^{N_i}$ as $\xi^{N_i}_\cdot$ and the latter is stationary, it is easy to show that the limit process $\tilde \xi_\cdot$ is also stationary.

Corresponding to the sequence of processes $\tilde\xi^i_\cdot,\, i\geq 1$, we can also prove the existences of complex Brownian motions $\big\{\tilde W^{i,k}_\cdot \big\}_{k\in \Z^2_0},\, i\geq 1$, such that, for all $i\geq 1$ the pair $\big(\tilde\xi^i_\cdot, \{\tilde W^{i,k}_\cdot \}_{k\in \Z^2_0}\big)$ has the same law as $\big(\xi^{N_i}_\cdot, \{W^{k}_\cdot \}_{k\in \Z^2_0}\big)$. As the original process  $\xi^{N_i}_\cdot$ solves the equation \eqref{3-stoch-mSQG-Ito.1} with $N_i$ in place of $N$, we conclude that $\tilde\xi^i_\cdot$ satisfies the following equation: for any $\phi\in C^\infty(\T^2)$, $\tilde\P$-a.s. for all $t\in [0,T]$,
  \begin{equation}\label{3-stoch-mSQG-Ito.2}
  \aligned
  \big\<\tilde \xi^i_t,\phi \big\> =&\ \big\<\tilde \xi^i_0,\phi \big\> + \int_0^t \big\<\tilde \xi^i_s\otimes \tilde \xi^i_s, H^\phi_\eps \big\>\,\d s + \int_0^t \big\<\tilde \xi^i_s,\Delta \phi \big\> \,\d s \\
  &\, + \frac{\sqrt 2}{\|\theta^{N_i} \|_{\ell^2}} \sum_{|k|\leq N_i} \frac1{|k|^\gamma} \int_0^t \big\<\tilde \xi^i_s,\sigma_k\cdot \nabla\phi \big\>\,\d \tilde W^{i,k}_s.
  \endaligned
  \end{equation}
We are unable to take limit directly in the equation above, since we cannot prove the convergence of the martingale part; instead, we shall follow the approach of \cite{FlaLuo-2} to show the convergence of martingales.

We need some more notations. By $\Lambda\Subset\Z^2_0$ we mean that $\Lambda$ is a finite subset of $\Z^2_0$. Let $\mathcal{FC}$ be the collection of cylindrical functions $F:H^{-1-} \to \R$ of the form $F(\xi)= f(\<\xi, e_l\>; l\in \Lambda)$ for some $\Lambda\Subset \Z^2_0$ and $f\in C_b^\infty \big(\R^\Lambda \big)$, where $\R^\Lambda$ is the Euclidean space of dimension $\#\Lambda$. For simplicity, we shall often write $F=f\circ \Pi_\Lambda$ and $f_l(\xi)= (\partial_l f)(\Pi_\Lambda \xi)$, $f_{l,m}(\xi)= (\partial_l\partial_m f)(\Pi_\Lambda \xi)$, where $\Pi_\Lambda \xi= (\<\xi, e_l\>; l\in \Lambda)= \sum_{l\in \Lambda} \<\xi, e_l\> e_l \in \R^\Lambda$. Denote by $\mathcal L$ the generator of \eqref{mSQG-space-time-white-noise}:
  \begin{equation}\label{generator}
  \mathcal LF= \L_0F + \mathcal G F,
  \end{equation}
where, for any cylindrical function $F=f\circ \Pi_\Lambda$,
  $$\L_0F = 4\pi^2 \sum_{l\in \Lambda} |l|^2 \big[f_{l,-l}(\xi) - \<\xi, e_l\> f_l(\xi) \big] $$
and the ``drift'' operator $\mathcal G$ is given by
  $$\mathcal G F = -\<u\cdot\nabla \xi, DF\>= \sum_{l\in \Lambda} f_l(\xi) \big\< \xi\otimes \xi, H_\eps^{e_l} \big\>.$$
Recall also the Malliavin derivative
  $$(D_x F)(\xi)= \sum_{l\in \Lambda} f_l(\xi) e_l(x),\quad x\in \T^2,$$
and introduce the notation
  \begin{equation}\label{energy-op}
  \mathcal E(F)(\xi)= 2\int_{\T^2} \big|(-\Delta)^{1/2}_x (D_x F)(\xi)\big|^2\,\d x= 8\pi^2 \sum_{l\in \Lambda} |l|^2 f_l(\xi) f_{-l}(\xi).
  \end{equation}
Moreover, let
  $$C_{k,l}= \frac{a_k\cdot l}{|k|^\gamma}, \quad k,l\in \Z^2_0, $$
where $a_k$ is defined in Section \ref{subsec-prelim}. The next simple result was proved in \cite[Lemma 3.4]{FlaLuo-0}; we present the proof here for the reader's convenience.

\begin{lemma}\label{lem-identity}
For all $N\geq 1$; it holds that
  $$\sum_{|k|\leq N} C_{k,l}^2 = \frac12 |l|^2 \|\theta^N\|_{\ell^2}^2. $$
\end{lemma}

\begin{proof}
Let $D_{k,l}= \frac{k\cdot l}{|k|^{\gamma+1}},\, k,l\in \Z^2_0$; then
  $$C_{k,l}^2 + D_{k,l}^2= \frac{(a_k\cdot l)^2}{|k|^{2\gamma}}+ \frac{(k\cdot l)^2}{|k|^{2\gamma+2}}= \frac{1}{|k|^{2\gamma}} \bigg[\Big(\frac{k^\perp}{|k|} \cdot l\Big)^2 + \Big(\frac{k}{|k|} \cdot l\Big)^2 \bigg]= \frac{|l|^2}{|k|^{2\gamma}}, $$
where we have used the fact that $\big\{\frac{k^\perp}{|k|}, \frac{k}{|k|} \big\}$ is an orthonormal basis of $\R^2$ for any $k\in \Z^2_0$. Next, noting that the transformation $k\mapsto k^\perp$ is an isometry on $\Z^2_0$, hence
  $$\sum_{|k|\leq N} C_{k,l}^2 = \sum_{|k|\leq N} \frac{(k^\perp\cdot l)^2}{|k|^{2\gamma+2}}= \sum_{|k|\leq N} \frac{(k\cdot l)^2}{|k|^{2\gamma+2}} =  \sum_{|k|\leq N} D_{k,l}^2 . $$
Summarizing the above facts, we arrive at
  \[\sum_{|k|\leq N} C_{k,l}^2 = \frac12 \sum_{|k|\leq N}\big( C_{k,l}^2 + D_{k,l}^2\big) = \frac12 \sum_{|k|\leq N} \frac{|l|^2}{|k|^{2\gamma}} =\frac12 |l|^2 \|\theta^N\|_{\ell^2}^2. \qedhere \]
\end{proof}

Recall the limit process $\tilde \xi_\cdot$ on the new probability space $\big(\tilde \Omega, \tilde{\mathcal F}, \tilde\P\big)$; the following result is analogous to \cite[Proposition 2.9]{FlaLuo-2}, but here we also give the explicit formula of the quadratic variation of the martingale.

\begin{proposition}\label{prop-martingale-solution}
For any $F\in \mathcal{FC}$,
  $$\tilde M^F_t:= F\big(\tilde \xi_t \big)- F\big(\tilde \xi_0 \big) - \int_0^t \mathcal L F\big(\tilde \xi_s \big)\,\d s$$
is an $\tilde{\mathcal F}_t=\sigma\big(\tilde \xi_s; s\leq t \big)$-martingale, with the quadratic variation
  $$\big[\tilde M^F \big]_t= \int_0^t \mathcal E(F)\big(\tilde \xi_s \big) \,\d s.$$
\end{proposition}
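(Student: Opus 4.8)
The plan is to obtain an exact semimartingale decomposition for the approximating processes $\tilde\xi^i_\cdot$, to which It\^o's formula applies rigorously, and then to pass to the limit $i\to\infty$ using the almost sure convergence $\tilde\xi^i_\cdot\to\tilde\xi_\cdot$ in $C\big([0,T],H^{-1-}\big)$ together with the uniform moment bounds of Lemma \ref{3-lem-estimates}. Fixing $F=f\circ\Pi_\Lambda\in\mathcal{FC}$, I would first take $\phi=e_l$ in \eqref{3-stoch-mSQG-Ito.2} to extract the one-dimensional dynamics of each coordinate $\<\tilde\xi^i_t,e_l\>$, noting $\Delta e_l=-4\pi^2|l|^2 e_l$ and $\sigma_k\cdot\nabla e_l=2\pi{\rm i}(a_k\cdot l)e_{k+l}$, so that the martingale part of $\<\tilde\xi^i_t,e_l\>$ is
  $$\d M^{i,l}_t=\frac{2\pi{\rm i}\sqrt2}{\|\theta^{N_i}\|_{\ell^2}}\sum_{|k|\le N_i}C_{k,l}\,\<\tilde\xi^i_t,e_{k+l}\>\,\d\tilde W^{i,k}_t.$$
Applying the multidimensional It\^o formula to $F(\tilde\xi^i_t)$ then produces three pieces: the first-order terms yield $\mathcal{G}F(\tilde\xi^i_t)-4\pi^2\sum_{l}|l|^2\<\tilde\xi^i_t,e_l\>f_l(\tilde\xi^i_t)$, the stochastic integrals assemble into the martingale $\tilde M^{F,i}_t=\sum_l\int_0^t f_l(\tilde\xi^i_s)\,\d M^{i,l}_s$, and there remains the second-order term $\frac12\sum_{l,m}f_{l,m}(\tilde\xi^i_s)\,\frac{\d}{\d s}[M^{i,l},M^{i,m}]_s$.

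Next I would compute the bracket densities. Using \eqref{BMs}, the symmetry $a_{-k}=a_k$, and $C_{k,-l}=-C_{k,l}$, one finds
  $$\frac{\d}{\d t}[M^{i,l},M^{i,m}]_t=-\frac{16\pi^2}{\|\theta^{N_i}\|_{\ell^2}^2}\sum_{|k|\le N_i}C_{k,l}C_{k,m}\,\<\tilde\xi^i_t,e_{k+l}\>\<\tilde\xi^i_t,e_{-k+m}\>,$$
so the diagonal choice $m=-l$, which forces $e_{-k+m}=\overline{e_{k+l}}$, isolates the modulus $|\<\tilde\xi^i_t,e_{k+l}\>|^2$ with coefficient $C_{k,l}^2$. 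The heart of the argument is that, since each $\tilde\xi^i_t$ is a white noise, $\E|\<\tilde\xi^i_t,e_{k+l}\>|^2=1$, and Lemma \ref{lem-identity} gives $\sum_{|k|\le N_i}C_{k,l}^2=\frac12|l|^2\|\theta^{N_i}\|_{\ell^2}^2$; hence the averaged diagonal contribution to the second-order term equals $4\pi^2\sum_l|l|^2 f_{l,-l}$, which combines with the first-order Laplacian term to reconstruct $\L_0F$, while the averaged diagonal contribution to the bracket of $\tilde M^{F,i}$ equals $8\pi^2\sum_l|l|^2 f_l f_{-l}=\mathcal{E}(F)$ by \eqref{energy-op}. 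Thus $\tilde M^{F,i}_t=F(\tilde\xi^i_t)-F(\tilde\xi^i_0)-\int_0^t\mathcal{A}^iF(\tilde\xi^i_s)\,\d s$ for a generator $\mathcal{A}^i$ that should converge to $\L$.

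The main obstacle, and the place where the scaling $\|\theta^{N_i}\|_{\ell^2}\to\infty$ is used decisively, is to show that the \emph{random} quadratic functionals above concentrate on their deterministic averages: both the off-diagonal ($m\ne-l$) sums and the fluctuation of $|\<\tilde\xi^i_t,e_{k+l}\>|^2$ about its mean $1$ must vanish after division by $\|\theta^{N_i}\|_{\ell^2}^2$. I would prove this by a second-moment estimate, expanding the variance of the $k$-sums via the Isserlis--Wick formula for the white noise (exactly as in the proof of Proposition \ref{2-prop-estimates}(iii)) and using the boundedness of $f_l,f_{l,m}$, to obtain a bound of order $\|\theta^{N_i}\|_{\ell^2}^{-2}\to0$. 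This yields, in $L^2(\tilde\Omega)$,
  $$\int_0^t\big(\mathcal{A}^iF-\L F\big)(\tilde\xi^i_s)\,\d s\to0,\qquad [\tilde M^{F,i}]_t-\int_0^t\mathcal{E}(F)(\tilde\xi^i_s)\,\d s\to0.$$

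Finally I would transfer these facts to the limit process. By the almost sure convergence $\tilde\xi^i_\cdot\to\tilde\xi_\cdot$ in $C\big([0,T],H^{-1-}\big)$, the convergence of the nonlinear term $\<\tilde\xi^i_s\otimes\tilde\xi^i_s,H^{e_l}_\eps\>$ established in Section \ref{subsec-limit}, and the uniform $L^p$ bounds of Lemma \ref{3-lem-estimates} (which supply the uniform integrability), the quantities $F(\tilde\xi^i_t)$, $F(\tilde\xi^i_0)$, $\int_0^t\L F(\tilde\xi^i_s)\,\d s$, and $\int_0^t\mathcal{E}(F)(\tilde\xi^i_s)\,\d s$ converge to their counterparts for $\tilde\xi$; combined with the two displays above this gives $\tilde M^{F,i}_t\to\tilde M^F_t$ in $L^2$. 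To conclude that $\tilde M^F$ is a martingale I would pass the identity $\tilde\E\big[(\tilde M^{F,i}_t-\tilde M^{F,i}_s)G\big]=0$, valid for every bounded continuous $\tilde{\mathcal F}_s$-measurable $G$ depending on finitely many coordinates, to the limit using uniform integrability; the same argument applied to $(\tilde M^{F,i}_t)^2-[\tilde M^{F,i}]_t$ shows that $(\tilde M^F_t)^2-\int_0^t\mathcal{E}(F)(\tilde\xi_s)\,\d s$ is a martingale, which identifies $[\tilde M^F]_t=\int_0^t\mathcal{E}(F)(\tilde\xi_s)\,\d s$ and completes the proof.
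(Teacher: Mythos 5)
Your proposal is correct and follows essentially the same route as the paper's proof: the component equations from \eqref{3-stoch-mSQG-Ito.2}, It\^o's formula for $F(\tilde\xi^i_t)$, the bracket computation via the coefficients $C_{k,l}$ and Lemma \ref{lem-identity}, the vanishing (after division by $\|\theta^{N_i}\|_{\ell^2}^2$) of the centered fluctuation sums --- the paper's $R_{l,m}$, whose uniform $L^p$ bound is exactly the Isserlis--Wick estimate you spell out --- and passage to the limit by testing the (approximate) martingale identities for $\tilde M^{F,i}$ and $(\tilde M^{F,i})^2-[\tilde M^{F,i}]$ against bounded continuous functionals of the path, using the a.s.\ convergence, the convergence of the renormalized nonlinear term, and uniform integrability. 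The only cosmetic difference is that the paper tests against functionals of the pair $\big(\tilde\xi^i_\cdot,\tilde W^{(i)}_\cdot\big)$, obtains the martingale property for the enlarged filtration $\tilde{\mathcal G}_t=\sigma\big(\tilde\xi_s,\tilde W_s: s\leq t\big)$, and then descends to $\tilde{\mathcal F}_t$ by the tower property, whereas you test directly against $\tilde{\mathcal F}_s$-measurable functionals; both are valid.
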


\begin{proof}
For some $l\in \Z_0^2$, taking $\phi= e_l$ in \eqref{3-stoch-mSQG-Ito.2} gives us
  \begin{equation}\label{eq-seq-1}
  \aligned
  \d \big\< \tilde \xi^{i}_t, e_l\big\>&= \big\<\tilde \xi^{i}_t\otimes \tilde \xi^{i}_t, H_\eps^{e_l}\big\>\, \d t -4 \pi^2 |l|^2 \big\< \tilde \xi^{i}_t, e_l \big\>\,\d t \\
  &\hskip13pt + \frac{\sqrt 2}{\|\theta^{N_i} \|_{\ell^2}} \sum_{|k|\leq N_i} \frac1{|k|^{\gamma}} \big\< \tilde \xi^{i}_t,\sigma_k \cdot \nabla e_l \big\>\,\d \tilde W^{i, k}_t.
  \endaligned
  \end{equation}
Therefore, for $l,m \in \Z_0^2$, the quadratic covariation
  \begin{equation*}
  \d \Big[\big\< \tilde \xi^{i}_\cdot, e_l\big\>, \big\< \tilde \xi^{i}_\cdot, e_m\big\> \Big]_t = \frac{4}{\|\theta^{N_i} \|_{\ell^2}^2 } \sum_{|k|\leq N_i} \frac1{|k|^{2\gamma}} \big\< \tilde \xi^{i}_t,\sigma_k \cdot \nabla e_l \big\> \big\< \tilde \xi^{i}_t,\sigma_{-k} \cdot \nabla e_m \big\>\,\d t,
  \end{equation*}
where we have used \eqref{BMs}. Direct computation leads to $\sigma_k \cdot \nabla e_l = 2\pi{\rm i} (a_k\cdot l) e_{k+l}$ and $\sigma_{-k} \cdot \nabla e_m =2\pi{\rm i} (a_{-k}\cdot m) e_{-k+m}$; hence, by the definition of $C_{k,l}$,
  $$\aligned
  &\ \frac1{|k|^{2\gamma}} \big\< \tilde \xi^{i}_t,\sigma_k \cdot \nabla e_l \big\> \big\< \tilde \xi^{i}_t,\sigma_{-k} \cdot \nabla e_m \big\> \\
  = &\ -4\pi^2 C_{k,l}C_{k,m} \big\< \tilde \xi^{i}_t, e_{k+l}\big\> \big\< \tilde \xi^{i}_t,e_{-k +m} \big\> \\
  = &\ -4\pi^2 C_{k,l}C_{k,m} \Big[\big\< \tilde \xi^{i}_t, e_{k+l}\big\> \big\< \tilde \xi^{i}_t,e_{-k +m} \big\> -\delta_{l,-m} \Big] + 4\pi^2 \delta_{l,-m} C_{k,l}^2.
  \endaligned$$
As a result, by Lemma \ref{lem-identity},
  \begin{equation*}
  \aligned
  &\ \d \Big[\big\< \tilde \xi^{i}_\cdot, e_l\big\>, \big\< \tilde \xi^{i}_\cdot, e_m\big\> \Big]_t \\
  = &\ -\frac{16 \pi^2}{\|\theta^{N_i} \|_{\ell^2}^2} \sum_{|k|\leq N_i} C_{k,l}C_{k,m} \Big[\big\< \tilde \xi^{i}_t, e_{k+l}\big\> \big\< \tilde \xi^{i}_t,e_{-k +m} \big\> -\delta_{l,-m} \Big] \d t \\
  &\ +8\pi^2 \delta_{l,-m} |l|^2 \,\d t.
  \endaligned
  \end{equation*}
To simplify the notations, we denote by
  \begin{equation}\label{R-l-m}
  R_{l,m}\big(\tilde \xi^{i}_t\big) = -8 \pi^2 \sum_{|k|\leq N_i} C_{k,l}C_{k,m} \Big[\big\< \tilde \xi^{i}_t, e_{k+l}\big\> \big\< \tilde \xi^{i}_t,e_{-k+m} \big\> -\delta_{l,-m} \Big].
  \end{equation}
Recall that $\tilde \xi^{i}_t$ is a white noise on $\T^2$ for any $t\in [0,T]$, thus by the classical renormalization argument, the random variable $R_{l,m}\big(\tilde \xi^{i}_t\big)$ is bounded in any $L^p\big([0,T]\times \tilde\Omega\big),\, p>1$. Finally, we arrive at
  \begin{equation}\label{eq-seq-2}
  \aligned
  \d \Big[\big\< \tilde \xi^{i}_\cdot, e_l\big\>, \big\< \tilde \xi^{i}_\cdot, e_m\big\> \Big]_t &= \frac{2}{\|\theta^{N_i} \|_{\ell^2}^2} R_{l,m}\big(\tilde \xi^{i}_t\big) \,\d t + 8 \pi^2 \delta_{l,-m} |l|^2 \,\d t.
  \endaligned
  \end{equation}

By the It\^o formula and \eqref{eq-seq-1}, \eqref{eq-seq-2},
  \begin{equation*}
  \aligned
  \d F\big(\tilde \xi^{i}_t \big)= &\ \d f\big(\big\< \tilde \xi^{i}_t, e_l\big\>; l\in \Lambda\big)\\
  =&\ \sum_{l\in \Lambda} f_l\big(\tilde \xi^{i}_t \big) \Big[\big\<\tilde \xi^{i}_t\otimes \tilde \xi^{i}_t, H_\eps^{e_l} \big\> -4 \pi^2 |l|^2 \big\< \tilde \xi^{i}_t, e_l \big\>\Big]\,\d t \\
  &\ +\frac{\sqrt 2}{\|\theta^{N_i} \|_{\ell^2}} \sum_{l\in \Lambda} f_l\big(\tilde \xi^{i}_t \big) \sum_{|k|\leq N_i} \frac1{|k|^\gamma} \big\< \tilde \xi^{i}_t,\sigma_k \cdot \nabla e_l \big\>\,\d \tilde W^{i, k}_t \\
  &\ + \sum_{l, m\in \Lambda} f_{l,m}\big(\tilde \xi^{i}_t \big) \bigg[\frac{1}{\|\theta^{N_i} \|_{\ell^2}^2} R_{l,m}\big(\tilde \xi^{i}_t\big) + 4 \pi^2 \delta_{l,-m} |l|^2 \bigg] \,\d t.
  \endaligned
  \end{equation*}
Recalling the operator $\L$ defined in \eqref{generator}, the above formula can be rewritten as
  \begin{equation}\label{Ito-formula}
  \d F\big(\tilde \xi^{i}_t \big) = \L F\big(\tilde \xi^{i}_t \big) \,\d t + \frac{1}{\|\theta^{N_i} \|_{\ell^2}^2} \tilde \zeta^{(i)}_t \,\d t + \d \tilde M^{(i)}_t,
  \end{equation}
where
  $$\tilde \zeta^{(i)}_t = \sum_{l, m\in \Lambda} f_{l,m}\big(\tilde \xi^{i}_t \big) R_{l,m}\big(\tilde \xi^{i}_t\big)$$
is bounded in $L^p\big([0,T]\times \tilde\Omega \big)$ for any $p>1$, and the martingale part
  \begin{equation}\label{martingale-i}
  \d \tilde M^{(i)}_t= \frac{\sqrt{2}}{\|\theta^{N_i} \|_{\ell^2}} \sum_{l\in \Lambda} f_l\big(\tilde \xi^{i}_t \big) \sum_{|k|\leq N_i} \frac1{|k|^\gamma} \big\< \tilde \xi^{i}_t,\sigma_k \cdot \nabla e_l \big\>\,\d \tilde W^{{i}, k}_t.
  \end{equation}
Note that $\tilde M^{(i)}_t$ is a martingale with respect to the filtration
  $$\tilde{\mathcal F}^{(i)}_t = \sigma\big(\tilde \xi^{i}_s, \tilde W^{(i)}_s: s\leq t\big),$$
where we denote by $\tilde W^{(i)}_s = \big\{ \tilde W^{i,k}_s \big\}_{k\in \Z_0^2}$.

Next, we show that the formula \eqref{Ito-formula} converges as $i\to \infty$ in a suitable sense, by following the argument of \cite[p. 232]{DaPZ}. Fix any $0<s <t\leq T$. Take a real valued, bounded and continuous function $\varphi: C\big([0,s], H^{-1-} \times \mathbb C^{\Z_0^2} \big)\to \R$. By \eqref{Ito-formula}, we have
  \begin{equation}\label{martingale-0}
  \tilde \E\bigg[\bigg( F\big(\tilde \xi^{i}_t \big) -F\big(\tilde \xi^{i}_s \big) - \int_s^t\! \L F\big(\tilde \xi^{i}_r \big) \,\d r - \frac{1}{\|\theta^{N_i} \|_{\ell^2}^2} \! \int_s^t \tilde \zeta^{(i)}_r \,\d r \bigg) \varphi\big(\tilde \xi^{i}_\cdot, \tilde W^{(i)}_\cdot \big)\bigg] =0.
  \end{equation}
Since $F\in \mathcal{FC}$ and $\tilde \xi^{i}_t$ is a white noise, all the terms in the bracket belong to $L^p\big(\tilde \P\big)$ for any $p>1$. Recall that $\|\theta^{N_i} \|_{\ell^2} \to \infty$ as $i\to \infty$ and, $\tilde\P$-a.s., $\big(\tilde \xi^{i}_\cdot, \tilde W^{(i)}_\cdot \big)$ converges in $C\big([0,T], H^{-1-} \times \mathbb C^{\Z_0^2} \big)$ to $\big(\tilde \xi_\cdot, \tilde W_\cdot \big)$, where we write $\tilde W_\cdot$ for the family of Brownian motions $\big\{ \tilde W^{k}_\cdot \big\}_{k\in \Z_0^2}$. Thus, letting $i\to \infty$ in the above equality yields
  $$\tilde \E\bigg[\bigg( F(\tilde \xi_t ) -F(\tilde \xi_s ) - \int_s^t \L F(\tilde \xi_r ) \,\d r \bigg) \varphi\big(\tilde \xi_\cdot, \tilde W_\cdot \big)\bigg] =0.$$
The arbitrariness of $0<s<t$ and $\varphi: C\big([0,s], H^{-1-} \times \mathbb C^{\Z_0^2} \big)\to \R$ implies that $\tilde M^F_\cdot$ is a martingale with respect to the filtration $\tilde{\mathcal G}_t = \sigma\big(\tilde \xi_s, \tilde W_s: s\leq t\big),\, t\in [0,T]$. For any $0\leq s< t\leq T$, we have $\tilde{\mathcal F}_s\subset \tilde{\mathcal G}_s$, thus
  $$\tilde \E \big(\tilde M^F_t \big|\tilde{\mathcal F}_s \big)= \tilde \E \Big[\tilde \E \big(\tilde M^F_t \big| \tilde{\mathcal G}_s \big) \big| \tilde{\mathcal F}_s \Big] = \tilde \E \big[\tilde M^F_s \big| \tilde{\mathcal F}_s \big] =\tilde M^F_s ,$$
since $\tilde M^F_s$ is adapted to $\tilde{\mathcal F}_s $.

Finally we prove the formula for the quadratic variation. Recall the martingale in \eqref{martingale-i}; we have, by \eqref{BMs},
  $$\aligned
  \d\big[\tilde M^{(i)} \big]_t &= \frac{4}{\|\theta^{N_i} \|_{\ell^2}^2} \sum_{l,m\in \Lambda} f_l\big(\tilde \xi^{i}_t \big) f_m\big(\tilde \xi^{i}_t \big) \sum_{|k|\leq N_i} \frac1{|k|^{2\gamma}} \big\< \tilde \xi^{i}_t,\sigma_k \cdot \nabla e_l \big\> \big\< \tilde \xi^{i}_t,\sigma_{-k} \cdot \nabla e_m \big\>\,\d t \\
  &= \frac{-16\pi^2}{\|\theta^{N_i} \|_{\ell^2}^2} \sum_{l,m\in \Lambda} f_l\big(\tilde \xi^{i}_t \big) f_m\big(\tilde \xi^{i}_t \big) \sum_{|k|\leq N_i} C_{k,l}C_{k,m} \big\< \tilde \xi^{i}_t,e_{k+l} \big\> \big\< \tilde \xi^{i}_t,e_{-k+m} \big\>\,\d t.
  \endaligned $$
Using the notation $R_{l,m}\big(\tilde \xi^{i}_t \big)$ in \eqref{R-l-m}, we obtain
  $$\aligned
  \d\big[\tilde M^{(i)} \big]_t &= \frac{2}{\|\theta^{N_i} \|_{\ell^2}^2} \sum_{l,m\in \Lambda} f_l\big(\tilde \xi^{i}_t \big) f_m\big(\tilde \xi^{i}_t \big) \bigg[R_{l,m}\big(\tilde \xi^{i}_t \big) +8\pi^2 \delta_{l,-m} \sum_{|k|\leq N_i} C_{k,l}^2 \bigg]\,\d t \\
  &= \frac{2}{\|\theta^{N_i} \|_{\ell^2}^2} \sum_{l,m\in \Lambda} f_l\big(\tilde \xi^{i}_t \big) f_m\big(\tilde \xi^{i}_t \big) R_{l,m}\big(\tilde \xi^{i}_t \big)\,\d t + 8\pi^2 \sum_{l\in \Lambda} |l|^2 f_l\big(\tilde \xi_t \big) f_{-l}\big(\tilde \xi_t \big) \,\d t,
  \endaligned $$
where in the second step we have used Lemma \ref{lem-identity}. Similarly as the arguments above, since $\|\theta^{N_i} \|_{\ell^2}\to \infty$ as $i\to \infty$, the $\tilde\P$-a.s. convergence $\tilde \xi^{i}_\cdot \to \tilde \xi_\cdot$ and the boundedness in $L^p\big(\tilde\Omega, \tilde\P\big)\, (p>1)$ of $R_{l,m}\big(\tilde \xi^{i}_\cdot \big)$ give rise to
  \begin{equation}\label{limit-variation}
  \lim_{i\to\infty} \big[\tilde M^{(i)} \big]_t= 8\pi^2 \sum_{l\in \Lambda} |l|^2 \int_0^t f_l\big(\tilde \xi_s \big) f_{-l}\big(\tilde \xi_s \big) \,\d s = \int_0^t \mathcal E(F)\big(\tilde \xi^{i}_s \big) \,\d s,
  \end{equation}
which holds $\tilde\P$-a.s. and in $L^p\big(\tilde\Omega, \tilde\P\big)\, (p>1)$. Note that $\big(\tilde M^{(i)}_t \big)^2 - \big[\tilde M^{(i)} \big]_t$ is also a $\sigma\big(\tilde \xi^i_s, \tilde W^{(i)}_s: s\leq t\big)$-martingale, $ t\in [0,T]$; similar to \eqref{martingale-0}, we have, for $0<s<t\leq T$,
  $$\tilde\E \Big[\Big\{\big(\tilde M^{(i)}_t \big)^2 - \big[\tilde M^{(i)} \big]_t- \big(\tilde M^{(i)}_s \big)^2 + \big[\tilde M^{(i)} \big]_s \Big\} \varphi\big(\tilde \xi^{i}_\cdot, \tilde W^{(i)}_\cdot \big) \Big]=0. $$
Using \eqref{limit-variation} and the expression \eqref{Ito-formula} of $\tilde M^{(i)}_t$, we can let $i\to \infty$ to conclude
  $$\tilde\E \bigg[\bigg\{\big(\tilde M^F_t \big)^2- \big(\tilde M^F_s \big)^2 -\int_s^t \mathcal E(F) \big(\tilde \xi_r \big) \,\d r \bigg\} \varphi\big(\tilde \xi_\cdot, \tilde W_\cdot \big) \bigg]=0. $$
This implies that
  $$\big(\tilde M^F_t \big)^2- \int_0^t \mathcal E(F)\big(\tilde \xi_s \big) \,\d s$$
is a $\tilde{\mathcal G}_t = \sigma\big(\tilde \xi_s, \tilde W_s: s\leq t\big)$-martingale, $ t\in [0,T]$. A little more arguments as above yield that it is also a martingale with respect to $\tilde{\mathcal F}_t = \sigma\big(\tilde \xi_s: s\leq t\big)$. Thus we obtain the expression of the quadratic variation of $\tilde M^F_t$.
\end{proof}

As a consequence, we can prove the following result.

\begin{corollary}\label{cor-BM}
There exists a family of independent complex Brownian motions $\{\bar W^k_t; t\geq 0\}_{k\in \Z^2_0}$ such that the limit process $\tilde\xi_\cdot$ and the cylindrical Brownian motion $\zeta_\cdot := \sum_k \sigma_k \bar W^k_\cdot $ solve the equations \eqref{mSQG-space-time-white-noise}.
\end{corollary}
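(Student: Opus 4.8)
The plan is to reconstruct the driving noise directly from the martingales supplied by Proposition \ref{prop-martingale-solution}, exploiting the crucial fact that the energy operator $\mathcal E$ in \eqref{energy-op} is \emph{constant} in $\xi$, so that the relevant quadratic variations are deterministic. First I would apply Proposition \ref{prop-martingale-solution} to the (complex) linear functionals $F=\big\<\cdot,e_l\big\>$, $l\in\Z^2_0$; strictly these are unbounded and hence not in $\mathcal{FC}$, so I would run the argument first for bounded smooth functions of the real and imaginary parts $\big\<\cdot,\cos(2\pi l\cdot x)\big\>$, $\big\<\cdot,\sin(2\pi l\cdot x)\big\>$, and then remove the truncation by letting the cutoff tend to infinity, using the uniform moments of $\big\<\tilde\xi_s,e_l\big\>$ from Lemma \ref{3-lem-estimates}(i) and the white-noise marginal. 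Computing the action of $\L$ from \eqref{generator}, namely $\L F=\big\<\xi\otimes\xi,H_\eps^{e_l}\big\>-4\pi^2|l|^2\big\<\xi,e_l\big\>$, this shows that
  $$M^l_t:=\big\<\tilde\xi_t,e_l\big\>-\big\<\tilde\xi_0,e_l\big\>-\int_0^t\Big[\big\<\tilde\xi_s\otimes\tilde\xi_s,H_\eps^{e_l}\big\>-4\pi^2|l|^2\big\<\tilde\xi_s,e_l\big\>\Big]\,\d s$$
is a continuous $\tilde{\mathcal F}_t$-martingale. Polarizing the identity $\big[\tilde M^F\big]_t=\int_0^t\mathcal E(F)\,\d s$ via \eqref{energy-op}, I would obtain the \emph{deterministic} covariations $\big[M^l,M^m\big]_t=8\pi^2|l|^2\delta_{l,-m}\,t$; moreover, since $\tilde\xi_s$ is real, $M^{-l}=\overline{M^l}$.

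The second step is to invert the noise-to-test-function map. From $\sigma_k=a_k e_k$ one computes $\nabla^\perp\cdot\sigma_k=2\pi{\rm i}\,(a_k\cdot k^\perp)e_k$, so that for the candidate solution the martingale part of $\big\<\tilde\xi_t,e_l\big\>$ equals $-2\pi{\rm i}\,(a_l\cdot l^\perp)\bar W^{-l}_t$; this relation is diagonal, and by \eqref{basis} the coefficient $a_l\cdot l^\perp=|l|\big({\bf 1}_{\Z^2_+}(l)-{\bf 1}_{\Z^2_-}(l)\big)$ never vanishes. I would therefore simply \emph{define}
  $$\bar W^k_t:=\frac{1}{2\pi{\rm i}\,(a_k\cdot k^\perp)}\,M^{-k}_t,\qquad k\in\Z^2_0,$$
which are continuous $\tilde{\mathcal F}_t$-martingales. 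The covariation formula for $\{M^l\}$ then gives $\big[\bar W^k,\bar W^l\big]_t=2t\,\delta_{k,-l}$, while the reality relation $M^{-l}=\overline{M^l}$ yields $\overline{\bar W^k}=\bar W^{-k}$; hence, by Lévy's characterization applied to the real and imaginary parts (whose mutual variations are read off the same formula), the $\{\bar W^k\}$ form a family of independent complex Brownian motions satisfying \eqref{BMs}. Consequently $\zeta_\cdot=\sum_k\sigma_k\bar W^k_\cdot$ is a cylindrical Brownian motion on $H$, and by construction $M^l_t$ is precisely the $e_l$-component of $\int_0^t\nabla^\perp\cdot\d\zeta_s$. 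Since the drift of $\big\<\tilde\xi_t,e_l\big\>$ is exactly $\L e_l$, the pair $\big(\tilde\xi_\cdot,\zeta_\cdot\big)$ solves \eqref{mSQG-space-time-white-noise} tested against every $e_l$, and hence against every $\phi\in C^\infty(\T^2)$ by linearity.

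I expect the main obstacle to be the passage to the unbounded functionals $\big\<\cdot,e_l\big\>$ together with the careful bookkeeping of the complex structure. Indeed $\mathcal E\big(\big\<\cdot,e_l\big\>\big)=0$, because the energy operator pairs the mode $l$ with $-l$, so the genuine (modulus) quadratic variation only surfaces through $\big[M^l,\overline{M^l}\big]=\big[M^l,M^{-l}\big]$; one must therefore pass through the $\cos/\sin$ components to extract $\big[M^l,M^m\big]_t=8\pi^2|l|^2\delta_{l,-m}\,t$ correctly. Once this deterministic, non-degenerate covariance structure is established, no enlargement of the probability space is required: the normalized martingales themselves \emph{are} the Brownian motions, and Lévy's theorem together with $M^{-l}=\overline{M^l}$ closes the argument.
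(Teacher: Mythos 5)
Your proposal is correct, and its overall architecture coincides with the paper's own proof: both extract the continuous martingales $M^l=\tilde M^{(l)}$ by applying Proposition \ref{prop-martingale-solution} to the linear functionals $\<\cdot,e_l\>$, establish the deterministic covariations $\big[M^l,M^m\big]_t=8\pi^2|l|^2\delta_{l,-m}\,t$, invoke L\'evy's characterization to identify a family of independent complex Brownian motions, and then define $\bar W^k_t=\frac{1}{2\pi{\rm i}\,(a_k\cdot k^\perp)}M^{-k}_t$ exactly as in the paper, so that the componentwise equation follows by construction. The one genuine methodological difference lies in how the covariations are computed. The paper does not use the quadratic-variation half of Proposition \ref{prop-martingale-solution} here: it applies the martingale property to the quadratic cylinder functions $F(\xi)=\<\xi,e_l\>\<\xi,e_m\>$ (treating $m\neq -l$ and $m=-l$ separately) and reads off the brackets by comparing $\tilde M^{(l,m)}$ with It\^o's product formula, the correction $8\pi^2|l|^2\delta_{l,-m}$ arising from the second-derivative term $f_{l,-l}$ in $\L_0$. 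You instead polarize the identity $\big[\tilde M^F\big]_t=\int_0^t\mathcal E(F)(\tilde\xi_s)\,\d s$, which works precisely because $\mathcal E$ is a deterministic quadratic form on cylinder functions; your bookkeeping through the $\cos/\sin$ components, including the observation that $\mathcal E(\<\cdot,e_l\>)=0$ so that the modulus variation surfaces only in $\big[M^l,M^{-l}\big]$, is exactly right and reproduces the paper's bracket structure. Each route buys something: yours needs only (truncated) linear test functionals but uses the full strength of Proposition \ref{prop-martingale-solution}, while the paper's needs quadratic functionals but only the martingale-property half of that proposition. You are also more careful than the paper on a point it passes over silently: neither $\<\cdot,e_l\>$ nor $\<\xi,e_l\>\<\xi,e_m\>$ is literally in $\mathcal{FC}$ (they are complex-valued and unbounded), and your truncation-and-limit argument via Lemma \ref{3-lem-estimates}(i) and the white-noise marginals is the standard repair, which would equally be needed for the paper's quadratic functionals.
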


\begin{proof}
For $l\in \Z^2_0$, taking the cylinder function $F(\xi)= \<\xi, e_l\>$ in Proposition \ref{prop-martingale-solution} yields the martingale
  \begin{equation}\label{cor-BM.0}
  \tilde M^{(l)}_t= \big\< \tilde \xi_t, e_l\big\> - \big\< \tilde \xi_0, e_l\big\> + 4\pi^2|l|^2 \int_0^t \big\< \tilde \xi_s, e_l\big\>\,\d s - \int_0^t \big\< \tilde \xi_s\otimes \tilde\xi_s, H_\eps^{e_l} \big\>\,\d s .
  \end{equation}
Moreover, for $F(\xi)= \<\xi, e_l\>\<\xi, e_m\>,\, l,m\in \Z^2_0, m\neq -l$, we obtain the martingale
  $$\aligned
  \tilde M^{(l,m)}_t=&\ \big\< \tilde \xi_t, e_l\big\> \big\< \tilde \xi_t, e_m\big\>- \big\< \tilde \xi_0, e_l\big\> \big\< \tilde \xi_0, e_m\big\> + 4\pi^2 (|l|^2+|m|^2) \int_0^t \big\< \tilde \xi_s, e_l\big\> \big\< \tilde \xi_s, e_m\big\>\,\d s \\
  &- \int_0^t\Big( \big\< \tilde \xi_s, e_m\big\> \big\< \tilde \xi_s\otimes \tilde\xi_s, H_\eps^{e_l} \big\>+ \big\< \tilde \xi_s, e_l\big\> \big\< \tilde \xi_s\otimes \tilde\xi_s, H_\eps^{e_m} \big\> \Big)\,\d s.
  \endaligned $$
From these facts one can deduce
  \begin{equation}\label{cor-BM.1}
  \big[\tilde M^{(l)}, \tilde M^{(m)}\big]_t=0 \quad \mbox{for all } m\neq -l.
  \end{equation}

Next, taking $F(\xi)= \<\xi, e_l\>\<\xi, e_{-l}\>$ in Proposition \ref{prop-martingale-solution} gives us the martingale
  $$\aligned
  \tilde M^{(l,-l)}_t=&\ \big\< \tilde \xi_t, e_l\big\> \big\< \tilde \xi_t, e_{-l}\big\>- \big\< \tilde \xi_0, e_l\big\> \big\< \tilde \xi_0, e_{-l}\big\> - 8\pi^2 |l|^2 \int_0^t \big(1- \big\< \tilde \xi_s, e_l\big\> \big\< \tilde \xi_s, e_{-l}\big\> \big)\,\d s \\
  &- \int_0^t\Big( \big\< \tilde \xi_s, e_{-l}\big\> \big\< \tilde \xi_s\otimes \tilde\xi_s, H_\eps^{e_l} \big\>+ \big\< \tilde \xi_s, e_l\big\> \big\< \tilde \xi_s\otimes \tilde\xi_s, H_\eps^{e_m} \big\> \Big)\,\d s.
  \endaligned $$
Therefore, we have
  $$\big[\tilde M^{(l)}, \tilde M^{(-l)}\big]_t= 8\pi^2 |l|^2 t. $$
Combining this with \eqref{cor-BM.1}, and using L\'evy's characterization of Brownian motions, we conclude that $\big\{ \tilde M^{(l)}_t \big\}_{l\in \Z^2_0}$ are independent complex Brownian motions, with the quadratic variation process $4\pi^2 |l|^2 t$. Finally, if we define
  $$\bar W^l_t= \frac1{2\pi{\rm i} (a_l\cdot l^\perp)} \tilde M^{(-l)}_t, \quad l\in \Z^2_0, $$
then by \eqref{cor-BM.0}, we have
  $$\big\< \tilde \xi_t, e_l\big\> = \big\< \tilde \xi_0, e_l\big\> - 4\pi^2|l|^2 \int_0^t \big\< \tilde \xi_s, e_l\big\>\,\d s + \int_0^t \big\< \tilde \xi_s\otimes \tilde\xi_s, H_\eps^{e_l} \big\>\,\d s - 2\pi{\rm i} (a_l\cdot l^\perp) \bar W^{-l}_t, \quad l\in \Z^2_0.$$
These are the componentwise version of \eqref{mSQG-space-time-white-noise}.
\end{proof}

Finally we can prove the main result of this paper.

\begin{proof}[Proof of Theorem \ref{thm-scaling-limit}]
From Corollary \ref{cor-BM} we see that any weakly convergent subsequence $\big\{\tilde \xi^{N_i} \big\}_{i\geq 1}$ converges weakly to the stationary white noise solutions of \eqref{mSQG-space-time-white-noise}. Furthermore, by the following remark and Proposition \ref{prop-Ito-trick}, we know that the limit solves the cylinder martingale problem for the operator $\L$ with initial distribution $\mu$ in the sense of Definition \ref{def-cylinder-martingale}. Since the family $\{Q^N\}_{N\geq 1}$ is tight, by the weak uniqueness of martingale solutions to \eqref{mSQG-space-time-white-noise} in Theorem \ref{thm-uniqueness}, we conclude that the whole sequence $\big\{\xi^{N} \big\}_{N\geq 1}$ is weakly convergent.
\end{proof}

We want to prove another property of the limit process $\tilde\xi_\cdot$ which will be useful in the next section. Recall the equation \eqref{3-stoch-mSQG-Ito.1} at the beginning of this section. In view of Remark \ref{rem-time-reversal-1}, for fixed $T>0$, we can also consider the time-reversed process $\hat\xi^N_t := \xi^N_{T-t}$ which solves a similar equation, with a minus sign in front of the nonlinear part. The subsequent proofs work as well for these new processes and we can prove an analogue of Proposition \ref{prop-martingale-solution}. More precisely, for the limit process $\tilde\xi_t$ solving \eqref{3-stoch-mSQG-Ito.2}, we define the time reversal $\hat\xi_t:= \tilde\xi_{T-t}$, then
  $$\hat M^F_t:= F\big(\hat \xi_t \big)- F\big(\hat \xi_0 \big) - \int_0^t \hat{\mathcal L} F\big(\hat \xi_s \big)\,\d s$$
is an $\hat{\mathcal F}_t=\sigma\big(\hat \xi_s; s\leq t \big)$-martingale, with the quadratic variation
  $$\big[\hat M^F \big]_t= \int_0^t \mathcal E(F)\big(\hat \xi_s \big) \,\d s.$$
Here, $\hat{\mathcal L}$ is the adjoint operator of $\L$ defined in \eqref{generator}:
  $$\hat{\L}F=\L_0F -\mathcal GF = 4\pi^2 \sum_{l\in \Lambda} |l|^2 \big[f_{l,-l}(\xi) - \<\xi, e_l\> f_l(\xi) \big] + \<u\cdot\nabla \xi, DF\>.$$

To state the next result, we need a few notations. Let $\mathcal N$ be the number (or Ornstein-Uhlenbeck) operator on $L^2(H^{-1-},\mu)$. Given a function $w:\mathbb N\to \R_+$, the operator $w(\mathcal N)$ acts on $F\in L^2(H^{-1-},\mu)$ as a spectral multiplier; more precisely, let $F= \sum_{n\geq 0} F_n$ be the Wiener chaos expansion, then $w(\mathcal N)F= \sum_{n\geq 0} w(n) F_n$. It is self-adjoint. Moreover, let $c_p= \sqrt{p-1}$ for $p\geq 2$; we have the following well known hypercontractivity:
  $$\big\| |F|^{p/2} \big\|_{L^2(\mu)}^2 \leq \big\|c_p^{\mathcal N} F \big\|_{L^2(\mu)}^p. $$
With these preparations, we can use the It\^o trick in \cite{GJ} to prove

\begin{proposition}\label{prop-Ito-trick}
Let $F\in \mathcal{FC}$ have zero mean. For all $p\geq 2$,
  \begin{equation}\label{prop-Ito-trick.1}
  \tilde\E\bigg[ \sup_{0\leq t\leq T} \bigg| \int_0^t F\big( \tilde\xi_s\big) \,\d s \bigg|^p \bigg] \lesssim T^{p/2} \big\|c_{p}^{\mathcal N}(-\L_0)^{-1/2} F \big\|_{L^2(\mu)}^p .
  \end{equation}
\end{proposition}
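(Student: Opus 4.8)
The plan is to realise $\int_0^t F(\tilde\xi_s)\,\d s$ as a combination of a forward and a backward martingale increment (the It\^o trick of \cite{GJ}) and then estimate these by the Burkholder--Davis--Gundy inequality together with stationarity and hypercontractivity. Since $F$ has zero mean, set $G:=(-\L_0)^{-1}F$, which is well defined because $-\L_0$ is the weighted number operator with discrete spectrum bounded away from $0$ on the orthogonal complement of the constants. Applying Proposition \ref{prop-martingale-solution} to $G$ (after the standard approximation that extends that identity from $\mathcal{FC}$ to $G=(-\L_0)^{-1}F$, which need not be cylindrical) gives the forward martingale $\tilde M^G_t = G(\tilde\xi_t)-G(\tilde\xi_0)-\int_0^t\L G(\tilde\xi_s)\,\d s$ with $[\tilde M^G]_t=\int_0^t\mathcal E(G)(\tilde\xi_s)\,\d s$; the time-reversed version recorded just above Proposition \ref{prop-Ito-trick}, with adjoint generator $\hat\L$, furnishes the backward martingale $\hat M^G_u=G(\hat\xi_u)-G(\hat\xi_0)-\int_0^u\hat\L G(\hat\xi_v)\,\d v$ with $[\hat M^G]_u=\int_0^u\mathcal E(G)(\hat\xi_v)\,\d v$, where $\hat\xi_u=\tilde\xi_{T-u}$.

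The key algebraic step uses $\L_0=\tfrac12(\L+\hat\L)$ and $\L_0 G=-F$. Adding the forward Dynkin identity (run on $[0,t]$) to the backward one (run on $[T-t,T]$ and rewritten in the time variable of $\tilde\xi$ via $v\mapsto T-v$), the boundary values $G(\tilde\xi_0)$ and $G(\tilde\xi_t)$ cancel, leaving
$$\int_0^t F(\tilde\xi_s)\,\d s=\tfrac12\big(\tilde M^G_t+\hat M^G_T-\hat M^G_{T-t}\big),\qquad 0\le t\le T,$$
so that $\sup_{0\le t\le T}\big|\int_0^t F(\tilde\xi_s)\,\d s\big|\le \tfrac12\sup_t|\tilde M^G_t|+\sup_u|\hat M^G_u|$.

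For the estimation I would apply BDG to each martingale and then Minkowski's integral inequality in $L^{p/2}(\tilde\Omega)$ (legitimate since $p/2\ge 1$) together with the stationarity of $\tilde\xi$ and of $\hat\xi$, both with marginal $\mu$:
$$\tilde\E\big[\sup_t|\tilde M^G_t|^p\big]\lesssim \tilde\E\Big[\big[\tilde M^G\big]_T^{p/2}\Big]=\tilde\E\Big[\Big(\int_0^T\mathcal E(G)(\tilde\xi_s)\,\d s\Big)^{p/2}\Big]\le T^{p/2}\big\|\mathcal E(G)^{1/2}\big\|_{L^p(\mu)}^p,$$
and identically for $\hat M^G$. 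It then remains to treat the carr\'e-du-champ term: since $\mathcal E(G)=2\|(-\Delta)^{1/2}_xD_xG\|_{L^2(\T^2)}^2$ and $(-\L_0)^{1/2}G=(-\L_0)^{-1/2}F$, the Meyer inequalities for the Gaussian (Ornstein--Uhlenbeck) structure yield $\|\mathcal E(G)^{1/2}\|_{L^p(\mu)}\lesssim\|(-\L_0)^{-1/2}F\|_{L^p(\mu)}$, and Nelson's hypercontractivity, in the form recalled above, converts this $L^p$-norm into $\|c_p^{\mathcal N}(-\L_0)^{-1/2}F\|_{L^2(\mu)}$, which gives \eqref{prop-Ito-trick.1}.

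The main obstacle is this last display, namely controlling $\|\mathcal E(G)\|_{L^{p/2}(\mu)}$ by $\|c_p^{\mathcal N}(-\L_0)^{-1/2}F\|_{L^2(\mu)}^2$. For $p=2$ it is the exact energy identity $\E_\mu[\mathcal E(G)]=2\|(-\L_0)^{-1/2}F\|_{L^2(\mu)}^2$, which follows from Gaussian integration by parts for $\mu$; for $p>2$ one must both pay the hypercontractivity factor $c_p^{\mathcal N}$ and invoke the $L^p$-equivalence of the gradient length with $(-\L_0)^{1/2}$, which I would isolate as a Wiener-chaos/spectral lemma in the appendix. By contrast, the forward--backward cancellation, the BDG step, and the Minkowski/stationarity reduction are routine.
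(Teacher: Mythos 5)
Your core decomposition is exactly the paper's It\^o trick: the identity $\int_0^t F(\tilde\xi_s)\,\d s=\tfrac12\big(\tilde M^G_t+\hat M^G_T-\hat M^G_{T-t}\big)$ with $G=(-\L_0)^{-1}F$ is precisely the paper's relation $0=2\int_0^t\L_0 F(\tilde\xi_s)\,\d s+\tilde M^F_t+\hat M^F_T-\hat M^F_{T-t}$ read with $G$ in place of $F$; the paper merely performs the substitution $\psi=(-\L_0)^{-1}F$ at the end rather than at the start, which is immaterial (and both proofs share the same tacit approximation step, since $(-\L_0)^{-1}F$ need not be cylindrical). The BDG, Minkowski/stationarity reduction to $T^{p/2}\big\|\mathcal E(G)^{1/2}\big\|_{L^p(\mu)}^p$ also matches the paper, which uses Jensen instead of Minkowski to the same effect.

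The one genuine divergence is your final step, and it is the soft spot. You pass from $\big\|\mathcal E(G)^{1/2}\big\|_{L^p(\mu)}$ to $\big\|(-\L_0)^{1/2}G\big\|_{L^p(\mu)}$ by ``Meyer inequalities'' and only then apply hypercontractivity. Note that what you need is not the standard Meyer inequality (which compares $\|DG\|_{L^p(\mu;L^2(\T^2))}$ with $\|\mathcal N^{1/2}G\|_{L^p(\mu)}$): here the gradient carries the weight $(-\Delta)^{1/2}_x$ and the relevant operator is the second quantization $-\L_0=d\Gamma(-\Delta)$, not the number operator. A weighted Meyer equivalence of this type does hold (via Littlewood--Paley--Stein theory for the semigroup $\Gamma(e^{t\Delta})$, in the spirit of Shigekawa), but it is a substantially heavier tool, and--contrary to your remark--it cannot be isolated as ``a Wiener-chaos/spectral lemma'': for $p\neq 2$ the $L^p(\mu)$ norm does not decompose across chaoses, so no chaos-by-chaos computation yields it. The paper sidesteps this entirely by reversing the order of your two operations: first hypercontractivity, $\big\|\mathcal E(G)^{1/2}\big\|_{L^p(\mu)}\leq \big\|c_p^{\mathcal N}\mathcal E(G)^{1/2}\big\|_{L^2(\mu)}$, and then the weighted $L^2$ estimate \eqref{lem-martingale.1}, $\|w(\mathcal N)\mathcal E(G)^{1/2}\|\lesssim\|w(\mathcal N-1)(-\L_0)^{1/2}G\|$ with $w(n)=c_p^n$, which genuinely \emph{is} an elementary chaos computation (carried out in Lemma \ref{lem-martingale}). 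If you swap your two steps in this way your argument becomes complete and coincides with the paper's; as written, it is correct in outline but rests on a nonstandard $L^p$ Riesz-transform bound that you neither prove nor correctly locate.
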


\begin{proof}
The arguments above tell us that, for $0<t<T$,
  $$F\big( \hat\xi_T\big)= F\big( \hat\xi_{T-t}\big) + \int_{T-t}^T \hat{\mathcal L} F\big(\hat \xi_s \big)\,\d s + \hat M^F_T- \hat M^F_{T-t}; $$
equivalently,
  $$F\big( \tilde\xi_0\big)= F\big( \tilde\xi_t\big) + \int_0^t \L_0 F\big(\tilde\xi_s \big)\,\d s- \int_0^t \mathcal G F\big(\tilde\xi_s \big)\,\d s + \hat M^F_T- \hat M^F_{T-t}. $$
Combining this equality with Proposition \ref{prop-martingale-solution}, namely,
  $$F\big( \tilde\xi_t\big)= F\big( \tilde\xi_0\big) + \int_0^t \L_0 F\big(\tilde\xi_s \big)\,\d s + \int_0^t \mathcal G F\big(\tilde\xi_s \big)\,\d s + \tilde M^F_t, $$
we obtain
  $$0= 2\int_0^t \L_0 F\big(\tilde\xi_s \big)\,\d s+ \tilde M^F_t + \hat M^F_T- \hat M^F_{T-t}. $$

Therefore, recalling the quadratic variations of the martingales $\tilde M^F_t$ and $\hat M^F_t$, we have
  $$\aligned
  \tilde\E \bigg[ \sup_{t\in [0,T]} \bigg| \int_0^t \L_0 F\big(\tilde\xi_s \big) \,\d s \bigg|^p \bigg] &\lesssim \tilde\E \bigg[ \bigg( \int_0^T \mathcal E(F)\big(\tilde\xi_s \big) \,\d s \bigg)^{p/2} \bigg] \\
  &\leq T^{p/2-1} \int_0^T \tilde\E \big[\mathcal E(F)\big(\tilde\xi_s \big) \big]^{p/2} \,\d s \\
  &= T^{p/2} \big\|\mathcal E(F)^{p/2} \big\|_{L^1(\mu)} = T^{p/2} \big\|\mathcal E(F)^{p/4} \big\|_{L^2(\mu)}^2.
  \endaligned $$
By the hypercontractivity and \eqref{lem-martingale.1} below,
  $$\tilde\E \bigg[ \sup_{t\in [0,T]} \bigg| \int_0^t \L_0 F\big(\tilde\xi_s \big) \,\d s \bigg|^p \bigg] \lesssim T^{p/2} \big\| c_p^{\mathcal N} \mathcal E(F)^{1/2} \big\|_{L^2(\mu)}^p \lesssim T^{p/2} \big\| c_p^{\mathcal N} (-\L_0)^{1/2} F \big\|_{L^2(\mu)}^p . $$
Finally, let $\psi= (-\L_0)^{-1} F$, and use the above inequality, we have
  $$\aligned
  \tilde\E \bigg[ \sup_{t\in [0,T]} \bigg| \int_0^t F\big(\tilde\xi_s \big) \,\d s \bigg|^p \bigg] &= \tilde\E \bigg[ \sup_{t\in [0,T]} \bigg| \int_0^t (-\L_0)\psi\big(\tilde\xi_s \big) \,\d s \bigg|^p \bigg] \\
  &\lesssim T^{p/2} \big\| c_p^{\mathcal N} (-\L_0)^{1/2} \psi \big\|_{L^2(\mu)}^p \\ 
  &= T^{p/2}\big\|c_p^{\mathcal N}(-\L_0)^{-1/2} F \big\|_{L^2(\mu)}^p .
  \endaligned $$
The proof is complete.
\end{proof}

\section{Uniqueness of the martingale problem to \eqref{mSQG-space-time-white-noise}}

In this section, we follow the approach in \cite{GP-18, GT} to prove the uniqueness of the cylinder martingale problem associated to \eqref{mSQG-space-time-white-noise}.

\subsection{Galerkin approximation and a priori estimates}

We consider the Galerkin approximation of \eqref{mSQG-space-time-white-noise}: for $m\geq 1$,
  \begin{equation}\label{2D-NSEs-Galerkin}
  \partial_t \xi^m+ B_m(\xi^m) = \Delta \xi^m +  \nabla^\perp \cdot \dot \zeta,
  \end{equation}
where
  $$B_m(\xi)= \div \Pi_m \big((K_\eps\ast \Pi_m\xi) \Pi_m\xi \big).$$
In the above identity, $\Pi_m$ is the orthogonal projection to Fourier modes $0<|k|\leq m$, and $K_\eps$ is the kernel on $\T^2$ corresponding to the operator $\nabla^\perp (-\Delta)^{-(1+\eps)/2}$:
  $$K_\eps(x) = \frac{\rm i}{(2\pi)^\eps } \sum_{k\in \Z^2_0} \frac{k^\perp}{|k|^{1+\eps}} e_k(x).$$
where $e_k(x)= {\rm e}^{2\pi {\rm i} k\cdot x}$. Let $\rho^m(x) = \sum_{|k|\leq m} e_k(x)$, then
  $$\Pi_m f= f \ast \rho^m.$$
Note that
  $$K_\eps \ast \Pi_m\xi = K_\eps\ast (\rho^m \ast \xi) = (K_\eps\ast \rho^m)\ast \xi=: K^m_\eps \ast \xi,$$
where
  $$K^m_\eps(x) = \frac{\rm i}{(2\pi)^\eps } \sum_{|k|\leq m} \frac{k^\perp}{|k|^{1+\eps}} e_k(x). $$
Using these notations, we have
  \begin{equation}\label{drift}
  B_m(\xi)= \div\Big[ \rho^m \ast \big((K^m_\eps \ast \xi) \rho^m \ast \xi \big) \Big].
  \end{equation}

It is well known that the equation \eqref{2D-NSEs-Galerkin} has a unique strong solution $\xi \in C(\R_+, H^{-1-}(\T^2))$ for any deterministic initial condition $\xi_0\in H^{-1-}(\T^2)$. Let $\<\, ,\>$ be the duality between distributions and smooth functions on $\T^2$. For any $k\in \Z^2_0$, the component form of \eqref{2D-NSEs-Galerkin} is
  \begin{equation}\label{2D-NSEs-component}
  \d\<\xi^m, e_k\> = -\<B_m(\xi^m), e_k\>\,\d t -4\pi^2|k|^2 \<\xi^m,e_k\>\,\d t - 2\pi {\rm i}(a_k\cdot k^\perp) \,\d W^{-k}_t,
  \end{equation}
where
  $$a_k\cdot k^\perp= |k|\big({\bf 1}_{\Z^2_+}(k)- {\bf 1}_{\Z^2_-}(k) \big). $$
Let $F(\xi)= f(\<\xi,e_k\>;k\in \Lambda)=: f(\Pi_\Lambda \xi)$ be a cylinder functional, and define the operators
  \begin{equation}\label{operator-1}
  \mathcal L_0 F(\xi) = 4\pi^2\sum_{k\in \Lambda} |k|^2\Big[ \partial_k\partial_{-k} f(\Pi_\Lambda \xi) -  \<\xi,e_k\> \partial_k f(\Pi_\Lambda \xi) \Big] ,
  \end{equation}
  \begin{equation}\label{operator-2}
  \mathcal G^m F(\xi) = -\sum_{k\in \Lambda} \partial_k f(\Pi_\Lambda \xi) \<B_m(\xi), e_k\>= \sum_{k\in \Lambda} \partial_k f(\Pi_\Lambda \xi) \big\<\Pi_m \xi\otimes \Pi_m \xi, H^{e_k}_\eps \big\>.
  \end{equation}
Moreover, let $\mathcal L^m= \mathcal L_0 + \mathcal G^m$, then by It\^o's formula,
  $$\d F(\xi^m_t)= \mathcal L^m F(\xi^m_t) \,\d t + \sum_{k\in \Lambda} \partial_k f(\Pi_\Lambda \xi^m_t)\, \d M_t(e_k), $$
where $M_t(e_k)= 2\pi {\rm i}(a_k\cdot k^\perp) W^{-k}_t$.

Recall the definition of the operator $\mathcal G$ below \eqref{generator}: for any cylinder function $\varphi=\varphi\circ \Pi_\Lambda$,
  $$\mathcal{G}\varphi= -\<u\cdot\nabla \xi, D\varphi\>= \sum_{l\in \Lambda} \partial_l \varphi(\xi) \big\< \xi\otimes \xi, H_\eps^{e_l} \big\>.$$

\begin{lemma}\label{lem-limit-operator}
For any $\varphi\in \mathcal{FC}$, the following limit holds in $L^2(\mu)$:
  $$\mathcal G \varphi= \lim_{m\to \infty} \mathcal G^m \varphi.$$
\end{lemma}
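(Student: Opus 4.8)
The plan is to reduce the asserted $L^2(\mu)$-convergence to a statement about a single second-chaos random variable and then invoke Proposition \ref{2-prop-2}. Writing $\varphi=f\circ\Pi_\Lambda$ with $\Lambda\Subset\Z^2_0$ and $f\in C_b^\infty(\R^\Lambda)$, the definitions of $\mathcal G$ and $\mathcal G^m$ give
$$\mathcal G\varphi-\mathcal G^m\varphi=\sum_{l\in\Lambda}(\partial_l\varphi)(\xi)\Big[\big\<\xi\otimes\xi,H_\eps^{e_l}\big\>-\big\<\Pi_m\xi\otimes\Pi_m\xi,H_\eps^{e_l}\big\>\Big].$$
Since $\Lambda$ is finite and each $\partial_l\varphi(\xi)=(\partial_lf)(\Pi_\Lambda\xi)$ is bounded (as $f\in C_b^\infty$), the correlation between this factor and the bracket is harmless, and it suffices to prove that for every fixed $l\in\Lambda$,
$$\big\<\Pi_m\xi\otimes\Pi_m\xi,H_\eps^{e_l}\big\>\longrightarrow\big\<\xi\otimes\xi,H_\eps^{e_l}\big\>\quad\text{in }L^2(\mu)\ \text{ as }m\to\infty.$$

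First I would rewrite the approximating quantity. Since $\Pi_m\xi$ is a (random) trigonometric polynomial and $\Pi_m$ is self-adjoint for the bilinear pairing on $L^2(\T^2)$, the left-hand side is a genuine integral that can be transferred onto the kernel:
$$\big\<\Pi_m\xi\otimes\Pi_m\xi,H_\eps^{e_l}\big\>=\big\<\xi\otimes\xi,H_m^{e_l}\big\>,\qquad H_m^{e_l}:=(\Pi_m\otimes\Pi_m)H_\eps^{e_l},$$
where $H_m^{e_l}$ is the Fourier truncation of $H_\eps^{e_l}$ to frequencies $|a|,|b|\le m$ in the two variables. The kernel $H_m^{e_l}$ is a trigonometric polynomial, hence smooth and in $H^{2+}(\T^2\times\T^2)$; it is symmetric because $H_\eps^{e_l}$ is symmetric (the kernel $K_\eps$ is odd and $\nabla e_l(x)-\nabla e_l(y)$ changes sign under $x\leftrightarrow y$) and $\Pi_m\otimes\Pi_m$ preserves symmetry. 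Moreover, by \eqref{H-eps-phi-estimate} one has $H_\eps^{e_l}\in L^2(\T^2\times\T^2)$ for $\eps\in(0,1]$, so the Fourier truncations converge, $\big\|H_m^{e_l}-H_\eps^{e_l}\big\|_{L^2(\T^2\times\T^2)}\to0$.

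The crux, and where some care is needed, is the renormalization (diagonal) condition required by Proposition \ref{2-prop-2}, namely $\int_{\T^2}H_m^{e_l}(x,x)\,\d x\to0$. Here I would exploit the explicit Fourier structure: inserting the expansions of $K_\eps$ and $\nabla e_l$ into \eqref{H-eps-phi} shows that every Fourier mode $(a,b)$ occurring in $H_\eps^{e_l}$ satisfies $a+b=l$ (the two families of modes produced are of the form $(k+l,-k)$ and $(k,l-k)$). Consequently $\int_{\T^2}H_m^{e_l}(x,x)\,\d x$, which after setting $y=x$ and integrating selects exactly the modes with $a+b=0$, vanishes identically for every $m$, because $l\in\Z^2_0$ so $l\neq0$. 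Thus $\{H_m^{e_l}\}_m$ is an admissible approximating sequence in the sense of Proposition \ref{2-prop-2}, and the ``moreover'' part of that proposition (independence of the limit from the chosen approximation) yields $\big\<\xi\otimes\xi,H_m^{e_l}\big\>\to\big\<\xi\otimes\xi,H_\eps^{e_l}\big\>$ in $L^2(\mu)$. Combined with the boundedness of $\partial_l\varphi$ and the finiteness of $\Lambda$, this gives $\mathcal G^m\varphi\to\mathcal G\varphi$ in $L^2(\mu)$.

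I expect the main obstacle to be precisely this diagonal trace: confirming that no Wick renormalization constant survives in the limit. The observation that every Fourier mode of $H_\eps^{e_l}$ has frequency sum $l\neq0$ is exactly what makes the condition $\int_{\T^2}H_m^{e_l}(x,x)\,\d x=0$ automatic, and it is special to test functions $e_l$ with $l\in\Z^2_0$. The remaining ingredients---the self-adjointness identity for $\Pi_m$, the $L^2$-convergence of Fourier truncations, symmetry, and the boundedness of $\partial_l\varphi$---are routine.
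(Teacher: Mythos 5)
Your proof is correct, and it takes a genuinely different route from the paper's. The paper proves the componentwise convergence $\big\<\Pi_m\xi\otimes\Pi_m\xi,H_\eps^{e_l}\big\>\to\big\<\xi\otimes\xi,H_\eps^{e_l}\big\>$ by a triangle-inequality argument that keeps the projections on the noise side and uses the smooth cutoff approximations $H^{e_l,n}_\eps$ of Proposition \ref{2-prop-2} as an intermediary: two of the three resulting terms are bounded, uniformly in $m$, by $\|H^{e_l}_\eps-H^{e_l,n}_\eps\|_{L^2(\T^2\times\T^2)}^2$ (this step imports second-chaos variance estimates for truncated white noise from \cite[Proposition A.6]{FlaLuo-2}), the middle term vanishes as $m\to\infty$ for fixed $n$, and one then sends $n\to\infty$; a separate lemma (\cite[Lemma A.7]{FlaLuo-2}) disposes of the zero Fourier mode. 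You instead transfer the projection onto the kernel via $\big\<\Pi_m\xi\otimes\Pi_m\xi,H_\eps^{e_l}\big\>=\big\<\xi\otimes\xi,(\Pi_m\otimes\Pi_m)H_\eps^{e_l}\big\>$ and check that the Fourier truncations themselves form an admissible approximating sequence for Proposition \ref{2-prop-2}; the decisive observation, that every Fourier mode $(a,b)$ of $H_\eps^{e_l}$ satisfies $a+b=l\neq 0$ so the diagonal trace of each truncation vanishes identically, is correct, and the ``moreover'' (uniqueness-of-limit) part of Proposition \ref{2-prop-2} then identifies the limit with the renormalized pairing. One small point you use implicitly: since the paper's $\Pi_m$ excludes the zero mode, the truncations converge to $H_\eps^{e_l}$ itself only because the modes of $H_\eps^{e_l}$ with $a=0$ or $b=0$ carry zero coefficient (they arise from $k=\mp l$ and are proportional to $l^\perp\cdot l=0$), which holds and also makes the zero-mode lemma of the paper unnecessary in your setup. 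Your argument buys self-containedness — a single limit in $m$, no appeal to \cite{FlaLuo-2}, and a transparent explanation of why no Wick renormalization constant survives — while the paper's argument buys robustness: it does not exploit the special frequency structure of the kernels $H_\eps^{e_l}$ (only that they can be approximated in the sense of Proposition \ref{2-prop-2}) and runs parallel to the Euler case $\eps=1$, allowing the authors to reuse existing estimates.
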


\begin{proof}
It is sufficient to prove that for $l\in\Lambda$,
  \begin{equation}\label{e:1}
  \big\< \xi\otimes \xi, H_\eps^{e_l} \big\>=\lim_{m\to\infty}\big\< \xi^m\otimes \xi^m, H_\eps^{e_l} \big\> \quad \textrm{ holds in } L^2(\mu).
  \end{equation}
Here we write $ \xi^m=\Pi_m \xi= \sum_{0<|k|\leq m} \<\xi,e_k\> e_k$. The case for $\eps=1$ has been proved in \cite[Theorem A.9]{FlaLuo-2}; for $\eps\in (0,1)$ the proof is similar. We recall it for completeness. Define $\tilde{\xi}^m:=\sum_{|k|\leq m}\<\xi, e_k\> e_k$; then $\xi^m=\tilde{\xi}^m-\<\xi,1\>$. By the triangle inequality we have
  \begin{align*}
  &\ \E_\mu\big[\big( \big\<\xi\otimes \xi, H^{e_l}_\eps \big\>- \big\<\tilde{\xi}^m\otimes \tilde{\xi}^m, H^{e_l}_\eps \big\>\big)^2 \big]\\ \lesssim&\ \E_\mu\big[ \big\<\xi\otimes \xi, H^{e_l}_\eps -H^{e_l,n}_\eps \big\>^2 \big] +\E_\mu\big[ \big(\big\<\xi\otimes \xi, H^{e_l,n}_\eps \big\>- \big\<\tilde{\xi}^m\otimes \tilde{\xi}^m, H^{e_l,n}_\eps \big\>\big)^2\big]\\
  &\, +\E_\mu\big[ \big\<\tilde{\xi}^m\otimes \tilde{\xi}^m, H^{e_l}_\eps -H^{e_l,n}_\eps \big\>^2\big],
\end{align*}
where $H^{e_l,n}_\eps$ is the approximation to $H^{e_l}_\eps$  in Proposition  \ref{2-prop-2}. By using the same argument as in the proof of \cite[Proposition A.6]{FlaLuo-2} we obtain
\begin{align*}
&\limsup_{m\to\infty}\E_\mu\big[ \big( \big\<\xi\otimes \xi, H^{e_l}_\eps\big\> -\big\<\tilde{\xi}^m\otimes \tilde{\xi}^m, H^{e_l}_\eps \big\>\big)^2\big] \lesssim \|H^{e_l}_\eps -H^{e_l,n}_\eps \|_{L^2(\T^2\times \T^2)}^2.
\end{align*}
Letting $n\to\infty$ and using Proposition \ref{2-prop-2},
$$\big\< \xi\otimes \xi, H_\eps^{e_l} \big\>=\lim_{m\to\infty}\big\< \tilde{\xi}^m\otimes \tilde{\xi}^m, H_\eps^{e_l} \big\> \quad \textrm{ holds in } L^2(\mu).$$
By similar argument as the proof of \cite[Lemma A.7]{FlaLuo-2} we have
$$\big\< \tilde{\xi}^m\otimes \tilde{\xi}^m, H_\eps^{e_l} \big\>=\big\< {\xi}^m\otimes {\xi}^m, H_\eps^{e_l} \big\>,$$
which implies \eqref{e:1} and the result follows.
\end{proof}

We want to find the formula for the operator $\mathcal G^m$. Throughout this section we assume the Wiener functionals $\varphi\in L^2(\mu)$ have zero average. We will use the chaos expansion
  $$\varphi= \sum_{n\geq 1} W_n(\varphi_n),$$
where $W_n$ is the multiple Wiener-It\^o integral for symmetric functions $\varphi_n\in L^2_0((\T^2)^n)$. For simplicity, we write $\T^{2n}$ instead of $(\T^2)^n$. Moreover,
  $$\|\varphi \|_{L^2(\mu)}^2 = \sum_{n\geq 1} n! \, \|\varphi_n \|_{L^2(\T^{2n})}^2. $$
As in \cite{GP-18}, we denote $\int_{x}$ and $\int_{x,s}$ for integrals with respect to the variables $x\in \T^2$ and $x,s\in \T^2$. Recall the definitions of $\rho^m$ and $K^m_\eps$ at the beginning of this section. Since $\eps\in (0,1]$ is fixed, we shall omit it and simply write $K^m$ instead of $K^m_\eps$.

\begin{lemma}\label{lem-1}
Let $\varphi\in \mathcal{FC}$ have the chaos expansion $\varphi = \sum_{n\geq 0} W_n(\varphi_n)$. Then $\mathcal G^m= \mathcal G^m_+ + \mathcal G^m_-$, and writing $f_x= f(x-\cdot)$ and $K^m_s= K^m(s-\cdot)$, we have
  $$\aligned
  \mathcal G^m_+ W_n(\varphi_n) &= n W_{n+1} \bigg(\int_{x,s} \varphi_n(x,\cdot) \big(K^m_s \hat\otimes \rho^m_s \big) \cdot \nabla_x \rho^m_x(s) \bigg), \\
  \mathcal G^m_- W_n(\varphi_n) &= 2n(n-1) W_{n-1} \bigg(\int_{x,y,s} \varphi_n(x,y,\cdot) \big(K^m_s \hat\otimes \rho^m_s \big)(y,\cdot) \cdot \nabla_x \rho^m_x(s) \bigg),
  \endaligned$$
where $(f\hat\otimes g)(x,y)= \frac12 (f(x) g(y) + g(x) f(y))$. Moreover, for all $\varphi_n\in L^2_0(\T^{2n})$ and $\varphi_{n+1}\in L^2_0(\T^{2(n+1)})$, it holds
  $$\big\< W_{n+1}(\varphi_{n+1}), \mathcal G^m_+ W_n(\varphi_n) \big\>= - \big\< \mathcal G^m_- W_{n+1}(\varphi_{n+1}), W_n(\varphi_n) \big\>. $$
\end{lemma}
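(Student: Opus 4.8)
The plan is to treat $\mathcal G^m$ as the first--order (derivation) operator $\mathcal G^m\varphi=-\<B_m(\xi),D\varphi\>$ and to read off its action on chaoses from the Wiener--It\^o product formula, the whole computation being controlled by a single structural identity. First I would compute the chaos decomposition of the drift $G_m(\xi)(x):=-B_m(\xi)(x)$ itself. Writing $B_m(\xi)(x)=\int_s\nabla_x\rho^m(x-s)\cdot\big[(K^m\ast\xi)(s)\,(\rho^m\ast\xi)(s)\big]\,\d s$ and Wick--reducing the product $(K^m\ast\xi)(s)(\rho^m\ast\xi)(s)$, the would--be constant term is the contraction $\int_z K^m(z)\rho^m(z)\,\d z=\frac{\rm i}{(2\pi)^\eps}\sum_{|k|\le m}k^\perp/|k|^{1+\eps}=0$, which vanishes by oddness of $K^m$. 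Hence $G_m(\xi)(x)$ is a \emph{pure} second chaos $W_2(g^m_x)$, and collecting the surviving terms identifies the kernel $g^m_x(a,b)$ with the vertex $\int_s(K^m_s\hat\otimes\rho^m_s)(a,b)\cdot\nabla_x\rho^m_x(s)\,\d s$ of the statement (the $\hat\otimes$ merely records that $W_2$ uses symmetric kernels).

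Next I would insert $D_xW_n(\varphi_n)=nW_{n-1}(\varphi_n(x,\cdot))$ and expand $W_2(g^m_x)\,W_{n-1}(\varphi_n(x,\cdot))$ via $W_2(g)W_q(h)=\sum_{r=0}^{2}r!\binom2r\binom qr W_{2+q-2r}(g\otimes_r h)$ (symmetrisation implicit). Integrating in $x$, the $r=0$ term has chaos order $n+1$ and combinatorial factor $n$, giving exactly $\mathcal G^m_+W_n(\varphi_n)$; the $r=1$ term has order $n-1$ and factor $n\cdot2(n-1)=2n(n-1)$ and, after using the symmetry of $\varphi_n$ to place the contracted variable, gives exactly $\mathcal G^m_-W_n(\varphi_n)$. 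Matching the kernels is routine bookkeeping; the only care needed is to track which slot of $g^m_x$ is contracted.

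The decisive input is the total antisymmetry of the vertex, $g^m_x(a,b)+g^m_a(x,b)+g^m_b(x,a)=0$. I would derive it from enstrophy conservation: since $K^m\ast\xi$ is divergence free,
\[
\<\xi,B_m(\xi)\>=-\tfrac12\int_{\T^2}(K^m\ast\xi)\cdot\nabla|\Pi_m\xi|^2\,\d x=\tfrac12\int_{\T^2}\div(K^m\ast\xi)\,|\Pi_m\xi|^2\,\d x=0 ,
\]
and, expanding this cubic--in--$\xi$ identity in chaos, its top ($W_3$) component---whose kernel is precisely the symmetrisation of $g^m$---must vanish. Granting this, the $r=2$ term (chaos $n-3$) is immediately killed: its kernel is $\int_{x,a,b}g^m_x(a,b)\,\varphi_n(x,a,b,\cdot)$, and since $\varphi_n$ is symmetric one may replace $g^m_x(a,b)$ by its full symmetrisation, which is $0$. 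Thus $\mathcal G^m=\mathcal G^m_++\mathcal G^m_-$.

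Finally, for the adjoint identity I would reduce both pairings to integrals of the vertex against the same density $\Phi=\varphi_n(p,\vec c)\,\varphi_{n+1}(q,r,\vec c)$, where $\vec c$ denotes the $n-1$ shared variables: up to the factors $n\,(n+1)!$ and $2n(n+1)\,n!$ one obtains $\int g^m_p(q,r)\,\Phi$ on the left and $\int g^m_q(r,p)\,\Phi$ on the right. The antisymmetry gives $g^m_p(q,r)=-g^m_q(r,p)-g^m_r(q,p)$, and since $\Phi$ is symmetric in $(q,r)$ the two right--hand integrals coincide, so $\int g^m_p(q,r)\,\Phi=-2\int g^m_q(r,p)\,\Phi$; the factor $2$ exactly absorbs the coefficient ratio and the minus sign produces the claimed relation. (Conceptually this is just the skew--symmetry of $\mathcal G^m$ on $L^2(\mu)$, which also re--explains why the $n-3$ part vanishes---it is the only chaos shift with no adjoint partner.) The main obstacle is not any single estimate but the combinatorial accounting in the product formula together with the clean deployment of the identity $\mathrm{Sym}_{x,a,b}\,g^m_x(a,b)=0$; once that identity is in hand, both the splitting and the adjunction fall out.
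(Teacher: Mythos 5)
Your proof is correct, and it reorganizes the argument around a different pivot than the paper's own proof. The paper also begins by identifying $B_m(\xi)(x)=W_2\big(\int_s (K^m_s\hat\otimes\rho^m_s)\cdot\nabla_x\rho^m_x(s)\big)$ (it kills the contraction term by integration by parts in $s$, using that $\int_x K^m_s(x)\rho^m_s(x)$ is constant in $s$; your observation that this constant is itself zero by oddness of $K^m\rho^m$ is equally valid and more direct), and it then applies the same $W_2\times W_{n-1}$ product formula with the same coefficients $n$, $2n(n-1)$ and $n\,2!\binom{n-1}{2}$. Where you genuinely diverge is in the two non-obvious steps. The paper disposes of the chaos-$(n-3)$ term and proves the adjoint relation by two separate, self-contained computations: symmetrization of $\varphi_n$ followed by integration by parts in $s$ using $\div_s K^m_s=0$, and, for the adjunction, explicit changes of variables plus the same integration by parts. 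You instead isolate the single identity $\mathrm{Sym}_{x,a,b}\,g^m_x(a,b)=0$ and use it twice, which is legitimate: your derivation of it from $\<\xi,B_m(\xi)\>=0$ is sound, since that cubic functional vanishes for $\mu$-a.e.\ $\xi$ (indeed pointwise, as $\Pi_m\xi$ is a trigonometric polynomial), so every chaos component vanishes, and one more application of the product formula shows the $W_3$ kernel is exactly the symmetrized vertex. Alternatively the identity is a three-line Fourier check: the vertex has coefficients $c(p,q)=p^\perp\!\cdot q\,\big(|p|^{-1-\eps}-|q|^{-1-\eps}\big)$ on the momentum shell $p+q+r=0$, with a cutoff $\mathbf{1}_{|p|,|q|,|r|\leq m}$ that is fully symmetric in the three momenta, and $c(v,w)+c(u,w)+c(u,v)=0$ whenever $u+v+w=0$. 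Your factor bookkeeping in the adjunction is also right: the two pairings reduce to $n(n+1)!\int g^m_p(q,r)\,\Phi$ and $2n(n+1)!\int g^m_q(r,p)\,\Phi$, and the antisymmetry identity together with the $(q,r)$-symmetry of $\Phi$ yields precisely the claimed relation with its minus sign. What your route buys is economy and a structural explanation (skew-adjointness of $\mathcal G^m$ on $L^2(\mu)$) for why only the $\pm 1$ chaos shifts survive; what the paper's route buys is that each step is an elementary computation requiring no auxiliary lemma, at the cost of running the same mechanism (divergence-freeness plus symmetrization) twice. One bookkeeping caveat: with the convention $\mathcal G^m\varphi=-\<B_m(\xi),D\varphi\>$ of \eqref{operator-2}, the direct computation gives $B_m(\xi)(x)=+W_2(\mbox{vertex})$, so your identification of $g^m_x$ (the kernel of $-B_m$) with the vertex appearing in the statement is off by an overall sign. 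This is harmless — the splitting, the adjoint identity and all downstream estimates are invariant under $g^m\mapsto -g^m$, and the paper's own proof is loose about the very same sign, as it computes $+\int_x B_m(\xi)(x)D_xW_n(\varphi_n)\,\d x$ — but you should fix one convention and carry it through consistently.
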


The proof is similar to that of \cite[Lemma 2.4]{GP-18} and will be given in the appendix. To derive the expressions of $\mathcal G^m_+$ and $ \mathcal G^m_-$, we will work on the Fock space $\mathcal H= \Gamma L^2_0(\T^2)= \otimes_{n=1}^\infty L^2_0(\T^{2n})$ equipped with the norm
  $$\|\varphi \|_{\mathcal H}^2 = \sum_{n\geq 1} n! \, \|\varphi_n \|_{L^2(\T^{2n})}^2 =  \sum_{n\geq 1} n! \sum_{k\in (\Z^2_0)^n} |\hat \varphi_n(k)|^2. $$
We denote with the same notations $\mathcal L_0, \mathcal G^m_+, \mathcal G^m_-$ the operators on the Fock space in such a way that, on smooth cylinder functions, we have
  $$\mathcal L_0 \bigg[\sum_{n\geq 1} W_n(\varphi_n) \bigg] = \sum_{n\geq 1} W_n((\mathcal L_0\varphi)_n), \quad \mathcal G^m_\pm \bigg[ \sum_{n\geq 1} W_n(\varphi_n) \bigg]= \sum_{n\geq 1} W_n((\mathcal G^m_\pm \varphi)_n). $$
In the following, for $m,n\in \Z_+,\, m\leq n$, $k_{m:n}$ denotes the vector $(k_m, k_{m+1}, \ldots, k_n)$ where $k_i\in \Z_0^2$. Similarly, $r_{m:n}= (r_m,\ldots, r_n)$ with $r_i\in \T^2$. The following results will be proved in the appendix.

\begin{lemma}\label{lem-drift}
For sufficiently nice function $\varphi\in \mathcal H$, the operators $\mathcal L_0, \mathcal G^m_+, \mathcal G^m_-$ are given in Fourier variables by
  \begin{equation*}
  \aligned
  \mathcal F(\mathcal L_0\varphi)_n(k_{1:n}) &= -4\pi^2 \big(|k_1|^2 + \cdots + |k_n|^2 \big) \hat \varphi_n(k_{1:n}), \\
  \mathcal F(\mathcal G^m_+ \varphi)_n(k_{1:n}) &= \frac{(2\pi)^{1-\eps}}2 (n-1) {\bf 1}_{|k_1|, |k_2|, |k_1+k_2|\leq m} \bigg(\frac{k_1^\perp\cdot k_2}{|k_1|^{1+\eps}} + \frac{k_2^\perp\cdot k_1}{|k_2|^{1+\eps}} \bigg)  \hat \varphi_{n-1}(k_1+k_2, k_{3:n}), \\
  \mathcal F(\mathcal G^m_- \varphi)_n(k_{1:n}) &= -\frac{(2\pi)^{1-\eps}}2 (n+1)n {\bf 1}_{|k_1|\leq m}\sum_{|p|,|q|\leq m, p+q=k_1} \bigg(\frac{p^\perp \cdot q}{|p|^{1+\eps}}+ \frac{q^\perp \cdot p}{|q|^{1+\eps}}\bigg) \hat \varphi_{n+1}(p,q,k_{2:n}) .
  \endaligned
  \end{equation*}
\end{lemma}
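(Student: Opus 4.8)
The plan is to treat the three operators in turn, obtaining $\mathcal L_0$ from the Wiener--chaos calculus and the two drift pieces by transcribing the spatial kernels of Lemma \ref{lem-1} into Fourier variables. For $\mathcal L_0$, observe that \eqref{operator-1} is exactly the generator of the Ornstein--Uhlenbeck dynamics relaxing each mode $\langle\xi,e_k\rangle$ at rate $4\pi^2|k|^2$; equivalently $\mathcal L_0=-\sum_k 4\pi^2|k|^2\,\mathcal N_k$, where $\mathcal N_k$ is the number operator in the direction $e_k$. Since a Wick monomial $\prod_i\langle\xi,e_{k_i}\rangle$ is an eigenvector of $\mathcal N_k$ with eigenvalue the multiplicity of $k$ among $k_{1:n}$, applying $\mathcal L_0$ to $W_n(\varphi_n)$ multiplies the $n$-th chaos by $-4\pi^2\sum_{i=1}^n|k_i|^2$. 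A one-line check on Wick monomials, using that $\partial_k\partial_{-k}$ removes a conjugate pair of factors while $\langle\xi,e_k\rangle\partial_k$ preserves the degree, makes this diagonal action rigorous and yields the first formula.

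For $\mathcal G^m_+$ I would substitute the Fourier series $\hat K^m(p)=\frac{{\rm i}}{(2\pi)^\eps}\frac{p^\perp}{|p|^{1+\eps}}{\bf 1}_{0<|p|\le m}$, $\hat\rho^m(q)={\bf 1}_{0<|q|\le m}$ and $\widehat{\nabla\rho^m}(r)=2\pi{\rm i}\,r\,{\bf 1}_{0<|r|\le m}$ into the kernel $\int_{x,s}\varphi_n(x,\cdot)(K^m_s\hat\otimes\rho^m_s)\cdot\nabla_x\rho^m_x(s)$ of Lemma \ref{lem-1}. Integrating over $s$ and $x$ produces two Kronecker constraints forcing the gradient mode $r$ to equal $p+q$ and tying it to the single contracted mode of $\varphi_n$; the two created modes $k_1,k_2$ are then the frequencies carried by $K^m_s$ and $\rho^m_s$, and the argument of $\varphi_{n-1}$ collapses to $k_1+k_2$. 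Using $p^\perp\cdot(p+q)=p^\perp\cdot q$, the pairing $\hat K^m\cdot\widehat{\nabla\rho^m}$ becomes the weight $k_1^\perp\cdot k_2/|k_1|^{1+\eps}$ from the first term of $\hat\otimes$ and $k_2^\perp\cdot k_1/|k_2|^{1+\eps}$ from the second, while the three cut-offs merge into ${\bf 1}_{|k_1|,|k_2|,|k_1+k_2|\le m}$. Tracking the constants $\frac{{\rm i}}{(2\pi)^\eps}$, $2\pi{\rm i}$ and the factor $\tfrac12$ from $\hat\otimes$ gives the constant $\tfrac{(2\pi)^{1-\eps}}2$ with the stated sign, and the prefactor $n$ in Lemma \ref{lem-1} (at output level $n$, i.e.\ input level $n-1$) supplies the $(n-1)$. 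It is essential to read the stated formula as the unsymmetrised representative singling out $k_1,k_2$, so that no further symmetrisation factor intrudes on the prefactor.

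I would then obtain $\mathcal G^m_-$ from the adjointness identity of Lemma \ref{lem-1}, namely $\langle W_{n+1}(\varphi_{n+1}),\mathcal G^m_+W_n(\varphi_n)\rangle=-\langle\mathcal G^m_-W_{n+1}(\varphi_{n+1}),W_n(\varphi_n)\rangle$. Writing both pairings in Fourier modes with the $n!$-weighted Fock inner product and matching coefficients against an arbitrary test kernel, the $\mathcal G^m_+$ multiplier (at output level $n+1$, hence with prefactor $n$) gets transposed into a sum over the splitting $p+q=k_1$; the factorial ratio $(n+1)!/n!=n+1$ then combines with that $n$ to give the prefactor $(n+1)n$, and the minus sign in the adjoint identity produces the opposite sign $-\tfrac{(2\pi)^{1-\eps}}2$. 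Alternatively one computes $\mathcal G^m_-$ directly from its three-variable kernel in Lemma \ref{lem-1} in exactly the same fashion; the additional contracted variable $y$ then generates precisely the sum over the two modes $p,q$ with $p+q=k_1$.

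The step I expect to be most delicate is the Fourier bookkeeping for $\mathcal G^m_\pm$: keeping the symmetrisation convention consistent so that the prefactors $(n-1)$ and $(n+1)n$ emerge correctly, verifying that the cross term of $\hat\otimes$ indeed assembles the two geometric weights into the symmetric-in-$(k_1,k_2)$ combination displayed (note $k_2^\perp\cdot k_1=-\,k_1^\perp\cdot k_2$, so this equals $(k_1^\perp\cdot k_2)\big(|k_1|^{-(1+\eps)}-|k_2|^{-(1+\eps)}\big)$), and pinning down the overall sign together with the power $(2\pi)^{1-\eps}$ from the accumulated factors of ${\rm i}$ and $2\pi$. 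Here the adjoint relation serves as a useful internal check, since it forces $\mathcal G^m_+$ and $\mathcal G^m_-$ to carry opposite signs, exactly as in the stated multipliers.
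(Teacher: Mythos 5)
Your proposal is correct, and for two of the three formulas it follows the paper's own route: the paper likewise treats the $\mathcal L_0$ formula as immediate (your number-operator argument just makes this explicit), and it computes $\mathcal F(\mathcal G^m_+\varphi)_n$ exactly as you describe, by inserting the Fourier expansions of $K^m$ and $\rho^m$ into the kernel from Lemma \ref{lem-1}, splitting according to the two terms of $\hat\otimes$ (the paper's $I_1,I_2$), and using $k_1^\perp\cdot(k_1+k_2)=k_1^\perp\cdot k_2$. The genuine difference is in $\mathcal G^m_-$: the paper computes it directly from the three-variable kernel of Lemma \ref{lem-1}, splitting into two pieces $J_1+J_2$ and showing that $J_2$ vanishes because $k_1^\perp\cdot(p+q)=k_1^\perp\cdot k_1=0$ after symmetrizing in $(p,q)$, whereas you transpose the already-established $\mathcal G^m_+$ multiplier through the adjointness identity $\langle W_{n+1}(\varphi_{n+1}),\mathcal G^m_+W_n(\varphi_n)\rangle=-\langle\mathcal G^m_-W_{n+1}(\varphi_{n+1}),W_n(\varphi_n)\rangle$. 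Your bookkeeping there is right: at output level $n+1$ the $\mathcal G^m_+$ prefactor is $n$, the ratio $(n+1)!/n!$ of Fock weights contributes the extra $n+1$, giving $(n+1)n$ with the opposite sign, and the cut-offs and geometric weights transpose into ${\bf 1}_{|k_1|\le m}\sum_{|p|,|q|\le m,\,p+q=k_1}$ exactly as stated. Your route is shorter and makes the vanishing of the $J_2$-type term automatic (it never appears), at the cost of leaning on the adjoint identity of Lemma \ref{lem-1}, whose integration-by-parts proof is where that cancellation is really hidden. Two small caveats, neither fatal: since the pairing is against symmetric test kernels, the adjoint identity determines $\mathcal F(\mathcal G^m_-\varphi)_n$ only up to symmetrization in $k_{1:n}$ --- but that is all the lemma can assert anyway, as the kernels enter only through $W_n$; and, as you yourself note, adjointness fixes only the \emph{relative} sign of $\mathcal G^m_\pm$, so the absolute sign must come from the direct computation of $\mathcal G^m_+$, which you do carry out.
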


Next we want to obtain some a priori estimates on the drift $\mathcal G^m$. Following \cite{GP-18, GT}, we introduce the notion of weight.

\begin{definition}
We call $w: \N\to \R_+$ a weight function if $w$ is increasing and there exists $C>0$ such that $w(x)\leq Cw(x-1)$ for all $x\geq 1$. We write $|w|$ for the smallest such constant $C>0$.
\end{definition}

We will prove the following key estimates; $\|\cdot \|$ without subscripts means the norm in $L^2(\mu)$.

\begin{lemma}\label{lem-apriori}
Let $w: \N\to \R_+$ and $\varphi\in \mathcal{FC}$. Then we have, uniformly in $m$,
  \begin{equation}\label{lem-apriori.1}
  \big\|w(\mathcal N) (-\mathcal L_0)^{-\gamma} \mathcal G^m_+ \varphi \big\| \lesssim \big\|w(1+\mathcal N) (1+\mathcal N) (-\mathcal L_0)^{1-\gamma -\eps/2} \varphi \big\|
  \end{equation}
for all $\gamma>(1-\eps)/2 $, and
  \begin{equation}\label{lem-apriori.2}
  \big\|w(\mathcal N) (-\mathcal L_0)^{-\gamma} \mathcal G^m_- \varphi \big\|\lesssim \big\|w(\mathcal N-1) \mathcal N (-\mathcal L_0)^{1-\gamma -\eps/4} \varphi \big\|
  \end{equation}
for $\gamma \leq(2-\eps)/4$. We also have the $m$-dependent estimate:
  \begin{equation}\label{lem-apriori.3}
  \big\| w(\mathcal N) \mathcal G^m \varphi \big\| \lesssim m \big\| \big(w(\mathcal N+1)+ w(\mathcal N-1)\big) (\mathcal N+1) (-\mathcal L_0)^{1/2} \varphi \big\|.
  \end{equation}
\end{lemma}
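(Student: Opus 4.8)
\smallskip
\noindent\textbf{Proof proposal.} The plan is to prove all three bounds on the Fock space $\mathcal H$ in Fourier variables, using the explicit symbols from Lemma \ref{lem-drift}. On the $n$-th chaos $\mathcal N$ acts as multiplication by $n$, $(-\mathcal L_0)^\alpha$ as multiplication by $(4\pi^2\sum_{i=1}^n|k_i|^2)^\alpha$, and $w(\mathcal N)$ by $w(n)$; thus each estimate becomes a weighted $\ell^2$ inequality between squared Fock norms, and the weight property $w(n)\le|w|\,w(n-1)$ lets me freely replace $w(\mathcal N)$ by $w(\mathcal N\pm1)$. The single analytic ingredient is control of the interaction symbol: writing $\Psi(k_1,k_2)=\frac{k_1^\perp\cdot k_2}{|k_1|^{1+\eps}}+\frac{k_2^\perp\cdot k_1}{|k_2|^{1+\eps}}$, the antisymmetry $k_2^\perp\cdot k_1=-k_1^\perp\cdot k_2$ gives $\Psi(k_1,k_2)=(k_1^\perp\cdot k_2)\bigl(|k_1|^{-1-\eps}-|k_2|^{-1-\eps}\bigr)$, together with $|k_1^\perp\cdot k_2|\le|k_1|\,|k_2|$.

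For \eqref{lem-apriori.1}, $\mathcal G^m_+$ raises the chaos degree by one, so by Lemma \ref{lem-drift} the squared left-hand norm is a sum over $n$ and over $k_{1:n}$ of $w(n)^2(n-1)^2(4\pi^2\sum_i|k_i|^2)^{-2\gamma}\mathbf 1_{|k_1|,|k_2|,|k_1+k_2|\le m}|\Psi(k_1,k_2)|^2|\hat\varphi_{n-1}(k_1+k_2,k_{3:n})|^2$. Grouping by the output momentum $l=k_1+k_2$ and the spectators $k_{3:n}$ (with total spectator energy $M=\sum_{i\ge3}|k_i|^2$), I reduce to the pointwise bound $\sum_{k_1+k_2=l}\mathbf 1_{|k_1|,|k_2|\le m}|\Psi(k_1,k_2)|^2\bigl(|k_1|^2+|k_2|^2+M\bigr)^{-2\gamma}\lesssim(|l|^2+M)^{2(1-\gamma-\eps/2)}$, uniformly in $m$ and $M$, the remaining degree-dependent constants being absorbed by the factor $(1+\mathcal N)$ on the right. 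This sum I split into a low--high region ($|k_1|\wedge|k_2|\ll|l|$), where $|\Psi|\lesssim|l|\,(|k_1|\wedge|k_2|)^{-\eps}$, and a high--high region ($|k_1|\approx|k_2|\gg|l|$), where the cancellation yields the decisive gain $|\Psi|\lesssim|l|^2\,|k_1|^{-1-\eps}$; a dyadic summation then gives the claim, and the threshold $\gamma>(1-\eps)/2$ enters as the convergence condition, equivalently $1-\gamma-\eps/2<1/2$.

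For \eqref{lem-apriori.2} I use duality rather than redoing the count: by the last identity of Lemma \ref{lem-1}, $\mathcal G^m_-=-(\mathcal G^m_+)^\ast$ on $\mathcal H$. Writing the left-hand side as a supremum of pairings $\langle w(\mathcal N)(-\mathcal L_0)^{-\gamma}\mathcal G^m_-\varphi,\psi\rangle=-\langle\varphi,\mathcal G^m_+(-\mathcal L_0)^{-\gamma}w(\mathcal N)\psi\rangle$ and moving the self-adjoint multipliers $w(\mathcal N)$ and $(-\mathcal L_0)^{\alpha}$ across the pairing, I apply \eqref{lem-apriori.1} to $\psi$; the admissible range transforms into $\gamma\le(2-\eps)/4$, exactly the dual condition $1-\gamma-\eps/4\ge1/2$. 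For the $m$-dependent bound \eqref{lem-apriori.3} uniformity in $m$ is not needed, so I argue crudely: on the support of the cutoffs all momenta are $\le m$, hence $|k_i|^{-\eps}\le1$ and $|\Psi(k_1,k_2)|\lesssim|k_1|+|k_2|\lesssim(|k_1|^2+|k_2|^2)^{1/2}$, so the symbol is dominated by one power of $(-\mathcal L_0)^{1/2}$, while the factor $m$ comes purely from counting (for $\mathcal G^m_+$ the free internal momentum ranges over $\lesssim m^2$ values and for $\mathcal G^m_-$ the inner sum over $p+q=k_1$, $|p|,|q|\le m$, has $\lesssim m^2$ terms, Cauchy--Schwarz yielding $m$ in both cases); the combined weight $\bigl(w(\mathcal N+1)+w(\mathcal N-1)\bigr)(\mathcal N+1)$ accounts for the raising part, the lowering part, and the degree-dependent prefactors.

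The main obstacle is the uniform-in-$m$ momentum sum of the second paragraph: reaching the sharp threshold $\gamma>(1-\eps)/2$ (rather than the cruder $\gamma>1-\eps/2$ that the bound $|k_1^\perp\cdot k_2|\le|k_1|\,|k_2|$ alone would force) requires genuinely exploiting the cancellation $|k_1|^{-1-\eps}-|k_2|^{-1-\eps}$ in the high--high region, and matching the chaos-combinatorial factors against the number-operator weights is the delicate bookkeeping, which I carry out following \cite{GP-18}.
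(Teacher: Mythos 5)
Your overall strategy (work in Fourier variables on the Fock space via Lemma \ref{lem-drift} and reduce each bound to a weighted momentum sum) is the paper's strategy, and your analytic core --- convergence of the momentum sum exactly at the threshold $\gamma>(1-\eps)/2$ --- is correct. But there is a genuine gap in the chaos bookkeeping, and it is not cosmetic. Your reduction of \eqref{lem-apriori.1} to the pointwise bound $\sum_{k_1+k_2=l}{\bf 1}_{|k_1|,|k_2|\le m}|\Psi(k_1,k_2)|^2\bigl(|k_1|^2+|k_2|^2+M\bigr)^{-2\gamma}\lesssim(|l|^2+M)^{2(1-\gamma-\eps/2)}$ is true, but it is too lossy to yield the stated weight $(1+\mathcal N)$. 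Summing it against the chaos coefficients, the $n$-th chaos of the left-hand side carries $n!\,w(n)^2(n-1)^2$ while the $(n-1)$-th chaos of the right-hand side carries $(n-1)!\,w(n)^2n^2$, and $n(n-1)^2/n^2\sim n$: you are short by a full factor of $n$ in the squared norm, i.e.\ your argument proves \eqref{lem-apriori.1} only with $(1+\mathcal N)^{3/2}$ in place of $(1+\mathcal N)$. The paper closes precisely this factor by a symmetrization step which your reduction destroys: it keeps $|\Psi|^2\le|l|^2\bigl(|k_1|^{-\eps}+|k_2|^{-\eps}\bigr)^2$ with the output factor $|l|^2$ explicit, and uses the exchangeability of the symmetric kernel $\hat\varphi_{n-1}$ in its $n-1$ arguments to replace $|l|^2$ by $\frac1{n-1}\bigl(|l|^2+M\bigr)$; this $\frac1{n-1}$ is exactly the missing factor, and once $|l|^2$ has been absorbed into the total energy it is irretrievable. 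The same deficit affects \eqref{lem-apriori.3} (the paper again symmetrizes, passing from $n^2$ to $n$ for $\mathcal G^m_+$ and from $n^4$ to $n^3$ for $\mathcal G^m_-$), and it propagates into \eqref{lem-apriori.2}: your duality argument is only as strong as \eqref{lem-apriori.1} applied with the weight $\tilde w(n)=n^{-1}$, and with exponent $3/2$ it leaves an uncontrolled factor $(\mathcal N+1)^{1/2}$.

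Two further comments. First, your two-region dyadic analysis exploiting the cancellation in $|k_1|^{-1-\eps}-|k_2|^{-1-\eps}$ is correct but unnecessary: the paper writes the symbol as $\bigl(\frac{k_1^\perp}{|k_1|^{1+\eps}}+\frac{k_2^\perp}{|k_2|^{1+\eps}}\bigr)\cdot(k_1+k_2)$ (using $k_i^\perp\cdot k_i=0$), bounds it by $|l|\bigl(|k_1|^{-\eps}+|k_2|^{-\eps}\bigr)$ with the plain triangle inequality, and obtains the sharp threshold in one stroke from Lemma \ref{lem-auxiliary} with $\beta=-2\eps$; so your dichotomy (crude bound forces $\gamma>1-\eps/2$, cancellation rescues $(1-\eps)/2$) only reflects your particular factorization of $\Psi$, not the problem itself. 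Second, your duality route to \eqref{lem-apriori.2} via $\mathcal G^m_-=-(\mathcal G^m_+)^\ast$ (the last identity of Lemma \ref{lem-1}) is genuinely different from the paper's direct kernel computation and, granted a correct proof of \eqref{lem-apriori.1}, it is sound --- it even yields the larger range $\gamma<(2+\eps)/4$, since the condition transforms as $1-\gamma-\eps/4>(1-\eps)/2$ rather than as $1-\gamma-\eps/4\ge1/2$ as you assert; the paper's restriction $\gamma\le(2-\eps)/4$ arises instead from its own symmetrization step, which requires $2-2\gamma-\eps/2\ge1$.
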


We postpone the proof of Lemma \ref{lem-apriori} to the appendix. By \eqref{lem-apriori.3}, it is natural to identify a dense domain $\mathcal D (\mathcal L^m)$ for $\mathcal L^m$ as
  $$\mathcal D (\mathcal L^m)= \big\{\varphi\in \mathcal H: \|(1+\mathcal N) \mathcal L_0 \varphi\|<\infty \big\}= (1+\mathcal N)^{-1} (-\mathcal L_0)^{-1} (\mathcal H). $$
We have $\<\psi, (\mathcal L_0+ \mathcal G^m) \varphi \>= \<(\mathcal L_0- \mathcal G^m) \psi, \varphi \>$ for any $\psi,\varphi\in \mathcal D (\mathcal L^m)$. Since $\mathcal L_0$ is dissipative, for all $\varphi\in \mathcal D (\mathcal L^m)$, one has
  $$\<\varphi, (\mathcal L_0+ \mathcal G^m) \varphi \>= \<\varphi, \mathcal L_0 \varphi \>= -\|(-\mathcal L_0 )^{-1/2} \varphi\|^2 \leq 0. $$

We shall consider the Galerkin approximations with ``near-stationary'' fixed-time marginal.

\begin{definition}
A stochastic process $(\xi_t)_{t\geq 0}$ with values in $C^\infty(\T^2)'$ is ($L^2$-)incompressible if for all $T>0$, there exists a constant $C(T)$ such that
  $$\sup_{0\leq t\leq T} \E |\varphi(\xi_t)| \leq C(T) \|\varphi \|, \quad \varphi\in \mathcal{FC}. $$
\end{definition}

By density argument, for an incompressible process $(\xi_t)_{t\geq 0}$ and for all $\varphi\in \mathcal H$, we can define $s\mapsto \varphi(\xi_s)$ as a stochastic process continuous in $L^1$. Similarly to \cite[Lemma 5]{GT}, we have

\begin{lemma}\label{lem-incompress}
For $\eta\in L^2(\mu)$, let $\E_{\eta\d \mu}$ be the expectation with respect to the law of the solution $\xi^m$ to the Galerkin approximation \eqref{2D-NSEs-Galerkin} with the initial condition $\xi^m_0 \sim \eta\d \mu$. Then, for any $\Psi\in C(\R_+, C^\infty(\T^2)')\to \R$,
  $$\E_{\eta\d \mu} |\Psi(\xi^m )| \leq \|\eta\| \,(\E_\mu \Psi(\xi^m )^2)^{1/2}. $$
In particular, any such process is incompressible uniformly in $m$.
\end{lemma}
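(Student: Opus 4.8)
The plan is to deduce the estimate from a single change-of-measure identity on path space, followed by Cauchy--Schwarz, in the spirit of \cite[Lemma 5]{GT}. First I would fix the Markov kernel of the Galerkin dynamics: for $x\in H^{-1-}$ let $\P^m_x$ denote the law on $C(\R_+, C^\infty(\T^2)')$ of the solution $\xi^m$ to \eqref{2D-NSEs-Galerkin} started from the deterministic datum $\xi^m_0=x$, which is well defined because \eqref{2D-NSEs-Galerkin} admits a unique strong solution for every initial condition. The law of $\xi^m$ started from $\eta\,\d\mu$ is then the mixture $\P_{\eta\d\mu}=\int \eta(x)\,\P^m_x\,\d\mu(x)$, and the stationary law (started from $\mu$) is $\P_\mu=\int \P^m_x\,\d\mu(x)$. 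Since both are built from the same kernel, $\P_{\eta\d\mu}$ is absolutely continuous with respect to $\P_\mu$ with density $\eta(\xi_0)$, i.e.\ $\eta$ evaluated at the initial coordinate of the path.

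Granting this, the main inequality is one line: for measurable $\Psi$ with $\E_\mu[\Psi(\xi^m)^2]<\infty$,
\begin{equation*}
\E_{\eta\d\mu}|\Psi(\xi^m)|=\E_\mu\big[\eta(\xi^m_0)\,|\Psi(\xi^m)|\big]\leq \big(\E_\mu[\eta(\xi^m_0)^2]\big)^{1/2}\big(\E_\mu[\Psi(\xi^m)^2]\big)^{1/2}=\|\eta\|\,\big(\E_\mu[\Psi(\xi^m)^2]\big)^{1/2},
\end{equation*}
using in the last step only that $\xi^m_0\sim\mu$ under $\P_\mu$, so that $\E_\mu[\eta(\xi^m_0)^2]=\int\eta^2\,\d\mu=\|\eta\|^2$. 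I would emphasise that the main bound uses no invariance, merely that $\P_\mu$ starts at $\mu$.

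For the ``in particular'' statement I would specialise to $\Psi(\xi^m)=\varphi(\xi^m_t)$ with $\varphi\in\mathcal{FC}$ and a fixed $t$, which yields $\E_{\eta\d\mu}|\varphi(\xi^m_t)|\leq\|\eta\|\,(\E_\mu[\varphi(\xi^m_t)^2])^{1/2}$. Here invariance of $\mu$ enters: under $\P_\mu$ one has $\xi^m_t\sim\mu$ for all $t\geq 0$, whence $\E_\mu[\varphi(\xi^m_t)^2]=\int\varphi^2\,\d\mu=\|\varphi\|^2$, and therefore $\sup_{0\leq t\leq T}\E_{\eta\d\mu}|\varphi(\xi^m_t)|\leq\|\eta\|\,\|\varphi\|$, which is incompressibility with $C(T)=\|\eta\|$ independent of both $T$ and $m$.

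The only genuinely substantive point, and the step I expect to require the most care, is thus the invariance of $\mu$ for \eqref{2D-NSEs-Galerkin}. I would verify it by checking $\int\mathcal L^m\varphi\,\d\mu=0$ for all $\varphi\in\mathcal{FC}$: the Ornstein--Uhlenbeck part satisfies $\int\mathcal L_0\varphi\,\d\mu=0$ by Gaussian integration by parts, while for the drift one uses the splitting $\mathcal G^m=\mathcal G^m_++\mathcal G^m_-$ of Lemma \ref{lem-1}. Since $\mathcal G^m_+$ raises the chaos order and the prefactor $2n(n-1)$ in $\mathcal G^m_- W_n$ vanishes at $n=1$, the functional $\mathcal G^m\varphi$ carries no chaos-zero component, so $\int\mathcal G^m\varphi\,\d\mu=\langle 1,\mathcal G^m\varphi\rangle=0$. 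Equivalently, the modes $|k|>m$ evolve as independent Ornstein--Uhlenbeck processes whose Gaussian marginals are invariant, while the finite-dimensional low-mode drift $B_m$ is divergence free in phase space and conserves the energy; either route gives the invariance used above.
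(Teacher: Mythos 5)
Your proof is correct and follows essentially the same route as the paper's: the paper gives no argument of its own but defers to \cite[Lemma 5]{GT}, whose proof is precisely this change of measure on path space (density $\eta(\xi^m_0)$ of the law started from $\eta\,\d\mu$ with respect to the stationary law) followed by Cauchy--Schwarz, with invariance of $\mu$ entering only for the incompressibility corollary. Your added verification that $\mu$ is invariant for the Galerkin system --- via the vanishing of the chaos-zero component of $\mathcal G^m\varphi$, or equivalently via the Liouville property and energy conservation of $B_m$ together with the Ornstein--Uhlenbeck structure --- is the standard fact the paper takes for granted.
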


Next we prove an analogue of \cite[Lemma 6]{GT}; recall that $D_x$ is the Malliavin derivative.

\begin{lemma}\label{lem-martingale}
Let $\eta\in L^2(\mu)$ and $\xi^m$ be a solution to the Galerkin approximation \eqref{2D-NSEs-Galerkin} with the initial condition $\xi^m_0 \sim \eta\d \mu$. Then this solution is incompressible and, for any $\varphi\in \mathcal D(\mathcal L^m)$, the process
  $$M^{m,\varphi}_t = \varphi(\xi^m_t)- \varphi(\xi^m_0)- \int_0^t \mathcal L^m \varphi(\xi^m_s)\,\d s$$
is a continuous martingale with quadratic variation
  \begin{equation}\label{lem-martingale.0}
  \< M^{m,\varphi}\>_t = \int_0^t \mathcal E(\varphi)(\xi^m_s)\,\d s ,\quad \mbox{with } \mathcal E(\varphi) = 2\int_{\T^2} \big| (-\Delta)^{1/2} D_x\varphi \big|^2\,\d x.
  \end{equation}
For any weight $w$, we have
  \begin{equation}\label{lem-martingale.1}
  \big\| w(\mathcal N) (\mathcal E(\varphi))^{1/2} \big\| \lesssim \big\| w(\mathcal N-1) (-\mathcal L_0)^{1/2} \varphi\big\|.
  \end{equation}
Moreover, for all $p\geq 1$,
  \begin{equation}\label{lem-martingale.2}
  \E \sup_{0\leq t\leq T} \bigg| \int_0^t \varphi(\xi^m_s) \,\d s \bigg|^p \lesssim T^{p/2} \big\|c_{2p}^{\mathcal N}(-\L_0)^{-1/2}\varphi \big\|_{L^2(\mu)}^p .
  \end{equation}
\end{lemma}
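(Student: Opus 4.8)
The four assertions can be handled in turn, and I expect the weighted ``energy'' bound \eqref{lem-martingale.1} to be the only genuinely new computation, the remaining parts being adaptations of \cite[Lemmas 5--6]{GT}. Incompressibility is already the content of Lemma \ref{lem-incompress}: invariance of $\mu$ under the Galerkin flow gives $\E_\mu\varphi(\xi^m_t)^2=\|\varphi\|^2$, so $\sup_{t\le T}\E_{\eta\d\mu}|\varphi(\xi^m_t)|\le\|\eta\|\,\|\varphi\|$ uniformly in $m$. For the martingale property and \eqref{lem-martingale.0} I would begin with cylinder $\varphi=F\in\mathcal{FC}$, for which the It\^o formula displayed after \eqref{operator-2} already exhibits $M^{m,F}_t$ as the martingale part, $\d M^{m,F}_t=\sum_{k\in\Lambda}\partial_k f(\Pi_\Lambda\xi^m_t)\,\d M_t(e_k)$ with $M_t(e_k)=2\pi{\rm i}(a_k\cdot k^\perp)W^{-k}_t$. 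Using \eqref{BMs}, the bracket $\d[M(e_k),M(e_l)]_t$ vanishes unless $l=-k$, and for $l=-k$ the relations $a_{-k}=a_k$, $(-k)^\perp=-k^\perp$ and \eqref{basis} give $\d[M(e_k),M(e_{-k})]_t=8\pi^2|k|^2\,\d t$; summing yields $8\pi^2\sum_{k\in\Lambda}|k|^2 f_kf_{-k}=\mathcal E(F)$ by \eqref{energy-op}, which is \eqref{lem-martingale.0}. To reach a general $\varphi\in\mathcal D(\mathcal L^m)$ I would approximate $\varphi$ by cylinder functions in the graph norm of $\mathcal L^m$ --- dense by the definition of $\mathcal D(\mathcal L^m)$ and the $m$-dependent bound \eqref{lem-apriori.3} --- and pass to the limit in $L^1$ using incompressibility for $\varphi(\xi^m_t)$ and $\int_0^t\mathcal L^m\varphi\,\d s$, controlling the brackets through \eqref{lem-martingale.1} applied to the differences.

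The estimate \eqref{lem-martingale.1} is the heart of the lemma and the step I expect to be hardest. Writing $\varphi=\sum_n W_n(\varphi_n)$, the Malliavin derivative $D_x\varphi=\sum_n nW_{n-1}(\varphi_n(x,\cdot))$ lowers the chaos order by one, and $(-\Delta)^{1/2}_x$ enters as the multiplier $2\pi|k_1|$ on the frozen variable; after integrating in $x$, $\mathcal E(\varphi)$ becomes a double sum of products $W_{n-1}(\cdot)W_{n'-1}(\cdot)$ carrying a factor $|k_1|^2$. The subtle point is that the square root in $(\mathcal E(\varphi))^{1/2}$ breaks the chaos structure, so rather than expanding the square root I would expand $\mathcal E(\varphi)$ itself by the product formula for multiple Wiener integrals, estimate the contracted kernels level by level, and track the weight: a product of chaoses of orders $n-1$ and $n'-1$ lives in chaos of order at most $(n-1)+(n'-1)$, on which $w$ is dominated by $|w|\,w(\mathcal N-1)$ via $w(x)\le|w|w(x-1)$, while the $|k_1|^2$ factor is precisely the one-variable symbol of $-\mathcal L_0$ (Lemma \ref{lem-drift}); together these reproduce the right-hand side of \eqref{lem-martingale.1}. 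This is the analogue of \cite[Lemma 6]{GT}, and I would either carry out this bookkeeping here or defer it to the appendix alongside Lemma \ref{lem-apriori}.

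Finally, \eqref{lem-martingale.2} follows from the It\^o trick of \cite{GJ}. First I would reduce to the stationary case $\xi^m_0\sim\mu$ by Lemma \ref{lem-incompress}; the Cauchy--Schwarz this requires doubles the exponent (from $p$ to $2p$) and is what turns the expected $c_p$ into $c_{2p}$. For the stationary process, setting $\psi=(-\mathcal L_0)^{-1}\varphi$ and adding the forward martingale $M^{m,\psi}$ to the backward martingale of the time-reversed process --- whose generator is the $L^2(\mu)$-adjoint $\hat{\mathcal L}^m=\mathcal L_0-\mathcal G^m$, the antisymmetry of $\mathcal G^m$ having been recorded before the definition of $\mathcal D(\mathcal L^m)$ --- cancels the drift $\mathcal G^m\psi$ and represents $2\int_0^t\varphi(\xi^m_s)\,\d s$ as a sum of martingale increments. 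The Burkholder--Davis--Gundy inequality, stationarity and \eqref{lem-martingale.0} then bound $\E_\mu\sup_{t\le T}|\int_0^t\varphi(\xi^m_s)\,\d s|^{2p}$ by $T^{p}\,\|(\mathcal E(\psi))^{1/2}\|_{L^{2p}(\mu)}^{2p}$; the hypercontractivity stated above gives $\|(\mathcal E(\psi))^{1/2}\|_{L^{2p}(\mu)}\le\|c_{2p}^{\mathcal N}(\mathcal E(\psi))^{1/2}\|$, and a last use of \eqref{lem-martingale.1} with $w=c_{2p}^{\mathcal N}$ together with $(-\mathcal L_0)^{1/2}\psi=(-\mathcal L_0)^{-1/2}\varphi$ gives the stated bound after taking the square root in the reduction.
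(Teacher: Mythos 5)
Your handling of three of the four assertions matches the paper's proof and is sound: incompressibility is indeed just Lemma \ref{lem-incompress}; the martingale property and \eqref{lem-martingale.0} follow exactly as you say from the It\^o formula for cylinder functions, the bracket computation via \eqref{BMs} and \eqref{basis} (your $8\pi^2|k|^2\,\d t$ is correct), and a graph-norm density argument using \eqref{lem-apriori.3}; and your proof of \eqref{lem-martingale.2} (It\^o trick with the time-reversed generator $\L_0-\mathcal G^m$ in the stationary case, then Lemma \ref{lem-incompress} and Cauchy--Schwarz doubling $p$ into $2p$) is precisely the paper's argument.

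The gap is in your proof of \eqref{lem-martingale.1}, the part you yourself call the heart of the lemma. Expanding $\mathcal E(\varphi)$ by the product formula for multiple Wiener integrals can only yield bounds on weighted norms of $\mathcal E(\varphi)$ itself; you offer no mechanism --- and there is no level-by-level one --- to convert such bounds into a bound on $\|w(\mathcal N)(\mathcal E(\varphi))^{1/2}\|$, because the chaos decomposition of the square root is not determined componentwise by that of $\mathcal E(\varphi)$ (the operator $w(\mathcal N)$ is a spectral multiplier, not a multiplication operator, so $\|w(\mathcal N)G^{1/2}\|^2$ has no useful relation to $\|w(\mathcal N)G\|$ for non-constant $w$). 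Your weight bookkeeping is also incorrect: a product of chaoses of orders $n-1$ and $n'-1$ has components up to order $(n-1)+(n'-1)$, and iterating $w(x)\leq |w|\,w(x-1)$ down to level $n-1$ costs a factor $|w|^{n'-1}$, not a single $|w|$; for the geometric weights $c_p^{\,\cdot}$ that are actually used later this is unbounded. The paper's proof avoids all of this: the weighted norm of $\mathcal E(\varphi)^{1/2}$ is read (as in \cite{GP-18, GT}) in the vector-valued sense, i.e.\ $w(\mathcal N)$ acts \emph{under} the $\d x$-integral defining $\mathcal E(\varphi)$, on $(-\Delta)^{1/2}D_x\varphi$, whose chaos components are explicit, $(D_x\varphi)_n=n\varphi_n(x,\cdot)$ of order $n-1$. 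Then Plancherel gives
\begin{equation*}
\big\|w(\mathcal N)\,\mathcal E(\varphi)^{1/2}\big\|^2 = 2\int_{\T^2}\big\|w(\mathcal N)(-\Delta)^{1/2}D_x\varphi\big\|_{L^2(\mu)}^2\,\d x \backsimeq \sum_{n\geq 1} n!\,w(n-1)^2\, n\sum_{k_{1:n}}|k_1|^2\,|\hat\varphi_n(k_{1:n})|^2,
\end{equation*}
and the symmetry of $\hat\varphi_n$ converts $n|k_1|^2$ into $|k_1|^2+\cdots+|k_n|^2$, the symbol of $-\L_0$, so that \eqref{lem-martingale.1} is in fact an equivalence of norms --- no product formula, no contraction estimates. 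Note finally that this same vector-valued reading is what legitimizes the hypercontractivity step $\|\mathcal E(\psi)^{1/2}\|_{L^{2p}(\mu)}\leq\|c_{2p}^{\mathcal N}\mathcal E(\psi)^{1/2}\|$ in your last paragraph, so your proof of \eqref{lem-martingale.2} silently rests on the very statement you have not established.
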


\begin{proof}
For a cylinder function $\varphi(\xi)= \varphi(\<\xi,e_{k_j}\>; 1\leq j\leq n)$, applying the It\^o formula yields
  $$M^{m,\varphi}_t = -2\pi {\rm i} \sum_{j=1}^n (a_{k_j}\cdot k_j^\perp) \int_0^t (\partial_{k_j} \varphi)(\xi^m_s)\,\d W^{-k_j}_s. $$
It is clear that
  $$\< M^{m,\varphi}\>_t = \int_0^t \mathcal E(\varphi)(\xi^m_s)\,\d s. $$
Therefore, by Doob's inequality and Lemma \ref{lem-incompress},
  $$\E\bigg[\sup_{t\in [0,T]} |M^{m,\varphi}_t|\bigg] \lesssim \E \big(\<M^{m,\varphi}\>_T^{1/2} \big) \leq \|\eta \| \big(\E_\mu \<M^{m,\varphi}\>_T \big)^{1/2}. $$
Note that the process $\xi^m$ has the invariant law $\mu$, we have
  $$\E_\mu \<M^{m,\varphi}\>_T = \E_\mu \int_0^T \mathcal E(\varphi)(\xi^m_s)\,\d s= T \|\mathcal E(\varphi) \|_{L^1(\mu)}= T \big\|\mathcal E(\varphi)^{1/2} \big\|_{L^2(\mu)}^2.  $$
Combining the above two results gives us
  $$\E\bigg[\sup_{t\in [0,T]} |M^{m,\varphi}_t|\bigg] \lesssim \|\eta \| \sqrt{T}\, \big\|\mathcal E(\varphi)^{1/2} \big\|_{L^2(\mu)}.  $$

Next, we prove \eqref{lem-martingale.1}. Since $(D_x \varphi)_n = n \varphi_n(x,\cdot)$,
  $$\aligned
  \big\|w(\mathcal N) \mathcal E(\varphi)^{1/2} \big\|_{L^2(\mu)}^2&= 2\int_{\T^2} \big\|w(\mathcal N) (-\Delta)^{1/2} D_x\varphi \big\|_{L^2(\mu)}^2\,\d x \\
  &=  2\int_{\T^2} \bigg(\sum_{n\geq 1} (n-1)!\, w(n-1)^2 n^2 \big\| (-\Delta)^{1/2} \varphi_n(x,\cdot) \big\|_{L^2(\T^{2(n-1)})}^2 \bigg) \,\d x \\
  &\backsimeq \sum_{n\geq 1} n!\, w(n-1)^2 n \sum_{k_{1:n}} 4\pi^2|k_1|^2 |\hat\varphi_n(k_{1:n})|^2 \\
  &\backsimeq \sum_{n\geq 1} n!\, w(n-1)^2 \sum_{k_{1:n}} \big(|k_1|^2+ \cdots +|k_n|^2\big) |\hat\varphi_n(k_{1:n})|^2 \\
  &\backsimeq \big\|w(\mathcal N-1) (-\mathcal L_0)^{1/2}\varphi \big\|_{L^2(\mu)}^2,
  \endaligned$$
which gives us \eqref{lem-martingale.1}. Using the bounds \eqref{lem-apriori.3} and \eqref{lem-martingale.1} and by a density argument, we can extend the last two formulae in Lemma \ref{lem-drift} to all functions in $\mathcal D(\mathcal L^m)$.

The proof of the last estimate is similar to that of Proposition \ref{prop-Ito-trick}; this is because if the process $\xi^m$ starts from the stationary distribution $\mu$, then the reversed process $(\tilde \xi^m= \xi^m_{T-t})_{t\in [0,T]}$ is also stationary with the generator $\tilde{\mathcal L}^m = \L_0 -\mathcal G^m$. Thus we have
  $$\E_\mu \bigg[ \sup_{0\leq t\leq T} \bigg| \int_0^t \varphi(\xi^m_s) \,\d s \bigg|^{p} \bigg] \lesssim T^{p/2} \big\|c_{p}^{\mathcal N} (-\L_0)^{-1/2}\varphi \big\|_{L^2(\mu)}^{p}. $$
This is an estimate uniform in $m$. Finally, by Lemma \ref{lem-incompress},
  $$\aligned
  \E \bigg[ \sup_{0\leq t\leq T} \bigg| \int_0^t \varphi(\xi^m_s) \,\d s \bigg|^p \bigg] &\lesssim \bigg[\E_\mu \bigg(\sup_{0\leq t\leq T} \bigg| \int_0^t \varphi(\xi^m_s) \,\d s \bigg|^{2p} \bigg) \bigg]^{1/2} \\
  &\lesssim \Big[T^{p}\big\|c_{2p}^{\mathcal N}(-\L_0)^{-1/2}\varphi \big\|_{L^2(\mu)}^{2p} \Big]^{1/2}\\
  &= T^{p/2} \big\|c_{2p}^{\mathcal N}(-\L_0)^{-1/2}\varphi \big\|_{L^2(\mu)}^{p},
  \endaligned $$
which completes the proof.
\end{proof}

\subsection{The cylinder martingale problem}

We shall take limit of Galerkin approximations and characterize the limit dynamics. The estimate \eqref{lem-martingale.2} suggests us that any limit process $(\xi_t)_{t\geq 0}$ should satisfy
  \begin{equation}\label{limit-a-priori}
  \E \bigg[ \sup_{0\leq t\leq T} \bigg| \int_0^t \varphi(\xi_s) \,\d s \bigg|^p \bigg] \lesssim T^{p/2} \big\|c_{2p}^{\mathcal N}(-\L_0)^{-1/2}\varphi \big\|_{L^2(\mu)}^{p}
  \end{equation}
for all $p\geq 1$ and all cylinder functions $\varphi\in \mathcal{FC}$. Following \cite[Definition 3]{GT}, we introduce the notion of cylinder martingale problem with respect to the operator $\L$.

\begin{definition}\label{def-cylinder-martingale}
A process $(\xi_t)_{t\geq 0}$ with trajectories in $C\big(\R_+; C^\infty(\T^2)' \big)$ solves the cylinder martingale problem for the operator $\L$ with initial distribution $\nu$ if $\xi_0 \sim \nu$ and the following conditions hold.
\begin{itemize}
\item[\rm(i)] $(\xi_t)_{t\geq 0}$ is incompressible;
\item[\rm(ii)] the It\^o trick works: for all cylinder functions $\varphi$ and all $p\geq 1$, \eqref{limit-a-priori} holds;
\item[\rm(iii)] for any $\varphi\in \mathcal{FC}$, the process
  \begin{equation}\label{def-cylinder-martingale.1}
  M^\varphi_t= \varphi(\xi_t)- \varphi(\xi_0) - \int_0^t (\L \varphi)(\xi_s) \,\d s
  \end{equation}
is a continuous martingale with quadratic variation $\<M^\varphi \>_t = \int_0^t \mathcal E(\varphi)(\xi_s) \,\d s$.
\end{itemize}
\end{definition}

By Propositions \ref{prop-martingale-solution} and \ref{prop-Ito-trick}, the weak limit process $\tilde\xi_\cdot$ obtained in Section 3 is a solution to the cylinder martingale problem for $\L$; moreover, it is stationary with the marginal law $\mu$. Similarly to \cite[Theorem 1]{GT}, we can prove

\begin{theorem}
Let $\eta\in L^2(\mu)$ and for each $m\geq 1$, let $(\xi^m_t)_{t\geq 0}$ be the solution to \eqref{2D-NSEs-Galerkin} with $\xi^m_0 \sim \eta\,\d\mu$. Then the family $(\xi^m)_{m\geq 1}$ is tight in $C\big(\R_+; C^\infty(\T^2)' \big)$ and any weak limit $\xi$ solves the cylinder martingale problem for $\mathcal L$ with initial distribution $\eta\,\d\mu$ according to Definition \ref{def-cylinder-martingale} and we have
  $$\E[|\varphi(\xi_t)- \varphi(\xi_s)|^p]\lesssim |t-s|^{p/2} \big\| c_{4p}^{\mathcal N}(-\L_0)^{1/2} \varphi \big\|^p $$
for all $p\geq 2$ and $\varphi\in \mathcal{FC}$.
\end{theorem}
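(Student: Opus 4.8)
The plan is to follow the scheme of \cite[Theorem 1]{GT}: first produce a H\"older bound on the coordinate processes that is uniform in $m$, deduce tightness from it, and then identify every weak limit as a solution of the cylinder martingale problem by passing to the limit in the martingale decomposition of Lemma \ref{lem-martingale}. The starting point is the identity
$$\varphi\big(\xi^m_t\big)- \varphi\big(\xi^m_s\big)= \int_s^t \mathcal L^m \varphi\big(\xi^m_r\big)\,\d r + \big(M^{m,\varphi}_t- M^{m,\varphi}_s\big), \qquad \mathcal L^m= \L_0+ \mathcal G^m,$$
valid for every $\varphi\in \mathcal{FC}$.

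First I would establish the final H\"older estimate. For the drift I apply the It\^o trick \eqref{lem-martingale.2} to the cylinder function $\mathcal L^m\varphi= \L_0\varphi+ \mathcal G^m\varphi$: the $\L_0$ part yields a $(-\L_0)^{1/2}\varphi$ contribution, while the $\mathcal G^m$ part is controlled uniformly in $m$ through Lemma \ref{lem-apriori}, using \eqref{lem-apriori.1} with $\gamma=1/2$ for $\mathcal G^m_+$ (admissible precisely because $\eps>0$) and \eqref{lem-apriori.2} at its endpoint $\gamma=(2-\eps)/4$ for $\mathcal G^m_-$, after dominating $(-\L_0)^{-1/2}\lesssim (-\L_0)^{-(2-\eps)/4}$ on the nonzero chaos; both reduce to $(-\L_0)^{1/2}\varphi$. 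For the martingale increment I use Burkholder--Davis--Gundy together with the quadratic variation \eqref{lem-martingale.0}, the bound \eqref{lem-martingale.1} and incompressibility, which again produce $|t-s|^{p/2}$ times an $L^2(\mu)$-norm of $\varphi$ in the $(-\L_0)^{1/2}$ and $\mathcal N$ scale; the index $c_{4p}$ in the statement is the (non-sharp) outcome of the Cauchy--Schwarz reduction to $\mu$ of Lemma \ref{lem-incompress} combined with hypercontractivity. Taking $\varphi(\xi)=\<\xi, e_k\>$ and invoking the Kolmogorov--Chentsov criterion gives tightness of each real process $t\mapsto \<\xi^m_t, e_k\>$ in $C(\R_+;\R)$; since $C^\infty(\T^2)'$ is the dual of a nuclear space, Mitoma's criterion then upgrades this, together with the uniform moment bounds from incompressibility, to tightness of $(\xi^m)_{m\geq 1}$ in $C\big(\R_+; C^\infty(\T^2)'\big)$.

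Given a weak limit $\xi$ along a subsequence, realised almost surely on a common space by Skorohod's theorem, I would verify the three requirements of Definition \ref{def-cylinder-martingale}. Incompressibility and the It\^o trick \eqref{limit-a-priori} descend from the uniform-in-$m$ bounds of Lemmas \ref{lem-incompress} and \ref{lem-martingale} by Fatou, using the uniform integrability supplied by the (near-)white-noise marginals. For the martingale property I would test $M^{m,\varphi}_t- M^{m,\varphi}_s$ against a bounded continuous functional of the path on $[0,s]$ and let $m\to\infty$. The terms $\varphi(\xi^m_\cdot)$, the quadratic variation $\int \mathcal E(\varphi)(\xi^m_r)\,\d r$ and the $\L_0$ part of the drift are bounded or polynomially growing continuous cylinder functionals of the modes $\<\xi^m_r, e_k\>$, so they pass to the limit by the almost sure convergence and uniform integrability. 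The same testing applied to $\big(M^{m,\varphi}\big)^2- \<M^{m,\varphi}\>$ will identify the quadratic variation of $M^\varphi$ as $\int_0^\cdot \mathcal E(\varphi)(\xi_s)\,\d s$.

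The main obstacle is the nonlinear drift $\int_s^t \mathcal G^m\varphi\big(\xi^m_r\big)\,\d r$: the limiting operator $\mathcal G$ involves the renormalised product $\<\xi\otimes\xi, H_\eps^{e_l}\>$, which is only defined as an $L^2(\mu)$ limit and is \emph{not} a continuous functional on $C^\infty(\T^2)'$, so weak convergence alone does not close the argument. I would resolve this by a double approximation. First replace $\mathcal G^m\varphi$ by $\mathcal G\varphi$: the resulting error is controlled uniformly in $m$ by the It\^o trick and the $L^2(\mu)$ convergence $\mathcal G^m\varphi\to \mathcal G\varphi$ of Lemma \ref{lem-limit-operator}, the key point being that $\mathcal G^m\varphi$ and $\mathcal G\varphi$ live in a fixed finite band of chaos, on which the factors $c_q^{\mathcal N}$ and $(-\L_0)^{-1/2}$ act boundedly, so $L^2(\mu)$ convergence is convergence in the very norm that governs \eqref{limit-a-priori}. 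Then approximate the fixed element $\mathcal G\varphi$ in $L^2(\mu)$ by genuine polynomial cylinder functionals $\psi_j$, pass to the limit in $m$ for each $\psi_j$ using their continuity and uniform integrability, and finally send $j\to\infty$, the last error being once more absorbed by the uniform It\^o trick bound. This simultaneous use of the uniform a priori estimate to trade the absence of continuity for $L^2(\mu)$ control is exactly the crux of the Gubinelli--Perkowski method, and completes the identification of the limit dynamics.
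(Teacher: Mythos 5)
Your proposal is correct and takes essentially the same route as the paper, whose proof of this theorem is simply a deferral to \cite[Theorem 1]{GT}: the uniform-in-$m$ H\"older bound via the It\^o trick (Lemma \ref{lem-martingale}) plus Burkholder--Davis--Gundy, tightness via Kolmogorov/Mitoma, and identification of the limit by the density argument combining the uniform It\^o-trick estimate with the $L^2(\mu)$ convergence $\mathcal G^m\varphi\to\mathcal G\varphi$ of Lemma \ref{lem-limit-operator}. The technical points you single out (admissibility of $\gamma=1/2$ in \eqref{lem-apriori.1} and of the endpoint $\gamma=(2-\eps)/4$ in \eqref{lem-apriori.2}, and the absorption of the $(1+\mathcal N)$ factors and weight shifts into the non-sharp constant $c_{4p}$) are consistent with the paper's estimates.
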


\begin{proof}
The proof is similar to that of \cite[Theorem 1]{GT}.
\end{proof}

Next, as shown in Theorem \ref{thm-Kolmogorov} below, we can find a dense domain $\mathcal D(\L)\subset \mathcal H$ for $\L$ such that the Kolmogorov equation
  $$\partial_t \varphi(t) = \L\varphi(t)$$
has a unique solution in $C(\R_+; \mathcal D(\L)) \cap C^1(\R_+; \mathcal H)$ for any initial condition in a dense set $\mathcal U\subset \mathcal H$. Then by duality arguments, one can prove Theorem \ref{thm-uniqueness}.

\begin{proof}[Proof of Theorem \ref{thm-uniqueness}] It is sufficient to prove that for $\varphi\in  \mathcal D(\L)$,
$\varphi(\xi_t)-\varphi(\xi_0)-\int_0^t\L\varphi(\xi_s)\d s$ is a martingale. The rest are the same as those of \cite[Theorem 2]{GT}. Define $\varphi^M$ as the projection of $\varphi$ onto the chaos components of order $\leq M$, and in each chaos we project onto the Fourier modes $|k|_\infty\leq M$. Then $\varphi^M\in \mathcal{FC}$. By construction
$$\lim_{M\to\infty} \E\big|\varphi(\xi_t)-\varphi(\xi_0)-\varphi^M(\xi_t)+\varphi^M(\xi_0) \big|=0.$$
We only have to show that
$$\lim_{M\to\infty}\E\left|\int_0^t\L\varphi^M(\xi_s)\, \d s-\int_0^t\L\varphi(\xi_s)\, \d s\right|=0.$$
By using \eqref{limit-a-priori} and the fact that $c_2=1$, we have
  $$\aligned
  \E\left|\int_0^t\L\varphi^M(\xi_s)\, \d s-\int_0^t\L\varphi(\xi_s)\, \d s\right| & \lesssim \|(-\L_0)^{-1/2}\L(\varphi^M-\varphi)\|\\
  & \lesssim\|(-\L_0)^{1/2}(\varphi^M-\varphi)\|+\|(-\L_0)^{-1/2}\mathcal{G}(\varphi^M-\varphi)\| \\
  & \lesssim \|(1+\mathcal{N})^{\alpha(0)} (-\L_0)^{1/2}(\varphi^M-\varphi)\|,
  \endaligned$$
where in the last step we have used Propositions \ref{prop-control-struc} and \ref{prop-control-struc-2} below (these results hold uniformly in $m$ and hold also for the limit operator). The dominated convergence theorem implies the result.
\end{proof}

\subsection{The Kolmogorov equation}

We want to determine a suitable domain for $\L$ and solve the Kolmogorov backward equation
  $$\partial_t\varphi(t)= \L\varphi(t)$$
for a sufficiently large class of initial data.

\subsubsection{A priori estimates}

The next result is similar to \cite[Lemma 10]{GT}.

\begin{lemma}\label{lem-apriori-Kol}
For any $\varphi_0\in \mathcal V:= (1+\mathcal N)^{-2} (-\L_0)^{-1} (\mathcal H)$, there exists a solution
  $$\varphi^m\in C(\R_+,\mathcal D(\L^m)) \cap C^1(\R_+, \mathcal H)$$
to the backward Kolmogorov equation
  $$\partial_t \varphi^m(t)= \L^m \varphi^m(t)$$
with $\varphi^m(0)= \varphi_0$, satisfying the estimates
  $$\|(1+\mathcal N)^p \varphi^m(t)\|^2 + \int_0^t e^{-C(t-s)} \|(1+\mathcal N)^p (-\L_0)^{1/2} \varphi^m(s)\|^2\,\d s \lesssim_p e^{Ct} \|(1+\mathcal N)^p \varphi_0\|^2,$$
  $$\|(1+\mathcal N)^p \L_0 \varphi^m(t)\| \lesssim_{t,m,p} \|(1+\mathcal N)^{p+1} \L_0 \varphi_0 \|$$
for all $t\geq 0$ and $p\geq 1$.
\end{lemma}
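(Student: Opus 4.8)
The plan is to construct $\varphi^m$ as a limit of finite-dimensional Galerkin approximations and to propagate the two weighted energy bounds by hand, in the spirit of \cite[Lemma 10]{GT}. For $M\geq m$ I would introduce the orthogonal projection $P_M$ of $\mathcal H$ onto those chaos components of order $n\leq M$ all of whose Fourier modes satisfy $|k|_\infty\leq M$. The crucial observation, read off from the Fourier formulas in Lemma \ref{lem-drift}, is that $\mathcal G^m_+$ only creates modes $k_1,k_2$ with $|k_1|,|k_2|\leq m$ while leaving the spectator modes untouched, and $\mathcal G^m_-$ produces a single mode $k_1$ with $|k_1|\leq m$; hence $\mathcal G^m$ maps the finite-dimensional range of $P_M$ into that of $P_{M+1}$, and $\mathcal L^{m,M}:=P_M\mathcal L^m P_M$ is a bounded operator on a finite-dimensional space. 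The ODE $\partial_t\varphi^{m,M}=\mathcal L^{m,M}\varphi^{m,M}$ with $\varphi^{m,M}(0)=P_M\varphi_0$ then has a unique global solution, and the whole construction rests on establishing the two displayed estimates uniformly in $M$ and passing to the limit $M\to\infty$ by weak compactness; the equation together with the bounds yields the regularity $C(\R_+,\mathcal D(\mathcal L^m))\cap C^1(\R_+,\mathcal H)$.

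The heart of the matter is the first estimate. Testing the equation against $(1+\mathcal N)^{2p}\varphi^{m,M}$ gives
\begin{equation*}
\tfrac12\tfrac{\d}{\d t}\big\|(1+\mathcal N)^p\varphi^{m,M}\big\|^2 = \big\<(1+\mathcal N)^{2p}\mathcal L_0\varphi^{m,M},\varphi^{m,M}\big\> + \big\<(1+\mathcal N)^{2p}\mathcal G^m\varphi^{m,M},\varphi^{m,M}\big\>.
\end{equation*}
Since $\mathcal L_0$ and $\mathcal N$ act diagonally on the chaos decomposition and commute, the first term equals $-\big\|(-\mathcal L_0)^{1/2}(1+\mathcal N)^p\varphi^{m,M}\big\|^2$ and supplies the dissipation. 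For the second term I would use the antisymmetry $(\mathcal G^m_+)^\ast=-\mathcal G^m_-$ from Lemma \ref{lem-1}: writing $\mathcal G^m=\mathcal G^m_++\mathcal G^m_-$ and recalling that $\mathcal G^m_\pm$ shift the chaos order by $\pm 1$, the two contributions combine into a telescoping sum in which the weight $(1+\mathcal N)^{2p}$ is replaced by the discrete difference $(n+2)^{2p}-(n+1)^{2p}\lesssim_p (n+1)^{2p-1}$. This cancellation is precisely what removes one full power of $\mathcal N$, so that the cross term is dominated by a weighted pairing of $(-\mathcal L_0)^{-1/2}\mathcal G^m_+\varphi^{m,M}$ against $(-\mathcal L_0)^{1/2}\varphi^{m,M}$. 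Applying the a priori bound \eqref{lem-apriori.1} with $\gamma=1/2$ (legitimate exactly because $\eps>0$ forces $1/2>(1-\eps)/2$) controls the first factor by $\big\|(1+\mathcal N)^p(-\mathcal L_0)^{1/2-\eps/2}\varphi^{m,M}\big\|$; interpolating this $\eps/2$-gain in $(-\mathcal L_0)$-regularity and using Young's inequality then splits the pairing into a small multiple of $\big\|(-\mathcal L_0)^{1/2}(1+\mathcal N)^p\varphi^{m,M}\big\|^2$, absorbed into the dissipation, plus a lower-order term bounded by $C\big\|(1+\mathcal N)^p\varphi^{m,M}\big\|^2$. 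Gronwall's lemma yields the first estimate with a constant independent of $M$ and of $m$.

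For the second, $m$-dependent estimate I would differentiate the equation in the form $\partial_t(\mathcal L_0\varphi^{m})=\mathcal L^m(\mathcal L_0\varphi^m)+[\mathcal L_0,\mathcal G^m]\varphi^m$ and run the same weighted energy argument on $\psi^m:=\mathcal L_0\varphi^m$. Here the delicate absorption is unnecessary: the crude bound \eqref{lem-apriori.3}, applied both to $\mathcal G^m\psi^m$ and to the commutator term (which has the same structure as $\mathcal G^m$ up to a factor coming from $\mathcal L_0$), controls everything at the cost of an explicit factor $m$ and one extra power of $1+\mathcal N$, the lower-order contributions of $\varphi^m$ being handled by the first estimate. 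A Gronwall argument with an $m$-dependent exponential rate then gives $\|(1+\mathcal N)^p\mathcal L_0\varphi^m(t)\|\lesssim_{t,m,p}\|(1+\mathcal N)^{p+1}\mathcal L_0\varphi_0\|$, which in particular guarantees $\varphi^m(t)\in\mathcal D(\mathcal L^m)$ for $\varphi_0\in\mathcal V$ and justifies the pointwise validity of the Kolmogorov equation. The main obstacle is the uniform-in-$m$ closing of the first estimate: both the chaos-telescoping that gains one power of $\mathcal N$ and the $\eps$-dependent regularity margin in \eqref{lem-apriori.1} are indispensable, and without either the cross term could not be absorbed into the dissipation.
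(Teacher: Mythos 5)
Your proposal is correct, and its analytic core coincides with the paper's: the weighted energy identity, the gain of one power of $\mathcal N$ obtained by combining the adjoint relation from Lemma \ref{lem-1} (i.e.\ $(\mathcal G^m_+)^*=-\mathcal G^m_-$) with the chaos-shift commutation (which the paper records as $\mathcal G^{m,h}_+ w(\mathcal N+1)=w(\mathcal N)\mathcal G^{m,h}_+$), the application of the uniform bound \eqref{lem-apriori.1} with $\gamma=1/2$ (admissible exactly because $\eps>0$), absorption of the cross term into the dissipation, and Gronwall, leading to the same inequality $|\<(1+\mathcal{N})^{2p}\varphi,\mathcal{G}^m\varphi\>|\leq C_\delta\|(1+\mathcal{N})^{p}\varphi\|^2+\delta\|(1+\mathcal{N})^{p}(-\L_0)^{1/2}\varphi\|^2$ that the paper states. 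Incidentally, your choice of \eqref{lem-apriori.1} rather than \eqref{lem-apriori.2} is forced, since $\gamma=1/2$ violates the constraint $\gamma\leq(2-\eps)/4$ in \eqref{lem-apriori.2}; the telescoped sum must therefore be expressed through $\mathcal G^m_+$, as you do. What genuinely differs is the scaffolding. The paper, following \cite{GT}, never truncates to finite dimensions: it regularizes the drift as $\mathcal G^{m,h}=J_h\mathcal G^m J_h$ with $J_h=e^{-h(\mathcal N-\L_0)}$, solves $\partial_t\varphi^{m,h}=(\L_0+\mathcal G^{m,h})\varphi^{m,h}$ in $\mathcal H$, proves the first estimate uniformly in $(m,h)$, and sends $h\to0$; since $J_h$ is a function of the commuting pair $(\mathcal N,\L_0)$, it preserves the antisymmetry and chaos-shift structure exactly as your projections $P_M$ do (indeed $P_M$ commutes with $\mathcal N$ and $\L_0$, and $(P_M\mathcal G^m_\pm P_M)^*=-P_M\mathcal G^m_\mp P_M$, so your cancellation survives projection). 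Your finite-dimensional route buys trivial existence of the approximations (a linear ODE) at the price of a compactness passage $M\to\infty$, which plays precisely the role of the paper's limit $h\to 0$; the two are interchangeable here. For the second, $m$-dependent estimate the paper invokes the projector inequality $-\L_0\Pi_m\lesssim m^2(1+\mathcal N)\Pi_m$ together with the non-uniform bound \eqref{lem-apriori.3}, whereas you run the energy argument on $\L_0\varphi^m$ with the commutator $[\L_0,\mathcal G^m]$; since this commutator's Fourier multiplier is $|k_1|^2+|k_2|^2-|k_1+k_2|^2=-2k_1\cdot k_2$, bounded by $2m^2$ on the support of the cut-offs while the spectator modes pass through to $\L_0$ acting on the argument, your formulation is in fact the precise version of the paper's step (the output of $\mathcal G^m$ is band-limited only in the created modes, so the projector inequality cannot be applied to $\mathcal G^m\varphi$ verbatim). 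Both routes deliver the lemma with the stated constants.
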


\begin{proof}
The proof is similar to that of Lemma \cite[Lemma 10]{GT} by using a suitable approximation to $\mathcal{G}^m$: $\mathcal{G}^{m,h} =J_h \mathcal{G}^m J_h$ with $J_h=e^{-h(\mathcal{N}-\L_0)}$. We only give a sketch of the proof to save space.  Let $\varphi^{m,h}$ be the solution to $\partial_t\varphi^{m,h}=(\L_0+\mathcal{G}^{m,h})\varphi^{m,h}$.  To obtain the desired estimate we calculate
\begin{equation}\label{zzz}\partial_t\frac{1}{2}\|(1+\mathcal{N})^p\varphi^{m,h}(t)\|^2
=\<(1+\mathcal{N})^{2p}\varphi^{m,h}(t),(\L_0+\mathcal{G}^{m,h})\varphi^{m,h}(t)\>.\end{equation}
The second identity in Lemma \ref{lem-drift} leads to
  $$\mathcal G^{m,h}_+ w(\mathcal N+1)= w(\mathcal N)\mathcal G^{m,h}_+. $$
Using the last equality in Lemma \ref{lem-1} and the uniform estimates in Lemma \ref{lem-apriori}, we can prove
  $$|\<(1+\mathcal{N})^{2p}\varphi^{m,h}(t),\mathcal{G}^{m,h}\varphi^{m,h}(t)\>|\leq C_\delta\|(1+\mathcal{N})^{p}\varphi^{m,h}(t)\|^2+\delta\|(1+\mathcal{N})^{p}(-\L_0)^{1/2}\varphi^{m,h}(t)\|^2.$$
  Substituting this into \eqref{zzz} and taking integration, we deduce the first uniform in $m, h$ estimate for $\varphi^{m,h}$. The rest is the same as  the proof of Lemma \cite[Lemma 10]{GT}.
We only mention one difference: we have, for the Galerkin projectors,
  $$ -\L_0 \Pi_m \lesssim |m|^2 (1+\mathcal N) \Pi_m,$$
and thus
  $$\aligned
  \|(1+\mathcal N)^p \L_0 \mathcal G^{m,h}\varphi^{m,h}(s)\| &\leq C(m) \|(1+\mathcal N)^{p+1} \mathcal G^{m,h}\varphi^{m,h}(s)\| \\
  &\lesssim_m \|(1+\mathcal N)^{p+1} (-\L_0)^{1/2} \varphi^{m,h}(s)\|,
  \endaligned $$
where the second step follows from the non-uniform bounds in Lemma \ref{lem-apriori}.
\end{proof}

Next, let $T^m$ be the semigroup generated by the Galerkin approximation $\xi^m$. By Lemma \ref{lem-apriori-Kol} and the approximation to the functions in $\mathcal{D}(\L^m)$ from cylinder functions,  we can prove the following result. We omit the proof; for more details we refer to \cite[Lemma 11]{GT}.

\begin{lemma}
$(\L^m, \mathcal D(\L^m))$ is closable and its closure is the generator $\hat\L^m$. In particular, if $\varphi\in \mathcal V$, then $\varphi^m(t) =T^m_t \varphi$ solves
  $$\partial_t \varphi^m(t) = \L^m \varphi^m(t),$$
and it holds
  $$\L^m T^m_t\varphi= T^m_t\L^m \varphi. $$
\end{lemma}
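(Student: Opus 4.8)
The plan is to assemble the statement from soft semigroup theory applied on top of the two hard inputs already in place: the martingale characterisation of $T^m$ from Lemma \ref{lem-martingale} and the solvability of the backward Kolmogorov equation from Lemma \ref{lem-apriori-Kol}. First I would dispose of closability. On the dense domain $\mathcal D(\L^m)$ we recorded the identity $\<\psi,\L^m\varphi\>=\<\tilde\L^m\psi,\varphi\>$ with $\tilde\L^m:=\L_0-\mathcal G^m$ just before the statement. This says $\tilde\L^m\subseteq(\L^m)^\ast$, so the adjoint $(\L^m)^\ast$ is densely defined, whence $\L^m$ is closable; write $\overline{\L^m}$ for its closure.

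Next I would identify $\overline{\L^m}$ with the Markov generator $\hat\L^m$. By Lemma \ref{lem-martingale}, for every cylinder function $\varphi$ the process $M^{m,\varphi}_t=\varphi(\xi^m_t)-\varphi(\xi^m_0)-\int_0^t\L^m\varphi(\xi^m_s)\,\d s$ is a martingale; taking expectations and using incompressibility yields Dynkin's formula $T^m_t\varphi-\varphi=\int_0^t T^m_s\L^m\varphi\,\d s$ in $\mathcal H=L^2(\mu)$, so $\varphi\in\mathcal D(\hat\L^m)$ with $\hat\L^m\varphi=\L^m\varphi$, i.e. $\L^m\subseteq\hat\L^m$ and therefore $\overline{\L^m}\subseteq\hat\L^m$ (recall $T^m$ is a $C_0$ contraction semigroup on $L^2(\mu)$ because $\mu$ is invariant). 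For the reverse inclusion I would use the Lumer--Phillips theorem: both $\L^m$ and $\tilde\L^m$ are dissipative on $\mathcal D(\L^m)$, since $\<\varphi,\L^m\varphi\>=\<\varphi,\L_0\varphi\>\le 0$ and likewise for $\tilde\L^m$. Dissipativity of $\L^m$ together with density of the range $(\lambda-\L^m)\mathcal D(\L^m)$ for some $\lambda>0$ forces $\overline{\L^m}$ to generate a $C_0$ contraction semigroup; the range condition is precisely what Lemma \ref{lem-apriori-Kol} supplies, because the resolvent $R_\lambda\psi=\int_0^\infty e^{-\lambda t}\varphi^m(t)\,\d t$ built from the Kolmogorov solutions inverts $\lambda-\L^m$ on the dense set $\mathcal V$. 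Since $\overline{\L^m}$ already generates a semigroup and is contained in the generator $\hat\L^m$, maximality of generators (if $\lambda-\overline{\L^m}$ is onto while $\lambda-\hat\L^m$ is injective, the two coincide) gives $\overline{\L^m}=\hat\L^m$.

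For the ``in particular'' assertion, fix $\varphi\in\mathcal V$. A direct computation shows $\mathcal V=(1+\mathcal N)^{-2}(-\L_0)^{-1}(\mathcal H)\subseteq(1+\mathcal N)^{-1}(-\L_0)^{-1}(\mathcal H)=\mathcal D(\L^m)$, so Lemma \ref{lem-apriori-Kol} produces $\varphi^m\in C(\R_+,\mathcal D(\L^m))\cap C^1(\R_+,\mathcal H)$ solving $\partial_t\varphi^m(t)=\L^m\varphi^m(t)$ with $\varphi^m(0)=\varphi$. Since $\L^m\subseteq\hat\L^m$, this $\varphi^m$ also solves the abstract Cauchy problem for $\hat\L^m$, and uniqueness for the contraction semigroup identifies $\varphi^m(t)=T^m_t\varphi$; equivalently, for a difference $u$ of two solutions one has $\frac{\d}{\d t}\|u(t)\|^2=2\<u,\L_0 u\>\le 0$ with $u(0)=0$, forcing $u\equiv 0$. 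The commutation $\L^m T^m_t\varphi=T^m_t\L^m\varphi$ is then the restriction to $\mathcal V$ of the standard semigroup identity $\hat\L^m T^m_t=T^m_t\hat\L^m$ on $\mathcal D(\hat\L^m)$: both $\varphi$ and $T^m_t\varphi=\varphi^m(t)$ lie in $\mathcal D(\L^m)$, where $\hat\L^m$ and $\L^m$ agree.

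The genuinely delicate point is the core/range step, namely that $\overline{\L^m}$ is the full generator rather than a proper restriction of it. This rests entirely on the a priori bounds of Lemma \ref{lem-apriori-Kol}, whose own proof requires the smoothing regularisation $\mathcal G^{m,h}=J_h\mathcal G^m J_h$ with $J_h=e^{-h(\mathcal N-\L_0)}$ to tame the unboundedness of $\mathcal G^m$, followed by a passage $h\to 0$ using the uniform-in-$h$ estimates. Checking that these limits preserve membership in $\mathcal D(\L^m)$ and the validity of the Kolmogorov equation is where I expect the real work to sit; the remainder of the argument above is standard functional analysis once those estimates are granted.
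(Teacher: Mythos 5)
Your argument is correct and follows essentially the route of the proof the paper itself omits (it defers to \cite[Lemma 11]{GT}): Dynkin's formula from Lemma \ref{lem-martingale} gives $\L^m\subseteq\hat\L^m$, the Kolmogorov solutions of Lemma \ref{lem-apriori-Kol} furnish the Lumer--Phillips range condition via the Laplace-transform resolvent, and maximality of generators plus uniqueness for the abstract Cauchy problem yield the identification $\varphi^m(t)=T^m_t\varphi$ and the commutation identity. Two minor points to polish, neither a real gap: in your ``equivalently'' energy computation the difference $u$ lies a priori only in $\mathcal D(\hat\L^m)$, so one should invoke dissipativity of $\hat\L^m$ rather than the identity $\<u,\L^m u\>=\<u,\L_0 u\>$ (which requires $u\in\mathcal D(\L^m)$); and the convergence of $\int_0^\infty e^{-\lambda t}\varphi^m(t)\,\d t$ in the graph norm of $\L^m$ rests on the (implicitly exponential) $t$-growth of the constants in Lemma \ref{lem-apriori-Kol} together with the $m$-dependent bound \eqref{lem-apriori.3}, which you correctly flag as the place where the real work sits.
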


The identity $\L^m T^m_t\varphi= T^m_t\L^m \varphi$ enables us to get better estimates uniform in $m$; see the proof of \cite[Corollary 1]{GT}.

\begin{corollary}\label{cor-evolution}
For all $\varphi_0\in \mathcal V$ and for all $\alpha\geq 1$, we have
  $$\|(1+\mathcal N)^\alpha \partial_t\varphi^m(t)\|^2 = \|(1+\mathcal N)^\alpha \L^m\varphi^m(t)\|^2 \lesssim e^{Ct} \|(1+\mathcal N)^\alpha \L^m\varphi_0 \|^2,$$
and
  $$\|(1+\mathcal N)^\alpha (-\L_0)^{1/2} \varphi^m(t)\|^2\lesssim t e^{Ct} \|(1+\mathcal N)^\alpha \L^m \varphi_0\|^2 + \|(1+\mathcal N)^\alpha (-\L_0)^{1/2} \varphi_0 \|^2. $$
\end{corollary}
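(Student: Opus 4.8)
The plan is to obtain both inequalities from the commutation identity $\L^m T^m_t\varphi=T^m_t\L^m\varphi$ established above, combined with the a priori bounds of Lemma \ref{lem-apriori-Kol} and the drift estimates of Lemma \ref{lem-apriori}, following the scheme of \cite[Corollary 1]{GT}. For the first inequality, the equality $\|(1+\mathcal N)^\alpha\partial_t\varphi^m(t)\|^2=\|(1+\mathcal N)^\alpha\L^m\varphi^m(t)\|^2$ is immediate from $\partial_t\varphi^m=\L^m\varphi^m$. The point is then that, by the commutation identity, $\psi^m(t):=\L^m\varphi^m(t)=T^m_t(\L^m\varphi_0)$ is itself a solution of the backward Kolmogorov equation $\partial_t\psi^m=\L^m\psi^m$ with initial datum $\L^m\varphi_0$; hence the same a priori estimate that yields the first bound of Lemma \ref{lem-apriori-Kol}, applied with $p=\alpha$ to $\psi^m$ in place of $\varphi^m$, gives
  $$\big\|(1+\mathcal N)^\alpha\L^m\varphi^m(t)\big\|^2\lesssim e^{Ct}\big\|(1+\mathcal N)^\alpha\L^m\varphi_0\big\|^2.$$
(The only technical point is that $\L^m\varphi_0$ need not lie in $\mathcal V$; but $\psi^m(t)=\L^m\varphi^m(t)\in\mathcal H$ for $\varphi_0\in\mathcal V$, and the derivation of the bound uses only the equation and the uniform estimates of Lemma \ref{lem-apriori}, so it transfers verbatim.)

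For the second inequality I would run an energy estimate on $g_\alpha(t):=\|(1+\mathcal N)^\alpha(-\L_0)^{1/2}\varphi^m(t)\|^2$. Differentiating in time and using $\partial_t\varphi^m=\L_0\varphi^m+\mathcal G^m\varphi^m$ together with the commuting self-adjoint functional calculus of $\mathcal N$ and $-\L_0$, one gets
  $$\tfrac12\tfrac{\d}{\d t}g_\alpha(t)=-\big\|(1+\mathcal N)^\alpha(-\L_0)\varphi^m\big\|^2+\big\<(1+\mathcal N)^{2\alpha}(-\L_0)\varphi^m,\,\mathcal G^m\varphi^m\big\>,$$
where the first, dissipative term is the gain and the second is the dangerous cross term. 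I would bound the cross term by splitting $\mathcal G^m=\mathcal G^m_++\mathcal G^m_-$, inserting a fractional power $(-\L_0)^{-\gamma}$ with $\gamma$ just above $(1-\eps)/2$, and invoking \eqref{lem-apriori.1}–\eqref{lem-apriori.2}: Young's inequality absorbs a small multiple of $\|(1+\mathcal N)^\alpha(-\L_0)\varphi^m\|^2$ into the dissipative term, and the leftover — which because of the weight bump $\mathcal N\mapsto\mathcal N+1$ in Lemma \ref{lem-apriori} cannot be absorbed — is re-expressed through $(-\L_0)\varphi^m=\mathcal G^m\varphi^m-\L^m\varphi^m$ and controlled by $\|(1+\mathcal N)^\alpha\L^m\varphi^m\|^2$ plus a multiple of $g_\alpha$. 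Feeding in the first bound to replace $\|(1+\mathcal N)^\alpha\L^m\varphi^m(s)\|^2\lesssim e^{Cs}\|(1+\mathcal N)^\alpha\L^m\varphi_0\|^2$ and applying Gronwall's lemma, the time integral of this inhomogeneity produces the factor $te^{Ct}$ in front of $\|(1+\mathcal N)^\alpha\L^m\varphi_0\|^2$, while $g_\alpha(0)=\|(1+\mathcal N)^\alpha(-\L_0)^{1/2}\varphi_0\|^2$ supplies the remaining term; this matches the asserted right-hand side.

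The hard part will be the cross term. The difficulty is intrinsic: each application of Lemma \ref{lem-apriori} gains the regularity $\eps/2$ (resp. $\eps/4$) in $-\L_0$ but costs one power of the weight $1+\mathcal N$, so the natural estimate for $\<(1+\mathcal N)^{2\alpha}(-\L_0)\varphi^m,\mathcal G^m\varphi^m\>$ lives one weight-level above what the dissipation $\|(1+\mathcal N)^\alpha(-\L_0)\varphi^m\|^2$ can swallow, and since $\mathcal N\not\lesssim(-\L_0)^{1/2}$ the discrepancy cannot be interpolated away. This is exactly why the commutation identity is indispensable: the weight-bumped leftover is routed through $\L^m\varphi^m$, whose weighted norm is propagated uniformly in $m$ by the first bound, rather than left at the bad level — the bookkeeping being precisely that of \cite[Corollary 1]{GT}. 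The skew-adjointness $(\mathcal G^m_+)^*=-\mathcal G^m_-$ from Lemma \ref{lem-1} is used to symmetrise the pairing and extract the commutator gain, and the admissible fractional exponents must satisfy both $\gamma>(1-\eps)/2$ and $\gamma\leq(2-\eps)/4$, which is compatible exactly because $\eps\in(0,1]$; this compatibility, tight as $\eps\downarrow0$, is what confines the whole argument to the subcritical range.
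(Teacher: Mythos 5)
Your argument for the first inequality is correct and is exactly the intended one (the paper gives no details and points to \cite[Corollary 1]{GT}): by the commutation identity, $\psi^m(t):=\L^m\varphi^m(t)=T^m_t(\L^m\varphi_0)$ is itself a solution of the backward equation with datum $\L^m\varphi_0$, and the first estimate of Lemma \ref{lem-apriori-Kol} applied to it gives the bound. For the second inequality, however, your energy/Gronwall scheme has a genuine gap at the cross term $\big\<(1+\mathcal N)^{2\alpha}(-\L_0)\varphi^m,\mathcal G^m\varphi^m\big\>$. To use \eqref{lem-apriori.1} you must put $(-\L_0)^{-\gamma}$ with $\gamma>(1-\eps)/2$ on the $\mathcal G^m$ factor, but then Cauchy--Schwarz leaves $\|(1+\mathcal N)^\alpha(-\L_0)^{1+\gamma}\varphi^m\|$ on the other side, whose regularity $1+\gamma>(3-\eps)/2$ strictly exceeds the dissipation $\|(1+\mathcal N)^\alpha(-\L_0)\varphi^m\|$ and cannot be absorbed by Young or by interpolation against $g_\alpha$; putting the fractional power the other way would require \eqref{lem-apriori.1} with negative $\gamma$, which is unavailable for $\mathcal G^m_+$. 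Your fallback, substituting $(-\L_0)\varphi^m=\mathcal G^m\varphi^m-\L^m\varphi^m$, is circular: it collapses the energy identity back to the tautology $\tfrac12\tfrac{\d}{\d t}g_\alpha(t)=\big\<(1+\mathcal N)^{2\alpha}(-\L_0)\varphi^m,\L^m\varphi^m\big\>$, in which the dissipative term has cancelled, so there is nothing left into which the factor $\|(1+\mathcal N)^\alpha(-\L_0)\varphi^m\|$ produced by Cauchy--Schwarz could be absorbed. Finally, the weight-bumped leftover that \eqref{lem-apriori.1}--\eqref{lem-apriori.2} actually produce is essentially $\|(1+\mathcal N)^{\alpha+1}(-\L_0)^{1/2}\varphi^m\|$, i.e.\ $g_{\alpha+1}^{1/2}$, so closing a Gronwall argument forces either an unending hierarchy $g_\alpha\to g_{\alpha+1}\to\cdots$ or a right-hand side involving $\|(1+\mathcal N)^{\alpha+1}\L^m\varphi_0\|$, which is not the stated estimate; and even granting your differential inequality, Gronwall would place $e^{Ct}$ in front of $\|(1+\mathcal N)^\alpha(-\L_0)^{1/2}\varphi_0\|^2$ as well, weaker than the claim.

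The intended proof of the second bound never differentiates $g_\alpha$; it uses the commutation identity a second time. Write $\varphi^m(t)=\varphi_0+\int_0^t T^m_s(\L^m\varphi_0)\,\d s$, apply $(1+\mathcal N)^\alpha(-\L_0)^{1/2}$ and Minkowski's inequality, then Cauchy--Schwarz in the time integral, and bound $\int_0^t\|(1+\mathcal N)^\alpha(-\L_0)^{1/2}T^m_s(\L^m\varphi_0)\|^2\,\d s\lesssim e^{Ct}\|(1+\mathcal N)^\alpha\L^m\varphi_0\|^2$ by the \emph{integrated} dissipation part of the first estimate of Lemma \ref{lem-apriori-Kol}, applied to the solution started from $\L^m\varphi_0$ (the same solution $\psi^m$ you already introduced for the first bound). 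Squaring yields precisely $te^{Ct}\|(1+\mathcal N)^\alpha\L^m\varphi_0\|^2+\|(1+\mathcal N)^\alpha(-\L_0)^{1/2}\varphi_0\|^2$: no pointwise-in-time control of $(-\L_0)\varphi^m$ or $\mathcal G^m\varphi^m$, and no cross-term absorption at the level of $(-\L_0)$, is ever needed.
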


\subsubsection{Controlled structures} \label{sec-control-struc}

Next we deal with the limit operator $\L$ and define a domain $\mathcal D(\L)$. We will decompose the term $\mathcal G$ in $\L$ by means of a cut-off function $\mathcal M= M(\mathcal N)$. First, for the approximating operators, we define
  $$\mathcal G^m= {\bf 1}_{|\L_0|\geq \mathcal M} \mathcal G^m + {\bf 1}_{|\L_0|< \mathcal M} \mathcal G^m=: \mathcal G^{m,\succ} + \mathcal G^{m,\prec}. $$

\begin{proposition}\label{prop-control-struc}
Let $w$ be a weight, $L\geq 1$, $M(n)= L(n+1)^{4/\eps}$. Then we have
  \begin{equation}\label{prop-control-struc.1}
  \|w(\mathcal N) (-\L_0)^{-1/2} \mathcal G^{m,\succ}\psi \| \lesssim |w| L^{-\eps/4} \|w(\mathcal N) (-\L_0)^{1/2} \psi \|.
  \end{equation}
Hence, there exists $L_0= L_0(|w|,\eps)$ such that, for all $L\geq L_0$ and all $\varphi^\sharp\in w(\mathcal N)^{-1} (-\L_0)^{-1/2} (\mathcal H)$, there is a unique $\varphi^m= \mathcal K^m\varphi^\sharp$ such that
  $$\varphi^m= (-\L_0)^{-1} \mathcal G^{m,\succ} \varphi^m + \varphi^\sharp \in w(\mathcal N)^{-1} (-\L_0)^{-1/2} (\mathcal H),$$
satisfying the bound
  \begin{equation}\label{prop-control-struc.1.5}
  \|w(\mathcal N) (-\L_0)^{1/2} \mathcal K^m\varphi^\sharp \| + |w|^{-1} L^{\eps/4} \|w(\mathcal N) (-\L_0)^{1/2} (\mathcal K^m\varphi^\sharp- \varphi^\sharp) \| \lesssim \|w(\mathcal N) (-\L_0)^{1/2} \varphi^\sharp \|.
  \end{equation}
All the estimates are uniform in $m$ and true in the limit $m\to \infty$. We shall denote $\mathcal K= \mathcal K^\infty$.
\end{proposition}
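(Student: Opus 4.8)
The plan is to establish the contraction estimate \eqref{prop-control-struc.1} first, and then read off $\mathcal K^m$ and the bound \eqref{prop-control-struc.1.5} from a Neumann series. Everything takes place on the Fock space $\mathcal H$, where $\mathcal N$, $\L_0$ and the cut-off ${\bf 1}_{|\L_0|\geq\mathcal M}$ are simultaneously diagonal (they are all Fourier/chaos multipliers), so they commute and the cut-off is simply the spectral projection onto the region $-\L_0\geq M(\mathcal N)=L(\mathcal N+1)^{4/\eps}$. Since $\mathcal G^{m,\succ}={\bf 1}_{|\L_0|\geq\mathcal M}\mathcal G^m$ and $\mathcal G^m=\mathcal G^m_++\mathcal G^m_-$, I would treat the two pieces separately, in both cases using the localization to produce the small prefactor.

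The basic observation is the operator inequality on the range of the cut-off,
\[(-\L_0)^{-\eps/4}\,{\bf 1}_{|\L_0|\geq\mathcal M}\leq M(\mathcal N)^{-\eps/4}=L^{-\eps/4}(\mathcal N+1)^{-1},\]
which is exactly why the exponent $4/\eps$ is built into $M$: it converts the borderline smoothing $(-\L_0)^{-\eps/4}$ into a gain of $L^{-\eps/4}$ plus one power of $(\mathcal N+1)^{-1}$. Writing $(-\L_0)^{-1/2}=(-\L_0)^{-\eps/4}(-\L_0)^{-(1/2-\eps/4)}$ and pulling the cut-off through, I would get, for $\bullet\in\{+,-\}$,
\[\big\|w(\mathcal N)(-\L_0)^{-1/2}{\bf 1}_{|\L_0|\geq\mathcal M}\mathcal G^m_\bullet\psi\big\|\lesssim L^{-\eps/4}\big\|\tilde w(\mathcal N)(-\L_0)^{-(1/2-\eps/4)}\mathcal G^m_\bullet\psi\big\|,\]
with $\tilde w(\mathcal N)=w(\mathcal N)(\mathcal N+1)^{-1}$, and then apply Lemma \ref{lem-apriori} with the single choice $\gamma=1/2-\eps/4$. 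This $\gamma$ is admissible for both a priori bounds because $(1-\eps)/2<1/2-\eps/4\leq(2-\eps)/4$ for $\eps\in(0,1]$, so \eqref{lem-apriori.1} and \eqref{lem-apriori.2} apply at once. For the $\mathcal G^m_+$ piece the output exponent becomes $1-\gamma-\eps/2=1/2-\eps/4$ and the right-hand weight is $\tilde w(1+\mathcal N)(1+\mathcal N)\simeq w(1+\mathcal N)\leq|w|\,w(\mathcal N)$; for the $\mathcal G^m_-$ piece the exponent becomes $1-\gamma-\eps/4=1/2$ exactly and the weight is $\tilde w(\mathcal N-1)\mathcal N=w(\mathcal N-1)\leq w(\mathcal N)$. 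Because $-\L_0\geq 4\pi^2$ on zero-average chaoses, the surplus $(-\L_0)^{1/2-\eps/4}$ in the $\mathcal G_+$ term may be replaced by $(-\L_0)^{1/2}$ at the cost of a constant. Adding the two contributions yields \eqref{prop-control-struc.1} with the constant $|w|L^{-\eps/4}$; note that the single factor $(\mathcal N+1)^{-1}$ furnished by the cut-off is precisely what absorbs the one extra power of $\mathcal N$ lost in each a priori bound, which is why one localization suffices.

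Granting \eqref{prop-control-struc.1}, I would set up the fixed point on the Banach space $X=w(\mathcal N)^{-1}(-\L_0)^{-1/2}(\mathcal H)$ normed by $\|\psi\|_X=\|w(\mathcal N)(-\L_0)^{1/2}\psi\|$. The map $\mathcal T^m:=(-\L_0)^{-1}\mathcal G^{m,\succ}$ satisfies $\|\mathcal T^m\psi\|_X=\|w(\mathcal N)(-\L_0)^{-1/2}\mathcal G^{m,\succ}\psi\|\lesssim|w|L^{-\eps/4}\|\psi\|_X$, so there is $L_0=L_0(|w|,\eps)$ with $\|\mathcal T^m\|_{X\to X}\leq 1/2$ whenever $L\geq L_0$. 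Then $I-\mathcal T^m$ is boundedly invertible on $X$ by the Neumann series, and I set $\mathcal K^m:=(I-\mathcal T^m)^{-1}$, giving the unique $\varphi^m=\mathcal K^m\varphi^\sharp\in X$ that solves $\varphi^m=(-\L_0)^{-1}\mathcal G^{m,\succ}\varphi^m+\varphi^\sharp$. The first bound in \eqref{prop-control-struc.1.5} is $\|\varphi^m\|_X\leq(1-\|\mathcal T^m\|)^{-1}\|\varphi^\sharp\|_X$; for the second I use $\varphi^m-\varphi^\sharp=\mathcal T^m\varphi^m$, whence $\|\varphi^m-\varphi^\sharp\|_X\lesssim|w|L^{-\eps/4}\|\varphi^\sharp\|_X$, and multiplying by $|w|^{-1}L^{\eps/4}$ gives the claim. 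Since all constants from Lemma \ref{lem-apriori} are uniform in $m$, so are $L_0$ and every bound; the case $m=\infty$ follows from this uniformity together with the convergence $\mathcal G^m\to\mathcal G$ of Lemma \ref{lem-limit-operator}.

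The hard part will be the bookkeeping behind the borderline exponent: one must verify that $\gamma=1/2-\eps/4$ genuinely lies in the intersection of the two admissible ranges, and that after extracting $L^{-\eps/4}(\mathcal N+1)^{-1}$ from the cut-off the residual powers of $\mathcal N$ and the weight shifts cancel exactly instead of leaving an uncontrolled factor. This delicate balance is the reason the threshold is $L(\mathcal N+1)^{4/\eps}$ rather than a generic polynomial; once it is in place, the contraction and the limit $m\to\infty$ are routine.
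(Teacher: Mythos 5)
Your proof is correct and follows essentially the same route as the paper: you split $\mathcal G^{m,\succ}$ into $\mathcal G^{m,\succ}_\pm$, use the cut-off to trade $(-\L_0)^{-\eps/4}$ for $M(\mathcal N)^{-\eps/4}=L^{-\eps/4}(\mathcal N+1)^{-1}$, apply Lemma \ref{lem-apriori} with $\gamma=1/2-\eps/4$ to both pieces (exactly what the paper does, with the weight $w(\cdot)M(\cdot)^{-\eps/4}$, i.e.\ your $\tilde w$ up to the factor $L^{-\eps/4}$), and then close with a perturbative argument; your Neumann series for $(I-\mathcal T^m)^{-1}$ is just an equivalent phrasing of the paper's Banach contraction for $\Psi^m(\psi)=(-\L_0)^{-1}\mathcal G^{m,\succ}\psi+\varphi^\sharp$. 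The bookkeeping you flag as delicate (admissibility of $\gamma=1/2-\eps/4$, the exact cancellation of the $\mathcal N$-factors by $(\mathcal N+1)^{-1}$, and replacing $(-\L_0)^{1/2-\eps/4}$ by $(-\L_0)^{1/2}$ since $-\L_0\gtrsim 1$) checks out and matches the paper's computation.
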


\begin{proof}
We follow the ideas of \cite[Lemma 2.14]{GP-18}, \cite[Lemma 12]{GT} and start with the estimate on $\mathcal G^{m,\succ}_+$:
  $$\aligned
  \|w(\mathcal N) (-\L_0)^{-1/2} \mathcal G^{m,\succ}_+\psi \|&= \|w(\mathcal N) (-\L_0)^{-1/2} {\bf 1}_{|\L_0|\geq M(\mathcal N)} \mathcal G^m_+\psi \| \\
  &\lesssim \|w(\mathcal N) M(\mathcal N)^{-\eps/4} (-\L_0)^{-1/2+\eps/4} {\bf 1}_{|\L_0|\geq M(\mathcal N)} \mathcal G^m_+\psi \| \\
  &\leq \|w(\mathcal N) M(\mathcal N)^{-\eps/4} (-\L_0)^{-1/2+\eps/4} \mathcal G^m_+\psi \|\\
  &\lesssim \|w(\mathcal N+1) M(\mathcal N+1)^{-\eps/4} (\mathcal N+1) (-\L_0)^{1/2-\eps/4} \psi \|,
  \endaligned $$
where in the last step we have used \eqref{lem-apriori.1}. In the same way,
  $$\aligned
  \|w(\mathcal N) (-\L_0)^{-1/2} \mathcal G^{m,\succ}_-\psi \|&= \|w(\mathcal N) (-\L_0)^{-1/2} {\bf 1}_{|\L_0|\geq M(\mathcal N)} \mathcal G^m_-\psi \| \\
  &\lesssim \|w(\mathcal N) M(\mathcal N)^{-\eps/4} (-\L_0)^{-1/2+\eps/4} {\bf 1}_{|\L_0|\geq M(\mathcal N)} \mathcal G^m_-\psi \| \\
  &\leq \|w(\mathcal N) M(\mathcal N)^{-\eps/4} (-\L_0)^{-1/2+\eps/4} \mathcal G^m_-\psi \|\\
  &\lesssim \|w(\mathcal N-1) M(\mathcal N-1)^{-\eps/4} \mathcal N (-\L_0)^{1/2} \psi \|,
  \endaligned $$
where the last step follows from \eqref{lem-apriori.2}. Using the definition of $M(n)$ and the fact that $w(n\pm 1) \leq |w| w(n)$, we obtain
  \begin{equation}\label{prop-control-struc.2}
  \aligned
  \|w(\mathcal N) (-\L_0)^{-1/2} \mathcal G^{m,\succ} \psi \|&\lesssim |w| L^{-\eps/4} \|w(\mathcal N) (-\L_0)^{1/2} \psi \|.
  \endaligned
  \end{equation}

Let $\varphi^\sharp\in w(\mathcal N)^{-1} (-\L_0)^{-1/2} (\mathcal H)$ and define the map
  $$\Psi^m: w(\mathcal N)^{-1} (-\L_0)^{-1/2} (\mathcal H) \to w(\mathcal N)^{-1} (-\L_0)^{-1/2} (\mathcal H), $$
  $$\psi\mapsto \Psi^m(\psi):= (-\L_0)^{-1}\mathcal G^{m,\succ} \psi + \varphi^\sharp. $$
Then by \eqref{prop-control-struc.2}, we have
  $$\aligned
  \|w(\mathcal N) (-\L_0)^{1/2} \Psi^m(\psi) \| & \leq \|w(\mathcal N) (-\L_0)^{-1/2} \mathcal G^{m,\succ} \psi \| + \|w(\mathcal N) (-\L_0)^{1/2} \varphi^\sharp \| \\
  &\leq C|w| L^{-\eps/4} \|w(\mathcal N) (-\L_0)^{1/2} \psi \| + \|w(\mathcal N) (-\L_0)^{1/2} \varphi^\sharp \|
  \endaligned $$
for some constant $C>0$.
From this we conclude that, for $L$ big enough, the map $\Psi^m$ is a contraction leaving the ball of radius $2\|w(\mathcal N) (-\L_0)^{1/2} \varphi^\sharp \|$ invariant. Thus it has a unique fixed point $\mathcal K^m\varphi^\sharp$ such that
  \begin{equation}\label{prop-control-struc.3}
  \|w(\mathcal N) (-\L_0)^{1/2} \mathcal K^m\varphi^\sharp \|\leq 2 \|w(\mathcal N) (-\L_0)^{1/2} \varphi^\sharp \|.
  \end{equation}
Combining this with \eqref{prop-control-struc.2} yields
  $$\aligned
  \|w(\mathcal N) (-\L_0)^{1/2} (\mathcal K^m\varphi^\sharp- \varphi^\sharp) \| &= \|w(\mathcal N) (-\L_0)^{-1/2} \mathcal G^{m,\succ} \mathcal K^m\varphi^\sharp \| \\
  &\lesssim |w| L^{-\eps/4} \|w(\mathcal N) (-\L_0)^{1/2}\mathcal K^m\varphi^\sharp \| \\
  &\lesssim \|w(\mathcal N) (-\L_0)^{1/2} \varphi^\sharp \|.
  \endaligned $$
The proof is complete.
\end{proof}

In the above result, the coefficient $L$ in the cut-off $M(n)$ depends on $|w|$ of the weight $w$. In the following we will only make use of polynomial weights: $w(n)= (n+1)^\alpha$ with $|\alpha|\leq K$ for some fixed $K$; then $|w|$ is uniformly bounded and thus we can choose a cut-off which is adapted to all those weights. This will be fixed once and for all and will not be mentioned again.

\begin{proposition}\label{prop-control-struc-2}
Let $w$ be a polynomial weight, $\gamma\geq 0$ and
  $$\alpha(\gamma)= \frac4\eps \bigg(\gamma+ \frac12 \bigg)+1. $$
Let
  $$\varphi^\sharp \in w(\mathcal N)^{-1} (-\L_0)^{-1}(\mathcal H) \cap w(\mathcal N)^{-1} (\mathcal N+1)^{-\alpha(\gamma)} (-\L_0)^{-1/2}(\mathcal H),$$
and set $\varphi^m= \mathcal K^m\varphi^\sharp$. Then $\L^m \varphi^m$ is a well-defined operator and the following bound holds:
  $$ \|w(\mathcal N) (-\L_0)^\gamma \mathcal G^{m,\prec}\varphi^m \| \lesssim \|w(\mathcal N) (\mathcal N+1)^{\alpha(\gamma)} (-\L_0)^{1/2} \varphi^\sharp \|.$$
\end{proposition}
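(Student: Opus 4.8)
The plan is to split $\mathcal G^{m,\prec}=\mathcal G^{m,\prec}_++\mathcal G^{m,\prec}_-$, where $\mathcal G^{m,\prec}_\pm:={\bf 1}_{|\L_0|<M(\mathcal N)}\mathcal G^m_\pm$, and to estimate each piece separately. The guiding idea is that the low-frequency cut-off ${\bf 1}_{|\L_0|<M(\mathcal N)}$ lets me trade the positive power $(-\L_0)^\gamma$ for powers of $M(\mathcal N)=L(\mathcal N+1)^{4/\eps}$, thereby reducing matters to the a priori bounds \eqref{lem-apriori.1} and \eqref{lem-apriori.2}. Since $\L_0$ and $\mathcal N$ are simultaneously diagonal in the chaos/Fourier decomposition, all of $(-\L_0)^a$, $M(\mathcal N)^b$ and the cut-off commute, and on the range of the cut-off one has the elementary operator inequality
$$(-\L_0)^\gamma\,{\bf 1}_{|\L_0|<M(\mathcal N)}\leq M(\mathcal N)^{\gamma+\gamma'}(-\L_0)^{-\gamma'}\,{\bf 1}_{|\L_0|<M(\mathcal N)},\qquad \gamma'\geq-\gamma.$$
At the end the fixed-point bound \eqref{prop-control-struc.1.5} will let me pass from $\varphi^m=\mathcal K^m\varphi^\sharp$ to $\varphi^\sharp$.

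For $\mathcal G^{m,\prec}_+$ I would take $\gamma'=\tfrac12$, so that
$$\big\|w(\mathcal N)(-\L_0)^\gamma\mathcal G^{m,\prec}_+\varphi^m\big\|\leq \big\|w(\mathcal N)M(\mathcal N)^{\gamma+1/2}(-\L_0)^{-1/2}\mathcal G^m_+\varphi^m\big\|,$$
and then apply \eqref{lem-apriori.1} with the polynomial weight $\tilde w(\mathcal N)=w(\mathcal N)M(\mathcal N)^{\gamma+1/2}$; this is legitimate because $\tfrac12>(1-\eps)/2$ precisely when $\eps>0$. The resulting power on $\varphi^m$ is $(-\L_0)^{1-1/2-\eps/2}=(-\L_0)^{1/2-\eps/2}$, and the emerging weight $\tilde w(1+\mathcal N)(1+\mathcal N)$ carries, by $M(n)=L(n+1)^{4/\eps}$, the $\mathcal N$-power $\tfrac4\eps(\gamma+\tfrac12)+1=\alpha(\gamma)$ \emph{exactly}. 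Since the spectrum of $-\L_0$ is bounded below by $4\pi^2$, I may finally bound $(-\L_0)^{1/2-\eps/2}\lesssim(-\L_0)^{1/2}$, arriving at $\lesssim\|w(\mathcal N)(\mathcal N+1)^{\alpha(\gamma)}(-\L_0)^{1/2}\varphi^m\|$.

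For $\mathcal G^{m,\prec}_-$ the analogous computation uses $\gamma'=(2-\eps)/4$, which is exactly the largest exponent admitted in \eqref{lem-apriori.2}; this choice is the crux, since $1-(2-\eps)/4-\eps/4=\tfrac12$, so again only $(-\L_0)^{1/2}$ survives on $\varphi^m$. Tracking the weight in the same way, the $\mathcal N$-power now equals $\tfrac4\eps\big(\gamma+\tfrac{2-\eps}4\big)+1=\alpha(\gamma)-1$, with one power to spare, so $\lesssim\|w(\mathcal N)(\mathcal N+1)^{\alpha(\gamma)}(-\L_0)^{1/2}\varphi^m\|$ holds for this piece as well.

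Adding the two estimates and invoking \eqref{prop-control-struc.1.5} with the polynomial weight $\hat w(\mathcal N)=w(\mathcal N)(\mathcal N+1)^{\alpha(\gamma)}$ (which lies in the admissible class for which the cut-off constant $L$ was fixed) replaces $\varphi^m$ by $\varphi^\sharp$ and gives the claimed bound. Well-definedness of $\L^m\varphi^m$ follows from the fixed-point identity $\varphi^m=(-\L_0)^{-1}\mathcal G^{m,\succ}\varphi^m+\varphi^\sharp$, which yields $\L^m\varphi^m=\L_0\varphi^\sharp+\mathcal G^{m,\prec}\varphi^m$: the first term is in $\mathcal H$ by $\varphi^\sharp\in w(\mathcal N)^{-1}(-\L_0)^{-1}(\mathcal H)$, and the second is finite (the case $\gamma=0$ of the estimate) by $\varphi^\sharp\in w(\mathcal N)^{-1}(\mathcal N+1)^{-\alpha(0)}(-\L_0)^{-1/2}(\mathcal H)$. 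I expect the only real difficulty to be the exponent bookkeeping: one must check that the two admissible choices $\gamma'_+=\tfrac12$ and $\gamma'_-=(2-\eps)/4$ are exactly those that collapse the surviving power on $\varphi^m$ to $\tfrac12$, and that the induced $\mathcal N$-weights never exceed $(\mathcal N+1)^{\alpha(\gamma)}$ — this is precisely what forces the definitions of $\alpha(\gamma)$ and of the cut-off $M(n)=L(n+1)^{4/\eps}$.
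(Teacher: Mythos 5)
Your proposal is correct and follows essentially the same route as the paper's proof: the same splitting $\mathcal G^{m,\prec}=\mathcal G^{m,\prec}_++\mathcal G^{m,\prec}_-$, the same trade of $(-\L_0)^{\gamma+\gamma'}$ for $M(\mathcal N)^{\gamma+\gamma'}$ under the cut-off with the same exponent choices ($\gamma'_+=\tfrac12$ for \eqref{lem-apriori.1} and $\gamma'_-=\tfrac12-\tfrac\eps4$ for \eqref{lem-apriori.2}), the same bookkeeping producing $(\mathcal N+1)^{\alpha(\gamma)}$ resp.\ $(\mathcal N+1)^{\alpha(\gamma)-1}$, and the same final passage from $\varphi^m$ to $\varphi^\sharp$ via the fixed-point bound \eqref{prop-control-struc.3}, with well-definedness of $\L^m\varphi^m=\L_0\varphi^\sharp+\mathcal G^{m,\prec}\varphi^m$ handled identically.
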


\begin{proof}
By Proposition \ref{prop-control-struc}, we have
  $$\varphi^m = (-\L_0)^{-1} \mathcal G^{m,\succ} \varphi^m+ \varphi^\sharp,$$
then,
  $$ \L^m \varphi^m= \L_0 \varphi^m+ \mathcal G^m \varphi^m= - \mathcal G^{m,\succ} \varphi^m + \L_0\varphi^\sharp+ \mathcal G^m \varphi^m= \L_0\varphi^\sharp + \mathcal G^{m,\prec} \varphi^m.$$
Hence it is sufficient to estimate $\mathcal G^{m,\prec} \varphi^m$.

First, we have
  $$\aligned
  \| w(\mathcal N) (-\L_0)^\gamma \mathcal G^{m,\prec}_+ \varphi^m \| &= \|w(\mathcal N) (-\L_0)^{\gamma +1/2} (-\L_0)^{-1/2} {\bf 1}_{|\L_0|< M(\mathcal N)} \mathcal G^{m}_+ \varphi^m\| \\
  &\leq \|w(\mathcal N) M(\mathcal N)^{\gamma +1/2} (-\L_0)^{-1/2} \mathcal G^{m}_+ \varphi^m\| \\
  &\lesssim \|w(\mathcal N+1) M(\mathcal N+1)^{\gamma +1/2} (\mathcal N+1) (-\L_0)^{(1-\eps)/2} \varphi^m\|,
  \endaligned $$
where in the last step we have used \eqref{lem-apriori.1}. Similarly, by \eqref{lem-apriori.2},
  $$\aligned
  \| w(\mathcal N) (-\L_0)^\gamma \mathcal G^{m,\prec}_- \varphi^m \| &= \|w(\mathcal N) (-\L_0)^{\gamma +1/2-\eps/4} (-\L_0)^{-1/2 +\eps/4} {\bf 1}_{|\L_0|< M(\mathcal N)} \mathcal G^{m}_- \varphi^m\| \\
  &\leq \|w(\mathcal N) M(\mathcal N)^{\gamma +1/2 -\eps/4} (-\L_0)^{-1/2+\eps/4} \mathcal G^{m}_- \varphi^m\| \\
  &\lesssim \|w(\mathcal N-1) M(\mathcal N-1)^{\gamma +1/2 -\eps/4} \mathcal N (-\L_0)^{1/2} \varphi^m\|.
  \endaligned $$
Recall the function $M(n)$ defined in Proposition \ref{prop-control-struc}. Summarizing the above two estimates and  using the definitions of $|w|$, we obtain
  $$\aligned
  \| w(\mathcal N) (-\L_0)^\gamma \mathcal G^{m,\prec} \varphi^m \| & \lesssim |w| L^{\gamma +1/2} \|w(\mathcal N) (\mathcal N+1)^{\alpha(\gamma)} (-\L_0)^{1/2} \varphi^m\|
  \endaligned $$
which, combined with \eqref{prop-control-struc.3}, completes the proof.
\end{proof}

\subsubsection{Limit operator and its domain}

Based on the estimates in the previous subsection, we will find a suitable domain for the limit operator $\L$. The following lemma is the analogue of \cite[Lemma 2.19]{GP-18} and \cite[Lemma 13]{GT}.

\begin{lemma}\label{lem-domain}
Let $w$ be a weight and $M(n)$ the cut-off function in Proposition \ref{prop-control-struc}; take $\gamma=0$ in Proposition \ref{prop-control-struc-2}. Set
  $$\mathcal D_w(\L)= \big\{ \mathcal K\varphi^\sharp: \varphi^\sharp\in w(\mathcal N)^{-1} (-\L_0)^{-1} (\mathcal H) \cap w(\mathcal N)^{-1} (\mathcal N+1)^{-\alpha(0)} (-\L_0)^{-1/2} (\mathcal H) \big\}. $$
Then $\mathcal D_w(\L)$ is dense in $w(\mathcal N)^{-1} (\mathcal H)$. We simply write $\mathcal D(\L)$ if $w\equiv 1$.
\end{lemma}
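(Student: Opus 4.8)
The plan is to exploit the fact, established in Proposition \ref{prop-control-struc}, that for $L$ large enough the map $\mathcal K=\mathcal K^\infty$ is a bounded linear bijection of the Banach space
$$X_w := w(\mathcal N)^{-1}(-\L_0)^{-1/2}(\mathcal H), \qquad \|\psi\|_{X_w} := \big\|w(\mathcal N)(-\L_0)^{1/2}\psi\big\|,$$
onto itself. Indeed, the defining relation $\mathcal K\varphi^\sharp = (-\L_0)^{-1}\mathcal G^{\succ}\mathcal K\varphi^\sharp + \varphi^\sharp$ exhibits $\mathcal K=(I-S)^{-1}$ with $S:=(-\L_0)^{-1}\mathcal G^{\succ}$, and \eqref{prop-control-struc.1} gives $\|S\|_{X_w\to X_w}\lesssim |w|L^{-\eps/4}<1$ for $L\geq L_0$; thus $\mathcal K=\sum_{k\geq 0}S^k$ is a homeomorphism of $X_w$. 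Writing $\mathcal V_w$ for the source set $w(\mathcal N)^{-1}(-\L_0)^{-1}(\mathcal H)\cap w(\mathcal N)^{-1}(\mathcal N+1)^{-\alpha(0)}(-\L_0)^{-1/2}(\mathcal H)$, I first note $\mathcal V_w\subset X_w$: since $-\L_0\geq 4\pi^2 I$ the operator $(-\L_0)^{-1/2}$ is bounded and commutes with $w(\mathcal N)$, so $\|w(\mathcal N)(-\L_0)\varphi^\sharp\|<\infty$ forces $\|\varphi^\sharp\|_{X_w}<\infty$. Hence $\mathcal D_w(\L)=\mathcal K(\mathcal V_w)$ is well defined and contained in $X_w$.

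Next I would show that $\mathcal V_w$ is dense in $(X_w,\|\cdot\|_{X_w})$. Since $\mathcal N$ and $-\L_0$ are jointly diagonal on the Fock space, for $\psi\in X_w$ I set $\psi_{n,R}:=\mathbf 1_{\{\mathcal N\leq n\}}\mathbf 1_{\{-\L_0\leq R\}}\psi$. The spectral truncation makes $(\mathcal N+1)^{\alpha(0)}$ and $(-\L_0)$ act boundedly on $\psi_{n,R}$, so $\psi_{n,R}\in\mathcal V_w$; and because the truncating projections commute with $w(\mathcal N)(-\L_0)^{1/2}$, dominated convergence yields $\|\psi_{n,R}-\psi\|_{X_w}\to 0$ as $n,R\to\infty$. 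Therefore $\mathcal V_w$ is $\|\cdot\|_{X_w}$-dense in $X_w$, and since $\mathcal K$ is a homeomorphism of $X_w$, the image $\mathcal D_w(\L)=\mathcal K(\mathcal V_w)$ is $\|\cdot\|_{X_w}$-dense in $X_w$ as well.

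Finally I would transfer this to the target topology. Because $-\L_0\geq 4\pi^2 I$, one has $\|w(\mathcal N)\psi\|\lesssim \|\psi\|_{X_w}$, so $\|\cdot\|_{X_w}$-density implies $\|w(\mathcal N)\cdot\|$-density; in particular $\mathcal D_w(\L)$ is $\|w(\mathcal N)\cdot\|$-dense in $X_w$. It then remains to note that $X_w$ is itself $\|w(\mathcal N)\cdot\|$-dense in $w(\mathcal N)^{-1}(\mathcal H)$: for $\varphi$ with $\|w(\mathcal N)\varphi\|<\infty$ the truncations $\mathbf 1_{\{-\L_0\leq R\}}\varphi$ lie in $X_w$ and converge to $\varphi$ in $\|w(\mathcal N)\cdot\|$. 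Chaining the two density statements shows $\mathcal D_w(\L)$ is dense in $w(\mathcal N)^{-1}(\mathcal H)$.

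The one point that requires care — and is the main, though mild, obstacle — is keeping the two norms $\|\cdot\|_{X_w}$ and $\|w(\mathcal N)\cdot\|$ distinct throughout: $\mathcal K$ is controlled only in the stronger norm $\|\cdot\|_{X_w}$, so density must first be produced there using the contraction bound of Proposition \ref{prop-control-struc}, and only afterwards pushed down to the weaker target norm, with the truncations in the two density steps chosen to land in the correct intermediate spaces. Once this bookkeeping is in place, all the analytic input is supplied by the estimate \eqref{prop-control-struc.1} and the spectral decomposition of $\mathcal N$ and $\L_0$, so no further genuinely new estimate is needed.
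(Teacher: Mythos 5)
Your proof is correct, but it takes a genuinely different and more economical route than the paper's. Your key observation is that, since $\mathcal G^{\succ}$ is a linear operator, the fixed-point map in Proposition \ref{prop-control-struc} is affine, so $\mathcal K=(I-S)^{-1}$ with $S=(-\L_0)^{-1}\mathcal G^{\succ}$, and \eqref{prop-control-struc.1} makes $S$ a strict contraction on $X_w:=w(\mathcal N)^{-1}(-\L_0)^{-1/2}(\mathcal H)$ (norm $\|w(\mathcal N)(-\L_0)^{1/2}\cdot\|$) for $L\geq L_0$; hence $\mathcal K$ is a linear homeomorphism of $X_w$, and density of $\mathcal D_w(\L)=\mathcal K(\mathcal V_w)$ follows by pushing the density of $\mathcal V_w$ (spectral truncation in $\mathcal N$ and $\L_0$, which commute as they are jointly diagonal in the Fourier--chaos basis) through $\mathcal K$ and then descending to the weaker norm via $\|w(\mathcal N)\varphi\|\lesssim \|w(\mathcal N)(-\L_0)^{1/2}\varphi\|$, valid because $-\L_0\geq 4\pi^2$ on the mean-zero Fock space. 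All of these steps check out, including the verification that the truncations land in $\mathcal V_w$ and the two-norm bookkeeping you flag at the end. The paper argues differently: it fixes $\psi\in\mathcal V_w$ and, for each $\nu\geq 1$, solves a second fixed-point problem with the enlarged cut-off, $\varphi^\nu={\bf 1}_{\nu M(\mathcal N)\leq|\L_0|}(-\L_0)^{-1}\mathcal G\varphi^\nu+\psi$, and then invests most of the work in showing that $\varphi^\nu=\mathcal K\varphi^{\nu,\sharp}$ with $\varphi^{\nu,\sharp}=\psi-{\bf 1}_{M(\mathcal N)\leq|\L_0|<\nu M(\mathcal N)}(-\L_0)^{-1}\mathcal G\varphi^\nu$ belonging to the admissible class (estimates \eqref{lem-domain.4}--\eqref{lem-domain.5}). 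What this longer construction buys is quantitative information your soft argument does not produce: the approximation rate \eqref{lem-domain.2}, $\|w(\mathcal N)(-\L_0)^{1/2}(\varphi^\nu-\psi)\|\lesssim\nu^{-\eps/4}\|w(\mathcal N)(-\L_0)^{1/2}\psi\|$, together with the graph-norm blow-up bound \eqref{lem-domain.3}, $\|w(\mathcal N)\L\varphi^\nu\|\lesssim\nu^{1/2-\eps/4}(\cdots)$; this trade-off between accuracy and blow-up of $\L\varphi^\nu$ is the type of input consumed by the Gubinelli--Perkowski uniqueness machinery, even though the lemma as stated only claims bare density. Conversely, your route has a small bonus: the paper simply asserts that $\mathcal V_w$ is dense in $w(\mathcal N)^{-1}(\mathcal H)$, whereas you actually prove the stronger statement that $\mathcal V_w$ is dense in $X_w$ in the $X_w$-norm.
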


\begin{proof}
We know that $w(\mathcal N)^{-1} (-\L_0)^{-1} (\mathcal H) \cap w(\mathcal N)^{-1} (\mathcal N+1)^{-\alpha(0)} (-\L_0)^{-1/2} (\mathcal H)$ is dense in $w(\mathcal N)^{-1} (\mathcal H)$. Thus it is enough to show that, for any $\psi\in w(\mathcal N)^{-1} (-\L_0)^{-1} (\mathcal H) \cap w(\mathcal N)^{-1} (\mathcal N+1)^{-\alpha(0)} (-\L_0)^{-1/2} (\mathcal H)$ and for all $\nu\geq 1$, there exists a $\varphi^\nu\in \mathcal D_w(\L)$ such that
  \begin{eqnarray}
  \|w(\mathcal N) (-\L_0)^{1/2} \varphi^\nu \| &\lesssim& \|w(\mathcal N) (-\L_0)^{1/2} \psi \|, \label{lem-domain.1} \\
  \|w(\mathcal N) (-\L_0)^{1/2} (\varphi^\nu-\psi) \| &\lesssim& \nu^{-\eps/4} \|w(\mathcal N) (-\L_0)^{1/2} \psi \|, \label{lem-domain.2}
  \end{eqnarray}
and moreover,
  \begin{equation} \label{lem-domain.3}
  \|w(\mathcal N) \L \varphi^\nu \| \lesssim \nu^{1/2 -\eps/4} \big(\|w(\mathcal N) \L_0 \psi \| + \| w(\mathcal N) (\mathcal N+1)^{\alpha(0)} (-\L_0)^{1/2} \psi\| \big).
  \end{equation}

First, as the results in Proposition \ref{prop-control-struc} hold uniformly in $m\geq 1$, there exists $\varphi^\nu\in w(\mathcal N)^{-1} (\mathcal H)$ such that
  $$\varphi^\nu = {\bf 1}_{\nu M(\mathcal N)\leq |\L_0|} (-\L_0)^{-1}\mathcal G\varphi^\nu +\psi $$
and it enjoys the estimates \eqref{lem-domain.1} and \eqref{lem-domain.2}. It remains to show that $\varphi^\nu\in \mathcal D_w(\L)$ and \eqref{lem-domain.3} holds. We first give some heuristic discussions. Note that
  $$\varphi^\nu = (-\L_0)^{-1}\mathcal G^\succ \varphi^\nu +\varphi^{\nu,\sharp} ,$$
where
  $$\varphi^{\nu,\sharp}= \psi - {\bf 1}_{M(\mathcal N) \leq |\L_0|<\nu M(\mathcal N)} (-\L_0)^{-1}\mathcal G \varphi^\nu. $$
This means that $\varphi^\nu = \mathcal K \varphi^{\nu,\sharp}$ and thus, if we can show that $\varphi^{\nu,\sharp}\in w(\mathcal N)^{-1} (-\L_0)^{-1} (\mathcal H) \cap w(\mathcal N)^{-1} (\mathcal N+1)^{-\alpha(0)} (-\L_0)^{-1/2} (\mathcal H)$, then $\varphi^\nu\in \mathcal D_w(\L)$. Moreover, since
  $$\aligned
  \L \varphi^\nu &= \L_0 \varphi^\nu + \mathcal G\varphi^\nu= -\mathcal G^\succ \varphi^\nu+ \L_0 \varphi^{\nu,\sharp} + \mathcal G\varphi^\nu  = \L_0 \varphi^{\nu,\sharp} + \mathcal G^\prec \varphi^\nu,
  \endaligned $$
we have
  $$w(\mathcal N) \L \varphi^\nu = w(\mathcal N) \L_0 \varphi^{\nu,\sharp} + w(\mathcal N) \mathcal G^\prec \varphi^\nu .$$
Applying Proposition \ref{prop-control-struc-2} with $\gamma=0$, we have
  $$\|w(\mathcal N) \mathcal G^\prec \varphi^\nu\| \lesssim \|w(\mathcal N) (\mathcal N+1)^{\alpha(0)} (-\L_0)^{1/2} \varphi^{\nu,\sharp} \|. $$
Thus, to prove \eqref{lem-domain.3}, it suffices to estimate $\|w(\mathcal N)\L_0 \varphi^{\nu,\sharp} \|$ and $\|w(\mathcal N) (\mathcal N+1)^{\alpha(0)} (-\L_0)^{1/2} \varphi^{\nu,\sharp} \|$. Summarizing these discussions, we need to prove
  $$\varphi^{\nu,\sharp}\in w(\mathcal N)^{-1} (-\L_0)^{-1} (\mathcal H) \cap w(\mathcal N)^{-1} (\mathcal N+1)^{-\alpha(0)} (-\L_0)^{-1/2} (\mathcal H) $$
and estimate the norms.

We denote by
  $$ \psi^\nu= {\bf 1}_{M(\mathcal N) \leq |\L_0|<\nu M(\mathcal N)} (-\L_0)^{-1}\mathcal G \varphi^\nu, $$
and hence $\varphi^{\nu,\sharp}= \psi - \psi^\nu$. It is enough to show that both $\psi$ and $\psi^\nu$ enjoy the above properties. By assumption, $\psi$ satisfies the desired bounds, thus it remains to prove
  \begin{eqnarray}
  \| w(\mathcal N) \L_0 \psi^\nu \| &\lesssim& \nu^{1/2-\eps/4} \| w(\mathcal N) (\mathcal N+1)^{\alpha(0)} (-\L_0)^{1/2} \psi\|, \label{lem-domain.4} \\
  \| w(\mathcal N) (\mathcal N+1)^{\alpha(0)} (-\L_0)^{1/2} \psi^\nu \| &\lesssim & \| w(\mathcal N) (\mathcal N+1)^{\alpha(0)} (-\L_0)^{1/2} \psi\|. \label{lem-domain.5}
  \end{eqnarray}
Note that $w(\mathcal N) \L_0 \psi^\nu= -w(\mathcal N) {\bf 1}_{M(\mathcal N) \leq |\L_0|<\nu M(\mathcal N)} \mathcal G \varphi^\nu$, hence \eqref{lem-domain.4} can be proved by using the uniform estimates in Lemma \ref{lem-apriori} (these estimates hold also for $m=+\infty$). First, as in the proof of Proposition \ref{prop-control-struc}, we have, for the operator $\mathcal G_+$,
  $$\aligned
  &\, \|w(\mathcal N) {\bf 1}_{M(\mathcal N) \leq |\L_0|<\nu M(\mathcal N)} \mathcal G_+ \varphi^\nu \| \\
  = &\, \|w(\mathcal N) (-\L_0)^{1/2 -\eps/4 -1/2 +\eps/4 } {\bf 1}_{M(\mathcal N) \leq |\L_0|<\nu M(\mathcal N)} \mathcal G_+ \varphi^\nu \| \\
  \leq &\, \nu^{1/2 -\eps/4} \|w(\mathcal N) M(\mathcal N)^{1/2 -\eps/4} (-\L_0)^{-1/2 +\eps/4 } \mathcal G_+ \varphi^\nu \| \\
  \lesssim &\, \nu^{1/2 -\eps/4} \|w(\mathcal N+1) M(\mathcal N+1)^{1/2 -\eps/4} (\mathcal N+1) (-\L_0)^{1/2 -\eps/4 } \varphi^\nu \|,
  \endaligned $$
where in the last step we have used \eqref{lem-apriori.1} (with $m=+\infty$). In the same way, for the operator $\mathcal G_-$, by \eqref{lem-apriori.2}
  $$\aligned
  &\, \|w(\mathcal N) {\bf 1}_{M(\mathcal N) \leq |\L_0|<\nu M(\mathcal N)} \mathcal G_- \varphi^\nu \| \\
  = &\, \|w(\mathcal N) (-\L_0)^{1/2 -\eps/4 -1/2 +\eps/4 } {\bf 1}_{M(\mathcal N) \leq |\L_0|<\nu M(\mathcal N)} \mathcal G_- \varphi^\nu \| \\
  \leq &\, \nu^{1/2 -\eps/4} \|w(\mathcal N) M(\mathcal N)^{1/2 -\eps/4} (-\L_0)^{-1/2 +\eps/4 } \mathcal G_- \varphi^\nu \| \\
  \lesssim &\, \nu^{1/2 -\eps/4} \|w(\mathcal N-1) M(\mathcal N-1)^{1/2 -\eps/4} \mathcal N (-\L_0)^{1/2 } \varphi^\nu \|.
  \endaligned $$
Combining these results and using the definition of $M(n)$ in Proposition \ref{prop-control-struc}, we obtain \eqref{lem-domain.4} as follows:
  $$\aligned
  \| w(\mathcal N) \L_0 \psi^\nu \| &= \|w(\mathcal N) {\bf 1}_{M(\mathcal N) \leq |\L_0|<\nu M(\mathcal N)} \mathcal G \varphi^\nu \| \\
  &\lesssim \nu^{1/2 -\eps/4} |w| L^{1/2-\eps/4} \|w(\mathcal N) (\mathcal N+1)^{\alpha(0)} (-\L_0)^{1/2 } \varphi^\nu \| \\
  &\lesssim \nu^{1/2 -\eps/4} \|w(\mathcal N) (\mathcal N+1)^{\alpha(0)} (-\L_0)^{1/2 } \psi \|,
  \endaligned $$
where in the last step we have used \eqref{lem-domain.1} with $w(\mathcal N) (\mathcal N+1)^{\alpha(0)}$ in place of $w(\mathcal N)$.

Finally, we prove \eqref{lem-domain.5}. By the definition of $\psi^\nu$,
  $$\aligned
  &\, \| w(\mathcal N) (\mathcal N+1)^{\alpha(0)} (-\L_0)^{1/2} \psi^\nu \| \\
  =&\, \| w(\mathcal N) (\mathcal N+1)^{\alpha(0)}{\bf 1}_{M(\mathcal N) \leq |\L_0|<\nu M(\mathcal N)} (-\L_0)^{-1/2} \mathcal G \varphi^\nu \| \\
  \leq &\, \| w(\mathcal N) (\mathcal N+1)^{\alpha(0)}{\bf 1}_{M(\mathcal N) \leq |\L_0|<\nu M(\mathcal N)} (-\L_0)^{-1/2} \mathcal G_+ \varphi^\nu \| \\
  & + \| w(\mathcal N) (\mathcal N+1)^{\alpha(0)}{\bf 1}_{M(\mathcal N) \leq |\L_0|<\nu M(\mathcal N)} (-\L_0)^{-1/2} \mathcal G_- \varphi^\nu \| .
  \endaligned $$
We have
  $$\aligned
  &\, \| w(\mathcal N) (\mathcal N+1)^{\alpha(0)}{\bf 1}_{M(\mathcal N) \leq |\L_0|<\nu M(\mathcal N)} (-\L_0)^{-1/2} \mathcal G_+ \varphi^\nu \| \\
  \leq &\, \| w(\mathcal N) (\mathcal N+1)^{\alpha(0)} M(\mathcal N)^{-\eps/4} (-\L_0)^{-1/2+\eps/4} \mathcal G_+ \varphi^\nu \|\\
  \lesssim &\, \| w(\mathcal N+1) (\mathcal N+2)^{\alpha(0)} M(\mathcal N+1)^{-\eps/4} (\mathcal N+1) (-\L_0)^{1/2-\eps/4} \varphi^\nu \| \\
  \lesssim &\, \| w(\mathcal N) (\mathcal N+1)^{\alpha(0)} (-\L_0)^{1/2} \varphi^\nu \|,
  \endaligned $$
where the second step follows from \eqref{lem-apriori.1} and the last step from the definition of $M(n)$. Similarly,
  $$\aligned
  &\, \| w(\mathcal N) (\mathcal N+1)^{\alpha(0)}{\bf 1}_{M(\mathcal N) \leq |\L_0|<\nu M(\mathcal N)} (-\L_0)^{-1/2} \mathcal G_- \varphi^\nu \| \\
  \leq &\, \| w(\mathcal N) (\mathcal N+1)^{\alpha(0)} M(\mathcal N)^{-\eps/4} (-\L_0)^{-1/2+\eps/4} \mathcal G_- \varphi^\nu \|\\
  \lesssim &\, \| w(\mathcal N-1) \mathcal N^{\alpha(0)} M(\mathcal N-1)^{-\eps/4} \mathcal N (-\L_0)^{1/2} \varphi^\nu \| \\
  \lesssim &\, \| w(\mathcal N) (\mathcal N+1)^{\alpha(0)} (-\L_0)^{1/2} \varphi^\nu \|,
  \endaligned $$
where we have used \eqref{lem-apriori.2} in the second step. Combining the above estimates we obtain
  $$\| w(\mathcal N) (\mathcal N+1)^{\alpha(0)} (-\L_0)^{1/2} \psi^\nu \| \lesssim \| w(\mathcal N) (\mathcal N+1)^{\alpha(0)} (-\L_0)^{1/2} \varphi^\nu \|,$$
which, together with \eqref{lem-domain.1}, yields \eqref{lem-domain.5}. The proof is complete.
\end{proof}

The following result gives the dissipativity of the limit operator $\L$ and it is the same as \cite[Lemma 2.22]{GP-18}, see also \cite[Lemma 14]{GT}.

\begin{corollary}\label{cor-dissipa}
For any $\varphi\in \mathcal D(\L)$, we have $\<\varphi, \L\varphi\>\leq 0$. In particular, the operator $(\L, \mathcal D(\L))$ is dissipative.
\end{corollary}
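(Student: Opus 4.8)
The plan is to split $\L=\L_0+\mathcal G$ and to show that the transport part $\mathcal G$ makes no contribution to the diagonal pairing, so that $\<\varphi,\L\varphi\>=\<\varphi,\L_0\varphi\>$, which is visibly nonpositive. Since $\L_0$ acts as a nonpositive self-adjoint multiplier in the chaos expansion, one has $\<\varphi,\L_0\varphi\>=-\|(-\L_0)^{1/2}\varphi\|^2\le 0$ for every $\varphi\in\mathcal D(\L)\subset(-\L_0)^{-1/2}(\mathcal H)$ (the inclusion being the codomain statement of Proposition \ref{prop-control-struc}); thus the whole assertion reduces to the claim $\<\varphi,\mathcal G\varphi\>=0$.

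The mechanism behind $\<\varphi,\mathcal G\varphi\>=0$ is the skew-adjointness recorded in the last display of Lemma \ref{lem-1}: across neighbouring chaos levels $\mathcal G_+$ and $-\mathcal G_-$ are mutually adjoint, and (as noted after Lemma \ref{lem-apriori}) this identity is uniform in $m$ and persists in the limit $m\to\infty$. For a cylinder function $\varphi=\sum_n W_n(\varphi_n)$ with finite chaos expansion I would compute the pairing level by level: because $\mathcal G_+$ raises and $\mathcal G_-$ lowers the chaos order by one, only consecutive components meet, so $\<\varphi,\mathcal G\varphi\>=\sum_n\big[\<W_{n+1}(\varphi_{n+1}),\mathcal G_+W_n(\varphi_n)\>+\<W_{n-1}(\varphi_{n-1}),\mathcal G_-W_n(\varphi_n)\>\big]$. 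Reindexing the second sum and applying the adjoint relation term by term shows the two sums cancel exactly, giving $\<\varphi,\mathcal G\varphi\>=0$. This is nothing but the $m=\infty$ version of the identity $\<\varphi,(\L_0+\mathcal G^m)\varphi\>=\<\varphi,\L_0\varphi\>$ already established for finite $m$ right after the definition of $\mathcal D(\L^m)$.

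The remaining work is to pass from cylinder functions to a general $\varphi=\mathcal K\varphi^\sharp\in\mathcal D(\L)$. I would approximate $\varphi$ by the truncations $\varphi^M$ onto chaos components of order $\le M$ and Fourier modes $|k|_\infty\le M$, exactly as in the proof of Theorem \ref{thm-uniqueness}; these are cylinder functions, so $\<\varphi^M,\mathcal G\varphi^M\>=0$. Writing $\<\varphi^M,\mathcal G\varphi^M\>=\<(-\L_0)^{1/2}\varphi^M,(-\L_0)^{-1/2}\mathcal G\varphi^M\>$, one passes to the limit using the controlled structure $\varphi=(-\L_0)^{-1}\mathcal G^\succ\varphi+\varphi^\sharp$: the high-frequency contribution satisfies $(-\L_0)^{-1/2}\mathcal G^\succ\varphi=(-\L_0)^{1/2}(\varphi-\varphi^\sharp)\in\mathcal H$ by \eqref{prop-control-struc.1}, while the low-frequency part $\mathcal G^\prec\varphi$ lies in $\mathcal H$ by Proposition \ref{prop-control-struc-2} (with $\gamma=0$); since $(-\L_0)^{1/2}\varphi\in\mathcal H$ as well, both factors converge in $\mathcal H$ under truncation and the products converge. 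Hence $\<\varphi,\mathcal G\varphi\>=\lim_M\<\varphi^M,\mathcal G\varphi^M\>=0$ and $\<\varphi,\L\varphi\>=\<\varphi,\L_0\varphi\>\le 0$.

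Dissipativity is then the standard consequence: for $\lambda>0$ and $\varphi\in\mathcal D(\L)$, the Cauchy--Schwarz inequality gives $\|(\lambda-\L)\varphi\|\,\|\varphi\|\ge\mathrm{Re}\,\<(\lambda-\L)\varphi,\varphi\>=\lambda\|\varphi\|^2-\mathrm{Re}\,\<\L\varphi,\varphi\>\ge\lambda\|\varphi\|^2$, so $\|(\lambda-\L)\varphi\|\ge\lambda\|\varphi\|$. I expect the main difficulty to be precisely the justification in the previous paragraph: $\mathcal G\varphi$ is not a bounded operation on $\mathcal H$, and the separate sums $\<\varphi,\mathcal G_+\varphi\>$ and $\<\varphi,\mathcal G_-\varphi\>$ need not converge absolutely on their own, so one must keep $\mathcal G\varphi$ tamed through the resolvent decomposition and invoke the uniform bounds of Propositions \ref{prop-control-struc} and \ref{prop-control-struc-2} to guarantee that the cancellation of the chaos-level series genuinely survives the limits $M\to\infty$ (and $m\to\infty$).
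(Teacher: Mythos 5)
Your proposal is correct and follows essentially the same route as the paper: both reduce the claim to the antisymmetry $\<\varphi,\mathcal G\varphi\>=0$ (which in the paper comes from the adjoint relation $\<\psi,(\L_0+\mathcal G^m)\varphi\>=\<(\L_0-\mathcal G^m)\psi,\varphi\>$, equivalently the last identity of Lemma \ref{lem-1} that you use), and both justify the passage from cylinder functions to general $\varphi=\mathcal K\varphi^\sharp$ by approximation, using exactly the regularities supplied by Propositions \ref{prop-control-struc} and \ref{prop-control-struc-2} (including, as the paper notes explicitly and you invoke implicitly, the weighted version of Proposition \ref{prop-control-struc} with $w(n)=(n+1)^{\alpha(0)}$, needed to control $\mathcal G^{\prec}$ on the truncation errors). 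Your write-up merely spells out the truncation scheme and the standard Cauchy--Schwarz deduction of dissipativity that the paper leaves implicit.
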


\begin{proof}
Since $\varphi \in \mathcal D(\L)$, we can find some $\varphi^\sharp \in (-\L_0)^{-1} (\mathcal H) \cap (\mathcal N+1)^{-\alpha(0)} (-\L_0)^{-1/2} (\mathcal H)$ such that $\varphi= \mathcal K \varphi^\sharp$. By Proposition \ref{prop-control-struc} (see in particular the last assertion), we have $\varphi\in (-\L_0)^{-1/2} (\mathcal H)$ and thus $\L_0\varphi \in (-\L_0)^{1/2} (\mathcal H)$. Moreover, by Proposition \ref{prop-control-struc-2}, $ \mathcal G\varphi\in (-\L_0)^{1/2} (\mathcal H)$. Next, taking $w(n)= (n+1)^{\alpha(0)}$ in Proposition \ref{prop-control-struc}, we deduce that $\varphi\in (\mathcal N+1)^{-1} (-\L_0)^{-1/2} (\mathcal H)$. These regularities are enough to proceed by approximation and prove that
  $$\<\varphi, \L\varphi\>= -\<\varphi, -\L_0 \varphi\> + \<\varphi, \mathcal G\varphi\>= - \|(-\L_0)^{1/2} \varphi \|^2\leq 0,$$
where we have used the antisymmetry of the form associated to $\mathcal G$, i.e. $\<\varphi, \mathcal G\varphi\>=0$.
\end{proof}

\subsubsection{Existence and uniqueness for the Kolmogorov equation}

In this part, we intend to study the Kolmogorov equation $\partial_t \varphi= \L\varphi$. First, let $\varphi^m= \varphi^m(t)$ be the solution to the equation:
  $$\partial_t \varphi^m= \L^m \varphi^m= \L_0\varphi^m + \mathcal G^m\varphi^m,\quad \varphi^m(0)= \varphi^m_0. $$
As in Section \ref{sec-control-struc}, set
  \begin{equation}\label{eq-identity.0}
  \varphi^{m,\sharp}= \varphi^m- (-\L_0)^{-1} \mathcal G^{m,\succ}\varphi^m .
    \end{equation}
Then $ \L_0\varphi^{m,\sharp} = \L_0 \varphi^m + \mathcal G^{m,\succ}\varphi^m $ and thus
  \begin{equation}\label{eq-identity}
  \aligned
  \partial_t \varphi^m =\L^m\varphi^m &= \L_0 \varphi^{m,\sharp}- \mathcal G^{m,\succ}\varphi^m + \mathcal G^m \varphi^m \\
  &= \L_0 \varphi^{m,\sharp}+ \mathcal G^{m,\prec} \varphi^m.
  \endaligned
  \end{equation}
Now we want to find an equation for $\varphi^{m,\sharp}$: by \eqref{eq-identity.0},
  $$\aligned
  \partial_t \varphi^{m,\sharp} -\L_0\varphi^{m,\sharp} &= \partial_t \varphi^m - (-\L_0)^{-1} \mathcal G^{m,\succ} \partial_t \varphi^m  -\L_0 \varphi^{m,\sharp} \\
  &= \mathcal G^{m,\prec} \varphi^m - (-\L_0)^{-1} \mathcal G^{m,\succ} \partial_t \varphi^m ,
  \endaligned $$
where the second step follows from the last equality in \eqref{eq-identity}. Using again these equalities, we obtain
  \begin{equation}\label{eq-identity.1}
  \aligned
  \partial_t \varphi^{m,\sharp} -\L_0\varphi^{m,\sharp}  &= \mathcal G^{m,\prec} \varphi^m - (-\L_0)^{-1} \mathcal G^{m,\succ} \big( \mathcal G^{m,\prec} \varphi^m +\L_0 \varphi^{m,\sharp} \big) =: \Phi^{m,\sharp}.
  \endaligned
  \end{equation}
We want to find a bound for each term of $\Phi^{m,\sharp}$ in terms of $\varphi^{m,\sharp}_0$. To this end, we first prove the following result.

\begin{lemma}\label{lem-auxili-1}
It holds that
  \begin{equation}\label{lem-auxili-1.1}
  \aligned
  &\, \|(1+\mathcal N)^p(-\L_0)^{1/2} \varphi^{m,\sharp}(t) \| \\
  \lesssim&\, \big(t e^{Ct}+1 \big)^{1/2} \big( \|(1+\mathcal N)^p \L_0 \varphi^{m,\sharp}_0 \| +  \|(1+\mathcal N)^{p+\alpha(0)} (-\L_0)^{1/2} \varphi^{m,\sharp}_0 \|\big).
  \endaligned
  \end{equation}
\end{lemma}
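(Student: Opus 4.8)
The plan is to sidestep the somewhat awkward equation \eqref{eq-identity.1} for $\varphi^{m,\sharp}$ and instead reduce everything to the estimates already available for the Galerkin solution $\varphi^m$ itself. The starting point is the controlled-structure identity $\varphi^{m,\sharp}(t)=\varphi^m(t)-(-\L_0)^{-1}\mathcal G^{m,\succ}\varphi^m(t)$. Applying $(1+\mathcal N)^p(-\L_0)^{1/2}$ and the triangle inequality, the contribution of the second summand is $\|(1+\mathcal N)^p(-\L_0)^{-1/2}\mathcal G^{m,\succ}\varphi^m(t)\|$, which by the smallness bound \eqref{prop-control-struc.1} (applied with the polynomial weight $w(n)=(n+1)^p$, whose $|w|$ is uniformly bounded so that Proposition \ref{prop-control-struc} is in force) is controlled by $\|(1+\mathcal N)^p(-\L_0)^{1/2}\varphi^m(t)\|$. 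Hence it suffices to prove \eqref{lem-auxili-1.1} with $\varphi^{m,\sharp}(t)$ replaced by $\varphi^m(t)$ on the left.

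For this I would invoke Corollary \ref{cor-evolution} with $\alpha=p$, which is exactly what produces the factor $te^{Ct}$ we seek:
$$\|(1+\mathcal N)^p(-\L_0)^{1/2}\varphi^m(t)\|\lesssim (te^{Ct})^{1/2}\|(1+\mathcal N)^p\L^m\varphi^m_0\|+\|(1+\mathcal N)^p(-\L_0)^{1/2}\varphi^m_0\|.$$
It then remains to re-express the two initial-data norms in terms of $\varphi^{m,\sharp}_0$. The crucial observation is that, evaluating the definition of $\varphi^{m,\sharp}$ at $t=0$, one has $\varphi^m_0=\mathcal K^m\varphi^{m,\sharp}_0$. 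Consequently \eqref{prop-control-struc.1.5} immediately gives $\|(1+\mathcal N)^p(-\L_0)^{1/2}\varphi^m_0\|\lesssim\|(1+\mathcal N)^p(-\L_0)^{1/2}\varphi^{m,\sharp}_0\|$, which is dominated by the second norm on the right of \eqref{lem-auxili-1.1}.

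For the remaining term $\|(1+\mathcal N)^p\L^m\varphi^m_0\|$ I would use \eqref{eq-identity} at time $0$, namely $\L^m\varphi^m_0=\L_0\varphi^{m,\sharp}_0+\mathcal G^{m,\prec}\varphi^m_0$. The first piece is precisely the first norm in \eqref{lem-auxili-1.1}; for the second I would apply Proposition \ref{prop-control-struc-2} with $\gamma=0$ and $w(n)=(n+1)^p$ (legitimate since $\varphi^m_0=\mathcal K^m\varphi^{m,\sharp}_0$), obtaining $\|(1+\mathcal N)^p\mathcal G^{m,\prec}\varphi^m_0\|\lesssim\|(1+\mathcal N)^{p+\alpha(0)}(-\L_0)^{1/2}\varphi^{m,\sharp}_0\|$. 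Assembling the three estimates, absorbing the standalone $\|(1+\mathcal N)^p(-\L_0)^{1/2}\varphi^{m,\sharp}_0\|$ into the larger $(\mathcal N+1)^{\alpha(0)}$-weighted norm, and merging the prefactors via the elementary inequality $(te^{Ct})^{1/2}+1\lesssim(te^{Ct}+1)^{1/2}$, yields exactly \eqref{lem-auxili-1.1}.

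I expect no genuine analytic obstacle: all the real work is packaged into Corollary \ref{cor-evolution} and into Propositions \ref{prop-control-struc} and \ref{prop-control-struc-2}, and the argument is a matter of chaining these uniform-in-$m$ bounds rather than proving a new estimate. The only points to watch are that the constants from those propositions are indeed uniform in $m$ and valid in the limit $m\to\infty$, and the bookkeeping of the weights so that the correct power $p+\alpha(0)$ emerges; one also tacitly assumes $\varphi^m_0\in\mathcal V$ so that Corollary \ref{cor-evolution} applies, extending to general data by density if necessary.
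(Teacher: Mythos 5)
Your proposal is correct and follows essentially the same route as the paper's own proof: the triangle inequality on $\varphi^{m,\sharp}=\varphi^m-(-\L_0)^{-1}\mathcal G^{m,\succ}\varphi^m$ combined with \eqref{prop-control-struc.1}, then Corollary \ref{cor-evolution}, then \eqref{prop-control-struc.1.5} for $\|(1+\mathcal N)^p(-\L_0)^{1/2}\varphi^m_0\|$, and finally \eqref{eq-identity} at $t=0$ together with Proposition \ref{prop-control-struc-2} (with $\gamma=0$) for $\|(1+\mathcal N)^p\L^m\varphi^m_0\|$. The paper's argument is exactly this chain, so there is nothing to add beyond your careful bookkeeping of weights and the final merging of prefactors.
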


\begin{proof}
We follow the idea of \cite[Lemma 3.5]{GP-18} and \cite[Lemma 15]{GT}. By the definition of $\varphi^{m,\sharp}$,
  $$\aligned
  &\, \|(1+\mathcal N)^p(-\L_0)^{1/2} \varphi^{m,\sharp}(t) \|\\
  \leq&\, \|(1+\mathcal N)^p(-\L_0)^{1/2} \varphi^{m}(t) \| + \|(1+\mathcal N)^p(-\L_0)^{-1/2} \mathcal G^{m,\succ} \varphi^{m}(t) \| \\
  \lesssim &\, \|(1+\mathcal N)^p(-\L_0)^{1/2} \varphi^{m}(t) \|,
  \endaligned $$
where in the second step we have used \eqref{prop-control-struc.1}. Next, by Corollary \ref{cor-evolution},
  $$\aligned
  \|(1+\mathcal N)^p(-\L_0)^{1/2} \varphi^{m}(t) \| &\lesssim \big(te^{Ct} \big)^{1/2} \|(1+\mathcal N)^p \L^m \varphi^m_0 \| + \|(1+\mathcal N)^p(-\L_0)^{1/2} \varphi^{m}_0 \| \\
  &\lesssim \big(te^{Ct} \big)^{1/2} \|(1+\mathcal N)^p \L^m \varphi^m_0 \| +\|(1+\mathcal N)^p(-\L_0)^{1/2} \varphi^{m,\sharp}_0 \|,
  \endaligned $$
where we have used \eqref{prop-control-struc.1.5} for the second term on the right-hand side. It remains to estimate the first one: by \eqref{eq-identity},
  $$\|(1+\mathcal N)^p \L^m \varphi^m_0 \| \leq \|(1+\mathcal N)^p \L_0 \varphi^{m,\sharp}_0 \|+ \|(1+\mathcal N)^p \mathcal G^{m,\prec} \varphi^m_0 \|. $$
Using Proposition \ref{prop-control-struc-2} with $\gamma=0$, we get
  $$\|(1+\mathcal N)^p \mathcal G^{m,\prec} \varphi^m_0 \| \lesssim \|(1+\mathcal N)^{p+\alpha(0)} (-\L_0)^{1/2} \varphi^{m,\sharp}_0 \|. $$
Summarizing these estimates, we finish the proof.
\end{proof}

We can prove the following bound on $\Phi^{m,\sharp}$.

\begin{lemma}\label{lem-Phi-m}
For any $p>0$ and $\gamma\in (3/2 -\eps/2, 3/2-\eps/4)$, there exists $q=q(p,\gamma)$ and small $\delta\in (0,1)$ such that
  $$\aligned
  &\, \sup_{0\leq t\leq T} \|(1+\mathcal N)^p (-\L_0)^\gamma \Phi^{m,\sharp}(t) \| \\
  \lesssim_T &\ \|(1+\mathcal N)^q \L_0 \varphi^{m,\sharp}_0 \| + \|(1+\mathcal N)^{q+\alpha(0)} (-\L_0)^{1/2} \varphi^{m,\sharp}_0 \|\\
  &\, + \delta\sup_{0\leq t\leq T} \|(1+\mathcal N)^p (-\L_0)^{1+\gamma -\eps/8} \varphi^{m,\sharp}(t) \|.
  \endaligned $$
\end{lemma}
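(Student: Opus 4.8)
The plan is to substitute the definition \eqref{eq-identity.1} of $\Phi^{m,\sharp}$ and to treat the three resulting pieces
\[
\Phi^{m,\sharp}= \mathcal G^{m,\prec}\varphi^m - (-\L_0)^{-1}\mathcal G^{m,\succ}\mathcal G^{m,\prec}\varphi^m - (-\L_0)^{-1}\mathcal G^{m,\succ}\L_0\varphi^{m,\sharp}
\]
separately, estimating $\|(1+\mathcal N)^p(-\L_0)^\gamma(\,\cdot\,)\|$ for each. Applying $(-\L_0)^\gamma$ turns the two terms carrying $(-\L_0)^{-1}$ into $(-\L_0)^{\gamma-1}\mathcal G^{m,\succ}(\cdots)$; since $\gamma-1\in(\tfrac12-\tfrac\eps2,\tfrac12-\tfrac\eps4)\subset[0,\tfrac12)$ is \emph{positive}, this places a positive power of $-\L_0$ on the output of $\mathcal G^{m,\succ}$, which is the crux of the difficulty.

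The first term is the easiest: as $\gamma>0$, Proposition \ref{prop-control-struc-2} gives $\|(1+\mathcal N)^p(-\L_0)^\gamma\mathcal G^{m,\prec}\varphi^m\|\lesssim\|(1+\mathcal N)^{p+\alpha(\gamma)}(-\L_0)^{1/2}\varphi^{m,\sharp}\|$, and Lemma \ref{lem-auxili-1} (with $p$ replaced by $p+\alpha(\gamma)$, the factor $(te^{Ct}+1)^{1/2}$ absorbed into $\lesssim_T$) converts the right-hand side into the two initial-data norms, fixing $q=q(p,\gamma)$. The lowering parts $\mathcal G^{m,\succ}_-$ of the other two terms I would handle directly: writing $(-\L_0)^{\gamma-1}{\bf 1}_{|\L_0|\ge M(\mathcal N)}\le M(\mathcal N)^{-\eps/8}(-\L_0)^{\gamma-1+\eps/8}$ peels off a factor $M(\mathcal N)^{-\eps/8}\lesssim L^{-\eps/8}$ from the high-frequency cut-off, after which \eqref{lem-apriori.2} applies with output exponent $\gamma-1+\eps/8$ (its admissible range $\gamma_{\mathrm{lem}}\le(2-\eps)/4$ is respected because $\gamma>1$); tracking the exponents, the input of the third term lands at exactly $(-\L_0)^{1+\gamma-\eps/8}\varphi^{m,\sharp}$, which produces the last term in the claim with $\delta\sim L^{-\eps/8}$.

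The main obstacle is the raising part $\mathcal G^{m,\succ}_+$ of the second and third terms, since $(-\L_0)^{\gamma-1}\mathcal G^m_+$ carries a positive output power, for which \eqref{lem-apriori.1} is inapplicable (it requires a negative output exponent) and the cut-off ${\bf 1}_{|\L_0|\ge M(\mathcal N)}$ yields no smallness for positive exponents. I would circumvent this by duality: by the adjoint relation in Lemma \ref{lem-1} one has $(\mathcal G^m_+)^\ast=-\mathcal G^m_-$ in $L^2(\mu)$, so testing $(1+\mathcal N)^p(-\L_0)^{\gamma-1}{\bf 1}_{|\L_0|\ge M(\mathcal N)}\mathcal G^m_+$ against $g$ turns the estimate into one for $\mathcal G^m_-$ acting on ${\bf 1}_{|\L_0|\ge M(\mathcal N)}(-\L_0)^{\gamma-1}(1+\mathcal N)^p g$; now \eqref{lem-apriori.2} does apply (it tolerates positive exponents), and the cut-off, now sitting on the test function, again supplies the $L^{-\eps/8}$ smallness. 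The interval $\gamma\in(3/2-\eps/2,3/2-\eps/4)$ is exactly what forces the exponents of both the $+$ and the $-$ contributions into the admissible ranges of \eqref{lem-apriori.1}–\eqref{lem-apriori.2} while keeping the output at most at $(-\L_0)^{1+\gamma-\eps/8}$.

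For the second term, the inner factor surviving these manipulations is $(-\L_0)^\beta\mathcal G^{m,\prec}\varphi^m$ for some $\beta\ge0$, which I bound once more by Proposition \ref{prop-control-struc-2} followed by Lemma \ref{lem-auxili-1}; thus—unlike the third term, whose input $\L_0\varphi^{m,\sharp}$ is too rough to be reabsorbed by Lemma \ref{lem-auxili-1}—it contributes only to the initial-data terms (the extra factor $L^{-\eps/8}\le1$ being harmless). Collecting all contributions, taking the supremum over $t\in[0,T]$ and setting $\delta\sim L^{-\eps/8}$ yields the stated inequality. I expect the delicate point to be the exponent bookkeeping in the duality step, where one must simultaneously guarantee applicability of \eqref{lem-apriori.2} and ensure the output regularity never exceeds $1+\gamma-\eps/8$.
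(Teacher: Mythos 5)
Your starting point agrees with the paper: the same three-term decomposition of $\Phi^{m,\sharp}$ from \eqref{eq-identity.1}, and the same treatment of the first term $\mathcal G^{m,\prec}\varphi^m$ via Proposition \ref{prop-control-struc-2} followed by Lemma \ref{lem-auxili-1}. You are also right that \eqref{lem-apriori.1} as stated does not cover positive output exponents; the paper handles this by asserting that \eqref{lem-apriori.1}--\eqref{lem-apriori.2} extend to $\mathcal G^{m,\succ}_{\pm}$ for the output exponent $\gamma-1\in(1/2-\eps/2,\,1/2-\eps/4)$ and applying them directly, giving $\|(1+\mathcal N)^{p+1}(-\L_0)^{\gamma-\eps/4}(\cdot)\|$. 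Your substitute -- duality -- does not close. Writing $A=(1+\mathcal N)^p(-\L_0)^{\gamma-1}{\bf 1}_{|\L_0|\geq M(\mathcal N)}$ and using $(\mathcal G^m_+)^\ast=-\mathcal G^m_-$, you must bound $\sup_{\|g\|=1}|\langle \psi, \mathcal G^m_-(Ag)\rangle|$ by $\|B\psi\|$ with $B$ no stronger than $(1+\mathcal N)^{p}(-\L_0)^{\gamma-\eps/8}$ (so that, for $\psi=\L_0\varphi^{m,\sharp}$, the output is the absorbable term). This requires $\|B^{-1}\mathcal G^m_-(Ag)\|\lesssim\|g\|$, i.e.\ \eqref{lem-apriori.2} with output exponent $\sigma=\gamma-\eps/8$; but its admissible range is $\sigma\leq(2-\eps)/4\leq 1/2$, while $\gamma-\eps/8>3/2-5\eps/8\geq 7/8$. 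Taking instead any admissible $\sigma\leq(2-\eps)/4$, the estimate \eqref{lem-apriori.2} leaves $(1+\mathcal N)^{p+1}(-\L_0)^{\gamma-\sigma-\eps/4}{\bf 1}_{|\L_0|\geq M(\mathcal N)}$ acting on $g$, with exponent $\gamma-\sigma-\eps/4\geq\gamma-1/2>1/2$: a \emph{positive} power of $-\L_0$ restricted to $\{|\L_0|\geq M(\mathcal N)\}$ is unbounded on $L^2(\mu)$ uniformly in $m$. Contrary to your claim, the high-frequency cut-off supplies smallness only for negative powers; for positive powers it makes matters worse. Duality therefore reproduces the original obstruction rather than circumventing it.

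The second gap concerns the source of the small constant $\delta$, which in your argument is $\delta\sim L^{-\eps/8}$. This is a \emph{fixed} number: $L$ is frozen once the controlled structure is set up, since $M(\mathcal N)$ enters the definition of $\mathcal G^{m,\succ}$ and hence of $\varphi^{m,\sharp}$ itself, the very object appearing on both sides of the lemma. But the lemma is used in Proposition \ref{prop-key-result} with $\delta$ chosen so that $\delta T^{\eps/8}\leq 1/2$ for arbitrary $T$, so $\delta$ must be selectable arbitrarily small; your construction cannot provide this. Moreover \eqref{lem-apriori.2} costs a factor $\mathcal N$, so your absorbed term carries a weight at least $(1+\mathcal N)^{p+1/2}$, not the $(1+\mathcal N)^p$ required by the statement (and needed for the absorption argument); peeling more of $M(\mathcal N)$ to repair the weight pushes the $\L_0$-exponent beyond $1+\gamma-\eps/8$. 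The paper resolves both points in one stroke, and this is the step your proposal is missing: after reaching $\|(1+\mathcal N)^{p+1}(-\L_0)^{1+\gamma-\eps/4}\varphi^{m,\sharp}\|$ it interpolates, for every $\delta\in(0,1)$, against $C_\delta\|(1+\mathcal N)^q(-\L_0)^{1/2}\varphi^{m,\sharp}\|+\delta\|(1+\mathcal N)^p(-\L_0)^{1+\gamma-\eps/8}\varphi^{m,\sharp}\|$ (an $n$-dependent Young inequality, which simultaneously trades the extra $\mathcal N$-weight for $\L_0$-regularity), and then controls the $C_\delta$-term by Lemma \ref{lem-auxili-1}.
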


\begin{proof}
By the definition \eqref{eq-identity.1} of $\Phi^{m,\sharp}(t)$, we have
  \begin{equation}\label{lem-Phi-m.1}
  \aligned
  \|(1+\mathcal N)^p (-\L_0)^\gamma \Phi^{m,\sharp}(t) \|
  \leq &\  \|(1+\mathcal N)^p (-\L_0)^\gamma \mathcal G^{m,\prec} \varphi^m (t) \| \\
  & + \|(1+\mathcal N)^p (-\L_0)^{\gamma-1} \mathcal G^{m,\succ} \mathcal G^{m,\prec} \varphi^m (t) \| \\
  &+ \|(1+\mathcal N)^p (-\L_0)^{\gamma-1} \mathcal G^{m,\succ} \L_0\varphi^{m,\sharp} (t) \| .
  \endaligned
  \end{equation}
We start with the estimate of the first term on the right-hand side: by Proposition \ref{prop-control-struc-2} and Lemma \ref{lem-auxili-1},
  $$\aligned
  &\, \sup_{0\leq t\leq T}\|(1+\mathcal N)^p (-\L_0)^\gamma \mathcal G^{m,\prec} \varphi^m(t) \|\\
  \lesssim &\ \|(1+\mathcal N)^{p+\alpha(\gamma)} (-\L_0)^{1/2} \varphi^{m,\sharp}(t) \| \\
  \lesssim &\ \|(1+\mathcal N)^{p+\alpha(\gamma)} \L_0 \varphi^{m,\sharp}_0 \| + \|(1+\mathcal N)^{p+\alpha(\gamma)+\alpha(0)} (-\L_0)^{1/2} \varphi^{m,\sharp}_0 \|.
  \endaligned $$
Next, since $\gamma-1\in (1/2 -\eps/2, 1/2-\eps/4)$, the estimates \eqref{lem-apriori.1} and \eqref{lem-apriori.2} hold also for $\mathcal G^{m,\succ}_\pm$, thus
  $$\aligned
  \|(1+\mathcal N)^p (-\L_0)^{\gamma-1} \mathcal G^{m,\succ} \mathcal G^{m,\prec} \varphi^m (t) \| & \lesssim \|(1+\mathcal N)^{p+1} (-\L_0)^{\gamma -\eps/4} \mathcal G^{m,\prec} \varphi^m (t) \|\\
  &\lesssim \|(1+\mathcal N)^{p+1+ \alpha(\gamma -\eps/4)} (-\L_0)^{1/2} \varphi^{m,\sharp} (t) \|\\
  &= \|(1+\mathcal N)^{p+ \alpha(\gamma)} (-\L_0)^{1/2} \varphi^{m,\sharp} (t) \|,
  \endaligned $$
where in the last two steps we have used Proposition \ref{prop-control-struc-2} and the fact that $1+ \alpha(\gamma -\eps/4)= \alpha(\gamma)$, respectively. Moreover, by Lemma \ref{lem-auxili-1}, we obtain
  $$\aligned
  &\, \sup_{0\leq t\leq T} \|(1+\mathcal N)^p (-\L_0)^{\gamma-1} \mathcal G^{m,\succ} \mathcal G^{m,\prec} \varphi^m (t) \|\\
  \lesssim &\ \|(1+\mathcal N)^{p+ \alpha(\gamma)} \L_0 \varphi^{m,\sharp}_0 \| + \|(1+\mathcal N)^{p+ \alpha(\gamma)+\alpha(0)} (-\L_0)^{1/2} \varphi^{m,\sharp}_0 \|.
  \endaligned $$
Using again \eqref{lem-apriori.1} and \eqref{lem-apriori.2}, we have
  $$\aligned
  \|(1+\mathcal N)^p (-\L_0)^{\gamma-1} \mathcal G^{m,\succ} \L_0\varphi^{m,\sharp} (t) \| & \lesssim \|(1+\mathcal N)^{p+1} (-\L_0)^{\gamma -\eps/4} \L_0\varphi^{m,\sharp} (t) \| \\
  &= \|(1+\mathcal N)^{p+1} (-\L_0)^{\gamma +1 -\eps/4} \varphi^{m,\sharp} (t) \|.
  \endaligned $$
By interpolation, for any $\delta\in (0,1)$ there is a $C_\delta>0$ such that
  $$\aligned
  &\ \|(1+\mathcal N)^p (-\L_0)^{\gamma-1} \mathcal G^{m,\succ} \L_0\varphi^{m,\sharp} (t) \| \\
  \lesssim &\ C_\delta \|(1+\mathcal N)^q (-\L_0)^{1/2} \varphi^{m,\sharp} (t) \| + \delta \|(1+\mathcal N)^p (-\L_0)^{\gamma +1 -\eps/8} \varphi^{m,\sharp} (t) \|,
  \endaligned $$
where $q=q(p,\gamma)$ is some parameter. Again by Lemma \ref{lem-auxili-1},
  $$\aligned
  &\, \sup_{0\leq t\leq T} \|(1+\mathcal N)^p (-\L_0)^{\gamma-1} \mathcal G^{m,\succ} \L_0\varphi^{m,\sharp} (t) \| \\
  \lesssim &\ \|(1+\mathcal N)^q \L_0 \varphi^{m,\sharp}_0 \| + \|(1+\mathcal N)^{q+\alpha(0)} (-\L_0)^{1/2} \varphi^{m,\sharp}_0 \| \\
  &\, + \delta \sup_{0\leq t\leq T} \|(1+\mathcal N)^p (-\L_0)^{\gamma +1 -\eps/8} \varphi^{m,\sharp} (t) \|.
  \endaligned $$
Summarizing all the above estimates, we obtain the desired result.
\end{proof}

Now we are ready to prove the following key result.

\begin{proposition}\label{prop-key-result}
For any $p>0$ and $\gamma\in (3/2 -5\eps/8, 3/2-3\eps/8)$, there exists $q=q(p,\gamma)$ and $\kappa\in (0,1)$ such that
  $$\|(1+\mathcal N)^p (-\L_0)^{1+\gamma/2} (\varphi^{m,\sharp}(t)- \varphi^{m,\sharp}(s))\|\lesssim |t-s|^\kappa \|(1+\mathcal N)^q (-\L_0)^{1+\gamma} \varphi^{m,\sharp}_0 \|. $$
\end{proposition}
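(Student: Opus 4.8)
The plan is to read the statement off the remainder equation \eqref{eq-identity.1}, $\partial_t \varphi^{m,\sharp}-\L_0\varphi^{m,\sharp}=\Phi^{m,\sharp}$, viewed as a linear parabolic equation driven by the forcing $\Phi^{m,\sharp}$, and to exploit the smoothing of the semigroup $e^{t\L_0}$ generated by $\L_0$. Since $\L_0$ acts diagonally on the chaos decomposition (Lemma \ref{lem-drift}), multiplying the $n$-th chaos by $-4\pi^2(|k_1|^2+\cdots+|k_n|^2)$, it generates an analytic contraction semigroup on $\mathcal H$ which commutes with $\mathcal N$ and with every power $(-\L_0)^a$. On the relevant subspace the spectrum of $-\L_0$ is bounded below, so I will freely use, uniformly on $\mathcal H$ and with free passage of $(1+\mathcal N)^p$, the two spectral bounds
\[
\big\|(-\L_0)^a e^{\tau\L_0}\big\|\lesssim \tau^{-a}\ (a>0),\qquad \big\|(-\L_0)^{-\kappa}\big(e^{\tau\L_0}-I\big)\big\|\lesssim \tau^\kappa\ (\kappa\in(0,1]).
\]
Throughout I fix the auxiliary exponent $\tilde\gamma=\gamma+\eps/8$, which for $\gamma\in(3/2-5\eps/8,\,3/2-3\eps/8)$ lands in the window $(3/2-\eps/2,\,3/2-\eps/4)$ where Lemma \ref{lem-Phi-m} applies, and which makes the self-referential term of that lemma sit at exactly the top regularity $1+\tilde\gamma-\eps/8=1+\gamma$.

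The first step is to establish, uniformly in $m$ and $t\le T$, the top-order a priori bound $\sup_{t}\|(1+\mathcal N)^p(-\L_0)^{1+\gamma}\varphi^{m,\sharp}(t)\|\lesssim_T\|(1+\mathcal N)^q(-\L_0)^{1+\gamma}\varphi^{m,\sharp}_0\|$. I would do this from the Duhamel representation $\varphi^{m,\sharp}(t)=e^{t\L_0}\varphi^{m,\sharp}_0+\int_0^t e^{(t-r)\L_0}\Phi^{m,\sharp}(r)\,\d r$: the homogeneous part is controlled by contractivity, while the integral term is estimated by $\int_0^t(t-r)^{-(1-\eps/8)}\,\d r\cdot\sup_r\|(1+\mathcal N)^p(-\L_0)^{\tilde\gamma}\Phi^{m,\sharp}(r)\|$, using $1+\gamma-\tilde\gamma=1-\eps/8\in(0,1)$. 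Inserting the bound of Lemma \ref{lem-Phi-m} (with parameter $\tilde\gamma$), whose $\delta$-term is precisely $\delta\sup_t\|(1+\mathcal N)^p(-\L_0)^{1+\gamma}\varphi^{m,\sharp}(t)\|$, and absorbing that term by choosing $\delta$ small and working on a short time interval (then iterating over $[0,T]$), closes the estimate. As a by-product this yields the clean forcing bound $\sup_t\|(1+\mathcal N)^p(-\L_0)^{\tilde\gamma}\Phi^{m,\sharp}(t)\|\lesssim_T\|(1+\mathcal N)^q(-\L_0)^{1+\gamma}\varphi^{m,\sharp}_0\|$.

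The second step is the time increment itself. I would base Duhamel at time $s$,
\[
\varphi^{m,\sharp}(t)-\varphi^{m,\sharp}(s)=\big(e^{(t-s)\L_0}-I\big)\varphi^{m,\sharp}(s)+\int_s^t e^{(t-r)\L_0}\Phi^{m,\sharp}(r)\,\d r,
\]
apply $(1+\mathcal N)^p(-\L_0)^{1+\gamma/2}$, and bound the two terms. For the first, the increment estimate with $\kappa=\gamma/2$ gives $|t-s|^{\gamma/2}\|(1+\mathcal N)^p(-\L_0)^{1+\gamma}\varphi^{m,\sharp}(s)\|$, which is controlled by Step one. For the second, the smoothing estimate with exponent $1+\gamma/2-\tilde\gamma=1-\gamma/2-\eps/8\in(0,1)$ yields $\int_s^t(t-r)^{-(1-\gamma/2-\eps/8)}\,\d r\cdot\sup_r\|(1+\mathcal N)^p(-\L_0)^{\tilde\gamma}\Phi^{m,\sharp}(r)\|$, producing the factor $|t-s|^{\gamma/2+\eps/8}$ and again the forcing bound from Step one. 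Taking $\kappa=\gamma/2\in(0,1)$ and collecting the $\mathcal N$-shifts into $q=q(p,\gamma)$ gives the claim.

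The main obstacle is the marginal exponent matching forced by the self-referential $\delta$-term of Lemma \ref{lem-Phi-m}: one cannot bound $\Phi^{m,\sharp}$ without a bound on $\varphi^{m,\sharp}$ at regularity $1+\tilde\gamma-\eps/8$, and the only admissible choice $\tilde\gamma=\gamma+\eps/8$ places this exactly at the target regularity $1+\gamma$, so the absorption in Step one is tight and is the crux of the whole argument. Everything hinges on simultaneously meeting the constraints $\tilde\gamma\in(3/2-\eps/2,3/2-\eps/4)$, $1+\gamma-\tilde\gamma\in(0,1)$, $1+\gamma/2-\tilde\gamma\in(0,1)$, and $1+\tilde\gamma-\eps/8\le1+\gamma$; this is precisely why the admissible window for $\gamma$ is the $\eps/8$-shift of the window in Lemma \ref{lem-Phi-m}, and verifying this compatibility (together with ensuring all constants are uniform in $m$ and pass to the limit $m\to\infty$) is the delicate bookkeeping that makes the proof go through.
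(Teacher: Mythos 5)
Your proposal is correct, and its core coincides with the paper's proof: Step one is exactly the paper's argument (Duhamel from time $0$, the Schauder bound $\|(-\L_0)^{1-\eps/8}S_{t-r}\|\lesssim (t-r)^{-1+\eps/8}$, Lemma \ref{lem-Phi-m} applied at the shifted exponent $\gamma+\eps/8$ so that its self-referential term lands precisely at regularity $1+\gamma$, and absorption of the $\delta$-term), which is indeed the crux. Two remarks on where you deviate. First, your parenthetical ``short time interval, then iterate'' is unnecessary and would actually be slightly awkward to carry out (re-basing Lemma \ref{lem-Phi-m} at a later initial time requires controlling new initial data); since $\delta$ in Lemma \ref{lem-Phi-m} comes from an interpolation with a free constant $C_\delta$, the paper simply picks $\delta$ with $\delta T^{\eps/8}\leq 1/2$ and absorbs in one stroke --- your primary mechanism, so this is cosmetic. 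Second, your finishing move differs from the paper's: you base Duhamel at time $s$ and use the semigroup continuity bound $\|(-\L_0)^{-\gamma/2}(e^{\tau\L_0}-I)\|\lesssim\tau^{\gamma/2}$ together with smoothing of exponent $1-\gamma/2-\eps/8\in(0,1)$ on the forcing integral, whereas the paper instead reads off from the equation a uniform bound on $\|(1+\mathcal N)^p(-\L_0)^{\gamma}\partial_t\varphi^{m,\sharp}(t)\|$ and then interpolates in the time variable between the $C^1$-in-time bound at spatial regularity $\gamma$ and the $C^0$-in-time bound at regularity $1+\gamma$. Both implementations are standard, both are uniform in $m$, and both yield the same H\"older exponent $\kappa=\gamma/2$; yours is marginally more self-contained (no interpolation-in-time lemma needed), the paper's avoids introducing the increment bound for the semigroup. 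Your bookkeeping of the exponent windows ($\tilde\gamma=\gamma+\eps/8\in(3/2-\eps/2,3/2-\eps/4)$, $1+\gamma-\tilde\gamma=1-\eps/8$, $1+\gamma/2-\tilde\gamma=1-\gamma/2-\eps/8$) is exactly right and matches the paper's.
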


\begin{proof}
Recall the equation \eqref{eq-identity.1}; let $S_t$ be the semigroup associated to the operator $\L_0$, then we have
  $$\varphi^{m,\sharp}(t)= S_t \varphi^{m,\sharp}_0 + \int_0^t S_{t-s} \Phi^{m,\sharp}(s)\,\d s. $$
Therefore,
  \begin{equation}\label{prop-key-result.0}
  \aligned
  \|(1+\mathcal N)^p (-\L_0)^{1+\gamma} \varphi^{m,\sharp}(t) \| \leq&\ \|(1+\mathcal N)^p (-\L_0)^{1+\gamma} S_t \varphi^{m,\sharp}_0 \| \\
  &\, + \int_0^t \|(1+\mathcal N)^p (-\L_0)^{1+\gamma} S_{t-s} \Phi^{m,\sharp}(s) \|\,\d s .
  \endaligned
  \end{equation}
One has
  \begin{equation}\label{prop-key-result.1}
  \|(1+\mathcal N)^p (-\L_0)^{1+\gamma} S_t \varphi^{m,\sharp}_0 \| \lesssim \|(1+\mathcal N)^p (-\L_0)^{1+\gamma} \varphi^{m,\sharp}_0 \|.
  \end{equation}
Next, we need the Schauder estimate: for $\alpha,\beta>0$,
  $$\|(1+\mathcal N)^\alpha (-\L_0)^\beta S_t\psi \| \lesssim t^{-\beta} \|(1+\mathcal N)^\alpha\psi \|, \quad t>0. $$
Therefore,
  $$\aligned
  \|(1+\mathcal N)^p (-\L_0)^{1+\gamma} S_{t-s} \Phi^{m,\sharp}(s) \|
  =&\ \|(1+\mathcal N)^p (-\L_0)^{\gamma+\eps/8} (-\L_0)^{1-\eps/8} S_{t-s} \Phi^{m,\sharp}(s) \| \\
  \lesssim&\ (t-s)^{-1+\eps/8} \|(1+\mathcal N)^p (-\L_0)^{\gamma+\eps/8} \Phi^{m,\sharp}(s) \|.
  \endaligned $$
As $\gamma\in (3/2 -5\eps/8, 3/2-3\eps/8)$, one has $\gamma+\eps/8\in (3/2 -\eps/2, 3/2-\eps/4)$, thus, applying Lemma \ref{lem-Phi-m} with $\gamma+\eps/8$ in place of $\gamma$, we have (choose a new $q=q(p,\gamma)$)
  $$\aligned
  &\ \|(1+\mathcal N)^p (-\L_0)^{1+\gamma} S_{t-s} \Phi^{m,\sharp}(s) \| \\
  \lesssim &\ (t-s)^{-1+\eps/8} \Big( \|(1+\mathcal N)^q \L_0 \varphi^{m,\sharp}_0 \| + \|(1+\mathcal N)^{q+\alpha(0)} (-\L_0)^{1/2} \varphi^{m,\sharp}_0 \| \Big)\\
  &\, + \delta (t-s)^{-1+\eps/8}\sup_{0\leq t\leq T} \|(1+\mathcal N)^p (-\L_0)^{1+\gamma} \varphi^{m,\sharp}(t) \|.
  \endaligned $$
As a result,
  $$\aligned
  &\ \int_0^t \|(1+\mathcal N)^p (-\L_0)^{1+\gamma} S_{t-s} \Phi^{m,\sharp}(s) \|\,\d s \\
  \lesssim &\ T^{\eps/8} \Big( \|(1+\mathcal N)^q \L_0 \varphi^{m,\sharp}_0 \| + \|(1+\mathcal N)^{q+\alpha(0)} (-\L_0)^{1/2} \varphi^{m,\sharp}_0 \| \Big) \\
  &\, + \delta T^{\eps/8} \sup_{0\leq t\leq T} \|(1+\mathcal N)^p (-\L_0)^{1+\gamma} \varphi^{m,\sharp}(t) \|.
  \endaligned $$
Take $\delta$ small enough such that $\delta T^{\eps/8} \leq 1/2$. Combining this estimate with \eqref{prop-key-result.0} and \eqref{prop-key-result.1}, we obtain
  \begin{equation}\label{prop-key-result.2}
  \aligned
  &\, \sup_{0\leq t\leq T} \|(1+\mathcal N)^p (-\L_0)^{1+\gamma} \varphi^{m,\sharp}(t) \|\\
  \lesssim &\ \|(1+\mathcal N)^q (-\L_0)^{1+\gamma} \varphi^{m,\sharp}_0 \| + \|(1+\mathcal N)^{q+\alpha(0)} (-\L_0)^{1/2} \varphi^{m,\sharp}_0 \| \\
  \lesssim &\ \|(1+\mathcal N)^{\bar q} (-\L_0)^{1+\gamma} \varphi^{m,\sharp}_0 \|
  \endaligned
  \end{equation}
for some $\bar q= \bar q(p,\gamma)$.

Next, using again the equation \eqref{eq-identity.1}, we have
  $$\aligned
  &\, \sup_{0\leq t\leq T} \|(1+\mathcal N)^p (-\L_0)^{\gamma} \partial_t \varphi^{m,\sharp}(t) \|\\
  \lesssim_T &\, \sup_{0\leq t\leq T} \|(1+\mathcal N)^p (-\L_0)^{1+\gamma} \varphi^{m,\sharp}(t) \| + \sup_{0\leq t\leq T} \|(1+\mathcal N)^p (-\L_0)^{\gamma} \Phi^{m,\sharp}(t) \| \\
  \lesssim_T &\ \|(1+\mathcal N)^{\bar q} (-\L_0)^{1+\gamma} \varphi^{m,\sharp}_0 \|,
  \endaligned $$
where in the last step we have used Lemma \ref{lem-Phi-m} and the inequality \eqref{prop-key-result.2}. Combining this result with \eqref{prop-key-result.2}, we obtain the desired inequality by interpolation for the time variable.
\end{proof}

Now for $p>0$, we introduce the set
  $$\mathcal U_p= \bigcup_{\gamma\in \big(\frac32- \frac58 \eps, \frac32- \frac38\eps \big)} \mathcal K(1+\mathcal N)^{\bar q(p,\gamma)} (-\L_0)^{-1-\gamma} (\mathcal H) $$
and $\mathcal U=\bigcup_{p>\alpha(0)} \mathcal U_p$. Then we have

\begin{theorem}\label{thm-Kolmogorov}
Let $p>0$ and $\varphi_0\in \mathcal U_p$. Then there exists a solution
  $$\varphi\in \bigcup_{\delta>0} C\big(\R_+; (1+\mathcal N)^{-p+\delta} (-\L_0)^{-1} (\mathcal H) \big)$$
to the backward Kolmogorov equation $\partial_t\varphi= \L\varphi$ with initial condition $\varphi(0)= \varphi_0$. For $p>\alpha(0)$, we have $\varphi\in C(\R_+; \mathcal D(\L)) \cap C^1(\R_+; \mathcal H)$ and, by dissipativity of $\L$, this solution is unique.
\end{theorem}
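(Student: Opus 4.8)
The plan is to construct $\varphi$ as a limit of the Galerkin solutions $\varphi^m(t)=T^m_t\varphi^m_0$ and to read off its properties from the uniform-in-$m$ estimates established above. Write $\varphi_0=\mathcal K\varphi^\sharp$ with $\varphi^\sharp\in(1+\mathcal N)^{\bar q(p,\gamma)}(-\L_0)^{-1-\gamma}(\mathcal H)$ for some admissible $\gamma\in(\tfrac32-\tfrac58\eps,\tfrac32-\tfrac38\eps)$, as afforded by the definition of $\mathcal U_p$. For each $m$ I would take $\varphi^m_0=\mathcal K^m\varphi^\sharp$ (so that $\varphi^m_0\to\varphi_0$ by the limiting validity of $\mathcal K^m$ in Proposition \ref{prop-control-struc}) and solve $\partial_t\varphi^m=\L^m\varphi^m$ with this datum via Lemma \ref{lem-apriori-Kol} and Corollary \ref{cor-evolution}, forming the controlled remainder $\varphi^{m,\sharp}$ as in \eqref{eq-identity.0}; then $\varphi^m=\mathcal K^m\varphi^{m,\sharp}$ and $\partial_t\varphi^m=\L_0\varphi^{m,\sharp}+\mathcal G^{m,\prec}\varphi^m$ by \eqref{eq-identity}.

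First I would collect the compactness. Proposition \ref{prop-key-result} supplies, uniformly in $m$, a bound on $\sup_{t\le T}\|(1+\mathcal N)^p(-\L_0)^{1+\gamma}\varphi^{m,\sharp}(t)\|$ together with a $\kappa$-H\"older modulus in time in a slightly weaker norm. Since the weights $(1+\mathcal N)^p$ and $(-\L_0)^{1+\gamma}$ control both the chaos level and the spatial frequencies, bounded sets in the corresponding weighted Fock space embed compactly into the space with slightly smaller exponents; combined with the time equicontinuity, an Arzel\`a--Ascoli argument yields a subsequence along which $\varphi^{m,\sharp}\to\varphi^\sharp$ in $C\big([0,T];(1+\mathcal N)^{-p+\delta}(-\L_0)^{-1-\gamma+\delta}(\mathcal H)\big)$ for every $T$ and small $\delta>0$. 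Setting $\varphi:=\mathcal K\varphi^\sharp$ and using the uniform boundedness and limiting validity of $\mathcal K^m$ from Proposition \ref{prop-control-struc}, the full solutions converge, $\varphi^m\to\varphi$, in the same sense.

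Next I would pass to the limit in the integrated equation $\varphi^m(t)=\varphi^m_0+\int_0^t\big(\L_0\varphi^{m,\sharp}(s)+\mathcal G^{m,\prec}\varphi^m(s)\big)\,\d s$. The term $\L_0\varphi^{m,\sharp}$ converges by the above, while Proposition \ref{prop-control-struc-2} (with $\gamma=0$) bounds $\mathcal G^{m,\prec}\varphi^m$ uniformly and, in the limit $m\to\infty$, gives $\mathcal G^{m,\prec}\varphi^m\to\mathcal G^\prec\varphi$. Recalling the identity $\L\varphi=\L_0\varphi^\sharp+\mathcal G^\prec\varphi$ valid for $\varphi=\mathcal K\varphi^\sharp$ (derived in the proof of Proposition \ref{prop-control-struc-2}), the limit satisfies $\varphi(t)=\varphi_0+\int_0^t\L\varphi(s)\,\d s$, i.e. $\partial_t\varphi=\L\varphi$ with $\varphi(0)=\varphi_0$. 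For $p>\alpha(0)$ the regularity of $\varphi^\sharp$ is exactly what is required for $\varphi=\mathcal K\varphi^\sharp\in\mathcal D(\L)$ in the sense of Lemma \ref{lem-domain}, and continuity in time gives $\varphi\in C(\R_+;\mathcal D(\L))$; since then $\partial_t\varphi=\L\varphi\in C(\R_+;\mathcal H)$, also $\varphi\in C^1(\R_+;\mathcal H)$. For general $p>0$ one only retains membership in $\bigcup_{\delta>0}C\big(\R_+;(1+\mathcal N)^{-p+\delta}(-\L_0)^{-1}(\mathcal H)\big)$.

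Finally, uniqueness follows from dissipativity. If $\varphi_1,\varphi_2\in C(\R_+;\mathcal D(\L))\cap C^1(\R_+;\mathcal H)$ both solve the equation with the same datum, then $w=\varphi_1-\varphi_2$ satisfies $\partial_t w=\L w$, $w(0)=0$, and $w(t)\in\mathcal D(\L)$; hence $\tfrac{\d}{\d t}\|w\|^2=2\<w,\L w\>\le 0$ by Corollary \ref{cor-dissipa}, so $\|w(t)\|\le\|w(0)\|=0$. The main obstacle is the convergence step: one must ensure that the limiting remainder $\varphi^\sharp$ still lies in the domain on which $\mathcal K$ and $\mathcal G^\prec$ are controlled, and that no regularity is lost when identifying $\L\varphi$, which is precisely what the uniform-in-$m$ paracontrolled estimates of Propositions \ref{prop-control-struc}, \ref{prop-control-struc-2} and \ref{prop-key-result} are designed to guarantee.
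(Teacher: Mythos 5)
Your proposal is correct and takes essentially the same route as the paper: the paper's own proof is a two-line sketch that invokes Proposition \ref{prop-key-result} to obtain relative compactness of $(\varphi^{m,\sharp})_m$ in $C\big(\R_+;(1+\mathcal N)^{-p+\delta}(-\L_0)^{-1}(\mathcal H)\big)$ and defers all remaining details (extraction of the limit, identification of the limit equation, uniqueness via dissipativity) to the proof of Theorem 3 in \cite{GT}. The steps you spell out --- choosing $\varphi^m_0=\mathcal K^m\varphi^\sharp$ so that $\varphi^{m,\sharp}_0=\varphi^\sharp$ exactly, the compact-embedding/Arzel\`a--Ascoli argument, passing to the limit in the integrated equation using the controlled structure $\L\varphi=\L_0\varphi^\sharp+\mathcal G^{\prec}\varphi$, and the Gronwall-type uniqueness from Corollary \ref{cor-dissipa} --- are precisely the details that deferred argument fills in.
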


\begin{proof}Using the above key estimate Proposition \ref{prop-key-result} we could obtain the relative compactness of $(\varphi^{m,\sharp})_m$ in $C(\R^+;(1+\mathcal{N})^{-p+\delta}(-\L_0)^{-1}\mathcal{H})$, which allows to find the solution.
For more details we refer to the proof of \cite[Theorem 3]{GT}.
\end{proof}

\section{Appendix: proofs of Lemmas \ref{lem-1}, \ref{lem-drift} and \ref{lem-apriori}}

First we prove Lemma \ref{lem-1} by following the ides of \cite[Lemma 2.4]{GP}.

\begin{proof}[Proof of Lemma \ref{lem-1}]
We have
  $$B_m(\xi)(x) = W_2\bigg(\int_s \big(K^m_s\hat\otimes \rho^m_s \big) \cdot \nabla_x \rho^m_x(s) \bigg) + \int_s \bigg(\int_x K^m_s(x) \rho^m_s(x) \bigg) \cdot \nabla_x \rho^m_x(s), $$
Note that $\nabla_x \rho^m_x(s)= - \nabla_s \rho^m_s(x)$, integrating by parts with respect to $s$ gives us
  $$\aligned
  \int_s \bigg(\int_x K^m_s(x) \rho^m_s(x) \bigg) \cdot \nabla_x \rho^m_x(s) &=-\int_s \bigg(\int_x K^m_s(x) \rho^m_s(x) \bigg) \cdot \nabla_s \rho^m_s(x)\\
  &= \int_s \div_s\bigg(\int_x K^m_s(x) \rho^m_s(x) \bigg) \rho^m_s(x) =0,
  \endaligned $$
where the last step is due to the fact that $\int_x K^m_s(x) \rho^m_s(x)$ is independent of $s\in \T^2$. Therefore,
  $$B_m(\xi)(x) = W_2\bigg(\int_s \big(K^m_s\hat\otimes \rho^m_s \big) \cdot \nabla_x \rho^m_x(s) \bigg). $$
Recall that $D_x W_n(\varphi_n)= n W_{n-1}(\varphi_n(x,\cdot))$, the contraction property of multiple Wiener-It\^o integrals leads to
  \begin{equation}\label{lem-1.1}
  \aligned
  &\, \int_x B_m(\xi)(x) D_x W_n(\varphi_n)\\
  =&\, \int_x W_2\bigg(\int_s \big(K^m_s\hat\otimes \rho^m_s \big) \cdot \nabla_x \rho^m_x(s) \bigg) n W_{n-1}(\varphi_n(x,\cdot))\\
  =&\, n\int_x W_{n+1} \bigg(\varphi_n(x,\cdot) \int_s \big(K^m_s\hat\otimes \rho^m_s \big) \cdot \nabla_x \rho^m_x(s) \bigg) \\
  &\, + 2n(n-1) \int_x W_{n-1} \bigg(\int_y \varphi_n(x,y,\cdot) \int_s \big(K^m_s \hat\otimes \rho^m_s \big)(y,\cdot) \cdot \nabla_x \rho^m_x(s) \bigg) \\
  &\, + n2! {\binom{n-1} 2} \int_x W_{n-3} \bigg(\int_{y,z} \varphi_n(x,y,z,\cdot) \int_s \big(K^m_s \hat\otimes \rho^m_s \big)(y,z) \cdot \nabla_x \rho^m_x(s) \bigg) .
  \endaligned
  \end{equation}

First, we show that the last term in the above formula vanishes. We have
  $$\aligned
  &\, \int_{x,y,z,s} \varphi_n(x,y,z,\cdot) \big(K^m_s \hat\otimes \rho^m_s \big)(y,z) \cdot \nabla_x \rho^m_x(s) \\
  =&\, -\frac12 \int_{x,y,z,s} \varphi_n(x,y,z,\cdot) \big(K^m_s(y) \rho^m_s(z) + K^m_s(z) \rho^m_s(y)\big) \cdot \nabla_s \rho^m_s(x).
  \endaligned $$
By the symmetry of $\varphi_n(x,y,z,\cdot)$ with respect to $x,z$, we obtain
  $$\aligned
  &\, \int_{x,y,z,s} \varphi_n(x,y,z,\cdot) K^m_s(y) \rho^m_s(z) \cdot \nabla_s \rho^m_s(x) \\
  =&\, \frac12 \int_{x,y,z} \varphi_n(x,y,z,\cdot) \int_s K^m_s(y)  \cdot \nabla_s \big( \rho^m_s(z)\rho^m_s(x) \big) \\
  =&\, - \frac12 \int_{x,y,z} \varphi_n(x,y,z,\cdot) \int_s \div_s (K^m_s(y)) \rho^m_s(z)\rho^m_s(x) =0,
  \endaligned $$
where in the second step we have used integration by parts with respect to $s$. In the same way,
  $$\int_{x,y,z,s} \varphi_n(x,y,z,\cdot) K^m_s(z) \rho^m_s(y) \cdot \nabla_s \rho^m_s(x) =0.$$
Summarizing these facts we conclude that the last term in \eqref{lem-1.1} vanishes, thus we obtain the two formulae for $\mathcal G^m_+ W_n(\varphi_n)$ and $\mathcal G^m_- W_n(\varphi_n)$.

Now we prove the last identity. We have
  $$\aligned
  &\, \big\< W_{n+1}(\varphi_{n+1}), \mathcal G^m_+ W_n(\varphi_n) \big\> \\
  =&\, (n+1)! \int_{r_{1:(n+1)}} \varphi_{n+1}(r_{1:(n+1)})\, n \int_{x,s} \varphi_n(x,r_{3:(n+1)}) \big(K^m_s \hat\otimes \rho^m_s \big)(r_1,r_2) \cdot \nabla_x \rho^m_x(s) \\
  =&\, (n+1)!n \int_{r_{1:(n+1)}} \varphi_{n+1}(r_{1:(n+1)}) \int_{x,s} \varphi_n(x,r_{3:(n+1)})  \rho^m_s(x) \div_s\big(K^m_s \hat\otimes \rho^m_s \big)(r_1,r_2),
  \endaligned $$
where we have used $\nabla_x \rho^m_x(s) = -\nabla_s \rho^m_s(x)$ and integration by parts. Since $\div_s K^m_s=0$, one has
  $$\aligned
  &\, \big\< W_{n+1}(\varphi_{n+1}), \mathcal G^m_+ W_n(\varphi_n) \big\> \\
  =&\, (n+1)! n \int_{r_{1:(n+1)},x,s} \varphi_{n+1}(r_{1:(n+1)}) \varphi_n(x,r_{3:(n+1)})  \rho^m_x(s) \big(K^m_s\hat\otimes \nabla_s\rho^m_s \big)(r_1,r_2) .
  \endaligned $$
We change variables as follows: $r_1\leftrightarrow x, r_2\rightarrow y, r_i\to r_{i-1}$ for $i\geq 3$; then
  $$\aligned
  &\, \big\< W_{n+1}(\varphi_{n+1}), \mathcal G^m_+ W_n(\varphi_n) \big\> \\
  =&\, (n+1)! n \int_{x, y, r_{2:n},r_1,s} \varphi_{n+1}(x,y, r_{2:n}) \varphi_n(r_1,r_{2:n})  \rho^m_{r_1}(s) \big(K^m_s\hat\otimes \nabla_s\rho^m_s \big)(x,y) \\
  =&\, (n+1)! n \int_{x, y, r_{1:n},s} \varphi_{n+1}(x,y, r_{2:n}) \varphi_n(r_{1:n})  \rho^m_{ s}(r_1) \big(K^m_s\hat\otimes \nabla_s\rho^m_s \big)(x,y) .
  \endaligned $$
Note that
  $$\aligned
  \big(K^m_s\hat\otimes \nabla_s\rho^m_s \big)(x,y)&= \frac12 \big(K^m_s(x)\cdot \nabla_s\rho^m_s(y) + K^m_s(y)\cdot \nabla_s\rho^m_s(x)\big) \\
  &= -\frac12 \big(K^m_s(x)\cdot \nabla_y\rho^m_y(s) + K^m_s(y)\cdot \nabla_x\rho^m_x(s)\big),
  \endaligned $$
we arrive at
  \begin{equation}\label{lem-1.2}
  \aligned
  &\, \big\< W_{n+1}(\varphi_{n+1}), \mathcal G^m_+ W_n(\varphi_n) \big\> \\
  =&\, -(n+1)! \frac n2 \int_{x, y, r_{1:n},s} \varphi_{n+1}(x,y, r_{2:n}) \varphi_n(r_{1:n}) \rho^m_{ s}(r_1) K^m_s(x)\cdot \nabla_y\rho^m_y(s) \\
  &\, -(n+1)! \frac n2 \int_{x, y, r_{1:n},s} \varphi_{n+1}(x,y, r_{2:n}) \varphi_n(r_{1:n}) \rho^m_{ s}(r_1) K^m_s(y)\cdot \nabla_x\rho^m_x(s) .
  \endaligned
  \end{equation}

Now by the second formula, we have
  $$\mathcal G^m_- W_{n+1}(\varphi_{n+1}) = 2(n+1)n W_{n} \bigg(\int_{x,y,s} \varphi_{n+1}(x,y,\cdot) \big(K^m_s\hat\otimes \rho^m_s \big)(y,\cdot) \cdot \nabla_x \rho^m_x(s) \bigg),$$
therefore
  $$\aligned
  &\, \big\< \mathcal G^m_- W_{n+1}(\varphi_{n+1}), W_n(\varphi_n) \big\> \\
  =&\, n! \int_{r_{1:n}} \varphi_{n}(r_{1:n})\, 2(n+1)n \int_{x,y,s} \varphi_{n+1}(x,y,r_{2:n}) \big(K^m_s\hat\otimes \rho^m_s \big)(y,r_1) \cdot \nabla_x \rho^m_x(s) \\
  =&\, (n+1)! n \int_{r_{1:n}, x,y,s} \varphi_{n}(r_{1:n}) \varphi_{n+1}(x,y,r_{2:n}) \big(K^m_s(y) \rho^m_s(r_1) + K^m_s(r_1) \rho^m_s(y) \big)  \cdot \nabla_x \rho^m_x(s) .
  \endaligned $$
Note that $\nabla_x \rho^m_x(s)= -\nabla_s \rho^m_s(x)$, one has, by the symmetry of $\varphi_{n+1}(x,y,r_{2:n})$ with respect to $x,y$,
  $$\aligned
  &\, \int_{r_{1:n}, x,y,s} \varphi_{n}(r_{1:n}) \varphi_{n+1}(x,y,r_{2:n}) K^m_s(r_1) \rho^m_s(y) \cdot \nabla_x \rho^m_x(s) \\
  =&\, - \int_{r_{1:n}, x,y,s} \varphi_{n}(r_{1:n}) \varphi_{n+1}(x,y,r_{2:n}) K^m_s(r_1) \rho^m_s(y) \cdot \nabla_s \rho^m_s(x) \\
  =& \, - \frac12 \int_{r_{1:n}, x,y,s} \varphi_{n}(r_{1:n}) \varphi_{n+1}(x,y,r_{2:n}) K^m_s(r_1) \cdot \nabla_s \big( \rho^m_s(y) \rho^m_s(x) \big) \\
  =&\, \frac12 \int_{r_{1:n}, x,y,s} \varphi_{n}(r_{1:n}) \varphi_{n+1}(x,y,r_{2:n}) \div_s( K^m_s(r_1)) \big( \rho^m_s(y) \rho^m_s(x) \big) =0,
  \endaligned $$
where we have used the integration by parts formula and the fact $\div_s( K^m_s(r_1))=0$. Again by the symmetry of $\varphi_{n+1}(x,y,r_{2:n})$ with respect to $x,y$,
  $$\aligned
  &\, \big\< \mathcal G^m_- W_{n+1}(\varphi_{n+1}), W_n(\varphi_n) \big\> \\
  =&\, (n+1)! n \int_{r_{1:n}, x,y,s} \varphi_{n}(r_{1:n}) \varphi_{n+1}(x,y,r_{2:n}) \rho^m_s(r_1) K^m_s(y) \cdot \nabla_x \rho^m_x(s) \\
  =&\, (n+1)! \frac n2 \int_{r_{1:n}, x,y,s} \varphi_{n}(r_{1:n}) \varphi_{n+1}(x,y,r_{2:n}) \rho^m_s(r_1) \big( K^m_s(y) \cdot \nabla_x \rho^m_x(s) + K^m_s(x) \cdot \nabla_y \rho^m_y(s) \big) .
  \endaligned $$
Combining this equality with \eqref{lem-1.2}, we complete the proof.
\end{proof}

Now we prove the expressions of the drift operators.

\begin{proof}[Proof of Lemma \ref{lem-drift}]
The first formula is obvious. We begin with the proof of the second one. By Lemma \ref{lem-1}, the kernel for $(\mathcal G^m_+ \varphi)_n$ has the Fourier transform
  $$\aligned
  \mathcal F(\mathcal G^m_+ \varphi)_n(k_{1:n}) =&\, (n-1) \int_{r_{1:n}} {\rm e}^{-2\pi {\rm i} k_{1:n} \cdot r_{1:n}} \int_{x,s} \varphi_{n-1}(x,r_{3:n}) \big(K^m_s\hat\otimes \rho^m_s \big)(r_1, r_2) \cdot \nabla_x \rho^m_x(s) \\
  =&\, \frac12 (n-1) \int_{r_{1:n},x,s} {\rm e}^{-2\pi {\rm i} k_{1:n} \cdot r_{1:n}} \varphi_{n-1}(x,r_{3:n}) \rho^m_s (r_2) K^m_s(r_1) \cdot \nabla_x \rho^m_x(s) \\
  &\, + \frac12 (n-1) \int_{r_{1:n},x,s} {\rm e}^{-2\pi {\rm i} k_{1:n} \cdot r_{1:n}} \varphi_{n-1}(x,r_{3:n}) \rho^m_s (r_1) K^m_s(r_2) \cdot \nabla_x \rho^m_x(s)\\
  =: & \, I_1 + I_2.
  \endaligned $$
We first compute $I_1$. Recall that $\rho^m_s (r_2)= \rho^m(s-r_2) = \sum_{|l|\leq m} {\rm e}^{2\pi {\rm i} l \cdot (s-r_2)}$ and $K^m_s(r_1)= K^m_\eps(s-r_1)= \frac{\rm i}{(2\pi)^\eps} \sum_{|l|\leq m} \frac{l^\perp}{|l|^{1+\eps}} {\rm e}^{2\pi {\rm i} l \cdot (s-r_1)}$, we have
  $$\aligned
  &\, \int_{r_{1:2}} {\rm e}^{-2\pi {\rm i} k_{1:2} \cdot r_{1:2}} K^m_s(r_1) \rho^m_s (r_2) = {\bf 1}_{|k_2|\leq m} {\rm e}^{-2\pi {\rm i} k_2 \cdot s} \int_{r_1} {\rm e}^{-2\pi {\rm i} k_{1} \cdot r_{1}} K^m(s-r_1)\\
  =&\, {\bf 1}_{|k_2|\leq m} {\rm e}^{-2\pi {\rm i} k_2 \cdot s} \frac{\rm i}{(2\pi)^\eps} \sum_{|l|\leq m} \frac{l^\perp}{|l|^{1+\eps}} \int_{r_1} {\rm e}^{-2\pi {\rm i} k_{1} \cdot r_{1}} {\rm e}^{2\pi {\rm i} l \cdot (s-r_1)}\\
  =&\, \frac{-\rm i}{(2\pi)^\eps} {\bf 1}_{|k_1|, |k_2|\leq m} \frac{k_1^\perp}{|k_1|^{1+\eps}} {\rm e}^{-2\pi {\rm i} (k_1+k_2) \cdot s} .
  \endaligned $$
As a result,
  $$\aligned
  &\, \int_{r_{1:2}, s} {\rm e}^{-2\pi {\rm i} k_{1:2} \cdot r_{1:2}} \rho^m_s (r_2) K^m_s(r_1)\cdot \nabla_x \rho^m_x(s) \\
  =&\, \frac{-\rm i}{(2\pi)^\eps} {\bf 1}_{|k_1|, |k_2|\leq m} \frac{k_1^\perp}{|k_1|^{1+\eps}} \cdot \int_s {\rm e}^{-2\pi {\rm i} (k_1+k_2) \cdot s} \sum_{|l|\leq m} 2\pi {\rm i} l {\rm e}^{ 2\pi {\rm i} l \cdot (x-s)} \\
  =&\, (2\pi)^{1-\eps} {\bf 1}_{|k_1|, |k_2|, |k_1+k_2|\leq m} \frac{k_1^\perp\cdot (k_1+k_2)}{|k_1|^{1+\eps}} {\rm e}^{-2\pi {\rm i} (k_1+k_2) \cdot x} .
  \endaligned $$
Substituting this identity into the expression of $I_1$ yields
  $$\aligned
  I_1= &\, \frac12 (2\pi)^{1-\eps} (n-1)  {\bf 1}_{|k_1|, |k_2|, |k_1+k_2|\leq m} \frac{k_1^\perp\cdot (k_1+k_2)}{|k_1|^{1+\eps}} \\
  & \times \int_{r_{3:n},x} {\rm e}^{-2\pi {\rm i} k_{3:n} \cdot r_{3:n}} \varphi_{n-1}(x,r_{3:n}) {\rm e}^{-2\pi {\rm i} (k_1+k_2) \cdot x} \\
  = &\, \frac12 (2\pi)^{1-\eps} (n-1)  {\bf 1}_{|k_1|, |k_2|, |k_1+k_2|\leq m} \frac{k_1^\perp\cdot (k_1+k_2)}{|k_1|^{1+\eps}} \hat \varphi_{n-1} (k_1+k_2, k_{3:n}).
  \endaligned $$
The computation of $I_2$ is the same as above, thus we obtain
  $$I_2= \frac12 (2\pi)^{1-\eps} (n-1)  {\bf 1}_{|k_1|, |k_2|, |k_1+k_2|\leq m} \frac{k_2^\perp\cdot (k_1+k_2)}{|k_2|^{1+\eps}} \hat \varphi_{n-1} (k_1+k_2, k_{3:n}),$$
which, together with $I_1$, gives us the formula of $\mathcal F(\mathcal G^m_+ \varphi)_n(k_{1:n})$.

Next we prove the last formula:
  $$\aligned
  \mathcal F(\mathcal G^m_- \varphi)_n(k_{1:n}) =&\, 2(n+1)n \int_{r_{1:n}} {\rm e}^{-2\pi {\rm i} k_{1:n} \cdot r_{1:n}} \int_{x,y,s} \varphi_{n+1}(x,y,r_{2:n}) \big(K^m_s\hat\otimes \rho^m_s \big)(y,r_1) \cdot \nabla_x \rho^m_x(s) \\
  =&\, (n+1)n \int_{r_{1:n,x,y,s}} {\rm e}^{-2\pi {\rm i} k_{1:n} \cdot r_{1:n}} \varphi_{n+1}(x,y,r_{2:n}) \rho^m_s(r_1) K^m_s(y) \cdot \nabla_x \rho^m_x(s) \\
  &\, + (n+1)n \int_{r_{1:n},x,y,s} {\rm e}^{-2\pi {\rm i} k_{1:n} \cdot r_{1:n}} \varphi_{n+1}(x,y,r_{2:n}) \rho^m_s(y) K^m_s(r_1) \cdot \nabla_x \rho^m_x(s)\\
  =:&\, J_1+ J_2.
  \endaligned $$
We start with $J_1$. Note that
  $$\int_{r_{1}} {\rm e}^{-2\pi {\rm i} k_{1} \cdot r_{1}} \rho^m_s(r_1) = {\bf 1}_{|k_1|\leq m} {\rm e}^{-2\pi {\rm i} k_{1} \cdot s} , $$
we have
  $$J_1= (n+1)n {\bf 1}_{|k_1|\leq m} \int_{r_{2:n,x,y,s}} {\rm e}^{-2\pi {\rm i} k_{2:n} \cdot r_{2:n}}  {\rm e}^{-2\pi {\rm i} k_{1} \cdot s} \varphi_{n+1}(x,y,r_{2:n}) K^m_s(y) \cdot \nabla_x \rho^m_x(s). $$
Moreover,
  $$\aligned
  \int_s {\rm e}^{-2\pi {\rm i} k_{1} \cdot s} K^m_s(y) \cdot \nabla_x \rho^m_x(s) &= \sum_{|p|,|q|\leq m} \int_s {\rm e}^{-2\pi {\rm i} k_{1} \cdot s} \frac{\rm i}{(2\pi)^\eps} \frac{p^\perp}{|p|^{1+\eps}} {\rm e}^{2\pi {\rm i} p \cdot (s-y)} \cdot (2\pi {\rm i} q) {\rm e}^{2\pi {\rm i} q \cdot (x-s)} \\
  &= -(2\pi)^{1-\eps} \sum_{|p|,|q|\leq m} \frac{p^\perp \cdot q}{|p|^{1+\eps}} \int_s {\rm e}^{-2\pi {\rm i} k_{1} \cdot s} {\rm e}^{2\pi {\rm i} p \cdot (s-y)}  {\rm e}^{2\pi {\rm i} q \cdot (s-x)} \\
  &= - (2\pi)^{1-\eps} \sum_{|p|,|q|\leq m, p+q=k_1} \frac{p^\perp \cdot q}{|p|^{1+\eps}} {\rm e}^{-2\pi {\rm i} (p \cdot y + q\cdot x)} ,
  \endaligned $$
therefore
  $$\aligned
  J_1=&\, -(2\pi)^{1-\eps} (n+1)n {\bf 1}_{|k_1|\leq m}\sum_{|p|,|q|\leq m, p+q=k_1} \frac{p^\perp \cdot q}{|p|^{1+\eps}} \\
  &\, \times \int_{r_{2:n,x,y}} {\rm e}^{-2\pi {\rm i} k_{2:n} \cdot r_{2:n}} {\rm e}^{-2\pi {\rm i} (p \cdot y + q\cdot x)} \varphi_{n+1}(x,y,r_{2:n}) \\
  =&\, -(2\pi)^{1-\eps} (n+1)n {\bf 1}_{|k_1|\leq m}\sum_{|p|,|q|\leq m, p+q=k_1} \frac{p^\perp \cdot q}{|p|^{1+\eps}} \hat \varphi_{n+1}(q,p,k_{2:n}).
  \endaligned $$
Using the symmetry of $\varphi_{n+1}$, we have $\hat \varphi_{n+1}(q,p,k_{2:n}) =\hat \varphi_{n+1}(p,q,k_{2:n})$, hence
  $$J_1= -\frac12(2\pi)^{1-\eps} (n+1)n {\bf 1}_{|k_1|\leq m}\sum_{|p|,|q|\leq m, p+q=k_1} \bigg(\frac{p^\perp \cdot q}{|p|^{1+\eps}}+ \frac{q^\perp \cdot p}{|q|^{1+\eps}}\bigg) \hat \varphi_{n+1}(p,q,k_{2:n}).$$

It remains to compute $J_2$. First,
  $$\aligned
  &\, \int_{r_1,s} {\rm e}^{-2\pi {\rm i} k_{1} \cdot r_{1}} \rho^m_s(y) K^m_s(r_1) \cdot \nabla_x \rho^m_x(s) = \frac{- \rm i}{(2\pi)^\eps } {\bf 1}_{|k_1|\leq m} \frac{k_1^\perp}{|k_1|^{1+\eps}} \cdot \int_s {\rm e}^{-2\pi {\rm i} k_{1} \cdot s}\rho^m_s(y) \nabla_x \rho^m_x(s) \\
  =&\, \frac{- \rm i}{(2\pi)^\eps } {\bf 1}_{|k_1|\leq m} \frac{k_1^\perp}{|k_1|^{1+\eps}} \cdot \sum_{|p|, |q|\leq m} \int_s {\rm e}^{-2\pi {\rm i} k_{1} \cdot s} {\rm e}^{2\pi {\rm i} p \cdot (s-y)} (2\pi{\rm i} q) {\rm e}^{2\pi {\rm i} q \cdot (x-s)}  \\
  =&\, (2\pi)^{1-\eps} {\bf 1}_{|k_1|\leq m} \sum_{|p|, |q|\leq m} \frac{k_1^\perp \cdot q}{|k_1|^{1+\eps}} \int_s {\rm e}^{-2\pi {\rm i} k_{1} \cdot s} {\rm e}^{2\pi {\rm i} p \cdot (s-y)}  {\rm e}^{2\pi {\rm i} q \cdot (s-x)} \\
  =&\, (2\pi)^{1-\eps} {\bf 1}_{|k_1|\leq m} \sum_{|p|, |q|\leq m, p+q =k_1} \frac{k_1^\perp \cdot q}{|k_1|^{1+\eps}} {\rm e}^{-2\pi {\rm i} (p \cdot y+ q \cdot x)} .
  \endaligned $$
Thus,
  $$\aligned
  J_2= &\, (2\pi)^{1-\eps} (n+1)n {\bf 1}_{|k_1|\leq m} \sum_{|p|, |q|\leq m, p+q =k_1} \frac{k_1^\perp \cdot q}{|k_1|^{1+\eps}} \\
  &\times \int_{r_{2:n},x,y} {\rm e}^{-2\pi {\rm i} k_{2:n} \cdot r_{2:n}} \varphi_{n+1}(x,y,r_{2:n}) {\rm e}^{-2\pi {\rm i} (p \cdot y+ q \cdot x)} \\
  = &\, (2\pi)^{1-\eps} (n+1)n {\bf 1}_{|k_1|\leq m} \sum_{|p|, |q|\leq m, p+q =k_1} \frac{k_1^\perp \cdot q}{|k_1|^{1+\eps}} \hat \varphi_{n+1}(q,p,k_{2:n}).
  \endaligned $$
By the symmetry of $\hat \varphi_{n+1}(q,p,k_{2:n})$ with respect to $p,q$, we obtain
  $$\aligned
  J_2&= \frac12 (2\pi)^{1-\eps} (n+1)n {\bf 1}_{|k_1|\leq m} \sum_{|p|, |q|\leq m, p+q =k_1} \bigg( \frac{k_1^\perp \cdot q}{|k_1|^{1+\eps}} + \frac{k_1^\perp \cdot p}{|k_1|^{1+\eps}} \bigg) \hat \varphi_{n+1}(q,p,k_{2:n}) \\
  &= \frac12 (2\pi)^{1-\eps} (n+1)n {\bf 1}_{|k_1|\leq m} \sum_{|p|, |q|\leq m, p+q =k_1} \frac{k_1^\perp \cdot (q+p)}{|k_1|^{1+\eps}} \hat \varphi_{n+1}(q,p,k_{2:n}) =0
  \endaligned $$
since $k_1^\perp \cdot (q+p) = k_1^\perp \cdot k_1=0$. Combining this fact with $J_1$ we finally obtain the formula of $\mathcal F(\mathcal G^m_- \varphi)_n(k_{1:n})$.
\end{proof}

For the proof of Lemma \ref{lem-apriori}, we need the following preparation which is slightly different from \cite[Lemma 16]{GT} since we allow $\beta<0$.

\begin{lemma}\label{lem-auxiliary}
Let $C\geq 0,\,\theta>0, \,\beta>-d$, and $\alpha >(d+\beta)/\theta$. Then
  $$\sum_{l\in \Z^d_0} \frac{|l|^\beta}{\big(|l|^\theta + |k-l|^\theta + C\big)^\alpha} \lesssim (|k|^\theta +C)^{(\beta+d)/\theta -\alpha}. $$
\end{lemma}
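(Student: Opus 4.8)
The plan is to estimate the sum by splitting the index set according to the size of $|l|$ relative to $|k|$, writing $A := |k|^\theta + C$ so that the claimed bound reads $\lesssim A^{(\beta+d)/\theta - \alpha}$. Since $k\in \Z^d_0$ we have $|k|\ge 1$, and since $\beta>-d$ we have $\beta+d>0$; the latter will be used repeatedly to pass from $|k|^{\beta+d}$ to $A^{(\beta+d)/\theta}$ via $|k|^\theta\le A$. First I would treat the region $\{0<|l|\le |k|/2\}$: here $|k-l|\ge |k|/2$, so the denominator is $\gtrsim (|k|^\theta+C)^\alpha = A^\alpha$, while $\sum_{0<|l|\le|k|/2}|l|^\beta \lesssim |k|^{\beta+d}$ (the exponent $\beta+d>0$ makes the shell sum comparable to $\int_1^{|k|/2} r^{\beta+d-1}\,dr$). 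Combining gives a contribution $\lesssim |k|^{\beta+d}A^{-\alpha}\le A^{(\beta+d)/\theta-\alpha}$.

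Next, in the intermediate region $\{|k|/2<|l|<2|k|\}$ one has $|l|\sim |k|$, so each summand is $\lesssim |k|^\beta A^{-\alpha}$ (bounding the denominator below by $|l|^\theta+C\gtrsim A$), and there are at most $\lesssim |k|^d$ lattice points; hence this region contributes $\lesssim |k|^{\beta+d}A^{-\alpha}\le A^{(\beta+d)/\theta-\alpha}$ as well.

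The main work is the tail region $\{|l|\ge 2|k|\}$, where I expect the principal obstacle to lie. There $|k-l|\ge |l|-|k|\ge |l|/2$, so the denominator is $\gtrsim (|l|^\theta+C)^\alpha$, reducing the problem to $\sum_{|l|\ge 2|k|}|l|^\beta(|l|^\theta+C)^{-\alpha}$. I would compare this with the integral $\int_{|x|\ge 2|k|}|x|^\beta(|x|^\theta+C)^{-\alpha}\,dx \sim \int_{2|k|}^\infty r^{\beta+d-1}(r^\theta+C)^{-\alpha}\,dr$, the sum-to-integral comparison being justified by dyadic decomposition over the shells $\{2^j|k|\le |l|<2^{j+1}|k|\}$. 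The convergence at infinity is exactly guaranteed by the hypothesis $\alpha>(\beta+d)/\theta$, since the integrand decays like $r^{\beta+d-1-\theta\alpha}$ with $\beta+d-1-\theta\alpha<-1$.

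To evaluate this tail integral I would distinguish two regimes according to whether $C$ dominates $|k|^\theta$. If $C\lesssim |k|^\theta$, then $r^\theta+C\sim r^\theta$ throughout the domain and the integral is $\sim (|k|^\theta)^{(\beta+d)/\theta-\alpha}\sim A^{(\beta+d)/\theta-\alpha}$; if instead $C\gtrsim |k|^\theta$, so that $A\sim C$, I would split at $r=C^{1/\theta}$: on $[2|k|,C^{1/\theta}]$ the denominator is $\sim C^\alpha$ and $\int r^{\beta+d-1}\,dr\lesssim C^{(\beta+d)/\theta}$ (using $\beta+d>0$), while on $[C^{1/\theta},\infty)$ one has $r^\theta+C\sim r^\theta$ and the integral is again $\sim C^{(\beta+d)/\theta-\alpha}$. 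Both pieces are $\lesssim A^{(\beta+d)/\theta-\alpha}$. Summing the three regional contributions yields the stated estimate; the only delicate points, beyond the routine sum-to-integral step, are keeping track of the constant $C$ in the two regimes and invoking $\beta+d>0$ (from $\beta>-d$) to control the shell sums.
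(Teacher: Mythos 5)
Your proof is correct: the three regional estimates are sound, the hypotheses enter exactly where they must ($\beta+d>0$ for the shell sums near the origin and for the piece of the tail integral below $C^{1/\theta}$; $\alpha>(\beta+d)/\theta$ for convergence of the tail integral at infinity), and the boundary situations in your two regimes for $C$ (e.g.\ when the interval $[2|k|,C^{1/\theta}]$ is empty) only affect constants. The paper's own proof reaches the same bound by a shorter route. Instead of splitting the lattice into $|l|\le|k|/2$, $|l|\sim|k|$, $|l|\ge 2|k|$, it first records the single uniform pointwise inequality $|l|^\theta+|k-l|^\theta\gtrsim|l|^\theta+|k|^\theta$ — your three denominator bounds are precisely this inequality, unrolled region by region — and, crucially, it absorbs the constant into one parameter $a:=(|k|^\theta+C)^{1/\theta}$, so the whole sum is dominated by $\sum_{l}|l|^\beta\big(|l|^\theta+a^\theta\big)^{-\alpha}$. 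A single sum-to-integral comparison and a single split of $\int_1^\infty r^{\beta+d-1}(r^\theta+a^\theta)^{-\alpha}\,\d r$ at $r=a$ then conclude, with no case analysis on whether $C$ dominates $|k|^\theta$: your two regimes correspond exactly to the two pieces $\int_1^a$ and $\int_a^\infty$ in the paper, and your near/intermediate regions correspond to the piece $\int_1^a$. What your version buys is transparency about where each hypothesis is used; what the paper's version buys is the elimination of all case distinctions through the substitution $a^\theta=|k|^\theta+C$, which treats $|k|$ and $C$ on an equal footing from the start.
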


\begin{proof}
We have $|l|^\theta + |k-l|^\theta \gtrsim |l|^\theta + |k|^\theta$, thus
  $$\sum_{l\in \Z^d_0} \frac{|l|^\beta}{\big(|l|^\theta + |k-l|^\theta + C\big)^\alpha} \lesssim \sum_{l\in \Z^d_0} \frac{|l|^\beta}{\big(|l|^\theta + |k|^\theta + C\big)^\alpha} = \sum_{l\in \Z^d_0} \frac{|l|^\beta}{\big(|l|^\theta + a^\theta \big)^\alpha}, $$
where we set $a=(|k|^\theta + C)^{1/\theta}$. Transforming into integral and using spherical coordinates, we obtain
  $$\aligned
  \sum_{l\in \Z^d_0} \frac{|l|^\beta}{\big(|l|^\theta + a^\theta \big)^\alpha} &\lesssim \int_{|x|\geq 1} \frac{|x|^\beta}{\big( |x|^\theta + a^\theta\big)^\alpha}\,\d x \lesssim  \int_1^\infty \frac{r^{\beta+d-1}}{(r^\theta+ a^\theta )^\alpha} \,\d r \\
  &= \bigg(\int_1^a + \int_a^\infty \bigg) \frac{r^{\beta+d-1}}{(r^\theta+ a^\theta )^\alpha} \,\d r =: I_1 + I_2.
  \endaligned $$
For $r\in [1,a]$, we have $a^\theta \leq r^\theta+ a^\theta\leq 2a^\theta $, thus
  $$I_1\leq \int_1^a  \frac{r^{\beta+d-1}}{a^{\theta \alpha} } \,\d r = \frac1{(\beta+d) a^{\theta \alpha}} (a^{\beta+d}-1) \lesssim a^{\beta+d- \theta \alpha}.$$
Next,
  $$I_2\leq \int_a^\infty \frac{r^{\beta+d-1}}{r^{\theta\alpha}} \,\d r = \int_a^\infty r^{\beta+d- \theta\alpha -1} \,\d r= \frac1{(\theta\alpha-\beta-d) a^{\theta\alpha-\beta-d}} \lesssim  a^{\beta+d- \theta \alpha}. $$
Combining these estimates with the definition of $a$, we finish the proof.
\end{proof}

Now we are ready to provide the

\begin{proof}[Proof of Lemma \ref{lem-apriori}]
We write $|k_{1:n}|^2= |k_1|^2 + \cdots + |k_n|^2$. By Lemma \ref{lem-drift},
  $$\aligned
  \big\|w(\mathcal N) (-\mathcal L_0)^{-\gamma} \mathcal G^m_+ \varphi \big\|^2
  =&\, \sum_{n\geq 0} n!\, w(n)^2 \sum_{k_{1:n}} \big( |k_{1:n}|^2 \big)^{-2\gamma} \big|\mathcal F(\mathcal G^m_+ \varphi)_n(k_{1:n})\big|^2 \\
  = &\, \sum_{n\geq 0} n!\, w(n)^2 (n-1)^2 \sum_{k_{1:n}} \big( |k_{1:n}|^2 \big)^{-2\gamma} C_\eps^2 {\bf 1}_{|k_1|, |k_2|, |k_1+k_2|\leq m} \\
  &\, \times \bigg| \bigg(\frac{k_1^\perp}{|k_1|^{1+\eps}}+  \frac{k_2^\perp}{|k_2|^{1+\eps}} \bigg)\cdot (k_1+k_2) \bigg|^2 |\hat \varphi_{n-1}(k_1+k_2, k_{3:n})|^2.
  \endaligned $$
Note that
  $$\aligned
  &\, \sum_{k_{1:n}} \big( |k_{1:n}|^2 \big)^{-2\gamma} {\bf 1}_{|k_1|, |k_2|, |k_1+k_2|\leq m} \bigg| \bigg(\frac{k_1^\perp}{|k_1|^{1+\eps}}+  \frac{k_2^\perp}{|k_2|^{1+\eps}} \bigg)\cdot (k_1+k_2) \bigg|^2 |\hat \varphi_{n-1}(k_1+k_2, k_{3:n})|^2 \\
  \leq &\, \sum_{k_{1:n}} \big( |k_{1:n}|^2 \big)^{-2\gamma} {\bf 1}_{|k_1|, |k_2|, |k_1+k_2|\leq m} \bigg|\frac{k_1^\perp }{|k_1|^{1+\eps}}+  \frac{k_2^\perp }{|k_2|^{1+\eps}} \bigg|^2 |k_1+k_2|^2 |\hat \varphi_{n-1}(k_1+k_2, k_{3:n})|^2 \\
  \leq &\, \sum_{l, k_{3:n}} |l|^2 |\hat \varphi_{n-1}(l, k_{3:n})|^2 \sum_{k_1+k_2=l} \big( |k_{1:n}|^2 \big)^{-2\gamma} \bigg| \frac{k_1^\perp}{|k_1|^{1+\eps}}+  \frac{k_2^\perp}{|k_2|^{1+\eps}} \bigg|^2 .
  \endaligned $$
We have the following estimate:
  $$\aligned
  \sum_{k_1+k_2=l} \big( |k_{1:n}|^2 \big)^{-2\gamma} \bigg| \frac{k_1^\perp}{|k_1|^{1+\eps}}+  \frac{k_2^\perp}{|k_2|^{1+\eps}} \bigg|^2
  \leq &\, 2 \sum_{k_1+k_2=l} \big( |k_{1:n}|^2 \big)^{-2\gamma} \bigg( \frac{1}{|k_1|^{2\eps}}+ \frac{1}{|k_2|^{2\eps}} \bigg) \\
  \lesssim &\, \sum_{k_1+k_2=l} \frac{|k_1|^{-2\eps}}{\big( |k_{1:n}|^2 \big)^{2\gamma}}\\
  =& \sum_{k_1\in \Z^2_0} \frac{|k_1|^{-2\eps}}{\big( |k_1|^2 + |l-k_1|^2+|k_{3:n}|^2 \big)^{2\gamma}}.
  \endaligned $$
For $\gamma>(1-\eps)/2$, applying Lemma \ref{lem-auxiliary} with $\theta=2,\, \alpha=2\gamma$ and $\beta=-2\eps, d=2$, we obtain
  $$\sum_{k_1+k_2=l} \big( |k_{1:n}|^2 \big)^{-2\gamma} \bigg| \frac{k_1^\perp}{|k_1|^{1+\eps}}+  \frac{k_2^\perp}{|k_2|^{1+\eps}} \bigg|^2 \lesssim \frac1{\big( |l|^2+|k_{3:n}|^2 \big)^{2\gamma+\eps -1}}. $$
As a result,
  $$\aligned
  &\, \sum_{k_{1:n}} \big( |k_{1:n}|^2 \big)^{-2\gamma} {\bf 1}_{|k_1|, |k_2|, |k_1+k_2|\leq m} \bigg| \bigg(\frac{k_1^\perp}{|k_1|^{1+\eps}}+  \frac{k_2^\perp}{|k_2|^{1+\eps}} \bigg)\cdot (k_1+k_2) \bigg|^2 |\hat \varphi_{n-1}(k_1+k_2, k_{3:n})|^2 \\
  \lesssim &\, \sum_{l,k_{3:n}} |l|^2 |\hat \varphi_{n-1}(l, k_{3:n})|^2 \frac1{\big( |l|^2+|k_{3:n}|^2 \big)^{2\gamma+\eps -1}} \\
  \leq&\, \frac1{n-1} \sum_{l,k_{3:n}} \big( |l|^2+|k_{3:n}|^2 \big) |\hat \varphi_{n-1}(l, k_{3:n})|^2 \frac1{\big( |l|^2+|k_{3:n}|^2 \big)^{2\gamma+\eps -1}},
  \endaligned $$
which is equal to
  $$\aligned  \frac1{n-1} \sum_{l,k_{3:n}} \big( |l|^2+|k_{3:n}|^2 \big)^{2-2\gamma-\eps} |\hat \varphi_{n-1}(l, k_{3:n})|^2
  = \frac1{n-1} \sum_{k_{1:n-1}} |k_{1:n-1}|^{2(2-2\gamma -\eps)} |\hat \varphi_{n-1}(k_{1:n-1})|^2.
  \endaligned $$
Consequently,
  $$\aligned
  \big\|w(\mathcal N) (-\mathcal L_0)^{-\gamma} \mathcal G^m_+ \varphi \big\|^2 &\lesssim \sum_{n\geq 0} n!\, w(n)^2 n \sum_{k_{1:n-1}} |k_{1:n-1}|^{2(2-2\gamma -\eps)} |\hat \varphi_{n-1}(k_{1:n-1})|^2 \\
  &= \sum_{n\geq 1} (n-1)!\, w(n)^2 n^2 \sum_{k_{1:n-1}} |k_{1:n-1}|^{2(2-2\gamma -\eps)} |\hat \varphi_{n-1}(k_{1:n-1})|^2 \\
  &= \big\|w(1+\mathcal N) (1+\mathcal N) (-\mathcal L_0)^{1-\gamma -\eps/2} \varphi \big\|^2 .
  \endaligned $$

Next we prove the second inequality. First,
  $$\big\|w(\mathcal N) (-\mathcal L_0)^{-\gamma} \mathcal G^m_- \varphi \big\|^2 \backsimeq \sum_{n\geq 0} n!\, w(n)^2 \sum_{k_{1:n}} \big(|k_{1:n}|^2 \big)^{-2\gamma} \big|\mathcal F(\mathcal G^m_- \varphi)_n(k_{1:n})\big|^2. $$
By Lemma \ref{lem-drift},
  $$\aligned
  \big|\mathcal F(\mathcal G^m_- \varphi)_n(k_{1:n})\big|^2 & \lesssim n^4\bigg| \sum_{p+q=k_1} {\bf 1}_{|k_1|,|p|,|q|\leq m} \bigg(\frac{p^\perp \cdot q}{|p|^{1+\eps}}+ \frac{q^\perp \cdot p}{|q|^{1+\eps}}\bigg) \hat \varphi_{n+1}(p,q,k_{2:n}) \bigg|^2 \\
  &= n^4\bigg| \sum_{p+q=k_1} {\bf 1}_{|k_1|,|p|,|q|\leq m} \bigg(\frac{p^\perp}{|p|^{1+\eps}}+ \frac{q^\perp}{|q|^{1+\eps}}\bigg)\cdot k_1\, \hat \varphi_{n+1}(p,q,k_{2:n}) \bigg|^2 \\
  &\leq n^4 |k_1|^2 \bigg[ \sum_{p+q=k_1} {\bf 1}_{|k_1|,|p|,|q|\leq m} \bigg(\frac1{|p|^\eps} + \frac1{|q|^\eps}\bigg) |\hat \varphi_{n+1}(p,q,k_{2:n})| \bigg]^2 .
  \endaligned$$
We have, by Cauchy's inequality,
  $$\aligned
  &\, \bigg[ \sum_{p+q=k_1} {\bf 1}_{|k_1|,|p|,|q|\leq m} \bigg(\frac1{|p|^\eps} + \frac1{|q|^\eps}\bigg) |\hat \varphi_{n+1}(p,q,k_{2:n})| \bigg]^2 \\
  =&\, \bigg[ \sum_{p+q=k_1} {\bf 1}_{|k_1|,|p|,|q|\leq m} \bigg(\frac1{|p|^\eps} + \frac1{|q|^\eps}\bigg) \frac1{(|p|^2 +|q|^2)^{1 -\gamma -\eps/4}} (|p|^2 +|q|^2)^{1 -\gamma -\eps/4} |\hat \varphi_{n+1}(p,q,k_{2:n})| \bigg]^2 \\
  \leq &\, \bigg[\sum_{p+q=k_1} \bigg(\frac1{|p|^\eps} + \frac1{|q|^\eps}\bigg)^2 \frac1{(|p|^2 +|q|^2)^{2 -2\gamma -\eps/2}} \bigg] \sum_{p+q=k_1} (|p|^2 +|q|^2)^{2 -2\gamma -\eps/2} |\hat \varphi_{n+1}(p,q,k_{2:n})|^2 \\
  \leq &\, \frac1{|k_1|^{2(1 -2\gamma) +\eps}} \sum_{p+q=k_1} (|p|^2 +|q|^2)^{2 -2\gamma -\eps/2} |\hat \varphi_{n+1}(p,q,k_{2:n})|^2,
  \endaligned $$
where the last step follows by applying Lemma \ref{lem-auxiliary}, since $\gamma <(1+\eps/2)/2$. Substituting this estimate into the one above, we get
  $$\aligned
  \big|\mathcal F(\mathcal G^m_- \varphi)_n(k_{1:n})\big|^2 &\lesssim \frac{n^4 }{|k_1|^{-4\gamma + \eps}} \sum_{p+q=k_1} (|p|^2 +|q|^2)^{2 -2\gamma -\eps/2} |\hat \varphi_{n+1}(p,q,k_{2:n})|^2 .
  \endaligned $$
Therefore,
  $$\aligned
  &\, \sum_{k_{1:n}} \big(|k_{1:n}|^2 \big)^{-2\gamma} \big|\mathcal F(\mathcal G^m_- \varphi)_n(k_{1:n})\big|^2 \\
  \lesssim &\, \sum_{k_{1:n}} \big(|k_{1:n}|^2 \big)^{-2\gamma} \frac{n^4 }{|k_1|^{-4\gamma +\eps}} \sum_{p+q=k_1} (|p|^2 +|q|^2)^{2 -2\gamma -\eps/2} |\hat \varphi_{n+1}(p,q,k_{2:n})|^2 \\
  \leq &\, n^4  \sum_{k_{1:n}} \frac{|k_1|^{4\gamma -\eps}}{\big(|k_{1:n}|^2 \big)^{2\gamma}} \sum_{p+q=k_1} (|p|^2 +|q|^2)^{2 -2\gamma -\eps/2} |\hat \varphi_{n+1}(p,q,k_{2:n})|^2.
  \endaligned $$
As a consequence,
  $$\aligned
  \sum_{k_{1:n}} \big(|k_{1:n}|^2 \big)^{-2\gamma} \big|\mathcal F(\mathcal G^m_- \varphi)_n(k_{1:n})\big|^2 \leq &\, n^4  \sum_{k_{1:n}} \sum_{p+q=k_1} (|p|^2 +|q|^2)^{2 -2\gamma -\eps/2} |\hat \varphi_{n+1}(p,q,k_{2:n})|^2 \\
  \leq &\, n^4  \sum_{k_{1:n+1}} (|k_1|^2 +|k_2|^2)^{2 -2\gamma -\eps/2} |\hat \varphi_{n+1}(k_{1:n+1})|^2 \\
  \lesssim &\, n^3  \sum_{k_{1:n+1}} (|k_{1:n+1}|^2 )^{2 -2\gamma -\eps/2} |\hat \varphi_{n+1}(k_{1:n+1})|^2 ,
  \endaligned $$
where in the last step we used the symmetry of $\hat \varphi_{n+1}$ in the variables $k_{1:n+1}$ and that $2 -2\gamma -\eps/2\geq 1$. Thus,
  $$\aligned
  \big\|w(\mathcal N) (-\mathcal L_0)^{-\gamma} \mathcal G^m_- \varphi \big\|^2 &\lesssim \sum_{n\geq 0} n!\, w(n)^2 n^3  \sum_{k_{1:n+1}} (|k_{1:n+1}|^2 )^{2 -2\gamma -\eps/2} |\hat \varphi_{n+1}(k_{1:n+1})|^2 \\
  &\lesssim \sum_{n\geq 1} (n+1)!\, w(n)^2 n^2 \big\|(-\mathcal L_0)^{1-\gamma -\eps/4} \varphi_{n+1} \big\|_{L^2(\T^{2(n+1)})}^2 \\
  &\lesssim \big\|w(\mathcal N-1) \mathcal N (-\mathcal L_0)^{1-\gamma -\eps/4} \varphi \big\|^2 .
  \endaligned $$

Finally, we derive the $m$-dependent estimate on $\mathcal G^m$. We have
  $$\aligned
  &\, \sum_{k_{1:n}} \big|\mathcal F(\mathcal G^m_+ \varphi)_n(k_{1:n})\big|^2 \\
  \lesssim &\, (n-1)^2 \sum_{k_{1:n}} {\bf 1}_{|k_1|, |k_2|, |k_1+k_2|\leq m} \bigg| \bigg(\frac{k_1^\perp}{|k_1|^{1+\eps}}+  \frac{k_2^\perp}{|k_2|^{1+\eps}} \bigg)\cdot (k_1+k_2) \bigg|^2 |\hat \varphi_{n-1}(k_1+k_2, k_{3:n})|^2 \\
  \lesssim &\, n^2 \sum_{k_{1:n}} {\bf 1}_{|k_1|, |k_2|, |k_1+k_2|\leq m} \bigg(\frac{1}{|k_1|^{\eps}} +\frac{1}{|k_2|^{\eps}} \bigg)^2 |k_1+ k_2|^2 |\hat \varphi_{n-1}(k_1+k_2, k_{3:n})|^2 \\
  \leq &\, n^2 \sum_{k_{1:n}} {\bf 1}_{|k_1|, |k_2|, |k_1+k_2|\leq m} |k_1+ k_2|^2 |\hat \varphi_{n-1}(k_1+k_2, k_{3:n})|^2 .
  \endaligned $$
Thus,
  $$\aligned
  \sum_{k_{1:n}} \big|\mathcal F(\mathcal G^m_+ \varphi)_n(k_{1:n})\big|^2 &\lesssim n^2 \sum_{l,k_{3:n}} \sum_{k_1+k_2=l} {\bf 1}_{|k_1|, |k_2|, |k_1+k_2|\leq m} |k_1+ k_2|^2 |\hat \varphi_{n-1}(k_1+k_2, k_{3:n})|^2 \\
  &= n^2 \sum_{l,k_{3:n}} {\bf 1}_{|l|\leq m} |l|^2 |\hat \varphi_{n-1}(l, k_{3:n})|^2 \sum_{k_1+k_2=l} {\bf 1}_{|k_1|, |k_2|\leq m} \\
  &\lesssim n^2 m^2 \sum_{l_{1:n-1}} {\bf 1}_{|l_1|\leq m} |l_1|^2 |\hat \varphi_{n-1}(l_{1:n-1})|^2 \\
  &\lesssim n m^2 \sum_{l_{1:n-1}} \big(|l_1|^2+\cdots + |l_{n-1}|^2 \big) |\hat \varphi_{n-1}(l_{1:n-1})|^2 .
  \endaligned $$
Therefore,
  $$ \aligned
  \big\| w(\mathcal N) \mathcal G^m_+ \varphi \big\|^2 & = \sum_{n\geq 0} n!\, w(n)^2 \sum_{k_{1:n}} \big|\mathcal F(\mathcal G^m_+ \varphi)_n(k_{1:n})\big|^2 \\
  &\lesssim \sum_{n\geq 1} n!\, w(n)^2 n m^2 \sum_{l_{1:n-1}} |l_{1:n-1}|^2 |\hat \varphi_{n-1}(l_{1:n-1})|^2 \\
  &\lesssim m^2 \sum_{n\geq 1} (n-1)!\, w(n)^2 n^2 \sum_{l_{1:n-1}} |l_{1:n-1}|^2 |\hat \varphi_{n-1}(l_{1:n-1})|^2 .
  \endaligned $$
From this we obtain
  \begin{equation}\label{lem-apriori.4}
  \aligned
  \big\| w(\mathcal N) \mathcal G^m_+ \varphi \big\|^2 &\leq m^2 \sum_{n\geq 0} n!\, w(n+1)^2 (n+1)^2 \sum_{l_{1:n-1}} |l_{1:n}|^2 |\hat \varphi_{n}(l_{1:n})|^2 \\
  &\leq m^2 \big\| w(\mathcal N+1) (\mathcal N+1) (-\mathcal L_0)^{1/2} \varphi \big\|^2.
  \endaligned
  \end{equation}
Next, in a similar way,
  $$\aligned
  \sum_{k_{1:n}} \big|\mathcal F(\mathcal G^m_- \varphi)_n(k_{1:n})\big|^2 &\lesssim n^4 \sum_{k_{1:n}} \bigg| \sum_{p+q=k_1} {\bf 1}_{|k_1|,|p|,|q|\leq m} \bigg(\frac{p^\perp}{|p|^{1+\eps}}+ \frac{q^\perp}{|q|^{1+\eps}}\bigg)\cdot k_1\, \hat \varphi_{n+1}(p,q,k_{2:n}) \bigg|^2 \\
  &\leq n^4 \sum_{k_{1:n}} |k_1|^2 \bigg[ \sum_{p+q=k_1} {\bf 1}_{|k_1|,|p|,|q|\leq m} \bigg(\frac{1}{|p|^{\eps}}+ \frac{1}{|q|^{\eps}} \bigg) |\hat \varphi_{n+1}(p,q,k_{2:n})| \bigg]^2 \\
  &\leq n^4 \sum_{k_{1:n}} |k_1|^2 \bigg[ \sum_{p+q=k_1} {\bf 1}_{|k_1|,|p|,|q|\leq m} |\hat \varphi_{n+1}(p,q,k_{2:n})| \bigg]^2.
  \endaligned $$
By Cauchy's inequality,
  $$\aligned
  \sum_{k_{1:n}} \big|\mathcal F(\mathcal G^m_- \varphi)_n(k_{1:n})\big|^2 &\lesssim n^4 \sum_{k_{1:n}} |k_1|^2 \bigg[ \sum_{p+q=k_1} {\bf 1}_{|k_1|,|p|,|q|\leq m} \bigg] \sum_{p+q=k_1} |\hat \varphi_{n+1}(p,q,k_{2:n})|^2 \\
  &\leq n^4 m^2 \sum_{k_{1:n}} |k_1|^2 \sum_{p+q=k_1} |\hat \varphi_{n+1}(p,q,k_{2:n})|^2.
  \endaligned $$
Now we have
  $$\aligned
  \sum_{k_{1:n}} \big|\mathcal F(\mathcal G^m_- \varphi)_n(k_{1:n})\big|^2 &\lesssim n^4 m^2 \sum_{k_{1:n}} \sum_{p+q=k_1} \big(|p|^2+ |q|^2 \big) |\hat \varphi_{n+1}(p,q,k_{2:n})|^2 \\
  &\leq n^4 m^2 \sum_{k_{1:n+1}} \big(|k_1|^2+ |k_2|^2 \big) |\hat \varphi_{n+1}(k_{1:n+1})|^2 \\
  &\lesssim n^3 m^2 \sum_{k_{1:n+1}} \big(|k_1|^2+\cdots + |k_{n+1}|^2 \big) |\hat \varphi_{n+1}(k_{1:n+1})|^2 .
  \endaligned $$
Consequently,
  $$\aligned
  \big\| w(\mathcal N) \mathcal G^m_- \varphi \big\|^2&= \sum_{n\geq 0} n!\, w(n)^2 \sum_{k_{1:n}} \big|\mathcal F(\mathcal G^m_- \varphi)_n(k_{1:n})\big|^2 \\
  &\lesssim \sum_{n\geq 0} n!\, w(n)^2 n^3 m^2 \sum_{k_{1:n+1}} \big(|k_1|^2+\cdots + |k_{n+1}|^2 \big) |\hat \varphi_{n+1}(k_{1:n+1})|^2 \\
  &\lesssim m^2 \sum_{n\geq 0} (n+1)!\, w(n)^2 n^2 \sum_{k_{1:n+1}} |k_{1:n+1}|^2 |\hat \varphi_{n+1}(k_{1:n+1})|^2 \\
  &\lesssim m^2 \big\| w(\mathcal N-1) \mathcal N (-\mathcal L_0)^{1/2} \varphi \big\|^2.
  \endaligned $$
Combining this with \eqref{lem-apriori.4} gives us the last estimate.
\end{proof}

\medskip

\noindent \textbf{Acknowledgements.} The first named author is grateful to the financial supports of the National Natural Science Foundation of China (Nos. 11688101, 11931004), and the Youth Innovation Promotion Association, CAS (2017003). The second named author is grateful to the financial supports of the National Natural Science Foundation of China (Nos. 11671035, 11922103) and the financial support by the DFG through the CRC
1283 ``Taming uncertainty and profiting from randomness and low regularity in analysis, stochastics and their
applications''.

\end{document}